\newif\iffinal
\tikzset{every path/.style={line width=.07 cm}}
\definecolor{darkblue}{rgb}{0,0,0.6}
\newenvironment{enumeratei}{\begin{enumerate}[\upshape (i)]}{\end{enumerate}}
\newenvironment{enumeratea}{\begin{enumerate}[\upshape (a)]}{\end{enumerate}}
\newenvironment{inparaenumn}{\begin{inparaenum}[\upshape \bfseries (i) ]}{\end{inparaenum}}
\newenvironment{inparaenuma}{\begin{inparaenum}[\upshape \bfseries (a) ]}{\end{inparaenum}}
\numberwithin{equation}{section}
\numberwithin{figure}{section}
\numberwithin{table}{section}
\newtheorem{thm}{Theorem}[section]
\newtheorem{lem}[thm]{Lemma}
\newtheorem{cor}[thm]{Corollary}
\newtheorem{prop}[thm]{Proposition}
\newtheorem{defn}[thm]{Definition}
\newtheorem{ass}[thm]{Assumption}
\newtheorem{lemma}[thm]{Lemma}
\theoremstyle{definition}
\newtheorem{rem}{Remark}
\newcommand{\ra}{\rangle}
\newcommand{\la}{\langle}
\newcommand{\ind}{\mathds{1}}
\newcommand{\eps}{\varepsilon}
\newcommand{\norm}[1]{\left\Vert#1\right\Vert}
\newcommand{\abs}[1]{\left\vert#1\right\vert}
\newcommand{\set}[1]{\left\{#1\right\}}
\newcommand{\probc}{\stackrel{\mathrm{P}}{\longrightarrow}}
\newcommand{\weakc}{\stackrel{\mathrm{w}}{\longrightarrow}}
\newcommand{\at}[2][]{#1|_{#2}}
\newcommand{\mycomment}[1]{}
\def\qed{ \hfill $\blacksquare$}
\newcommand{\cA}{\mathcal{A}}\newcommand{\cC}{\mathcal{C}}
\newcommand{\cE}{\mathcal{E}}\newcommand{\cF}{\mathcal{F}}
\newcommand{\cG}{\mathcal{G}}\newcommand{\cI}{\mathcal{I}}
\newcommand{\cJ}{\mathcal{J}}\newcommand{\cK}{\mathcal{K}}\newcommand{\cL}{\mathcal{L}}
\newcommand{\cM}{\mathcal{M}}\newcommand{\cN}{\mathcal{N}}
\newcommand{\cP}{\mathcal{P}}
\newcommand{\cS}{\mathcal{S}}
\newcommand{\cV}{\mathcal{V}}\newcommand{\cW}{\mathcal{W}}
\newcommand{\vC}{\mathbf{C}}
\newcommand{\vI}{\mathbf{I}}
\newcommand{\vU}{\mathbf{U}}\newcommand{\vV}{\mathbf{V}}
\newcommand{\vX}{\mathbf{X}}
\newcommand{\vc}{\mathbf{c}}
\newcommand{\ve}{\mathbf{e}}\newcommand{\vf}{\mathbf{f}}
\newcommand{\vt}{\mathbf{t}}\newcommand{\vu}{\mathbf{u}}
\newcommand{\vv}{\mathbf{v}}
\newcommand{\vz}{\mathbf{z}}
\newcommand{\mv}[1]{\boldsymbol{#1}}\newcommand{\mvzero}{\boldsymbol{0}}
\newcommand{\mvone}{\boldsymbol{1}}
\newcommand{\mvA}{\boldsymbol{A}}\newcommand{\mvB}{\boldsymbol{B}}
\newcommand{\mvH}{\boldsymbol{H}}
\newcommand{\mvM}{\boldsymbol{M}}
\newcommand{\mvQ}{\boldsymbol{Q}}
\newcommand{\mvY}{\boldsymbol{Y}}
\newcommand{\mvk}{\boldsymbol{k}}\newcommand{\mvl}{\boldsymbol{l}}\newcommand{\mvm}{\boldsymbol{m}}
\newcommand{\mvn}{\boldsymbol{n}}
\newcommand{\mvu}{\boldsymbol{u}}\newcommand{\mvv}{\boldsymbol{v}}
\newcommand{\mvx}{\boldsymbol{x}}
\newcommand{\mvy}{\boldsymbol{y}}
\newcommand{\mvz}{\boldsymbol{z}}
\newcommand{\mvpi}{\boldsymbol{\pi}}
\newcommand{\mvxi}{\boldsymbol{\xi}}
\newcommand{\fF}{\mathfrak{F}}
\newcommand{\fL}{\mathfrak{L}}
\newcommand{\fN}{\mathfrak{N}}
\newcommand{\fT}{\mathfrak{T}}
\newcommand{\bb}[1]{\mathbb{#1}}
\newcommand{\bB}{\mathbb{B}}\newcommand{\bC}{\mathbb{C}}
\newcommand{\bD}{\mathbb{D}}
\newcommand{\bL}{\mathbb{L}}
\newcommand{\bN}{\mathbb{N}}
\newcommand{\bP}{\mathbb{P}}\newcommand{\bR}{\mathbb{R}}
\newcommand{\bT}{\mathbb{T}}
\DeclareMathOperator{\E}{\mathbb{E}}
\DeclareMathOperator{\pr}{\mathbb{P}}
\DeclareMathOperator{\var}{Var}
\DeclareMathOperator{\cov}{Cov}
 \DeclareMathOperator{\Pois}{Poisson}
 \DeclareMathOperator{\IRG}{IRG}
 \DeclareMathOperator{\BP}{BP}
\newcommand{\sss}{\scriptscriptstyle}
\newcommand{\erdos}{Erd\H{o}s-R\'enyi }
\newcommand{\ER}{{\text{ER}}} 
\newcommand{\convd}{\stackrel{d}{\longrightarrow}}
\newcommand{\convp}{\stackrel{\prob}{\longrightarrow}}
\definecolor{aqua}{rgb}{0.0, 1.0, 1.0}
\definecolor{boo}{rgb}{1.0, 0.0, 1.0}
\definecolor{stred}{rgb}{1.0, 0.44, 0.37}
 \newcommand{\abb}{}
\newcommand{\ab}{}
\newcommand{\sa}{}
\newcommand{\MBP}{{\sf MBP}}
\newcommand{\dense}{{\sf dense}}
\newcommand{\inpr}[2]{\langle #1, #2 \rangle}
\newcommand{\bt}{\mathbf{t}}
\newcommand{\prob}{\mathbb{P}}
\DeclareMathAlphabet\mathbfcal{OMS}{cmsy}{b}{n}
\newcommand\thickbar[1]{\accentset{\rule{.6em}{1.3pt}}{#1}}
\def\beq{ \begin{equation} }
\def\eeq{ \end{equation} }
\newcommand{\thkappa}{\thickbar{\kappa}}
\newcommand{\fanz}{\mathbf{[0]}}
\newcommand{\fano}{\mathbf{[1]}}
\begin{document}

\title[Functional Central limit theorems for Inhomogeneous random graphs]{Functional Central limit theorems for microscopic and macroscopic functionals of inhomogeneous random graphs}

\date{}
\subjclass[2010]{Primary: 60K35, 05C80. }
\keywords{inhomogeneous random graphs, phase transition, functional central limit theorems, Gaussian processes }

\author[Bhamidi]{Shankar Bhamidi}
\author[Budhiraja]{Amarjit Budhiraja}
\address{Department of Statistics and Operations Research, 304 Hanes Hall, University of North Carolina, Chapel Hill, NC 27599}
\email{bhamidi@email.unc.edu, budhiraja@email.unc.edu, sakshay@unc.edu}
\author[Sakanaveeti]{Akshay Sakanaveeti }

\begin{abstract}
We study inhomogeneous random graphs with a finite type space. For a natural generalization of the model as a dynamic network-valued process, the paper establishes the following results: 
\begin{inparaenuma}
    \item   {
    Functional central limit theorems for the infinite vector of microscopic type-densities and characterizations of the limits as  infinite-dimensional \sa{ conditionally }Gaussian processes in a certain Banach space.}
    \item  Functional (joint) central limit theorems for macroscopic observables of the giant component in the supercritical regime including size,  surplus and number of vertices of various types in the giant component. As a corollary this provides  central limit theorems for the size of the largest connected component, its surplus, and its type vector, for percolation on dense graphs obtained from a finite type Graphon.
    \item  
   Central limit theorem for the {\it weight} of the minimum spanning tree with random {\it i.i.d.} Exponential edge weights on dense graph sequences driven by an underlying finite type graphon. 
\end{inparaenuma}

\end{abstract}

\maketitle

\section{Introduction}
\label{sec:int}
Driven  by mathematical questions on properties of large disordered media as well as applications arising from an explosion of data on real world systems, there has been significant impetus from a range of disciplines in formulating network models, understanding properties of these models either numerically or mathematically, and applying these models in relevant domains to gain scientific insight. Starting from the inception of the field one of the central questions of interest for a typical model, e.g. in the work of \erdos \cite{erdos1960evolution}, is connectivity, i.e.  regimes of the driving parameters of the network model where one has a ``giant component''. More precisely, writing $\set{\cG_n:n\geq 1}$ for a generic sequence of random graph models, where $\cG_n$ is a (random) network on vertex set $[n]:=\set{1,2,\ldots, n}$,   many such models have a parameter $t$ connected to the edge density for the sequence of networks and a model dependent critical time $t_c$ such that for $t< t_c$, the size (number of vertices) of the the largest connected component $\abs{\cC_{(1),n}(t)} = o_{\pr}(n)$ while for $t> t_c$, $\abs{\cC_{(1),n}(t)} \sim f(t) n$ for a strictly positive function $f(\cdot)$. The first regime is often referred to as the {\bf subcritical} regime while the second regime the {\bf supercritical} regime of the network model.

Foundational work on such questions include the original work of \erdos on the now eponymous and fundamental model \cite{erdos1960evolution}, the configuration model \cite{molloy1995critical}, and most relevant for this work, the inhomogeneous random graph model \cite{bollobas2007phase} and their dense cousins, the world of graphons \cite{lovasz-book}. We refer the interested reader to the books \cites{boll-book,durrett-book,janson-luczak-book,remco-book-1,remco-rgcn2,lovasz-book} and the references therein for the beautiful mathematics that has resulted from trying to understand such phenomenon. Also, there is much to say about  the the critical regime $t=t_c$ however, since this regime is only tangentially related to the goal of this paper, we defer further discussion to Section \ref{sec:disc}. 

Results on the largest component, $\cC_{(1),n}(t)$, noted above, give  laws of large number behavior of the form $|\cC_{(1),n}(t)|/n\convp f(t)$. These lead naturally to more refined questions such as: 
\begin{enumeratea}
 \item Laws of large numbers as well as fluctuations for {prevalence of different types of} microscopic components, for example the density of components of any fixed size $k\geq 1$ denoted by $\pi_n(k;t)$,  for fixed $t$ in the sub and supercritical regime; see for example \cite{pittel1990tree} for refined analysis in the setting of the \erdos random graph.  
\item Second order fluctuations in the supercritical regime,  typically phrased in terms of understanding asymptotics of the sequence $\set{(\abs{\cC_{(1),n}(t)} - nf(t))/\sqrt{n}: n\geq 1}$ for {\bf fixed } $t> t_c$. In the setting of the \erdos random graph, see e.g. \cites{pittel1990tree,bollobas2012asymptotic}; for related results in the configuration model, see e.g. \cites{barbour2019central,ball27asymptotic}. 
   
    \item First and second order behavior of other functionals of the giant component when $t> t_c$, for example the surplus of the giant, namely the number of extra edges in the giant compared to a tree of the same size, the size of the $2$-core of the giant and corresponding tree mantle; see e.g. \cite{pittel2005counting} for definitions of these objects and an analysis in the setting of the \erdos random graph.  
    \item {Functional limit theorems:} Many random graph models can naturally be viewed, via various constructions,  as a dynamic graph valued process $\set{\cG_n(t): t\ge 0}$. One construction in the  \erdos setting, for example,  assigns each of the ${n \choose 2} $ unordered pairs $\ve = \set{i,j}$ an independent and uniform $[0,1]$ valued weight $U_{\ve}$ and then for each fixed $t$, retains only those edges $\ve$ with $U_{\ve} \leq t$. This graph valued process starts from a collection of $n$ disconnected vertices at $t=0$ and terminates at the complete graph for $t=1$. {One can then look at, for example, the density of components of finite size $k$,  $\set{\pi_n(k;t):  k\geq 0}_{t\ge 0}$ and study  a functional central limit theorem for this infinite dimensional stochastic process.} The importance of such results in understanding for example, the behavior of the weight of minimal spanning tree on the complete graph equipped with random edge weights was explored in foundational papers by Frieze \cite{frieze1985value} and Janson \cite{janson1995minimal}. 
\end{enumeratea}

\subsection{Goals and contributions of this paper}
This paper considers the inhomogeneous random graph model \cite{bollobas2007phase}. In the finite type space setting, for the natural generalization of the model as a dynamic network valued process, this paper accomplishes the following major goals: 
\begin{enumeratea}
    \item {\bf Functional central limit theorems for  densities of microscopic components of various types:} We prove joint functional central limit theorems (FCLT) for the \emph{microscopic} type-density vector and characterize the limit as an  infinite dimensional (conditionally) Gaussian process in an appropriate Banach space \abb{given through the solution of a stochastic differential equation (SDE) driven by a cylindrical Brownian motion in $\ell_2$}. A special case of this result applied to the setting of the \erdos random graph answer the conjecture in Remark 1.3 of \cite{janson1995minimal}, while our general results make a start on \cite{aldous1999deterministic}*{Open Problem 9} regarding Gaussian fluctuations for coalescent systems for a specific setting of the {\em coagulation kernel}. We discuss these implications further in Section \ref{sec:disc}. 
    \item {\bf Functional central limit theorems for macroscopic functionals:} In the supercritical regime, the above results are leveraged to derive (joint) functional central limit theorems for macroscopic observables such as {the number of connected components,  the size and surplus of the giant component, and the number of different types of vertices in the giant.}  In the specific case of the \erdos random graph, this extends the work in \cite{pittel2005counting} that considers  fixed time instant central limit theorems for such objects. 
     \item {\bf Asymptotics for the weight of the MST on graphon modulated random graphs: } Using the above results as ingredients we then  prove  central limit theorems for the weight of the minimal spanning tree with random i.i.d. Exponential edge weights on {\bf dense} graph sequences driven by an underlying finite type kernel \cite{lovasz-book}, extending the famous work of Janson related to the fluctuations of the weight of the MST on the complete graph \cite{janson1995minimal} and strengthening the law of large number results for the weight recently derived in \cite{hladky2023random}.
     \end{enumeratea}
      \abb{Establishing the limit theorem in part (c) is one of the important motivations for studying (infinite dimensional) FCLT in parts (a) and (b). Indeed the weight of the MST can be represented as a  time integrated functional of the infinite sum of the type-density vector (see e.g. Lemma \ref{thm:mst-wn0-convg}) and one of the key technical challenges is to understand the behavior of the tail sums of the type-density vectors over time intervals that include the critical window. The asymptotic type-vector densities decay exponentially in the size of the type-vector outside the critical window but only polynomially within the window (see comments below \eqref{eqn:sde-gamma-defn}). This makes proving a suitable FCLT for the infinite type-density vector, over arbitrary time intervals, challenging. In the critical window, as one transitions from a collection of small components of maximal size $O_{\pr}(\log{n})$ to one macroscopic component (``the giant'') of $\Theta_{\pr}(n)$, the various error terms may explode, and in fact the coefficients of the limiting SDE  diverge as one approaches the critical point. We circumvent this poor behavior in the critical window by establishing separate FCLT in the subcritical regime (i.e. below the critical window) and the supercritical regime (above the critical window) and by carefully analyzing the behavior of the contribution to the cost of the MST over the critical window.}
     
     \abb{Inhomogeneous random graph models  with not just finite but infinite type space cover a  large number of models in probabilistic combinatorics (cf. the survey in \cite{bollobas2007phase}) and have shown up as crucial tools in understanding other network models such as bounded size rules \cites{bhamidi2014augmented,bhamidi2014bounded,bhamidi2015aggregation,spencer2007birth}. As described in \cite{bollobas2007phase} for laws of large numbers and in \cite{andreis2023large} for large deviations, in these general settings, one of the crucial ingredients is approximation using appropriately defined finite type models and leveraging technical estimates for the finite-type setting to understand the general setting. The current work develops such technical tools for finite type processes which we expect will be key in extending the results in this paper, to the more general setting with arbitrary types spaces considered in \cite{bollobas2007phase}. 
       }


\subsection{Organization of the paper}
 Section \ref{sec:defn} contains definitions of the network model and known results from \cite{bollobas2007phase} regarding the sub and supercritical regime of this model. Section \ref{sec:results} describes our main results. Relevance of these results, proof techniques and related literature are discussed in Section \ref{sec:disc}. The remaining sections are devoted to proofs. 

\section{Definitions}
\label{sec:defn}
\subsection{Notation}
\label{sec:not}
We use $\probc$ and $\weakc$ to denote convergence of random variables in probability and in distribution  respectively.
All the unspecified limits are taken as $n \to \infty$.
Given a sequence of events $\{E_n\}_{n\ge 1}$, we say $E_n$  occurs with high probability (whp) if $\pr(\{E_n\}) \to 1$. Equality of random variables in distribution will be denoted as $\stackrel{d}{=}$.
The standard Landau notation $O,\Omega,\Theta, o$ (and corresponding relations $O_{\pr}, \Omega_{\pr}$ etc) are defined in the usual manner. For example,   given a sequence of real valued random variables $\{\xi_n\}_{n\ge 1}$ and a function $f: \bN \to (0,\infty)$, we say $\xi_n = O_{\pr} (f)$ if there is a $C\in (0,\infty)$ such that $\xi_n \le C f(n)$ whp, and we say $\xi_n = \Omega_{\pr}(f)$ if there is a  $C \in (0,\infty)$ such that $\xi_n \ge Cf(n)$ whp. We say that $\xi_n = \Theta_{\pr}(f)$ if $\xi_n = O_{\pr}(f)$  and $\xi_n = \Omega_{\pr}(f)$; and we say  $\xi_n = o_{\pr} (f)$ if $\xi_n/f(n) \probc 0$. For $K\geq 1$ and Euclidean space $\bR^K$, we use boldface e.g. $\mvx = (x_1, x_2, \ldots, x_K)$ for vectors and will use the standard notation $\inpr{\mvx}{\mvy} := \sum_{i=1}^K x_i y_i$ for the inner product. We use $\mvzero$ for the zero vector and $\mvone = (1,1,\ldots, 1)$ for the vector of all ones. Given a Polish space $\cS$, by a kernel $\kappa$ on $\cS$, we will mean a symmetric measurable map from $\cS\times \cS$ to $\bR_+ = [0,\infty)$. For a Polish space $\cS$, $\bD([0,T]:\cS)$ (resp. $\bD([0,\infty):\cS)$) denote the space of $\cS$-valued right continuous functions with left limits (RCLL) from $[0,T]$ (resp. $[0,\infty)$) equipped with the usual Skorohod topology.  For $f: \cS \to \bR$, $\|f\|_{\infty} := \sup_{x\in \cS} |f(x)|$.
For a RCLL function $f:[0,\infty) \to \bR$,
we write $\Delta f(t) = f(t) - f(t-)$, $t >0$. {A sequence of processes is said to be $\bC$-tight in $\bD([0,T]:\cS)$ (resp. $\bD([0,\infty):\cS)$) if it is tight and any weak limit point takes values in $\bC([0,T]:\cS)$ (resp. $\bC([0,\infty):\cS)$) a.s.}

\subsection{Inhomogeneous random graph models}\label{subsec:irg-definitions}
We largely follow  \cite{bollobas2007phase}. There are a number of asymptotically equivalent formulations (cf.  \cite{janson2010asymptotic}), and we fix one such formulation that is particularly useful for the dynamic description it allows. 

Let $\cS$ be a Polish space and a kernel $\kappa$ on $\cS$.  Given a vertex set $\cV$ (typically assumed to be $\cV = [n]:= \set{1,2,\ldots ,n}$ for some $n\geq 2)$ and $\mvx = (x_v \in \cS:v\in \cV)$, we construct a random graph by placing an edge between vertices $u, v \in \cV$, $u\neq v$, with probability $p_{u,v}$ independently across edges where,
    \begin{equation}
        \label{eqn:puv-def}
        p_{u,v} = 1-\exp{\left(-\kappa(x_u,x_v)\right)}.
    \end{equation}
We denote this random graph as $\IRG(\kappa,\cV,\mvx)$. This paper considers the setting where $\cS$ is a finite set and the graph is  {\bf sparse} in the sense described below.

For the remainder of the paper, fix $K \geq 1$ and set $\cS = [K]:= \set{1,2,\ldots, K}$.   
The main object of study  is the following dynamic version of  inhomogeneous random graph models in which the kernels scale as $O(1/n)$.

\begin{defn}[Inhomogeneous random graph (IRG), dynamic version]
\label{def:irg} {Given  sequences of types $\mvx^n = \set{x^n_i \in \cS:i\in [n]}$, and  kernels $\set{\kappa_n}_{n\geq 1 }$ on $\cS$, we define a sequence of  random graph processes $\set{\cG_n(t,\mvx^n,\kappa_n):t\geq 0}_{n\geq 1}$ as follows:} For each $n\in \bN$, let $\set{\cP^n_{ij}: 1\leq i < j \leq n}$ be a collection of independent Poisson processes with $\cP^n_{ij} $ having rate $\kappa_n(x^n_i, x^n_j)/n$. The graph $\cG_n(t,\mvx^n,\kappa_n)$ on vertex set $[n]$ is obtained by placing edges between vertices $i,j \in [n]$, $i\neq j$, at the event times of the Poisson process $\cP^n_{ij}$ up to  time $t$. 
\end{defn}

We note the following.
\begin{enumeratea}
\item The graph $\cG_n(t,\mvx^n,\kappa_n)$ may have multiple edges between distinct vertices but it has no self-loops.

\item 

For each  $t \geq 0$, and any two vertices $i,j \in [n]$, $i\neq j$, at time $t \geq 0$ 
$$
\pr\left( \text{ at least one edge between } i \text{ and } j \text{ in } \cG_n(t,\mvx^n,\kappa_n) \right) = 1-\exp(-t\kappa_n(x_i,x_j)/n).$$ Let $\bar{\cG}_n(t,\mvx^n,\kappa_n)$ be the graph obtained from $\cG_n(t,\mvx^n,\kappa_n)$ by collapsing multiple edges into a single edge i.e., an edge exists between vertices $1 \leq i \neq  j \leq n$  in $ \bar{\cG}_n(t,\mvx^n,\kappa_n)$ if and only if there is at least one edge in $\cG_n(t,\mvx^n,\kappa_n)$. Note that  $\bar{\cG}_n(t,\mvx^n,\kappa_n)$ has the same connectivity structure as $\cG_n(t,\mvx^n,\kappa_n)$. Also, recalling \eqref{eqn:puv-def} we see that, for any  $t \geq 0$, 
$$\bar{\cG}_n(t,\mvx^n,\kappa_n) \stackrel{d}{=}\IRG\left(\frac{t\kappa_n}{n}, \cV = [n], \mvx^n\right).$$
\end{enumeratea}
 
Henceforth we will suppress the $\mvx^n$ and $\kappa_n$ in the notation and simply write $\cG_n(t)$ instead of $\cG_n(t,\mvx^n,\kappa_n)$.
An important mathematical tool in the analysis of the IRG  are multi-type branching processes \cite{athreya2004branching}*{Chapter 5}, which arise as local weak limits around typical vertices \cites{remco-rgcn2, bollobas2007phase}. We recall some basic definitions and properties below.

\begin{defn}[Multi-type Poisson Branching Process]\label{defn:MBP} Let $\mu', \mu$, and $\kappa$ be a probability measure, a measure, and  a kernel, on $\cS$, respectively. Then $\MBP_{\mu'}(\kappa,\mu)$ denotes the multi-type branching process with type space $\cS$, {\bf initial measure} $\mu'$ and {\bf driving measure} $\mu$, defined as follows:
\begin{enumeratei}
    \item We start with one individual with type in $\cS$ sampled according to $\mu'$.
    \item Any individual of type $x$ has $\Pois\left(\kappa(x,y)\mu(y)\right)$ many off-springs of type $y$ for each $y\in [K]$, independent for different $y$ and of other individuals in the population.
\end{enumeratei} 
\end{defn}
When $\mu^\prime = \delta_x$, write $\MBP_x(\kappa,\mu)$ for the multi-type branching process where the first individual is of type $x \in \cS$. When the initial distribution  is clear from the context, we simply write the branching process as $\MBP(\kappa,\mu)$. 

\begin{defn}[Irreducibility]
\label{ass:irred}
    A kernel $\kappa$ on $\cS$ is called irreducible if  $\kappa(x,y) >0$ for all $x,y\in [K]$. For a measure $\mu$ on $\cS$, the pair $(\kappa,\mu)$, is called irreducible if $\kappa$ is irreducible and   $\mu(x) >0 $ for all $x\in [K]$.
\end{defn}

A basic question of interest for such branching processes is their extinction probability (and its complement, the survival probability). These play an important role  in characterizing the  asymptotics of IRG. For describing these results we will need the following definition. Note that  $\bR^K$
can be viewed as the space of functions from $\cS$ to $\bR$ (denoted as $\bB(\cS)$). We will use this identification  throughout without additional comments.

\begin{defn}[Integral Operator]
   With $\kappa$ and $\mu$ as in Definition \ref{defn:MBP},  define $T_{\kappa,\mu}: \bR^{K} \to \bR^{K}$:
   \begin{align*}
        T_{\kappa,\mu} f(x) = \int\limits_{\cS} \kappa(x,y)f(y)d\mu(y) =\sum_{y\in [K]}\kappa(x,y)\mu(y)f(y), \;\; f \in \bB(\cS).
    \end{align*} Let $\norm{T_{\kappa,\mu}} = \sup_{\norm{f}_2 = 1} \norm{T_{\kappa,\mu} f}_2$ be the operator norm, where for $y \in \bR^K$, $\|y\|_2 = \left(\sum_{i=1}^K y_i^2 \mu(i)\right)^{1/2}$.
\end{defn}

The branching process $\MBP(\kappa,\mu)$ is said to be \emph{sub-critical} if $\norm{T_{\kappa,\mu}} < 1$ and \emph{super-critical} if   $\norm{T_{\kappa,\mu}} > 1$ \cite{athreya2004branching}. A \emph{sub-critical} irreducible branching process goes extinct almost surely {(\cite{bollobas2007phase}*{ Theorem 6.2})} . A super-critical irreducible branching process survives with positive probability. This probability can be characterized as follows. Consider for an irreducible pair $(\kappa,\mu)$, the nonlinear map
 $\Phi_{\kappa,\mu}:\bR^{K}  \to \bR^{K}$ defined as \[\Phi_{\kappa,\mu}f(x) =1-e^{-T_{\kappa,\mu}f(x)}, \qquad x\in \cS, \; f \in \bB(\cS).\] 
Let $\rho_{\kappa,\mu} \in \bR^{K}$ be the maximal non-negative fixed point of $\Phi_{\kappa,\mu}$ i.e \[\rho_{\kappa,\mu}(x) = \Phi_{\kappa,\mu}\rho_{\kappa,\mu}(x), \qquad  \mbox{ for all }  x\in \cS,\]
and $\rho_{\kappa,\mu}$ coordinate-wise dominates any other fixed point. Uniqueness of $\rho_{\kappa,\mu}$ follows from \cite{bollobas2007phase}*{ Theorem 6.2}) 
Then, $\rho_{\kappa,\mu}(x)$ is the survival probability of $\MBP_x(\kappa,\mu)$. Further, for any $x \in \cS$, $\rho_{\kappa,\mu}(x)$ is positive if and only if $\norm{T_{\kappa,\mu}} > 1$ {(\cite{bollobas2007phase}*{ Theorem 6.1, 6.2})}.
We define the {\bf dual measure} $\hat{\mu}$ associated with $\MBP(\kappa,\mu)$ as,
\begin{equation}
    \label{eqn:dual-prob}
    \hat{\mu}(x) = (1-\rho_{\kappa,\mu}(x))\mu(x), \qquad x\in \cS. 
\end{equation}
We call $\MBP(\kappa,\hat{\mu})$ to be the {\bf dual} branching process corresponding to $\MBP(\kappa,\mu)$.

\begin{lem}[{\cite{bollobas2007phase}*{Lemma 6.6 and Theorem 6.7}}]
For an irreducible pair $(\kappa,\mu)$,
 the dual branching process $\MBP_x(\kappa,\hat{\mu})$  is sub-critical for any $x\in \cS$,.
\end{lem}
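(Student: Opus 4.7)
My plan is to derive an explicit closed-form expression for $\lambda := \|T_{\kappa,\hat\mu}\|$ by combining Perron--Frobenius with the fixed-point equation for $\rho := \rho_{\kappa,\mu}$, and then to read off $\lambda < 1$ from an elementary one-variable inequality. First I would dispose of the subcritical case $\|T_{\kappa,\mu}\| \le 1$: there $\rho \equiv 0$, so $\hat\mu = \mu$ and the statement reduces to the hypothesis. In the supercritical case $\|T_{\kappa,\mu}\| > 1$, irreducibility of $(\kappa,\mu)$ forces $\rho_y > 0$ for every $y \in \cS$; moreover $\rho_y < 1$ for every $y$ because each individual has strictly positive probability $\prod_{z \in \cS}\exp(-\kappa(y,z)\mu_z) > 0$ of producing no offspring. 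Consequently $\hat\mu$ is strictly positive on $\cS$, the matrix $T_{\kappa,\hat\mu}$ is entrywise positive, and it is self-adjoint with respect to $L^2(\hat\mu)$.

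Next I would invoke Perron--Frobenius to select a strictly positive right eigenvector $\psi \in \bR^K$ associated to $\lambda$, i.e.
\[
\sum_{y \in \cS} \kappa(x,y)(1-\rho_y)\mu_y\, \psi_y \;=\; \lambda\, \psi_x, \qquad x \in \cS.
\]
The key step is then to multiply this identity by $\mu_x \rho_x$, sum over $x$, interchange the order of summation using symmetry of $\kappa$, and rewrite the inner sum $\sum_x \kappa(x,y)\mu_x \rho_x = T_{\kappa,\mu}\rho(y)$ via the fixed-point identity $\rho_y = 1 - e^{-T_{\kappa,\mu}\rho(y)}$, i.e.\ $T_{\kappa,\mu}\rho(y) = -\log(1-\rho_y)$. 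Setting $q_y := 1-\rho_y \in (0,1)$, this would collapse to the explicit formula
\[
\lambda \;=\; \frac{\sum_{y \in \cS} \mu_y\psi_y\, q_y(-\log q_y)}{\sum_{y \in \cS} \mu_y\psi_y\,(1-q_y)}.
\]

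The proof then concludes via the elementary inequality $q(-\log q) < 1-q$ for $q \in (0,1)$, which holds because $f(q) := q(-\log q) - (1-q)$ satisfies $f'(q) = -\log q > 0$ on $(0,1)$ and $f(1) = 0$. Since every weight $\mu_y \psi_y$ is strictly positive and every $q_y$ lies strictly inside $(0,1)$, the numerator above is strictly dominated term-by-term by the denominator, and hence $\lambda < 1$, as desired. I do not foresee a serious technical obstacle; the whole argument hinges on the variational identity in the middle paragraph, after which everything is one-variable calculus. If pressed to name the delicate point, I would point to the simultaneous need for strict positivity of $\rho_y$, $\psi_y$ and $\mu_y$ on all of $\cS$ (provided by irreducibility together with the reduction to the supercritical case) in order to promote the pointwise strict inequality into the strict inequality $\lambda < 1$.
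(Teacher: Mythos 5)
Your argument is correct in substance. Since the paper proves nothing here itself but simply cites Bollob\'as--Janson--Riordan (Lemma~6.6 and Theorem~6.7), the fair comparison is with BJR's proof, and your route is genuinely different. BJR's argument is engineered to work on a general separable metric type space: they approximate the kernel by finite-type kernels, establish comparison and monotonicity lemmas, and then pass to the limit to show the dual operator has norm strictly below~$1$. You instead give a closed, finite-type argument: pick the Perron--Frobenius eigenvector $\psi$ of $T_{\kappa,\hat\mu}$, pair the eigenvalue equation against the strictly positive weight $\mu_x\rho_x$, use the fixed-point identity $T_{\kappa,\mu}\rho = -\log(1-\rho)$ to land on
\[
\lambda \;=\; \frac{\sum_y \mu_y\psi_y\, q_y(-\log q_y)}{\sum_y \mu_y\psi_y\,(1-q_y)},
\qquad q_y := 1-\rho_y,
\]
and finish with $q(-\log q)<1-q$ on $(0,1)$. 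This last inequality is exactly the multi-type analogue of the classical single-type convexity step: for a Poisson$(\lambda)$ offspring with pgf $f$ and extinction probability $q$, one has $f'(q)=\lambda q = q(-\log q)/(1-q)<1$. What BJR's approach buys is generality beyond the finite-type setting; what yours buys is a short, fully explicit proof in the regime this paper actually uses, with the mechanism made transparent.

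Two small points worth tightening. First, you implicitly identify $\|T_{\kappa,\hat\mu}\|$ with the Perron eigenvalue $\lambda$ when you ``select a strictly positive right eigenvector $\psi$ associated to $\lambda$''; the identification is valid because $T_{\kappa,\hat\mu}$ is self-adjoint on $L^2(\hat\mu)$ (so its operator norm equals its spectral radius), and the entrywise-positive matrix $\kappa(x,y)\hat\mu(y)$ has spectral radius equal to its Perron eigenvalue, but a sentence saying so would make the step airtight. Second, your dispatch of the non-supercritical case by ``the statement reduces to the hypothesis'' glosses over $\|T_{\kappa,\mu}\|=1$: there $\rho\equiv 0$, $\hat\mu=\mu$, and the dual is critical, not sub-critical. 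This is not a flaw in your argument so much as a small imprecision inherited from the paper's statement --- BJR prove sub-criticality of the dual only for supercritical $\kappa$, which is the case your main computation handles, and the sub-critical case is trivial because the dual is the original process.
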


For the next Lemma, see Section 6 and the discussion preceding Lemma 14.13 in \cite{bollobas2007phase}. Let $\bT_{\cS}$ be the space of all rooted finite trees where each vertex has a type in $\cS$.
Denote by $|\MBP(\kappa,\mu)|$ the total number of individuals in $\MBP(\kappa,\mu)$.

\begin{lem}\label{lem:conditional-dual}
     For every $x\in \cS$, conditional on the extinction of $\MBP_x(\kappa,\mu)$, the multi-type branching process $\MBP_x(\kappa,\mu)$ has the same law as $\MBP_x(\kappa,\hat{\mu})$. Namely, for any $\bt \in \bT_{\cS}$, 
$$\pr(\MBP_x(\kappa,\mu) = \bt \mid |\MBP_x(\kappa,\mu)| < \infty) = \pr(\MBP_x(\kappa,\hat{\mu}) = \bt).$$
\end{lem}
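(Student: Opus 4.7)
The plan is to verify the identity by a direct computation of the Radon--Nikodym derivative on the countable set $\bT_{\cS}$ of finite rooted typed trees. Given the independent-Poisson offspring structure of Definition \ref{defn:MBP}, for any $\bt \in \bT_{\cS}$ rooted at a type-$x$ vertex, with $\tau(v)$ denoting the type of $v \in \bt$ and $c_y(v)$ the number of its type-$y$ children, the law of $\MBP_x(\kappa,\mu)$ assigns mass
\begin{equation*}
\pr(\MBP_x(\kappa,\mu) = \bt) = \prod_{v \in \bt} \prod_{y \in \cS} e^{-\kappa(\tau(v),y)\mu(y)} \frac{(\kappa(\tau(v),y)\mu(y))^{c_y(v)}}{c_y(v)!},
\end{equation*}
with the analogous formula holding for $\MBP_x(\kappa,\hat\mu)$ after replacing $\mu$ by $\hat\mu$.

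The main step is to compute the likelihood ratio. Using $\hat\mu(y)/\mu(y) = 1-\rho(y)$ with $\rho := \rho_{\kappa,\mu}$, the ratio collapses to
\begin{equation*}
\frac{\pr(\MBP_x(\kappa,\hat\mu) = \bt)}{\pr(\MBP_x(\kappa,\mu) = \bt)} = \prod_{v \in \bt}\exp\Bigl(\sum_{y\in\cS}\kappa(\tau(v),y)\mu(y)\rho(y)\Bigr)\prod_{v\in\bt}\prod_{y\in\cS}(1-\rho(y))^{c_y(v)}.
\end{equation*}
I would then invoke the fixed-point equation $\rho = \Phi_{\kappa,\mu}\rho$, rewritten as $\exp(T_{\kappa,\mu}\rho(u)) = (1-\rho(u))^{-1}$, to convert the first product into $\prod_{v \in \bt}(1-\rho(\tau(v)))^{-1}$. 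Combined with the fact that each non-root vertex $w$ contributes exactly one factor $(1-\rho(\tau(w)))$ to the second product (as a child of its parent), everything except the root factor $(1-\rho(x))^{-1}$ telescopes, leaving ratio $1/(1-\rho(x))$.

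Finally, since $\rho_{\kappa,\mu}(x)$ is by definition the extinction probability of $\MBP_x(\kappa,\mu)$, and since $\MBP_x(\kappa,\hat\mu)$ is a.s.\ finite by the preceding lemma so that its law is supported on $\bT_{\cS}$, the identity above yields precisely the claim. The only real obstacle is the combinatorial bookkeeping of the telescoping across vertex-parent pairs; the fixed-point equation is exactly what makes this work. As a conceptual alternative, one could argue recursively via Poisson thinning: the $\Pois(\kappa(x,y)\mu(y))$ offspring of type $y$ at the root splits independently into $\Pois(\kappa(x,y)\mu(y)\rho(y))$ individuals with surviving subtrees and $\Pois(\kappa(x,y)\hat\mu(y))$ with extinct subtrees, and conditioning on the former being zero isolates the dual rates at the root; induction on tree height using the branching property then propagates the identity.
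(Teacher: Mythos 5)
Your proof is correct and complete. The paper does not prove this lemma — it simply cites Section 6 and the discussion preceding Lemma 14.13 of Bollob\'as--Janson--Riordan (2007) — but your likelihood-ratio argument is exactly the classical proof of this duality principle: the Poisson product form of $\pr(\MBP_x = \bt)$, the substitution $\hat\mu(y)/\mu(y) = 1-\rho(y)$, the use of the fixed-point identity $e^{T_{\kappa,\mu}\rho(u)} = (1-\rho(u))^{-1}$ to clear the exponentials, and the telescoping between a vertex's own factor and its contribution as a child of its parent, leaving precisely the root factor $(1-\rho(x))^{-1} = \pr(|\MBP_x(\kappa,\mu)| < \infty)^{-1}$. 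The bookkeeping is sound, and the observation that $\MBP_x(\kappa,\hat\mu)$ is supported on finite trees (being subcritical by the preceding lemma) correctly ensures both sides define probability measures on $\bT_{\cS}$. The Poisson-thinning alternative you sketch is also valid and gives a cleaner conceptual explanation of why the Poisson structure with rates $\kappa(x,y)\hat\mu(y)$ emerges upon conditioning on no surviving subtree at the root, followed by induction via the branching property.
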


Now we return to the IRG, $\set{\cG_n(t,\mvx,\kappa_n):t\geq 0}_{n\geq 1}$, from Definition \ref{def:irg}. 
Let $L_n(t) = L_n(t,\kappa_n,\mvx)$ be the size of the largest component in $\cG_n(t)$. Branching processes of the form discussed above provide  the following characterization of the phase transition for inhomogeneous random graphs. Such a characterization is in fact known in a much more general setting.
\begin{thm}[\cite{bollobas2007phase}]
\label{thm:irg-boll}
Consider the $\IRG$ $\cG_n(t,\mvx^n,\kappa_n)$ in Definition \ref{def:irg}. Assume the empirical measure of types $\mu_n(\cdot) = \frac{1}{n}\sum_{i=1}^n\delta_{x_i^{n}}$ and kernels $\kappa_n$ satisfy $\mu_n \to \mu$ and $\kappa_n \to \kappa$ pointwise on $S$ and $S\times S$, respectively, where $\MBP(\kappa, \mu)$ is irreducible.    Define $t_c=t_c(\kappa,\mu) ={\norm{T_{\kappa,\mu}}}^{-1}$. Then:
\begin{enumeratea}
    \item The parameter $t_c$ is the critical time for the sequence $\{\cG_n(t)\}$ in the sense that for each $t< t_c$ we have $L_n(t)/n\convp 0$ while for $t> t_c$, $L_n(t)/n \convp l(t)$ where $l(t) > 0$ is the survival probability of the multi-type branching process $\MBP_{\mu}(t\kappa,\mu)$.
    \item For any  $t > t_c$, the second largest component in $\cG_n(t)$  is $O_{\pr}(\log n)$.
    \item For any $t< t_c$, $L_n(t) = O_{\pr}(\log{n})$. 
\end{enumeratea}

\end{thm}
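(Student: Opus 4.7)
The plan is to follow the standard branching-process/exploration approach of Bollob\'as--Janson--Riordan. Since $\cG_n(t)$ is obtained from independent edge-indicator Poisson processes, at any fixed $t$ the graph $\bar{\cG}_n(t)$ has the law of $\IRG(t\kappa_n/n,[n],\mvx^n)$. So it suffices to establish, for each fixed $t$, the claims for an inhomogeneous random graph with edge kernel $t\kappa_n$. The core idea is to compare the component-exploration process from a uniformly chosen vertex of type $x$ with the multi-type Poisson branching process $\MBP_x(t\kappa,\mu)$, whose offspring operator is exactly $T_{t\kappa,\mu}=tT_{\kappa,\mu}$; thus the branching process is subcritical iff $t<t_c$ and supercritical iff $t>t_c$.

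First I would carry out a BFS/neighborhood exploration coupling: reveal the component of a tagged vertex $v$ one generation at a time, and couple the number of new type-$y$ children of a type-$x$ active vertex with independent $\mathrm{Bin}(n\mu_n(y) - O(\mathrm{explored}), 1-e^{-t\kappa_n(x,y)/n})$ variables. Using $\mu_n\to\mu$ and $\kappa_n\to\kappa$, these Binomials converge to $\mathrm{Pois}(t\kappa(x,y)\mu(y))$, so the exploration on any finite horizon converges in distribution to $\MBP_x(t\kappa,\mu)$. This gives, for every $k$, $\pr(|\cC(v)|\ge k) \to \pr(|\MBP_x(t\kappa,\mu)|\ge k)$. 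Letting $k\to\infty$ yields, in the limit $n\to\infty$,
\[
\frac{1}{n}\sum_{v\in[n]} \ind\{|\cC(v)|\ge k_n\}\;\convp\;\sum_{x\in\cS}\rho_{t\kappa,\mu}(x)\mu(x)=:l(t),
\]
for any $k_n\to\infty$ sufficiently slowly, with $\rho_{t\kappa,\mu}\equiv 0$ when $t<t_c$.

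To promote this to statements about the largest component I would proceed by regime. For $t<t_c$, irreducibility and $\|T_{t\kappa,\mu}\|<1$ force the exploration to be stochastically dominated by a subcritical multi-type Galton--Watson process whose total-progeny distribution has exponential tails; standard large-deviation estimates for such branching processes plus a union bound over $n$ vertices yield $L_n(t)=O_{\pr}(\log n)$, which simultaneously implies $L_n(t)/n\convp 0$ and (c). For $t>t_c$, the convergence of the "big-component mass" displayed above shows that roughly $nl(t)$ vertices lie in components of size at least $k_n$. The key step is then a \emph{sprinkling / two-round} argument: write $t=t_1+t_2$ with $t_1>t_c$ and $t_2>0$ small, observe that $\cG_n(t)$ stochastically dominates the union of an independent copy of $\cG_n(t_1)$ and an independent sparse graph corresponding to the extra edge-density $t_2\kappa_n/n$, and show that the sparse sprinkled edges whp glue all $\Theta(\log n)$-or-larger components into a single giant, because each large component contains $\Omega(\log n)$ vertices of each type and the number of sprinkled edges between any two such components is Poisson with mean $\Omega((\log n)^2/n)\cdot\#(\text{other large components})$. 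This simultaneously gives $L_n(t)/n\convp l(t)$ and the $O_{\pr}(\log n)$ bound on the second component in (b), the latter by showing (via the dual branching process of Lemma~\ref{lem:conditional-dual} and its exponential tail, together with the sprinkling above) that every component not absorbed into the giant has size $O(\log n)$.

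The main obstacle is the supercritical analysis: one needs both the upper bound $L_n(t)\le (l(t)+o(1))n$ (which follows easily from the local limit once one argues that "being in a large component" is well-approximated by "the exploration survives to depth $k_n$") and the matching lower bound together with uniqueness of the giant. Establishing uniqueness and the $O_{\pr}(\log n)$ bound on the second component is where one must invoke the sprinkling construction carefully, ensuring the extra edge budget $t_2$ is small enough to preserve the large-component mass estimates from round one while still being large enough that any two giant-candidates are connected with probability $1-o(1)$. The irreducibility of $(\kappa,\mu)$ is essential here to guarantee that every large component contains a positive density of every type, which is what makes the sprinkling bipartite-edge count between any two such components diverge.
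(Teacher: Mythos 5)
The paper does not actually prove Theorem~\ref{thm:irg-boll}; immediately after the statement it notes that the three parts are Theorems 3.1, 12.6, and 3.12(i) of \cite{bollobas2007phase}, so this is a cited result, not one established in this manuscript. Your sketch is a faithful reconstruction of the Bollob\'as--Janson--Riordan strategy: identify the fixed-time graph with $\IRG(t\kappa_n/n,[n],\mvx^n)$, couple the breadth-first exploration with $\MBP_x(t\kappa,\mu)$, use exponential tails of the subcritical (or dual) total progeny plus a union bound to get the $O(\log n)$ bounds, and use a two-round/sprinkling argument for uniqueness of the giant in the supercritical regime.

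One caveat worth flagging in your sprinkling step: you argue that large components of size $\Theta(\log n)$ get glued because the pairwise sprinkled-edge count is Poisson with mean $\Theta((\log n)^2/n)$ and there are $\Theta(n/\log n)$ such components. At the $\log n$ scale this is exactly marginal: an auxiliary Erd\H{o}s--R\'enyi graph on $m\approx n/\log n$ ``super-vertices'' with edge probability $\approx (\log n)^2 t_2/n$ has $p_m\approx \log m / m$, so connectivity is borderline and cannot be concluded by a crude union/first-moment bound. The BJR argument is more careful here: it first shows that a $(l(t)-\epsilon)$-fraction of vertices lie in components of size $\ge k$ for a \emph{fixed} large $k$ (using the local weak limit), then sprinkles to merge the resulting $O(n/k)$ pieces, each of size $\ge k$ and collectively of size $\Theta(n)$, and separately controls the ``middle range'' of component sizes; the $O_{\pr}(\log n)$ bound on the second-largest component then comes out of the subcritical dual branching process analysis rather than directly from the sprinkling calculation you wrote. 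You correctly identified irreducibility as essential for the positive density of each type inside large pieces, but the quantitative threshold at which sprinkling actually closes is a real gap in the sketch as written.
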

The three parts in the above theorem are  Theorems 3.1, 12.6, and 3.12(i) in \cite{bollobas2007phase}, respectively.
The regime $t < t_c $ is called the \emph{sub-critical} regime  while the regime $ t > t_c $ is called \emph{super-critical}, as the largest component grows to an order of $n$, the same order as the size of the network. The time instant $t = t_c$ is called the \emph{critical point}.

\section{Results}
\label{sec:results}
In this section we present our main results.
 In Section \ref{sec:res-micro} we describe functional law of large numbers  and central limit results for finite dimensional projections of the vector of the density of components of various types. Section \ref{sec:res-micro-inf} strengthens the results of the prior section by establishing the weak convergence of the above vector in a suitable infinite dimensional Banach space. This result is then used in Section \ref{sec:res-macro}
to establish functional CLTs for several macroscopic functionals in the supercritical regime.  We conclude in Section \ref{sec:def-graphon-mst} with a result on fluctuations for the weight of the minimal spanning tree on certain dense random graph sequence.

\subsection{Asymptotics for component type vectors.}
\label{sec:res-micro}
We will consider the sequence of IRG, $\cG_n(t,\mvx,\kappa_n)$ introduced in Definition \ref{def:irg} and study its asymptotic behavior as $n\to \infty$. For the law of large number results we will only need pointwise convergence of $(\mu_n, \kappa_n)$ as in Theorem \ref{thm:irg-boll}, however for studying CLT results, we will need the last two parts of the following assumption on the scaling of fluctuations of the limit empirical density of types and of the kernel about their limits. Note  that a real function on $\cS\times \cS$ can be identified with a $K\times K$ real matrix. This identification will be used throughout without further comment.

\begin{ass}[Second order fluctuations of types and kernel]
\label{ass:irg}
    Let $\mu$ be a measure on $\cS$ and $\kappa$ a kernel on $\cS$. Suppose that $(\mu,\kappa)$ is an irreducible pair. Let $\{\kappa_n\}_{n\geq 1}$ be as in Definition \ref{def:irg} and assume that  $\mvx^n$ introduced there defines a collection of $\cS$ valued random variables. Conditional on $\mvx^n$, let $\set{\cP^n_{ij}: 1\leq i < j \leq n}$ be a collection of independent Poisson processes with $\cP^n_{ij} $ having rate $\kappa_n(x^n_i, x^n_j)/n$. Let $\mu_n = \frac{1}{n}\sum_{i=1}^n\delta_{x^n_i}$ be the (random) probability measure defined as in Theorem \ref{thm:irg-boll}.
    
   Suppose that as $n \to \infty$, 
    \begin{enumeratea}
        \item Type distribution $\mu_n \to \mu$, pointwise, in probability, and kernel $\kappa_n \to \kappa$, pointwise.
        \item {\bf Initial type density fluctuations:} 

        There exists a probability measure $\nu$ on $\bR^K$ such that, as $n\to \infty$, the distribution of $\sqrt{n}\left(\mu_n-\mu\right)$ converges  to $\nu$. Furthermore, $\int_{\bR^K} \|\vz\|^2\, \nu(d\vz)<\infty$ and  $\int \vz\, \nu(d\vz)=\mvzero$.  
        \item {\bf Convergence of Kernels:} There exists  $K\times K$-symmetric matrix  $\Lambda$ such that $\sqrt{n}(\kappa_n - \kappa) \to \Lambda$, pointwise. 
        
    \end{enumeratea}
\end{ass}
 The IRG constructed using a random sequence $\mvx^n$ and conditionally Poisson processes $\cP^n_{ij} $ as in the above definition will once more be denoted as $\cG_n(t,\mvx^n,\kappa_n)$ or $\cG_n(t)$ for brevity. This is the object that we will be interested in for the rest of this work.
 
Next, we introduce some basic definitions associated with an IRG.

Let $\bN_0 = \set{0,1,2,\ldots}$ and $\mv0$ be the $K$-dimensional zero vector.
\begin{defn}[$\IRG$ Component type space]
    \label{defn:irg-comp-type}
    Define the type space and norm:
\[ \fT  = \set{\mvl = (l_1,l_2, \ldots l_K): l_i \in \bN_0} = \bN_0^K, \qquad \norm{\mvl} = \sum_{i=1}^K l_i.  \] 
 For any time $t\geq 0$, $\mvl \in \fT\setminus\{\mv0\}$ and a connected component $\cC \subset \cG_n(t)$, we say that the type of the component $\cC$ is $\mvl$, written succinctly as $\fF(\cC) = \mvl$ if $\cC \text{ contains } l_j \text{ many vertices of type } j \text{ for } j \in [K]$.  Fix $N\geq 1$ and define the $N$-dimensional finite slice of $\fT$ by,
 $$\fT_N  = \set{\mvl\in \fT: \norm{\mvl}\leq N}.$$ 
\end{defn}
 For $\mvl, \mvk \in \fT$, we  define a total ordering on $\fT$ as,  $\mvl < \mvk$ if \begin{inparaenumn}
    \item  $ \norm{\mvl} < \norm{\mvk}$ or;
    \item when $ \norm{\mvl} =\norm{\mvk}$ then  $l_\tau < k_\tau$ where $\tau = \inf\set{i: l_i \neq k_i}$. 
\end{inparaenumn}  
We say $\mvl\leq \mvk$ if $\mvl = \mvk$ or $\mvl < \mvk$. 

For a function $\vf: \fT \to \bR$, write $\vf^{\fT_{N}}:\fT_{N} \to \bR $ for the restriction of $\vf$ to $\fT_{N}$. Note that with the identification $(\vf^{\fT_{N}}(\mvl): \mvl \in \fT_{N} ) \in \bR^{|\fT_{N}|}$, such a restriction can be viewed as an element of $\bR^{|\fT_{N}|}$.

Now for a vertex  $v\in [n]$ and $t\geq 0$,  denote the connected component containing vertex $v$ in $\cG_n(t)$ by $\cC_n(v,t)$.  Define a $\bR_+^{\fT}$ valued stochastic process $\{\mvpi_n(t), t \ge 0\}$, where denoting $\mvpi_n(t)(\mvl) = \pi_n(\mvl, t)$, for $\mvl \in \fT$,
\begin{align}
    \label{eqn:pin-irg-def}
    \pi_n(\mv0,t) &= \frac{\text{number of edges in } \cG_n(t)}{n}, \\
 \pi_n(\mvl,t) &=  \frac{1}{\norm{\mvl}} \left[\frac{1}{n}\sum_{i=1}^n \ind \set{\fF(\cC_n(i,t)) = \mvl}\right], \qquad \mvl \neq \mv0 \in \fT, \quad t\geq 0.
\end{align} 
Thus, for fixed  $\mvl \in \fT\setminus\{\mv0\}$, $\pi_n(\mvl, t)$ denotes the density  of connected components of type $\mvl$. 
 
Note that, for $t\ge 0$, $\mvpi_n(t)$ takes values in $\cL$
\begin{align}
    \cL = \set{\mvx = (x_{\mvl} \in \bR_+:\mvl\in \fT):  \sum\limits_{\mvl\in \fT}\norm{\mvl} x_{\mvl} \leq 1},
    \label{eqn:cl-IRG}
\end{align}
Let $(\ve_k: k\in [K])$ denote the standard basis vectors in $\bR^K$. 
When $t=0$, we have \begin{align}\label{eqn:intial-point}
    \pi_n({\mvl},0) = \sum_{k \in [K]} \mu_n(k) \ind\set{\mvl = \ve_k}
       \ \ \ \  \text{and we let} \ \ \ \  \pi({\mvl}, 0) := \sum_{k \in [K]}\mu(k) \ind\set{\mvl = \ve_k}, 
\end{align} 
where the second identity above defines limit densities of connected components of various types at time $0$. 
Specifically, under Assumption \ref{ass:irg}(a), $\pi_n(\mvl, 0) \probc \pi({\mvl}, 0)$ for all $\mvl \in \fT$.

 Theorem \ref{thm:odes-lln} below gives a functional law of large numbers for the finite dimensional projections of $\mvpi_n$ in terms of the  limit  deterministic function $\mvpi(\cdot):= \{\pi({\mvl}, \cdot): \mvl \in  \fT\}$. This limit  function can be uniquely characterized through the solution of an infinite system of differential equations.
 In order to describe this system we now introduce some additional notation.
 
 Let $\cM_K$ denote the space of $K \times K$ symmetric matrices with non-negative entries.
 Note that this space can be identified with the space of all kernels on the space $\cS$. We will occasionally suppress $K$ from the notation. For a kernel $\tilde \kappa \in \cM_K$, define the quadratic form: $\theta_{\tilde \kappa}: \bR^K\times \bR^K \to \bR$ as
\begin{equation}
\label{eqn:thetakapp-def}
    (\mvu, \mvv) \mapsto \theta_{\tilde\kappa}(\mvu,\mvv) = \sum_{i,j=1}^Ku_iv_j\tilde\kappa(i,j), \qquad \mvu,\mvv \in \bR^K.
\end{equation} 

For each $\mvl \in \fT$, define  $F_{\mvl}:\cL\times \cM \times \bR_+^K \to \bR$ as: For $(\mvx, \tilde\kappa, \tilde\mu) \in \cL\times \cM \times \bR_+^K$,
\begin{equation}
\begin{aligned}\label{eqn:F_l-defn}
F_{\mvl}(\mvx,\tilde{\kappa},\tilde\mu) &= \frac{1}{2} \theta_{\tilde\kappa}(\tilde\mu,\tilde\mu), \qquad \mvl = \mvzero,  \\
    F_{\mvl}(\mvx,\tilde\kappa,\tilde\mu) &= \frac{1}{2} \sum\limits_{\mvk_1 + \mvk_2 = \mvl}x_{\mvk_1} x_{\mvk_2} \theta_{\tilde\kappa}(\mvk_1,\mvk_2) - x_{\mvl}\theta_{\tilde{\kappa}}(\mvl,\tilde\mu), \qquad \mvl  \in \fT\setminus \{\mvzero\}.
\end{aligned}
\end{equation}
The next proposition introduces an infinite system of ordinary differential equations that will characterize the law of large numbers limit $\mvpi(\cdot)$ described above.

\begin{prop}\label{prop:ode-solutions}
Let $(\kappa, \mu)$ be an irreducible pair.
   The  system of differential equations
    \begin{align}
    \label{eqn:ode-irg}
    \frac{d x({\mvl},t)}{dt} = F_{\mvl}(\mvx(t),\kappa,\mu), \qquad x(\mvl, 0) = \pi(\mvl, 0), \qquad \mvl \in \fT,
\end{align}
with $\pi({\mvl}, 0)$ as in \eqref{eqn:intial-point}, has a unique solution on $[0, \infty)$.
In particular, $x(\mv0,t) = t\theta_{\kappa}(\mu,\mu)/2$ for all $t\geq 0$.
\end{prop}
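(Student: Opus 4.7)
The plan is to exploit a triangular structure hidden in the system \eqref{eqn:ode-irg}. First observe that the equation for $\mvl = \mv0$ completely decouples: since $F_{\mv0}(\mvx,\kappa,\mu) = \frac{1}{2}\theta_\kappa(\mu,\mu)$ does not depend on $\mvx$, and $\pi(\mv0,0)=0$ from \eqref{eqn:intial-point}, direct integration yields $x(\mv0,t) = \frac{t}{2}\theta_\kappa(\mu,\mu)$, establishing the explicit formula claimed in the proposition. For $\mvl \neq \mv0$, the key structural observation is that $\theta_\kappa(\mv0,\cdot) \equiv 0$, so every term in the quadratic sum $\sum_{\mvk_1+\mvk_2=\mvl} x_{\mvk_1}x_{\mvk_2}\theta_\kappa(\mvk_1,\mvk_2)$ with $\mvk_i = \mv0$ vanishes. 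The surviving terms involve pairs $\mvk_1,\mvk_2 \in \fT\setminus\{\mv0\}$ with $\mvk_1+\mvk_2 = \mvl$, forcing $\|\mvk_i\| \le \|\mvl\|-1$. Hence the right-hand side of the ODE for $x_\mvl$ depends only on $\{x_\mvk : 1 \le \|\mvk\| < \|\mvl\|\}$ and linearly on $x_\mvl$ itself through the dissipative term $-x_\mvl\theta_\kappa(\mvl,\mu)$.

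With this in hand I would induct on $n := \|\mvl\|$. In the base case $n=1$, so $\mvl = \ve_k$ for some $k \in [K]$, the quadratic sum is empty and the equation reduces to the scalar linear ODE $\dot x_{\ve_k} = -\theta_\kappa(\ve_k,\mu)\, x_{\ve_k}$ with initial condition $x_{\ve_k}(0) = \mu(k)$, whose unique solution on $[0,\infty)$ is $x_{\ve_k}(t) = \mu(k)\exp(-t\,\theta_\kappa(\ve_k,\mu))$. For the inductive step, assume unique nonnegative continuous solutions have been constructed for every $\mvk$ with $1 \le \|\mvk\| < n$; then for each $\mvl$ with $\|\mvl\|=n$ the equation becomes
\[
\dot x_\mvl(t) + \theta_\kappa(\mvl,\mu)\, x_\mvl(t) \;=\; G_\mvl(t), \qquad G_\mvl(t) := \tfrac{1}{2}\!\!\sum_{\substack{\mvk_1+\mvk_2=\mvl \\ \mvk_1,\mvk_2 \neq \mv0}} x_{\mvk_1}(t)\,x_{\mvk_2}(t)\,\theta_\kappa(\mvk_1,\mvk_2),
\]
where $G_\mvl$ is a finite sum of products of known continuous nonnegative functions, hence continuous and nonnegative on $[0,\infty)$. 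Variation of parameters yields the unique solution
\[
x_\mvl(t) = x_\mvl(0)\,e^{-t\theta_\kappa(\mvl,\mu)} + \int_0^t e^{-(t-s)\theta_\kappa(\mvl,\mu)}\,G_\mvl(s)\,ds,
\]
with $x_\mvl(0) = \ind\{\|\mvl\|=1\}\mu(k)$ from \eqref{eqn:intial-point} (so $x_\mvl(0)=0$ for $n\ge 2$). This is globally defined on $[0,\infty)$, continuous, and nonnegative, closing the induction.

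The proposition is then immediate: existence is constructive via the inductive recipe, and uniqueness at level $n$ reduces, via the triangular dependence, to uniqueness at levels $<n$ together with uniqueness for scalar linear first-order ODEs. The only genuine conceptual point to be careful about is that the system is infinite dimensional, which rules out a naive appeal to Picard--Lindel\"of on a Banach space; the triangular structure, guaranteed by the combinatorial identity $\mvk_1+\mvk_2=\mvl$ together with the vanishing $\theta_\kappa(\mv0,\cdot) \equiv 0$, is what circumvents this difficulty and reduces the problem to a countable cascade of scalar linear equations. As a byproduct of the recipe, $x(\mvl,t) \ge 0$ for all $\mvl,t$, and one can further verify (using the probabilistic interpretation and a Fatou-type argument, or by differentiating $\sum_{\mvl\ne\mv0}\|\mvl\|x(\mvl,t)$ term by term) that $\mvx(t) \in \cL$, so the solution actually takes values in the natural state space, although this is not needed for the statement of Proposition \ref{prop:ode-solutions}.
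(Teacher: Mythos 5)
Your proof is correct and follows essentially the same route the paper takes: exploit the triangular structure of the system (the quadratic part of $F_{\mvl}$ depends only on $x_{\mvk}$ with $\|\mvk\|<\|\mvl\|$ once one notes $\theta_\kappa(\mv0,\cdot)\equiv 0$, and the remaining dependence on $x_{\mvl}$ is linear), reducing to a cascade of scalar linear first-order ODEs solved by variation of parameters. The paper states this observation and omits the details; you have simply written them out, including the base case at $\|\mvl\|=1$ and the decoupled explicit formula for $x(\mv0,t)$.
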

The proof of the proposition is immediate on observing that the functions $\{F_{\mvl}, \mvl \in \fT\}$ is a lower triangular system (namely, for  $\mvx \in \cL$, $F_{\mvl}(\mvx)$ depends only on  $x_{\mvk}$, $\mvk \le \mvl$) and that, for $\mvx \in \cL$, $F_{\mvl}(\mvx)$ depends linearly on $x_{\mvl}$. We omit the details.

\begin{rem}
\label{rem:rem1}
Such systems of differential equations, and their generalizations to a continuous state space(``inhomogeneous coagulation processes''),  have been previously used to study  large deviations for sparse $\IRG$ models in \cite{andreis2023large}*{Section 2.4}, \cite{kovchegov2023multidimensional}. See Section \ref{sec:disc} for further discussion.
\end{rem}

Let $\mvpi(t) = (\pi({\mvl}, t):\mvl\in \fT)$ denote the unique solution to the differential equations in \eqref{eqn:ode-irg} when $(\kappa, \mu)$ is as in Assumption \ref{ass:irg}(a). Note that, for each $\mvl\in \fT$, $t\mapsto \pi({\mvl}, t)$
is a continuous function on $\bR_+$. Also for each $t\ge 0$, $\mvl \mapsto \pi_n(\mvl, t)$ is a sequence of functions on $\fT$.
Recall  that for fixed $N \in \bN$ and $t\ge 0$, $\mvpi_n^{\fT_N}(\cdot, t)$ denotes the restriction of the latter collection of functions to index set $\fT_N$ (the notation $\mvpi^{\fT_N}(\cdot,t)$ is interpreted in a similar way). The next result describes the connection between the component densities $\mvpi_n$ and the unique solution $\mvpi$ obtained from Proposition \ref{prop:ode-solutions}.
The proof can be found in Section \ref{sec:pfllnclt}.

\begin{thm}[Laws of Large numbers]\label{thm:odes-lln}
Suppose that Assumption \ref{ass:irg}(a) holds and let $\cG_n(t)$ be as introduced below Assumption \ref{ass:irg}.
    For each fixed $N \geq 1$, we have $\mvpi_n^{\fT_N}(\cdot)\convp \mvpi^{\fT_N}(\cdot)$ in $\bD([0,\infty): \bR^{|\fT_N|})$.
\end{thm}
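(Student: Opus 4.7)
The plan is to prove this as a density-dependent functional law of large numbers for a pure-jump Markov process, via the standard semimartingale--compensator route combined with the lower-triangular structure of the drift on $\fT_N$. The key structural input is that, restricted to $\fT_N$, the system $\{F_{\mvl}\}_{\mvl \in \fT_N}$ is lower triangular in the total order on $\fT$: $F_{\mvl}(\mvx,\kappa,\mu)$ depends only on $x_{\mvk}$ for $\mvk \le \mvl$, and is affine in $x_{\mvl}$ itself. This will let me induct on $\mvl$ and close each inductive step with Gronwall's inequality.

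\textbf{Semimartingale decomposition.} For $\mvl = \mvzero$, $\pi_n(\mvzero,\cdot)$ is the edge density, whose intensity is $\tfrac12\theta_{\kappa_n}(\mu_n,\mu_n) + O(1/n)$. For $\mvl \ne \mvzero$, a component of type $\mvl$ is created by an edge joining two components of types $\mvk_1,\mvk_2$ with $\mvk_1 + \mvk_2 = \mvl$ (each unordered pair contributes rate $n \pi_n(\mvk_1)\pi_n(\mvk_2)\theta_{\kappa_n}(\mvk_1,\mvk_2)$ after symmetrization), and destroyed by an edge joining it to any other component. The closure identity
$$ \sum_{\mvk \in \fT\setminus\{\mvzero\}} k_i\, \pi_n(\mvk,t) = \mu_n(i),\quad i \in [K], $$
which holds for every finite $n$ and every $t$ since each vertex lies in exactly one (finite) component, collapses the destruction rate to $n\pi_n(\mvl)\theta_{\kappa_n}(\mvl,\mu_n)$. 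Combining creation and destruction contributions yields the Doob--Meyer decomposition
$$ \pi_n(\mvl, t) = \pi_n(\mvl, 0) + \int_0^t F_{\mvl}(\mvpi_n(s), \kappa_n, \mu_n)\,ds + M_n^{\mvl}(t) + R_n^{\mvl}(t), $$
where $M_n^{\mvl}$ is a local martingale and $\sup_{t \le T} |R_n^{\mvl}(t)| = O_{\pr}(T/n)$ collects the $O(1/n)$ corrections from self-loops and from comparing $\binom{n\pi_n(\mvk)}{2}$ with $\tfrac12 (n\pi_n(\mvk))^2$.

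\textbf{Martingale control and induction.} Each jump of $\pi_n(\mvl,\cdot)$ has magnitude $O(1/n)$ while the total jump rate on $[0,T]$ is $O_{\pr}(n)$, so the predictable quadratic variation satisfies $\langle M_n^{\mvl}\rangle_T = O_{\pr}(1/n)$ and Doob's maximal inequality gives $\sup_{t\le T}|M_n^{\mvl}(t)| \convp 0$. I now induct on $\mvl$ in the total order. The base cases $\mvl = \mvzero$ and $\mvl = \ve_k$ follow immediately from the decomposition together with Assumption~\ref{ass:irg}(a), since $F_{\mvzero}$ is constant in $\mvx$ and $F_{\ve_k}(\mvx,\kappa,\mu) = -x_{\ve_k}\theta_{\kappa}(\ve_k,\mu)$. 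For the inductive step, writing $e_n(\mvl, t) = \pi_n(\mvl, t) - \pi(\mvl, t)$ and using triangularity together with affinity in $x_{\mvl}$, the decomposition yields
$$ |e_n(\mvl, t)| \le \eta_n(T) + C_{\mvl} \int_0^t |e_n(\mvl, s)|\,ds, \quad t\le T, $$
with $\eta_n(T) \convp 0$ by the inductive hypothesis, by Assumption~\ref{ass:irg}(a), and by the martingale estimate, and $C_{\mvl} = \sup_n \theta_{\kappa_n}(\mvl,\mu_n) < \infty$. Gronwall closes the induction, yielding uniform-on-compacts convergence in probability. Since $\mvpi^{\fT_N}(\cdot)$ is continuous and $\fT_N$ is finite, this upgrades to convergence in probability in $\bD([0,\infty):\bR^{|\fT_N|})$.

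\textbf{Main obstacle.} The technically delicate part is the combinatorial bookkeeping in the compensator, and in particular leveraging the closure identity $\sum_{\mvk\ne\mvzero} k_i\pi_n(\mvk,t) = \mu_n(i)$ that reduces the naively infinite coupled system to a lower-triangular one on $\fT_N$. This identity relies only on every vertex lying in some finite component; in particular, no supercritical subtleties from a possible macroscopic component enter this proof, since we are only tracking densities of components of bounded size.
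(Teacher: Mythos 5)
Your proof is correct. The paper itself proves this (in Section~\ref{sec:pfllnclt}) by writing $\mvpi_n^{\fT_N}$ via a random-time-changed Poisson process representation as in \eqref{eqn:irg-poisson-eqn}, verifying the error bounds \eqref{eqn:epsilon_l-bound}--\eqref{eqn:epsilon-0-bound}, and then invoking the Ethier--Kurtz density-dependent LLN (Theorem~\ref{thm:fdd-fclt-kurtz}) as a black box. Your argument unpacks exactly what that black box does: Doob--Meyer decomposition, a quadratic-variation bound showing $\sup_{t\le T}|M_n^{\mvl}(t)| \convp 0$, and Gronwall to close the drift error. The two are therefore the same mathematical content at different levels of abstraction. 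One genuine difference is your induction on $\mvl$ in the total order: the paper doesn't need this for the LLN, because the rate functions are globally Lipschitz on the bounded set $\cL$, so a single vector Gronwall on $\bR^{|\fT_N|}$ suffices; lower-triangularity is used in the paper only to establish uniqueness of the ODE system in Proposition~\ref{prop:ode-solutions}. Your induction is harmless and makes the role of the triangular structure explicit, but it is not required. Your identification and use of the closure identity $\sum_{\mvk\ne\mvzero} k_i\,\pi_n(\mvk,t) = \mu_n(i)$ is correct and is precisely what collapses the destruction rate into the term $-x_{\mvl}\theta_\kappa(\mvl,\mu)$ in $F_{\mvl}$; the paper achieves the same simplification implicitly through the rate $n\varphi_{\mvl}(\mvpi_n^{\fT_N},\kappa_n,\mu_n)$ for mergers with components outside $\fT_N$ in its dynamics bookkeeping in Section~\ref{subsec:three-events}.

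One small issue of presentation rather than substance: to invoke Gronwall as you state it, the kernel/measure error and the inductive errors should first be moved into the $\eta_n(T)$ term before isolating the linear $-e_n(\mvl,s)\theta_\kappa(\mvl,\mu)$ contribution; you clearly intend this, but the constant $C_{\mvl}$ should then be $\theta_\kappa(\mvl,\mu)$ (or $\sup_n \theta_{\kappa_n}(\mvl,\mu_n)$ if you keep the kernel at level $n$ inside the integral --- either works since these are uniformly bounded). Also note that, strictly speaking, a random initial condition $\mvx^n$ requires the conditioning reduction of Lemma~\ref{lem:weakcgce}, as the paper does at the start of Section~\ref{sec:fdd-fclt}, before one can treat $\mu_n, \kappa_n$ as deterministic; your sketch tacitly assumes this.
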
 

For fixed time $t$,  the next result describes a well known probabilistic characterization of the limiting functions $\pi({\mvl}, t)$ in terms of the distribution of the multi-type branching process $\MBP(t\kappa,\mu)$. Recall that $\bT_{\cS}$ denotes the space of all rooted finite trees where each vertex has a type in $\cS$.
For  $\mvl \in \fT\setminus\set{\mv0}$,  let $\bT_{\cS}({\mvl}) \subseteq \bT_{\cS}$ be the set of trees containing $l_i$ many vertices of type $i$ for each $i \in [K]$.

\begin{prop}[{\cite{bollobas2007phase}, \cite{remco-rgcn2}*{Chapter 3.5}.}]
\label{prop:ode-mbp-irg}
Let $\mvpi(t) = (\pi({\mvl}, t):\mvl\in \fT)$ be the unique solution to the differential equations in \eqref{eqn:ode-irg}.
For any $\mvl \in \fT\setminus \{\mv0\}$ and $t\geq 0$, we have 
\begin{align}\label{eqn:mbp-ode-relation}
    \pi({\mvl}, t) = \pi({\mvl}, t; \mu, \kappa) &:= \frac{\pr\left(\MBP_{\mu}(t\kappa,\mu) \in \bT_{\cS}(\mvl)\right)}{\norm{\mvl}}.
\end{align} Furthermore, 
$$ \pr\left(\MBP_{\mu}(t\kappa,\mu) \in \bT_{\cS}(\mvl)\right) =\sum_{x \in [K]} \hat{\mu}_t(x) \pr(\MBP_{x}(t\kappa,\hat{\mu}_t) \in \bT_{\cS}(\mvl)) = \pr(\MBP_{\hat{\mu}_t}(t\kappa,\hat{\mu}_t) \in \bT_{\cS}(\mvl)),$$
where $\hat\mu_t$ is the dual measure associated with $\MBP(t\kappa,\mu)$.
\end{prop}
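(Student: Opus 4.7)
The plan is to leverage Theorem \ref{thm:odes-lln} and a local weak convergence argument at a typical vertex for the first identity, and then derive the dual representation directly from Lemma \ref{lem:conditional-dual} together with the definition of the dual measure $\hat{\mu}_t$.

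First, I would pick $V$ uniformly from $[n]$ and exploit the bookkeeping identity
\[
\E[\pi_n(\mvl,t)] = \frac{1}{\|\mvl\|}\, \pr\big(\fF(\cC_n(V,t)) = \mvl\big),
\qquad \mvl \in \fT\setminus\{\mvzero\},
\]
which holds because each component of type $\mvl$ contains exactly $\|\mvl\|$ vertices. The next step is to identify the limit of the right-hand side by a finite-depth local weak convergence argument around $V$. Under Assumption \ref{ass:irg}(a), the type of $V$ has distribution $\mu_n \to \mu$; conditional on $V$ having type $x$, the number of neighbors of type $y$ in $\cG_n(t)$ is Binomial$(n\mu_n(y) - \ind\{x=y\},\, 1 - e^{-t\kappa_n(x,y)/n})$, which converges to Poisson$(t\kappa(x,y)\mu(y))$, matching the offspring distribution in Definition \ref{defn:MBP}. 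Iterating this exploration up to depth $\|\mvl\|$ and using that any fixed connected subgraph on $\|\mvl\|$ vertices contains a cycle with probability $O(1/n)$ (standard small-subgraph counts), I would conclude
\[
\pr\big(\fF(\cC_n(V,t)) = \mvl\big) \to \pr\big(\MBP_{\mu}(t\kappa,\mu) \in \bT_{\cS}(\mvl)\big).
\]
Combining this with Theorem \ref{thm:odes-lln}, the boundedness $\pi_n(\mvl,t) \in [0,1/\|\mvl\|]$ (so that convergence in probability upgrades to convergence of means by bounded convergence), yields \eqref{eqn:mbp-ode-relation}.

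For the dual representation, since every tree in $\bT_{\cS}(\mvl)$ is finite, $\{\MBP_x(t\kappa,\mu) \in \bT_{\cS}(\mvl)\} \subseteq \{|\MBP_x(t\kappa,\mu)| < \infty\}$ for each $x \in \cS$. Hence by Lemma \ref{lem:conditional-dual},
\[
\pr\big(\MBP_x(t\kappa,\mu) \in \bT_{\cS}(\mvl)\big)
= \pr\big(\MBP_x(t\kappa,\hat{\mu}_t) \in \bT_{\cS}(\mvl)\big)\cdot \big(1 - \rho_{t\kappa,\mu}(x)\big).
\]
Averaging over the root type with respect to $\mu$ and using $\hat{\mu}_t(x) = (1 - \rho_{t\kappa,\mu}(x))\mu(x)$ from \eqref{eqn:dual-prob} gives
\[
\pr\big(\MBP_{\mu}(t\kappa,\mu) \in \bT_{\cS}(\mvl)\big)
= \sum_{x\in[K]} \hat{\mu}_t(x)\, \pr\big(\MBP_x(t\kappa,\hat{\mu}_t) \in \bT_{\cS}(\mvl)\big)
= \pr\big(\MBP_{\hat{\mu}_t}(t\kappa,\hat{\mu}_t) \in \bT_{\cS}(\mvl)\big),
\]
where the last equality is by the convention for $\MBP$ with an initial (non-normalized) measure.

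The main obstacle is the local weak convergence step: one must argue that the depth-$\|\mvl\|$ neighborhood exploration in $\cG_n(t)$ couples to the multi-type branching exploration while preserving the tree structure. However, this is standard for IRG models and essentially amounts to controlling the probability of back-edges appearing before depth $\|\mvl\|$, which is $O(1/n)$ uniformly over the bounded exploration. Everything else is a direct computation using the explicit definitions.
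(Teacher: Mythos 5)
The paper does not supply a proof for this proposition; it cites \cite{bollobas2007phase} and \cite{remco-rgcn2}*{Chapter 3.5}. Your argument is a correct, self-contained reconstruction, so the comparison is to the standard literature rather than to anything in the paper.

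Your proof of the first identity takes a slightly indirect route: you show that $\E[\pi_n(\mvl,t)]$ has two limits, $\pi(\mvl,t)$ (from Theorem \ref{thm:odes-lln} plus bounded convergence, using $\pi_n(\mvl,t)\le 1/\|\mvl\|$) and $\|\mvl\|^{-1}\pr(\MBP_{\mu}(t\kappa,\mu)\in\bT_{\cS}(\mvl))$ (from the local coupling), and conclude by uniqueness of limits. This is sound, but it imports the whole Kurtz-type LLN machinery to prove what is in the end a deterministic identity about the ODE solution. The more direct argument found in the references — and the one that actually makes the attribution to Bollob\'as--Janson--Riordan/van der Hofstad transparent — verifies that $t\mapsto \|\mvl\|^{-1}\pr(\MBP_{\mu}(t\kappa,\mu)\in\bT_{\cS}(\mvl))$ solves \eqref{eqn:ode-irg} directly (differentiating the explicit Poisson product in \eqref{eqn:123142156775} term by term, which produces the splitting/merging terms in $F_{\mvl}$), and invokes uniqueness from Proposition \ref{prop:ode-solutions}. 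Your probabilistic route is valid and buys you less calculation, at the cost of depending on Theorem \ref{thm:odes-lln}; just note that you must confirm Theorem \ref{thm:odes-lln}'s proof does not itself invoke this proposition (it does not — it uses only Proposition \ref{prop:ode-solutions} and the Kurtz theorem), and that Assumption \ref{ass:irg}(a) can be arranged for an arbitrary irreducible pair $(\kappa,\mu)$ by choosing a suitable deterministic type sequence $\mvx^n$. Your coupling step, stated qualitatively as an $O(1/n)$ back-edge bound, is precisely what Lemma \ref{lem:TV-approximation} of the paper makes quantitative. The dual-representation half of your argument is exactly right: $\bT_{\cS}(\mvl)$ consists of finite trees, so the event implies extinction, and Lemma \ref{lem:conditional-dual} together with $\hat{\mu}_t(x)=(1-\rho_{t\kappa,\mu}(x))\mu(x)$ gives both equalities, the last one being the paper's stated convention.
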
 
We remark that the dual measure $\hat{\mu}_t$ is not necessarily a probability measure and in \eqref{eqn:mbp-ode-relation}, the expression $\pr(\MBP_{\hat{\mu}_t}(t\kappa,\hat{\mu}_t) \in \cdot)$ is simply defined to be  $\sum_{x \in [K]} \hat{\mu}_t(x) \pr(\MBP_{x}(t\kappa,\hat{\mu}_t) \in \cdot)$, thus the second equality in the the final statement of the proposition is a tautology. The limit objects $\pi({\mvl}, t)$ are functions of the driving parameters of the model namely type density $\mu$ and kernel $\kappa$. However this dependence will be suppressed from the notation.  
The main result of this Section is on the fluctuations of $\mvpi_n^{\fT_N}(\cdot)$ from $\mvpi^{\fT_N}(\cdot)$, where $\mvpi_n$ corresponds to the IRG model introduced below Assumption \ref{ass:irg}. Define, 
\begin{align}
    \label{eqn:xl-def}
    X_n({\mvl}, t) &= \sqrt{n}(\pi_n({\mvl}, t) - \pi({\mvl}, t)), \text{ for } \mvl \in \fT; \qquad \vX_n(t) = (X_n({\mvl}, t), \mvl \in \fT), \qquad t\geq 0. 
\end{align}
We first prove that for each fixed $N\geq 1$, the finite-dimensional restrictions  $\vX_n^{\fT_N}(t)$ converges in distribution to the solution of a linear stochastic differential equation, see \eqref{eqn:fdd-sde}. We need some notation in order to give a description of this SDE.  

Define the set of vectors $e_{\mvk} = (\ind\set{\mvl = \mvk}: \mvl \in \fT_N)$ for $\mvk \in \fT_N$.
Recall the measure $\nu$ from Assumption \ref{ass:irg}. Consider a probability space on which is given $\bR^K$ valued random variable $\Psi$ distributed as $\nu$ and a $|\fT_N|$-dimensional standard Brownian motion $\mvB^{\fT_N}$ independent of $\Psi$.
Then, writing $\Psi = (\Psi_k: k \in [K])$, under Assumption \ref{ass:irg},
\begin{align}\label{eqn:inital-point-x}
   X_n({\mvl}, 0) = \sqrt{n}(\pi_n({\mvl}, 0)-\pi({\mvl}, 0)) &\convd  X({\mvl}, 0) := \sum_{k=1}^K \Psi_k \ind\set{\mvl = \ve_k}.
\end{align}

Recall the matrix $\Lambda$ in Assumption \ref{ass:irg}(b) that described the second order fluctuations of the kernel $\kappa_n$ around its limit $\kappa$.  Also recall that  for $\mvl \in \fT_N$, $F_{\mvl}(\mvx,\tilde\kappa,\tilde\mu)$ depends on $\mvx$, only through $x_{\mvk}$ for $\mvk \in \fT_N$. Thus for such $\mvl$, abusing notation, we will write $F_{\mvl}(\mvx,\tilde\kappa,\tilde\mu)$ as $F_{\mvl}(\mvx^{\fT_N},\tilde\kappa,\tilde\mu)$. 
The following function will give the first part of the drift term of the SDE that governs the fluctuation behavior of $\vX_n^{\fT_N}$.
 \begin{align}\label{eqn:fdd-fclt-drift}
    a^{\fT_N}(t) = \left[\frac{1}{2}\theta_{\Lambda}(\mu,\mu) + \theta_{\kappa}(\Psi,\mu)\right] e_{\mv0} +  \sum_{\mvl \in \fT_N\setminus\set{\mv0}} \left[ F_{\mvl}(\mvpi^{\fT_N}(t),\Lambda,\mu) - \pi(\mvl,t) \theta_{\kappa}(\mvl,\Psi)\right]e_{\mvl}.
\end{align} 
 
We now describe the second term in the drift coefficient. For $t \ge 0$, define the $|\fT_N| \times |\fT_N|$ dimensional matrix $\Gamma^{\fT_N}(t)$ as\begin{align}\label{eqn:fdd-fclt-linear}
    \Gamma^{\fT_N}(t) = \sum_{\mvl \in \fT_N}\left[\sum_{\mvk_1+\mvk_2 = \mvl} \pi(\mvk_1,t) \theta_{\kappa}(\mvk_1,\mvk_2) e_{\mvl}e_{\mvk_2}^T\right] - \sum_{\mvl\in \fT_N}\theta_{\kappa}(\mvl,\mu)e_{\mvl}e_{\mvl}^T.
\end{align} Note that the matrix $\Gamma^{\fT_N}(t)$ is lower-triangular, namely
$e_{\mvk}^T \Gamma^{\fT_N}(t) e_{\mvl} = 0$ if $\mvk < \mvl$. 
The terms $(a^{\fT_N}(t), \Gamma^{\fT_N}(t))$ together will determine the drift coefficient (see \eqref{eqn:fdd-sde}).
 
Next for $\mvl,\mvk \ \in \fT_{N}\setminus \{\mv0\}$, define \begin{align}\label{eqn:Delta-N-defns}
    \Delta^{\fT_N}_{\mvl,\mvk} = e_{\mv0} + e_{\mvl+\mvk}\ind\set{\mvl+\mvk \in \fT_N} -e_{\mvl}-e_{\mvk},\qquad \Delta_{\mvl} = e_{\mv0} -e_{\mvl}.
\end{align} 
The diffusion coefficient in the SDE is given by the  $|\fT_N| \times |\fT_N|$ dimensional matrix $G^{\fT_N}(t)$, which is the symmetric square root of the following matrix 
\begin{align}\label{eqn:fdd-fclt-bm}
    \Phi^{\fT_N}(t) &= \frac{1}{2}\left[\theta_{\kappa}(\mu,\mu) - 2 \sum_{\mvl \in \fT_N} \pi(\mvl,t) \theta_{\kappa}(\mvl,\mu) + \sum_{\mvl,\mvk \in \fT_N} \pi(\mvl,t)\pi(\mvk,t)\theta_{\kappa}(\mvl,\mvk)\right] e_{\mv0}e_{\mv0}^T \\
    &\hspace{5mm}+ \sum_{\mvl \in \fT_N\setminus\set{\mv0}}\pi(\mvl,t)\left[\theta_{\kappa}(\mvl,\mu) - \sum_{\mvk\in \fT_N}\pi(\mvk,t)\theta_{\kappa}(\mvl,\mvk)\right] \Delta_{\mvl}\Delta_{\mvl}^T \nonumber \\
    &\hspace{7mm}+\frac{1}{2} \sum_{\mvl, \mvk \in \fT_N\setminus\set{\mv0}}  \pi(\mvl,t) \pi(\mvk,t) \theta_{\kappa}(\mvl,\mvk) \Delta^{\fT_N}_{\mvl,\mvk}(\Delta^{\fT_N}_{\mvl,\mvk})^T.\nonumber
\end{align} 
Observe that, by Theorem \ref{thm:odes-lln} and Fatou's Lemma, we have $\sum_{\mvl\in \fT_N}l_i \pi(\mvl,t) \leq \mu(i)$ for all $t\geq 0$. This observation, along with the bilinearity of the map $\theta_{\kappa}(\cdot,\cdot)$, guarantees that the terms in the first two sets of braces in the above display are nonnegative and so the  matrix $\Phi^{\fT_N}(t)$ is non-negative definite for all $t \geq 0$.
Consider the SDE
\begin{align}\label{eqn:fdd-sde}
     d\vV^{\fT_N}(t) = \left[a^{\fT_N}(t) +\Gamma^{\fT_N}(t) \vV^{\fT_N}(t)\right]dt + G^{\fT_N}(t)d\mvB^{\fT_N}, \; \vV^{\fT_N}(0) = X^{\fT_N}(0),
\end{align} 
where $X^{\fT_N}(0)$ is as introduced in \eqref{eqn:inital-point-x}.
 We remark here that the drift $a^{\fT_N}(t)$ in the equation is random since it depends on $\Psi$. However, $\Psi$  is independent of the Brownian motion $\mvB^{\fT_N}$ and so existence and uniqueness of a strong solution to the SDE follows from standard results.

 { For the rest of this section and the next two sections (Sections \ref{sec:res-micro-inf} and \ref{sec:res-macro}), {\bf Assumption \ref{ass:irg} will be taken to hold}. This assumption will not be noted explicitly in the statement of the various results.}

\begin{thm}[Finite Dimensional FCLT]\label{thm:fdd-fclt}

Fix $N\in \bN$. Then, $\vX^{\fT_N}_n(\cdot) \convd \vX^{\fT_N}(\cdot)$ in $\bD([0,\infty):\bR^{|\fT_N|})$ where $\vX^{\fT_N}$ is the unique solution to the SDE \eqref{eqn:fdd-sde}.

\end{thm}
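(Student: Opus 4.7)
The plan is to derive a semimartingale decomposition for the projected process $\mvpi_n^{\fT_N}(\cdot)$, scale it by $\sqrt{n}$, and then extract the drift and diffusion of the limit SDE by separately treating the predictable compensator and the martingale part. Conditional on $\mvx^n$, the process $\mvpi_n^{\fT_N}$ is a pure-jump Markov process driven by the Poisson clocks $\cP^n_{ij}$. When the clock $\cP^n_{ij}$ fires at time $t$ and $i,j$ lie in distinct components of types $\mvl_i(t-), \mvl_j(t-)$, the jump of $\mvpi_n^{\fT_N}$ is $n^{-1}\Delta^{\fT_N}_{\mvl_i(t-),\mvl_j(t-)}$ (truncated as needed when $\mvl_i+\mvl_j \notin \fT_N$), while a within-component edge produces the jump $n^{-1}e_{\mv0}$. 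Grouping these jumps by component types and summing against the intensities $\kappa_n(x^n_i,x^n_j)/n$ yields the decomposition
\[
\mvpi_n^{\fT_N}(t) = \mvpi_n^{\fT_N}(0) + \int_0^t b_n^{\fT_N}(\mvpi_n^{\fT_N}(s),\kappa_n,\mu_n)\,ds + M_n^{\fT_N}(t),
\]
where $M_n^{\fT_N}$ is a purely discontinuous martingale and the drift coordinate $b_n^{\fT_N}(\cdot)_{\mvl}$ coincides with $F_{\mvl}(\cdot,\kappa_n,\mu_n)$ from \eqref{eqn:F_l-defn} up to an additive $O(1/n)$ correction coming from the $\binom{n\cdot}{2}$ versus $(n\cdot)^2/2$ discrepancy and from the within-component contributions.

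Next, setting $\vX_n^{\fT_N} = \sqrt{n}(\mvpi_n^{\fT_N}-\mvpi^{\fT_N})$ and subtracting the integrated ODE from Proposition \ref{prop:ode-solutions}, I would Taylor-expand $F_{\mvl}$ to first order around $(\mvpi^{\fT_N}(s),\kappa,\mu)$. The $\mvx$-derivative, thanks to the bilinear structure of $F_{\mvl}$, produces precisely the matrix $\Gamma^{\fT_N}(s)$ of \eqref{eqn:fdd-fclt-linear} acting on $\vX_n^{\fT_N}(s)$. The $\kappa$- and $\mu$-derivatives, evaluated against $\sqrt{n}(\kappa_n-\kappa)\to\Lambda$ and $\sqrt{n}(\mu_n-\mu)\convd\Psi$, give respectively the $F_{\mvl}(\mvpi^{\fT_N}(s),\Lambda,\mu)$ terms and the $-\pi(\mvl,s)\theta_\kappa(\mvl,\Psi)$ terms in \eqref{eqn:fdd-fclt-drift} (together with the $\theta_\Lambda(\mu,\mu)/2 + \theta_\kappa(\Psi,\mu)$ contribution at $\mvl=\mv0$), so that the integrated drift of $\vX_n^{\fT_N}$ converges to $\int_0^t[a^{\fT_N}(s)+\Gamma^{\fT_N}(s)\vV(s)]\,ds$. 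The quadratic remainder is $o_{\pr}(1)$ uniformly on compact time intervals by Theorem \ref{thm:odes-lln} together with the uniform boundedness of $\mvpi^{\fT_N}$.

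For the martingale part, the predictable quadratic variation $\langle \sqrt{n}M_n^{\fT_N}\rangle_t$ is a sum over ordered pairs $(i,j)$ of outer products of jump vectors weighted by the intensities $\kappa_n(x^n_i,x^n_j)/n$, carrying an extra factor of $n$ from the scaling. Regrouping this sum by the component-type pair at the instant of the jump and invoking Theorem \ref{thm:odes-lln} to replace empirical component counts by $n\pi(\mvl,s)$ yields exactly $\int_0^t\Phi^{\fT_N}(s)\,ds$ as the a.s.\ limit, with the three pieces of $\Phi^{\fT_N}$ corresponding respectively to within-component edges (giving $e_{\mv0}e_{\mv0}^T$ contributions), distinct same-type merges (the $\Delta_\mvl\Delta_\mvl^T$ term), and distinct cross-type merges (the $\Delta^{\fT_N}_{\mvl,\mvk}(\Delta^{\fT_N}_{\mvl,\mvk})^T$ term). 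Since the jumps of $\sqrt{n}M_n^{\fT_N}$ are uniformly $O(n^{-1/2})$, the conditional Lindeberg condition is automatic, and the functional martingale CLT (\emph{e.g.,} Jacod--Shiryaev, Thm.\ VIII.3.11) delivers $\sqrt{n}M_n^{\fT_N}\convd \int_0^\cdot G^{\fT_N}(s)\,d\mvB^{\fT_N}(s)$, with $\mvB^{\fT_N}$ a standard Brownian motion that is independent of $\Psi$ by virtue of the conditional independence of the Poisson clocks $\cP^n_{ij}$ given $\mvx^n$. Tightness of $\vX_n^{\fT_N}$ in $\bD([0,\infty):\bR^{|\fT_N|})$ follows from Aldous's criterion applied to the compensator (bounded drift rate) and the martingale (bounded QV rate).

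Assembling these pieces, any subsequential weak limit of $\vX_n^{\fT_N}$ satisfies the linear SDE \eqref{eqn:fdd-sde} with initial condition $\vX^{\fT_N}(0)$, and since this SDE has affine (hence globally Lipschitz in $\vV$, given $\Psi$) coefficients, strong uniqueness identifies the limit as $\vX^{\fT_N}$. The main technical obstacle, in my view, is the joint convergence in Step 2 of the three sources of randomness, namely the initial fluctuation $\sqrt{n}(\mu_n-\mu)$, the deterministic perturbation $\sqrt{n}(\kappa_n-\kappa)$, and the Poisson martingale noise, with the dependence structure of the limit: $\Psi$ must enter only through $\vX^{\fT_N}(0)$ and the drift $a^{\fT_N}$, and remain independent of $\mvB^{\fT_N}$. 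The cleanest route is to argue conditionally on $\mvx^n$, prove convergence of $\sqrt{n}M_n^{\fT_N}$ to a Brownian integral on this enlarged probability space, and then unconditional joint convergence follows from independence of $\mvx^n$ and the Poisson clocks together with continuity of the SDE solution map in $(\Psi,\mvB^{\fT_N})$.
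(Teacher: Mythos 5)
Your proposal is correct in outline and captures the mathematical content of the limiting SDE, but it takes a more self-contained route than the paper. The paper sets up the same Poisson-driven jump representation \eqref{eqn:irg-poisson-eqn} and tabulates the same four event types, then \emph{outsources the analytic core} of the argument --- the semimartingale decomposition, the first-order Taylor expansion of the drift, the functional martingale CLT, the tightness argument, and the localization that makes all these work --- to a packaged theorem for density-dependent Markov jump processes (Theorem~\ref{thm:fdd-fclt-kurtz}, drawn from Ethier--Kurtz), verifying only that the rate functions are $C^1$, that the $O(1/n)$ error terms $\epsilon_n(\cdot,\cdot)$ are uniformly small, and that the initial condition converges. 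You propose to re-derive the contents of that black box from scratch; the coefficient identification you describe matches the paper's \eqref{eqn:gradient-relations} and the three sources of $\Phi^{\fT_N}$ exactly. The practical trade-off is that the paper's route is short and compartmentalized, while yours is self-contained but needs to reconstruct several standard lemmas.

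One point you should make explicit to close the argument: as stated, both your Taylor-expansion step (controlling the remainder $n^{-1/2}O(\|\vX_n^{\fT_N}\|^2)$) and your application of Aldous's criterion (bounding the drift rate, which contains the term $\Gamma^{\fT_N}(s)\vX_n^{\fT_N}(s)$) presuppose uniform stochastic boundedness of $\vX_n^{\fT_N}$ --- which is what tightness is meant to prove. This is the usual circularity and is resolved by a localization: stop $\vX_n^{\fT_N}$ upon first exceeding a level $R$, run Gronwall on the stopped process, and then let $R\to\infty$; this is precisely what Theorem~\ref{thm:fdd-fclt-kurtz} handles internally. Your conditioning strategy for separating the three noise sources (so that $\Psi$ enters only the initial condition and the drift, independent of $\mvB^{\fT_N}$) is in substance the same as the paper's Lemma~\ref{lem:weakcgce}, which reduces to a deterministic type sequence $\mvx^n$ and then lifts the result; either presentation works.
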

We will next give formulas for the conditional means and variances of the limit process.
Recall that  the quantities $\pi({\mvl}, t)$ are functions of the driving parameters of the model, namely the type density $\mu \in \bR^K$ and kernel $\kappa \in \bR^{K\times K}$. Below, we will denote  the  gradients with respect to these parameters as $\nabla_\mu$ and $ \nabla_\kappa$ respectively. For vectors $v_1, v_2 \in \bR^K$ and $K\times K$ matrices $A_1, A_2$, we denote
$$v_1\cdot v_2 := \sum_{i \in [K]}v_1(i)v_2(i), \; A_1 \cdot A_2 := \sum_{i,j \in [K]}A_1(i,j)A_2(i,j).$$

\begin{prop}[Expectations, Variance and Covariance]\label{prop:mean-variance}
    Let $\vX^{\fT_N}(t)$ be the solution to the SDE in \eqref{eqn:fdd-sde}.  Then, 
    \begin{enumeratei}
        \item $m(\mvl,t) := \E(X^{\fT_N}(\mvl,t)|\Psi) = \nabla_{\mu}\pi(\mvl,t) \cdot \Psi + \nabla_{\kappa}\pi(\mvl,t) \cdot \Lambda$ for $\mvl \in \fT_{N}$.
        \item $\Sigma_{\mvl,\mvk}(t) := \E\left(\left(X^{\fT_N}(\mvl,t) - m(\mvl,t)\right)\left(X^{\fT_N}(\mvk,t) - m(\mvk,t)\right)| \Psi\right)$ is given by \begin{align}\label{eqn:variance-covariance-matrix-irg}
            \Sigma_{\mv0,\mv0}(t) = \frac{1}{2}\theta_{\kappa}(\mu,\mu), \qquad \Sigma_{\mv0,\mvl}(t) = \pi(\mvl,t)\left(\norm{\mvl} -1- t\theta_{\kappa}(\mvl,\mu)\right), \qquad \mvl \neq \mv0\\
             \Sigma_{\mvl,\mvk}(t) = \delta_{\mvl,\mvk} \pi(\mvl,t) + \pi(\mvl,t) \pi(\mvk,t)\left[t\theta_{\kappa}(\mvl,\mvk) - \left(\sum\limits_{i=1}^K\frac{l_ik_i}{\mu(i)}\right) \right], \qquad \mvl,\mvk \neq \mv0.
        \end{align}
        \ab{In particular, 
        $$\E\left(\left(X^{\fT_N}(\mvl,t) - m(\mvl,t)\right)\left(X^{\fT_N}(\mvk,t) - m(\mvk,t)\right)| \Psi\right) = \E\left(\left(X^{\fT_N}(\mvl,t) - m(\mvl,t)\right)\left(X^{\fT_N}(\mvk,t) - m(\mvk,t)\right)\right).$$}
        \item For fixed $t\ge 0$, the random variable $ \vX^{\fT_N}(t)$ conditional on $\Psi$ has a multivariate Normal distribution with mean vector given by (i) and covariances given by (ii).
    \end{enumeratei}
\end{prop}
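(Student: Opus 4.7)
The structural observation driving the whole argument is that, conditional on $\Psi$, equation \eqref{eqn:fdd-sde} reduces to a \emph{linear} SDE with deterministic time-dependent coefficients: the drift $a^{\fT_N}(t) + \Gamma^{\fT_N}(t)v$ is deterministic because $a^{\fT_N}(t)$ depends only on $\Psi$ and $\Gamma^{\fT_N}(t)$ is $\Psi$-free; the diffusion matrix $G^{\fT_N}(t)$ is also $\Psi$-free; and the Brownian motion $\mvB^{\fT_N}$ is independent of $\Psi$. The initial condition $\vX^{\fT_N}(0)$ is a deterministic linear function of $\Psi$. Standard linear SDE theory (e.g. the variation-of-constants formula) then yields that $\vX^{\fT_N}(t)$ is conditionally Gaussian given $\Psi$, proving part (iii). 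Under this representation the conditional mean $m(t) := \E(\vX^{\fT_N}(t) \mid \Psi)$ satisfies
$$\dot m(t) = a^{\fT_N}(t) + \Gamma^{\fT_N}(t)\, m(t), \qquad m(0) = \vX^{\fT_N}(0),$$
while $Y(t) := \vX^{\fT_N}(t) - m(t)$ solves the $\Psi$-free linear SDE $dY(t) = \Gamma^{\fT_N}(t)\,Y(t)\,dt + G^{\fT_N}(t)\,d\mvB^{\fT_N}$ with $Y(0) = 0$. In particular $Y$ is independent of $\Psi$, hence its covariance $\Sigma(t) := \E[Y(t)Y(t)^T]$ is deterministic and satisfies the Lyapunov equation
$$\dot\Sigma(t) = \Gamma^{\fT_N}(t)\,\Sigma(t) + \Sigma(t)\,\Gamma^{\fT_N}(t)^T + \Phi^{\fT_N}(t), \qquad \Sigma(0) = 0.$$
This $\Psi$-independence of $\Sigma$ immediately gives the equality of conditional and unconditional covariances noted in (ii).

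For part (i), the plan is to verify that $\tilde m(\mvl, t) := \nabla_\mu \pi(\mvl, t)\cdot\Psi + \nabla_\kappa \pi(\mvl, t)\cdot\Lambda$ satisfies the same linear ODE as $m$. This is a sensitivity analysis of \eqref{eqn:ode-irg}: differentiating $\dot\pi(\mvl, t) = F_\mvl(\mvpi(t), \kappa, \mu)$ in the parameters $(\mu, \kappa)$ and contracting with $(\Psi, \Lambda)$ gives, by the chain rule,
$$\frac{d\tilde m(\mvl, t)}{dt} = \sum_{\mvk \in \fT_N} \partial_{x_\mvk} F_\mvl(\mvpi(t), \kappa, \mu)\,\tilde m(\mvk, t) + \partial_\mu F_\mvl(\mvpi(t), \kappa, \mu)\cdot\Psi + \partial_\kappa F_\mvl(\mvpi(t), \kappa, \mu)\cdot\Lambda.$$
A direct comparison with \eqref{eqn:F_l-defn}, \eqref{eqn:fdd-fclt-drift}, and \eqref{eqn:fdd-fclt-linear} then yields $\partial_{x_\mvk}F_\mvl(\mvpi(t),\kappa,\mu) = e_\mvl^T\Gamma^{\fT_N}(t) e_\mvk$, and a short case analysis on $\mvl = \mv0$ versus $\mvl \ne \mv0$ identifies the pair of partial derivatives in the inhomogeneous term with $e_\mvl^T a^{\fT_N}(t)$ (the $\tfrac{1}{2}\theta_\Lambda(\mu,\mu)$ and $\theta_\kappa(\Psi,\mu)$ contributions match the $\mv0$-entry of $a^{\fT_N}$, while $F_\mvl(\mvpi(t), \Lambda, \mu)$ and $-\pi(\mvl,t)\theta_\kappa(\mvl, \Psi)$ match the other entries). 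The initial conditions agree since \eqref{eqn:intial-point} gives $\pi(\mvl, 0) = \sum_k \mu(k)\ind\set{\mvl = \ve_k}$, a function of $\mu$ alone: hence $\nabla_\kappa\pi(\mvl,0) = 0$ and $\nabla_\mu\pi(\mvl,0)\cdot\Psi = \sum_k\Psi_k\ind\set{\mvl = \ve_k} = X(\mvl, 0)$ by \eqref{eqn:inital-point-x}. Uniqueness of the solution to the linear ODE then forces $m = \tilde m$, establishing (i).

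For part (ii), the remaining task is to verify that the claimed explicit formulas for $\Sigma_{\mvl,\mvk}(t)$ solve the Lyapunov equation above. The plan is direct substitution: differentiate the claimed expression in $t$, use \eqref{eqn:ode-irg} to eliminate $\dot\pi$ via $F_\mvl$, and match terms against $\Gamma^{\fT_N}\Sigma + \Sigma(\Gamma^{\fT_N})^T + \Phi^{\fT_N}$ after expanding the three blocks of \eqref{eqn:fdd-fclt-bm} and the lower-triangular structure of $\Gamma^{\fT_N}$. The initial condition $\Sigma(0) = 0$ is easy: for $\norm{\mvl}\ge 2$ one has $\pi(\mvl,0) = 0$, and for $\mvl = \ve_j,\mvk = \ve_{j'}$ the Kronecker term $\delta_{\mvl,\mvk}\pi(\mvl,0) = \mu(j)\delta_{j,j'}$ is cancelled exactly by $-\pi(\mvl,0)\pi(\mvk,0)\sum_i l_i k_i/\mu(i) = -\mu(j)\delta_{j,j'}$. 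I expect the main technical obstacle to lie in the bookkeeping for the $-\sum_i l_i k_i/\mu(i)$ correction: this term encodes the negative correlations induced by the conservation constraint $\sum_{\mvl} l_i\pi_n(\mvl,t) \le \mu_n(i)$ on type-$i$ vertices, and emerges through a delicate cancellation between off-diagonal entries of $\Gamma^{\fT_N}$ acting on the $\mv0$-row of $\Sigma$ and the $\Delta_\mvl \Delta_\mvl^T$ and $\Delta^{\fT_N}_{\mvl,\mvk}(\Delta^{\fT_N}_{\mvl,\mvk})^T$ pieces of $\Phi^{\fT_N}$; careful index manipulation, symmetrizing the sum $\sum_{\mvk_1+\mvk_2=\mvl}$ in $F_\mvl$, is what makes these cancellations line up.
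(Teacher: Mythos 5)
Your proposal is correct and follows essentially the same route as the paper: reduce, via standard linear SDE theory applied conditionally on $\Psi$, to the ODE for the conditional mean and the Lyapunov equation for the conditional covariance, then verify that the claimed formulas solve these ODEs (the mean part via the chain-rule/sensitivity computation matching $\partial_{x_\mvk}F_\mvl$ with the entries of $\Gamma^{\fT_N}$, just as in the paper's \eqref{eqn:gradient-relations}). Your observation that the fluctuation $Y = X^{\fT_N} - m$ solves a $\Psi$-free linear SDE with zero initial condition, and is therefore independent of $\Psi$, is a clean way to justify the ``In particular'' equality in part~(ii), which the paper states without comment.
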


\subsection{Infinite dimensional FCLTs for microscopic component densities}
\label{sec:res-micro-inf}
In order to understand the fluctuation properties of the giant component in the supercritical regime, it turns out that a FCLT for $X_n^{\fT_{N}}$ for each fixed $N$ is not enough, and   in fact one needs distributional convergence of $X_n^{\fT_{M\log n}}(\cdot) = \left( \sqrt{n}\left(\pi(\mvl,\cdot) - \pi(\mvl,\cdot)\right): \mvl \in \fT_{M \log n}\right)$, viewed as a sequence of processes with values in a suitable Banach space, for \emph{every} $M > 0$. 

Roughly speaking, the reason why it suffices to consider only type vectors of sizes $O(log n)$ is as follows.    Theorem \ref{thm:irg-boll} says that, for any fixed $t > t_c$, the size of the second largest component in $\cG_n(t)$ is of order $M\log n$ whp for an appropriate $M < \infty$. Consequently the size of the largest component, 
\[L_n(t) = n\left[1 - \sum_{\mvl \in \fT_{M\log n }} \norm{\mvl} \pi_n(\mvl,t)\right], \qquad \text{whp.}\]
This suggests that in order to capture the fluctuations of $L_n(t)$ it suffices to understand 
those of $\pi_n(\mvl)$ for $\mvl \in \fT_{M\log n }$ for a suitable choice of $M$.

Following \cite{pittel1990tree}, we now introduce a suitable Banach space that will be used to describe the  convergence of $X_n^{\fT_{M\log n}}$.  Fix $M,\delta \geq 0$.
Then, for each $t\ge 0$, $X_n^{\fT_{M\log n}}(t)$ will be viewed as an infinite random vector (which has zero values for all but finitely many coordinates) that take values in the separable Banach space $\ell_{1,\delta}$ defined as 
\begin{align}\label{eqn:l_1,delta-space}
    \ell_{1,\delta} = \set{\vz = (z_{\mvl}):\mvl \in \fT, \norm{z}_{1,\delta} < \infty } \text{ where } \norm{z}_{1,\delta} = |z_{\mv0}| + \sum_{\mvl \in \fT\setminus\set{\mv0}}\norm{\mvl}^\delta |z_{\mvl}|.
\end{align} Due to the phase transition at $t_c$, we will need to study the asymptotics of $X_n^{\fT_{M\log n}}$ in the \emph{sub-critical} regime and \emph{super-critical} regime separately. In both regimes, we show, in Theorem \ref{thm:fclt-irg}, that $X_n^{\fT_{M\log n}}$ converges in distribution to an $\ell_{1,\delta}$ valued process, which can be characterized as  the unique solution of an infinite dimensional linear SDE which is the analogue of the finite dimensional SDE in \eqref{eqn:fdd-sde}. 
We now introduce the drift and diffusion coefficients of this SDE.
Abusing notation, we will  once more denote by $e_{\mvk}$  the infinite vector  $(\ind\set{\mvl = \mvk}: \mvl \in \fT)$, for $\mvk \in \fT$.
 
The first part of the drift term, similar to the one in \eqref{eqn:fdd-fclt-drift}, is defined as \begin{align}\label{eqn:fclt-drift}
    a(t) = \left[\frac{1}{2}\theta_{\Lambda}(\mu,\mu) + \theta_{\kappa}(\Psi,\mu)\right] e_{\mv0} + \sum_{\mvl \in \fT\setminus\set{\mv0}} \left[ F_{\mvl}(\mvpi(t),\Lambda,\mu) - \pi(\mvl,t)\theta_{\kappa}(\mvl,\Psi)\right]e_{\mvl}.
\end{align} 
It can be verified that for each $t\neq t_c$, $a(t)$ is an element of $\ell_{1,\delta}$ for every $\delta \ge 0$. This is a direct consequence of exponential decay of the deterministic functions $\pi(\mvl,t)$ with $\mvl $ for $t\neq t_c$ as noted in Lemma \ref{lem:all-moments-finite}.
To define the second term in the drift, define the vector space 
\begin{align}
    \tilde{\ell}_1 = \bigcap_{\delta \geq 0}\ell_{1,\delta}. \label{eqn:l1-subspace}
\end{align}
 For every $t\neq t_c$ and $\delta \geq 0$, we define the linear operator $\Gamma(t):  \tilde{\ell}_1 \to \tilde{\ell}_1$ as \begin{align}\label{eqn:sde-gamma-defn}
    \left(\Gamma(t)(\mvx)\right)_{\mvl} = \left[\sum_{\mvk_1+\mvk_2 = \mvl} \pi(\mvk_1,t) x_{\mvk_2} \theta_{\kappa}(\mvk_1,\mvk_2) \right] - x_{\mvl}\theta_{\kappa}(\mvl,\mu), \text{ for } \mvl \in \fT.
\end{align}

Note that $\left(\Gamma(t)(\mvx)\right)_{\mv0} = 0$ for all $\mvx \in \tilde\ell_1$. Also, observe that the operator $\Gamma(t)$ is lower-triangular, i.e. $(\Gamma(t)(e_{\mvl}))_{\mvk}=0$ for $\mvk<\mvl$. The proof of the fact that the operator $\Gamma(t)$ maps elements in $\tilde{\ell}_1$ to itself is a consequence of Lemma \ref{lem:all-moments-finite}, and will be proved in Lemma \ref{lem:operator-properties}. 

The key idea in the proof of Lemma \ref{lem:all-moments-finite} is the observation in Lemma \ref{lem:mgf-mbp} that  $\pi(k,t)$ decays exponentially in $k$ for $t \neq t_c$. One does not expect such an exponential decay to hold for $t=t_c$. 
Indeed, in the case of \erdos (i.e $K = 1$ and $\kappa(1,1) = 1$), the  function $\pi(k,t_c)$ decays like $k^{-5/2}$ with $k$ (see \cite{pittel1990tree}). Similarly, for the Bohman-Frieze model, the  functions $\pi(k,t)$ for $t$ near the critical time $t_c$ exhibit $k^{-5/2}$ decay (see \cite{kang-spencer}). One expects analogous behavior for a general  $\IRG$. This is the reason that $\Gamma(t)$ is only defined for $t \neq t_c$.

Consider the Hilbert space
\begin{align}\label{eqn:l_2,delta-space}
    \ell_{2} = \set{\vz = (z_{\mvl}):\mvl \in \fT, \norm{z}_{2} < \infty } \text{ where } 
     \norm{z}_{2}^2 = \langle z, z\rangle \mbox{ and }
    \langle z, z'\rangle =   \sum_{\mvl \in \fT}  z_{\mvl} z'_{\mvl}.
\end{align}
and define the following nonnegative symmetric trace class operator (see Lemma \ref{lem:operator-properties}) on  $\ell_{2}$ as
\begin{align}\label{eqn:sde-phi-defn}
    \Phi(t) &=  \frac{1}{2}\left[\theta_{\kappa}(\mu,\mu) - 2 \sum_{\mvl \in \fT\setminus\set{\mv0}} \pi(\mvl,t) \theta_{\kappa}(\mvl,\mu) + \sum_{\mvl,\mvk \in \fT\setminus\set{\mv0}} \pi(\mvl,t)\pi(\mvk,t)\theta_{\kappa}(\mvl,\mvk)\right] e_{\mv0}e_{\mv0}^T\nonumber\\
    &\hspace{5mm}+ \sum_{\mvl \in \fT\setminus\set{\mv0}}\pi(\mvl,t)\left[\theta_{\kappa}(\mvl,\mu) - \sum_{\mvk\in \fT\setminus\set{\mv0}}\pi(\mvk,t)\theta_{\kappa}(\mvl,\mvk)\right] \Delta_{\mvl}\Delta_{\mvl}^T \nonumber\\
    &\hspace{5mm}+\frac{1}{2} \left[\sum_{\mvl,\mvk \in \fT\setminus\set{\mv0}}  \pi(\mvl,t) \pi(\mvk,t) \theta_{\kappa}(\mvl,\mvk) \Delta_{\mvl,\mvk}\Delta_{\mvl,\mvk}^T\right] 
\end{align} 
where $\Delta_{\mvl,\mvk} = e_{\mv0}+e_{\mvl+\mvk}-e_{\mvl}-e_{\mvk}$ and $\Delta_{\mvl} = e_{\mv0}-e_{\mvl}$ and the above infinite sums converge in the strong operator topology in $\ell_2$. Specifically, denoting the first infinite sum above by $\Phi_1(t)$, for $x \in \ell_2$, $\Phi_1(t)x$ is defined to be
\begin{equation}\label{eq:ssse} \sum_{\mvl \in \fT\setminus\set{\mv0}}\pi(\mvl,t)\left[\theta_{\kappa}(\mvl,\mu) - \sum_{\mvk\in \fT\setminus\set{\mv0}}\pi(\mvk,t)\theta_{\kappa}(\mvl,\mvk)\right] \langle x, \Delta_{\mvl}\rangle\Delta_{\mvl},
\end{equation}
where the above sum converges in $\ell_2$. The second infinite sum in \eqref{eqn:sde-phi-defn} is defined similarly. See Lemma \ref{lem:operator-properties}.

Let $G(t)$ be the symmetric square root of $\Phi(t)$. It can be verified that if  $\cI$ is a compact interval in $\bR_+$ with $t_c \not\in \cI$, then $\sup_{t \in \cI} \|G(t)\|_{\mbox{\tiny{HS}}} <\infty$, where $\|\cdot\|_{\mbox{\tiny{HS}}}$ denotes the Hilbert-Schmidt norm (see Lemma \ref{lem:operator-properties}).

Next, let $\set{B_{\mvl}(t): \mvl \in \fT}$ be \emph{i.i.d.} copies of a standard Brownian motion, and define $\mvB(\cdot) = (B_{\mvl}(\cdot):\mvl \in \fT)$. Then, it follows that, the stochastic integral $\int_{\cI}G(t)d\mvB(t)$ is well defined as a $\ell_2$-valued random variable, and 
\begin{align*}
     \left(\int_{\cI}G(t)d\mvB(t)\right)_{\mvl} = \sum_{\mvk \in \fT} \int_{\cI} \la e_{\mvl}, G(t) e_{\mvk}\ra dB_{\mvk}(t), \qquad 
\mvl \in \fT.
\end{align*} 
where the sum on the right converges a.s. and in $\cL^2$.
We will in fact show in Lemma \ref{lem:operator-properties} that $\int_{\cI}G(t)d\mvB(t)$ takes values in $\ell_{1,\delta}$ a.s. for every $\delta>0$.

We assume that on the  probability space where  $\mvB(\cdot)$ is defined, we are also  given a $\bR^K$ valued random variable $\Psi$ distributed as $\nu$ that is independent of $\mvB(\cdot)$.

The SDE that describes the limit of $X_n^{\fT_{M\log n}}$ is given as,  
\begin{align}\label{eqn:sde-vector}
    d\vV(t) = \left[ a(t) + \Gamma(t)\vV(t)\right]dt + G(t)d\mvB(t).
\end{align} 
The SDE is interpreted in the usual integral sense where the integral $\int_0^t (a(s) + \Gamma(s)\vV(s)) ds$ is a Bochner integral in the Banach space $\ell_{1,\delta}$ while the integral $\int_0^t G(s) d\mvB(s)$ is the stochastic integral discussed above.
The precise result is as follows. Recall the critical point $t_c$ introduced in Theorem \ref{thm:irg-boll}, associated with the irreducible pair $(\mu, \kappa)$ of Assumption \ref{ass:irg}.

\begin{thm}[Infinite dimensional FCLT for type densities of microscopic component sizes]
    \label{thm:fclt-irg}

Fix $\delta, M\geq 0$.
\begin{enumeratea}
    \item \textbf{Sub-Critical Regime:} For $T < t_c$, we have $ \vX_n^{\fT_{M\log n}}(\cdot)\convd \vX(\cdot)$ in $\bD\left([0,T]:  \ell_{1,\delta}\right)$ where $\vX(t) \in \tilde{\ell}_1$ for all $t \leq T$, and $\vX(\cdot)$ is the unique solution to the SDE \eqref{eqn:sde-vector} with $\vX(0)$ as defined in \eqref{eqn:inital-point-x}. 
    \item \textbf{Super-Critical Regime:} Fix $t_c < T_1 < T_2$.
    Assume, without loss of generality, that on the probability space on which  $\mvB$ and $\Psi$ are given, we have a $\tilde{\ell}_1$ valued random variable $\vX(T_1)$, which is independent of $\{\mvB(t+T_1) - \mvB(T_1), 0 \le t \le T_2-T_1\}$ and is such that, 
    conditionally on $\Psi$, $\{X(\mvl, T_1), \mvl \in \fT\}$ is a Gaussian family with mean $m(T_1) = \left(m(\mvl,T_1):\mvl \in \fT\right)$ and covariances $\Sigma(T_1) = (\Sigma_{\mvl,\mvk}(T_1))_{\mvl,\mvk \in \fT}$, defined in Proposition \ref{prop:mean-variance}. Then $ \vX_n^{\fT_{M\log n}}(\cdot)\convd \vX(\cdot)$ in $\bD([T_1,T_2]: \ell_{1,\delta})$ where $\vX(t) \in \tilde{\ell}_1$ for all $T_1 \leq t \leq T_2$ and is the unique solution to the SDE \eqref{eqn:sde-vector} on $[T_1, T_2]$ with initial value $\vX(T_1)$, namely, for all $t \in [T_1, T_2]$,
     $$\vX(t) = \vX(T_1) + \int_{T_1}^t \left[ a(s) + \Gamma(s)\vX(s)\right]ds + 
     \int_{T_1}^t G(s)d\mvB(s).$$
\end{enumeratea}
\end{thm}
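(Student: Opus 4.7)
The plan is to combine the finite-dimensional FCLT of Theorem \ref{thm:fdd-fclt} with an infinite-dimensional tightness estimate for $\vX_n^{\fT_{M\log n}}$ viewed as an $\ell_{1,\delta}$-valued RCLL process, together with a characterization of the weak limit as the unique solution of the SDE \eqref{eqn:sde-vector}. Unique solvability of \eqref{eqn:sde-vector} in $\tilde{\ell}_1$ is essentially free from the lower-triangular structure of $\Gamma(t)$: ordering coordinates by $<$ on $\fT$, for each $\mvl$ the equation for $V(\mvl,\cdot)$ is a scalar linear SDE whose coefficients depend on $\Psi$, $\mvB$, and on $\{V(\mvk,\cdot):\mvk<\mvl\}$ only, so one solves coordinate by coordinate; that the resulting vector lies in $\tilde{\ell}_1$ will be a moment computation using the exponential decay in Lemma \ref{lem:all-moments-finite} and the trace-class estimate for $\Phi(t)$ in Lemma \ref{lem:operator-properties}.

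For the sub-critical regime $T<t_c$, the main analytic input will be a uniform tail estimate
\[
\lim_{N\to\infty}\limsup_{n\to\infty}\pr\!\left(\sup_{t\in[0,T]}\sum_{\mvl\in\fT_{M\log n}\setminus \fT_N}\norm{\mvl}^{\delta}|X_n(\mvl,t)|>\eps\right)=0,
\]
which reduces the problem to the finite-dimensional statement in Theorem \ref{thm:fdd-fclt}. I will prove this by writing $\pi_n(\mvl,t)$ as a compensated Poisson martingale plus a bounded-variation drift, using standard Poisson coupling to compare component type-profiles at time $t$ with those of the multi-type branching process $\MBP(t\kappa,\mu)$ from Proposition \ref{prop:ode-mbp-irg}, and invoking the exponential tail of $\pi(\mvl,t)$ in $\norm{\mvl}$ (Lemma \ref{lem:mgf-mbp}) to handle both the deterministic contribution and, via a Burkholder-type variance bound, the martingale part. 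C-tightness of the paths in $\ell_{1,\delta}$ then follows from the fact that each jump of $\vX_n$ in $\ell_{1,\delta}$ is of size $O(\norm{\mvl}^{\delta+1}/\sqrt{n})$, combined with the uniform bound on the total jump intensity on $[0,T]$. Limit identification is then a routine martingale-problem argument: apply the continuous mapping/Skorohod representation to the $\fT_N$-projection, use Theorem \ref{thm:fdd-fclt} to see that each projection satisfies \eqref{eqn:fdd-sde}, pass $N\to\infty$ using the tail bound and the convergence of $\Gamma^{\fT_N}$, $\Phi^{\fT_N}$ to $\Gamma$, $\Phi$ from Lemma \ref{lem:operator-properties}, and finally invoke uniqueness of the SDE.

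For the super-critical regime $t_c<T_1<T_2$, the giant of $\cG_n(t)$ carries a macroscopic fraction of vertices so no direct sub-critical comparison works. The plan is to invoke the \emph{duality principle}: for each $t\ge T_1$, conditionally on the vertex-type composition of the giant component, the union of all microscopic components is distributed as an IRG on the remaining vertex set driven by the dual measure $\hat\mu_t$, and $\MBP(t\kappa,\hat\mu_t)$ is sub-critical by Lemma \ref{lem:conditional-dual} and the discussion surrounding \eqref{eqn:dual-prob}. Combined with Theorem \ref{thm:irg-boll}(b) which guarantees that, for $M$ large enough, $\fT_{M\log n}$ contains \emph{every} non-giant component whp, this reduces the tail estimate for $\vX_n^{\fT_{M\log n}}$ on $[T_1,T_2]$ to the sub-critical estimate established in (a), now applied to the dual kernel. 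The initial value at time $T_1$ is pinned down by Theorem \ref{thm:fdd-fclt} together with an approximation argument: finite-dimensional marginals converge to the Gaussian law described by Proposition \ref{prop:mean-variance}, and the uniform tail bound upgrades this to convergence in $\ell_{1,\delta}$; solving \eqref{eqn:sde-vector} forward from this random initial condition, again by the lower-triangular/linearity structure, yields $\vX(\cdot)$ on $[T_1,T_2]$.

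The hardest step will be the uniform-in-$t$ tail control for $t$ close to $T_1$ (respectively $T$) on the super-critical (respectively sub-critical) side. Constants in Lemmas \ref{lem:all-moments-finite}-\ref{lem:mgf-mbp} that govern the exponential decay of $\pi(\mvl,t)$ blow up as $t\to t_c$, so one must track this blow-up quantitatively and choose $M$, and the cutoff scheme, uniformly on the compact interval. In the super-critical case a further subtlety is that, since the giant's vertex-type vector is itself an $O_\pr(\sqrt{n})$ fluctuating object, the ``effective'' dual measure driving the micro-components is random; one must show that the resulting conditional tail bounds can be integrated to produce a deterministic bound, which is where the concentration estimate on the type vector of the giant (itself a consequence of the finite-dimensional FCLT of Theorem \ref{thm:fdd-fclt} applied at time $T_1$) enters crucially.
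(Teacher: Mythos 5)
Your sub-critical plan is broadly the paper's route: semimartingale decomposition, moment bounds giving a tail estimate in $\norm{\mvl}$, tightness in $\ell_{1,\delta}$, and limit identification via martingale problem plus the lower-triangular structure of $\Gamma(t)$ for uniqueness. The paper proves these moment bounds (Theorems \ref{thm:mean-moment-bound-main} and \ref{thm:variance-moment-bound-main}) by comparing $\E\pi_n(\mvl,t)$ to $\pi(\mvl,t)$ via the branching-process total-variation coupling (Lemma \ref{lem:TV-approximation}) and then invoking exponential decay of $\pi(\mvl,t)$ in $\norm{\mvl}$ via the subcriticality of the dual $\MBP$ (Lemmas \ref{lem:mgf-mbp}, \ref{lem:all-moments-finite}), and then verifies the Aldous--Kurtz criterion rather than your proposed direct uniform-in-$t$ tail bound; but the Doob-maximal-inequality bound \eqref{eqn:doob-mg-sub} in the paper delivers essentially what you want, so this is a difference of packaging more than substance.

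Your super-critical plan is where you diverge from the paper, and I think you are making the problem substantially harder than it needs to be. You propose to condition on the type composition of the giant at each $t$ and then argue that the remaining components form a sub-critical IRG with a \emph{random} dual measure. As you yourself anticipate, this forces you to control a $\Theta_\pr(\sqrt n)$-fluctuating conditioning object uniformly over $[T_1,T_2]$ and integrate the resulting conditional bounds. The paper avoids all of this: the moment estimates in Theorems \ref{thm:mean-moment-bound-main} and \ref{thm:variance-moment-bound-main} are stated and proved for \emph{any} compact interval $\cI$ not containing $t_c$, including supercritical ones. The mechanism is that Lemma \ref{lem:TV-approximation} bounds the TV distance between the type-profile of $\cC_n(U_n,t)$ and that of the forward branching process $\MBP_{\mu_n}(t\kappa_n,\mu_n)$ for every $t$, while Lemma \ref{lem:all-moments-finite} then uses Proposition \ref{prop:ode-mbp-irg} to rewrite $\pi(\mvl,t)$ in terms of the \emph{deterministic} dual process $\MBP(t\kappa,\hat\mu_t)$, whose operator norm is strictly below $1$ uniformly on $\cI$ (Lemma \ref{lem:dual-norm}). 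No conditioning on the realized giant is ever performed; the duality enters only at the level of the limiting branching process. Once the moment bounds are in hand, the super-critical tightness proof (Proposition \ref{lem:mg-x-tightness-super}) is a verbatim rerun of the sub-critical one with $[0,T]$ replaced by $[T_1,T_2]$ and the semimartingale reference time shifted to $T_1$ as in \eqref{eqn:semi-mg-rep-super}.

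Two smaller points. First, your worry that constants blow up as $t\to t_c$ is misplaced here: $T<t_c$ and $T_1>t_c$ are \emph{fixed}, so the compact intervals $[0,T]$ and $[T_1,T_2]$ are bounded away from $t_c$, and Lemma \ref{lem:all-moments-finite} gives exponential decay with rate uniform over the whole interval. The critical-window difficulty you describe is real, but it is why the theorem is split into separate sub- and super-critical statements; it does not create a problem within either fixed interval. Second, your identification step passes $\Gamma^{\fT_N}\to\Gamma$ and $\Phi^{\fT_N}\to\Phi$; the paper instead shows directly (via \eqref{eqn:qv-convergence-lk} and the claim \eqref{lem:bdd-var-approx}) that the quadratic variation and drift of the \emph{truncated} prelimit process converge to those of the infinite-dimensional SDE coefficients evaluated along the limit, and then invokes the Da Prato--Zabczyk martingale representation theorem to produce the cylindrical Brownian motion. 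The truncation-level-$N$ limit you describe would require an additional uniformity argument to commute the $N\to\infty$ and $n\to\infty$ limits; the paper's formulation sidesteps this by taking $N = M\log n$ from the start and using Lemma \ref{lem:ui-property} to control the leftover tail in the limit.

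In short: your sub-critical argument is sound and close to the paper's; your super-critical argument, while not obviously wrong, replaces a direct moment-bound approach with a much more delicate conditional-duality approach, and I do not see that you have a clean way to close the gaps you yourself flag about the random dual measure.
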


\subsection{Functional central limit theorems for macroscopic functionals in the supercritical regime}
\label{sec:res-macro}

For $t\ge 0$, let $N_n(t)$ be the number of connected components in $\cG_n(t)$.
Fix $t> t_c$ and let $\cC_{\sss(1),n}(t) = (\cV(\cC_{\sss(1),n}(t)), \cE(\cC_{\sss(1),n}(t)) )$ be the giant component with its corresponding vertex and edge set. Let
\[L_n(t) = |\cV(\cC_{\sss(1),n}(t)|, \qquad S_n(t) =|\cE(\cC_{\sss(1),n}(t))| - (L_n(t) -1), \]
 be the {\em size} (number of vertices) of the giant and the {\em surplus}, namely the excess number of edges in the giant over a tree of the same size,  respectively. 
 Recall the deterministic function $\mvpi(t)= (\pi({\mvl}, t):\mvl\in \fT)$ from the previous section, introduced below Remark \ref{rem:rem1}, and define,  
 \begin{equation}\label{eq:837}
 \eta(t) = \sum_{\mvl  \in \fT\setminus\{ \mv0\}} \pi(\mvl,t), \qquad l(t) = 1-\sum_{\mvl\in \fT\setminus\set{\mv0}} \norm{\mvl} \pi(\mvl,t), \qquad  s(t) = \sum_{\mvl \in \fT} \pi(\mvl,t) -1.\end{equation}
\begin{thm}
\label{thm:fclt-giant-surplus}
Suppose Assumption \ref{ass:irg} holds.
Fix  $t_c < T_1 < T_2$. 
Then as $n\to\infty$, 
\begin{equation}\label{eq:843}\left(\sqrt{n} \left[\frac{1}{n}\begin{pmatrix}
        N_n(t)\\
        L_n(t)\\
        S_n(t)
    \end{pmatrix} - \begin{pmatrix}
        \eta(t)\\
        l(t)\\
        s(t)
    \end{pmatrix}\right] \right)_{T_1\leq t \leq T_2} \convd \begin{pmatrix}
        \sum_{\mvl  \in \fT\setminus \{\mv0\}} X(\mvl,t) \\
        -\sum_{\mvl\in \fT\setminus\set{\mv0}} \norm{\mvl}X(\mvl,t)\\
        \sum_{\mvl \in \fT} X(\mvl,t)
    \end{pmatrix}_{T_1\leq t \leq T_2},\end{equation}
    in $D([T_1, T_2]: \bR^3)$ where, $\vX(t) = (X(\mvl,t) : \mvl \in \fT)$ is the limit process in Theorem \ref{thm:fclt-irg}(b). 
    
    Furthermore, for any $T<t_c$,
\begin{equation}\label{eq:843bn}\left(\sqrt{n} \left[\frac{1}{n}
       \left( N_n(t) -\eta(t)\right)
    \right] \right)_{0\leq t \leq T} \convd 
        \left(\sum_{\mvl  \in \fT\setminus \{\mv0\}} X(\mvl,t)\right)_{0\leq t \leq T},\end{equation}
    in $D([0, T]: \bR)$ where, $\vX(t) = (X(\mvl,t) : \mvl \in \fT)$ is the limit process in Theorem \ref{thm:fclt-irg}(a).
    
\end{thm}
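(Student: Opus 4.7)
The plan is to write each of $N_n/n$, $L_n/n$, $S_n/n$ (centred about $\eta,l,s$ and scaled by $\sqrt n$) as a continuous linear functional of the truncated density vector $\vX_n^{\fT_{M\log n}}$, modulo additive errors that are $o_{\pr}(1)$ uniformly on $[T_1,T_2]$, and then to apply the continuous mapping theorem to Theorem~\ref{thm:fclt-irg}. Since every vertex lies in a unique component of some non-zero type, one has the exact identities $N_n(t)/n = \sum_{\mvl\neq\mv0}\pi_n(\mvl,t)$ and $\sum_{\mvl\neq\mv0}\norm{\mvl}\pi_n(\mvl,t) = 1$, while Euler's formula applied componentwise gives
$$n\pi_n(\mv0,t) \;=\; n - N_n(t) + S_n(t) + S_n^{\mathrm{sm}}(t),$$
where $S_n^{\mathrm{sm}}(t)$ denotes the total surplus in components other than the giant. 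In the supercritical regime, Theorem~\ref{thm:irg-boll}(b), strengthened to uniformity over $t\in[T_1,T_2]$ via monotonicity of $\cG_n(\cdot)$ and a union bound at the endpoints, implies that for $M$ large enough the second-largest component has at most $M\log n$ vertices whp, uniformly in $t$. On this event
\begin{align*}
N_n(t)/n &\;=\; 1/n + \sum_{\mvl\in\fT_{M\log n}\setminus\{\mv0\}}\pi_n(\mvl,t),\\
L_n(t)/n &\;=\; 1 - \sum_{\mvl\in\fT_{M\log n}\setminus\{\mv0\}}\norm{\mvl}\pi_n(\mvl,t),
\end{align*}
and combining with Euler's formula and the identity $s(t)=\pi(\mv0,t)+\eta(t)-1$,
$$\sqrt{n}\bigl(S_n(t)/n - s(t)\bigr) \;=\; X_n(\mv0,t) + \sqrt{n}\bigl(N_n(t)/n - \eta(t)\bigr) - S_n^{\mathrm{sm}}(t)/\sqrt{n}.$$

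\textbf{Continuous mapping step.} Lemma~\ref{lem:all-moments-finite} provides exponential decay of $\pi(\mvl,t)$ in $\norm{\mvl}$, uniformly over compact subsets of $(t_c,\infty)$, hence
$$\sqrt{n}\sup_{t\in[T_1,T_2]}\sum_{\mvl\notin\fT_{M\log n}}(1+\norm{\mvl})\,\pi(\mvl,t) \;\to\; 0$$
for $M$ sufficiently large, which allows replacement of each truncated sum above by the full sum over $\fT\setminus\{\mv0\}$. The three linear functionals
$$F_1(\mvx)=\sum_{\mvl\neq\mv0}x_{\mvl},\quad F_2(\mvx)=-\sum_{\mvl\neq\mv0}\norm{\mvl}x_{\mvl},\quad F_3(\mvx)=x_{\mv0}+\sum_{\mvl\neq\mv0}x_{\mvl}$$
are each bounded on $\ell_{1,1}$, so Theorem~\ref{thm:fclt-irg}(b) applied with $\delta=1$ together with the continuous mapping theorem yields the joint convergence of $(F_1,F_2,F_3)(\vX_n^{\fT_{M\log n}}(\cdot))$ in $\bD([T_1,T_2]:\bR^3)$ to $(F_1,F_2,F_3)(\vX(\cdot))$, which is precisely the right-hand side of \eqref{eq:843}.

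\textbf{Main obstacle: controlling $S_n^{\mathrm{sm}}$.} The crucial remaining step is to show $\sup_{t\in[T_1,T_2]}S_n^{\mathrm{sm}}(t) = o_{\pr}(\sqrt{n})$. Using the Poisson construction of Definition~\ref{def:irg}, a newly inserted edge at time $s$ contributes to $S_n^{\mathrm{sm}}$ iff its endpoints already lie in a common non-giant component at $s-$; hence
$$\E[S_n^{\mathrm{sm}}(T_1)] \;\le\; \int_0^{T_1}\sum_{i<j}\frac{\kappa_n(x_i^n,x_j^n)}{n}\,\pr\bigl(i,j\text{ in the same non-giant component at time }s\bigr)\,ds.$$
Outside the critical window the branching-process comparison of Theorem~\ref{thm:irg-boll} and Lemma~\ref{lem:conditional-dual} (dual sub-critical process above $t_c$) give $\pr(\cdot)=O(1/n)$; within the critical window of width $O(n^{-1/3})$ the bound is at worst $O(n^{-1/3})$. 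Integrating yields $\E[S_n^{\mathrm{sm}}(T_1)] = O(n^{1/3})=o(\sqrt n)$. For $t\in[T_1,T_2]$, cycle-creating edges added inside a small component increase $S_n^{\mathrm{sm}}$, while absorption of a small component into the giant only transfers surplus out of $S_n^{\mathrm{sm}}$ into $S_n$; consequently $\sup_{t\in[T_1,T_2]}S_n^{\mathrm{sm}}(t)\le S_n^{\mathrm{sm}}(T_1)+\Delta_n$, where $\Delta_n$ counts cycle-creating edges added inside non-giant components over $(T_1,T_2]$, and the same estimate (purely in the supercritical regime, no critical window) gives $\E[\Delta_n]=O(1)$. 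Markov's inequality then yields the required uniform bound.

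\textbf{Subcritical case.} In the subcritical regime, Theorem~\ref{thm:irg-boll}(c) gives $L_n(t) = O_{\pr}(\log n)$ uniformly on $[0,T]$, so $N_n(t)/n = \sum_{\mvl\in\fT_{M\log n}\setminus\{\mv0\}}\pi_n(\mvl,t)$ whp (no $1/n$ correction). The same tail control combined with continuous mapping of the bounded functional $F_1$ on $\ell_{1,0}$ applied to Theorem~\ref{thm:fclt-irg}(a) with $\delta=0$ yields \eqref{eq:843bn}.
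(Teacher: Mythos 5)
Your overall strategy---expressing $N_n,L_n,S_n$ as linear functionals of $\vX_n^{\fT_{M\log n}}$ plus negligible errors and applying the continuous mapping theorem to Theorem \ref{thm:fclt-irg}---is exactly the paper's strategy, and your Euler-formula identity $n\pi_n(\mv0,t)=n-N_n(t)+S_n(t)+S_n^{\mathrm{sm}}(t)$ is algebraically equivalent to the paper's accounting of edges inside and outside small components via $\gamma_n(t)$. There are two gaps, however.

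The first is minor: the uniform-in-$t$ bound on the second-largest component does not follow from ``monotonicity of $\cG_n(\cdot)$ and a union bound at the endpoints,'' because $|\cC_{\sss(2),n}(t)|$ is \emph{not} monotone---it drops discontinuously whenever $\cC_{\sss(2),n}$ merges into the giant. The paper's Lemma \ref{lem:auxillary-irg}(1) instead proves uniformity over $[T_1,\infty)$ by bounding the expected number of Poisson edge arrivals that ever join a component of size $\ge \alpha n$ to a distinct one of size $\ge\beta\log n$.

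The second gap is substantive. Your bound on $\E[S_n^{\mathrm{sm}}(T_1)]$ rests on the claims that $\pr(i,j\text{ in the same non-giant component at }s)=O(1/n)$ uniformly for $|s-t_c|\ge n^{-1/3}$ and $O(n^{-1/3})$ inside the window; neither is established, and the first is false. For $|s-t_c|=\epsilon$ with $n^{-1/3}\ll\epsilon\ll 1$ the conditional expected small-component size is $\Theta(1/\epsilon)$, so the probability is $\Theta(1/(n\epsilon))$, not $O(1/n)$; moreover ``non-giant component at time $s$'' is not well defined for $s\le t_c$. (Integrating the corrected probabilities actually gives $O(\log n)$ rather than your $O(n^{1/3})$; both happen to be $o(\sqrt n)$, but the argument as written is broken.) The paper avoids all critical-window analysis via the deterministic counting argument of Lemma \ref{lem:auxillary-irg}(2): letting $\tilde N_n(T)$ count edge arrivals between two vertices already lying in a common component of size at most $M\log n$, one has for \emph{every} realization that the number of unordered pairs within such a common component is at most $nM\log n/2$, so the Poisson compensator gives $\E[\tilde N_n(T)]\le\frac{\norm{\kappa_n}_\infty}{n}\cdot T\cdot\frac{nM\log n}{2}=O(\log n)$. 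Since any surplus or multi-edge in a component that is still small at time $t\le T_2$ was created while both endpoints already shared a small component, $S_n^{\mathrm{sm}}(t)\le\tilde N_n(t)\le\tilde N_n(T_2)$ for all $t\le T_2$, yielding $\sup_{t\le T_2}S_n^{\mathrm{sm}}(t)=O_{\pr}((\log n)^2)=o_{\pr}(\sqrt n)$ in one stroke, with no branching-process or critical-window input.
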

\begin{rem}
    There are various other formulations of inhomogeneous random graph models that are asymptotically equivalent to the one considered in the current work (cf.  \cite{janson2010asymptotic}), e.g., the model obtained by replacing the connection probability $(1- \exp(-t K(x_i, x_j)/n))$ by $\min(tK(x_i, x_j)/n,1)$. It is easily verified that the the above result also establishes central limit theorems  at any fixed time $t> t_c$ for such model variants. We believe that, with some additional work, one can show that the same functional central limit theorems over fixed time intervals $[T_1, T_2]$  hold for such model variants, however we do not pursue this direction here.

\end{rem}

\begin{rem}
   In the setting of the \erdos random graph model Pittel\cite{pittel1990tree} shows  that the mean and variance for the limit random variable in the central limit theorem for the giant, at fixed time $t> t_c$, has a  nice interpretation in terms of survival probabilities and the mean of the associated dual branching process. 
   See also Section \ref{sec:er} below for other similar characterizations for the \erdos random graph model. After this paper was posted on arxiv, a paper by Clancy \cite{clancy2025centrallimittheoremgiant} has given nice formulae for the asymptotic mean and variance and a different proof for a fixed time CLT for the giant component in the IRG. 
\end{rem}

 One can also obtain functional central limit theorems for other observables of the giant. One example is as follows. For $t_c < T_1 < T_2$, let $L_{n}(i,t)$ be the number of vertices of type $i \in [K]$ in the largest component and let $\ell_n(i,t) =L_n(i,t)/n $. Also, let $\ell(i,t) = \mu(i) -\sum_{\mvl \in \fT} l_i \pi(\mvl,t)$, $i \in [K]$. Consider the $D([T_1, T_2]: \bR^K)$ valued process, 
\[\fL_n(t):= \sqrt{n}\left(\ell_n(i,t) - \ell(i,t): i\in [K]\right), \qquad T_1\leq t \leq T_2. \]

\begin{thm}[FCLT for density of types in the giant]
\label{thm:359}
Suppose Assumption \ref{ass:irg} holds.
As $n\to\infty$, 
$$ \left(\fL_n(t)\right)_{T_1\leq t\leq T_2} \convd \left(\Psi(i) - \sum_{\mvl\in \fT} l_i X(\mvl,t): i\in [K]\right)_{T_1\leq t\leq T_2},
$$
in $D([T_1, T_2]: \bR^K)$ where, $\vX(t) = (X(\mvl,t) : \mvl \in \fT)$ is the limit process in Theorem \ref{thm:fclt-irg}(b) and $\Psi$ is as introduced above Theorem \ref{thm:fclt-irg}.
\end{thm}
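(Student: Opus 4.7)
The plan is to express $\fL_n$ as a continuous linear functional of the pair $(\sqrt{n}(\mu_n-\mu),\vX_n^{\fT_{M\log n}}(\cdot))$, up to a negligible error, and then to apply Theorem~\ref{thm:fclt-irg}(b) together with the continuous mapping theorem. The starting point is the vertex-mass balance: each vertex of $[n]$ lies in exactly one connected component of $\cG_n(t)$, so for every $i\in[K]$ and $t\ge 0$,
\begin{equation*}
n\mu_n(i)\;=\;L_n(i,t)\;+\sum_{\mvl\in\fT\setminus\{\mv0\}}l_i\cdot N_n^{\mathrm{ng}}(\mvl,t),
\end{equation*}
where $N_n^{\mathrm{ng}}(\mvl,t)$ denotes the number of non-giant components of type $\mvl$ at time $t$. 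By (a uniform-in-$t$ version of) Theorem~\ref{thm:irg-boll}(b), there exists $M=M(T_1,T_2)<\infty$ such that, whp, every non-giant component of $\cG_n(t)$ has size at most $M\log n$ simultaneously for all $t\in[T_1,T_2]$. On this event the above sum may be truncated at $\fT_{M\log n}$, so that $n\mu_n(i)=L_n(i,t)+\sum_{\mvl\in\fT_{M\log n}\setminus\{\mv0\}}l_i\,n\pi_n(\mvl,t)$ for all $t\in[T_1,T_2]$. Subtracting the deterministic identity $\mu(i)=\ell(i,t)+\sum_{\mvl\in\fT\setminus\{\mv0\}}l_i\pi(\mvl,t)$ and multiplying by $\sqrt{n}$ yields
\begin{equation*}
\sqrt{n}\bigl(\ell_n(i,t)-\ell(i,t)\bigr)\;=\;\sqrt{n}\bigl(\mu_n(i)-\mu(i)\bigr)\;-\sum_{\mvl\in\fT_{M\log n}\setminus\{\mv0\}}l_i\,X_n(\mvl,t)\;+\;R_n(i,t),
\end{equation*}
with $R_n(i,t)=\sqrt{n}\sum_{\mvl\in\fT:\,\|\mvl\|>M\log n}l_i\pi(\mvl,t)$. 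By the exponential decay of $\pi(\mvl,t)$ in $\|\mvl\|$ for $t\neq t_c$ (the statement alluded to below \eqref{eqn:sde-gamma-defn} and formalized in Lemma~\ref{lem:all-moments-finite}), $\sup_{t\in[T_1,T_2]}|R_n(i,t)|\to 0$ once $M$ is chosen large enough, since $\sqrt{n}\,e^{-cM\log n}\to 0$ for $M$ exceeding $1/(2c)$.

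Next, the linear map $\ell_{1,1}\ni\vz\mapsto\bigl(\sum_{\mvl\in\fT}l_i z_{\mvl}\bigr)_{i\in[K]}\in\bR^K$ is bounded, since $l_i\le \|\mvl\|$ gives $|\sum_{\mvl}l_i z_\mvl|\le \sum_{\mvl\neq\mv0}\|\mvl\||z_\mvl|\le \|\vz\|_{1,1}$; hence it induces a continuous map $\bD([T_1,T_2]:\ell_{1,1})\to\bD([T_1,T_2]:\bR^K)$. Applying Theorem~\ref{thm:fclt-irg}(b) with $\delta=1$, jointly with Assumption~\ref{ass:irg}(b), gives the joint weak convergence
\begin{equation*}
\bigl(\sqrt{n}(\mu_n-\mu),\,\vX_n^{\fT_{M\log n}}(\cdot)\bigr)\;\convd\;\bigl(\Psi,\,\vX(\cdot)\bigr)\quad\text{in}\quad\bR^K\times\bD([T_1,T_2]:\ell_{1,1});
\end{equation*}
the coupling of $\Psi$ between the two coordinates is inherent to the construction, since the same $\Psi$ appears as the $\bR^K$-limit of the initial type-fluctuation and as the Gaussian factor driving the initial condition $\vX(T_1)$ and the drift $a(\cdot)$ in the SDE \eqref{eqn:sde-vector}. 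The continuous mapping theorem, together with the vanishing of $R_n$, then yields the claimed convergence with limit $\bigl(\Psi(i)-\sum_{\mvl\in\fT}l_iX(\mvl,t)\bigr)_{i\in[K],\,t\in[T_1,T_2]}$.

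The main obstacle is the uniform-in-$t$ control of the size of the second-largest component on $[T_1,T_2]$: Theorem~\ref{thm:irg-boll}(b) provides this at any fixed $t$, but the argument requires that whp \emph{for all} $t\in[T_1,T_2]$ simultaneously every non-giant component has size $O(\log n)$. A standard way around this is to discretise $[T_1,T_2]$ into a fine grid, apply Theorem~\ref{thm:irg-boll}(b) at each grid point with the same $M$, and invoke monotonicity of $\{\cG_n(t)\}$ together with a bound on the $O_\pr(n^{1/2})$ edges added between consecutive grid points to conclude that any non-giant component at a generic time $t$ is a union of non-giant components at the preceding grid point that could not have accumulated enough mass to exceed $M\log n$ without merging into the giant. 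A secondary point is the uniform exponential tail for $\pi(\mvl,t)$ over $t\in[T_1,T_2]$, which follows from continuity of the decay rate in $t$ away from $t_c$.
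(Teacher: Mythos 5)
Your overall strategy matches the paper's: write the vertex-mass balance $n\mu_n(i) = L_n^{(i)}(t) + \sum_{\mvl\neq\mv0} l_i n\pi_n(\mvl,t)$, truncate the sum at $\fT_{M\log n}$ on an event of high probability, subtract the deterministic analogue, and then apply Theorem~\ref{thm:fclt-irg}(b) with the continuous mapping theorem for the bounded linear functional $\vz\mapsto(\sum_{\mvl} l_i z_{\mvl})_i$ on $\ell_{1,1}$; the exponential-decay argument for the tail error $R_n(i,t)$ also matches how the paper handles such remainders.

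The one place your proposal diverges is where you flag the uniform-in-$t$ control of the second-largest component on $[T_1,T_2]$ as ``the main obstacle'' and sketch a grid-plus-monotonicity workaround. The paper already proves exactly this control as Lemma~\ref{lem:auxillary-irg}(1), via a direct pathwise argument: the probability that \emph{ever}, for $t\ge T$, an edge joins a component of size $\ge \alpha n$ to a disjoint component of size $\ge\beta\log n$ is bounded by the expected number of such Poisson events over all time, which is $O(1/(n\log n))$ for suitable $\beta$. That is cleaner than your discretisation sketch, and it avoids a numerical slip there: between grid points of spacing $\epsilon$ the number of added edges is $\Theta_\pr(n\epsilon)$, not $O_\pr(\sqrt n)$, so one would need additional quantitative tail bounds to push a naive union bound over grid points through. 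Since the needed lemma is already available, your proof goes through as written once you cite Lemma~\ref{lem:auxillary-irg}(1) in place of the discretisation paragraph.
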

We in fact have joint weak convergence of $\fL_n(\cdot)$ and the processes on the left side of \eqref{eq:843}.

\subsubsection{Special case: The \erdos random graph}
\label{sec:er}
In this section we note some consequences of the above result in the setting of the \erdos random graph where many of the limit constants can be explicitly computed. Most of the results in this section  are known,  see Remark \ref{rem:know}. 

The dynamic version of \erdos random graph is a special case of $\IRG$ with $K=1$, $\mu_n(1) =\mu(1) = 1$ and $\kappa_n(1,1) = \kappa(1,1) = 1$. In this case, the critical time $t_c =1$. Let $\rho_t \in (0,1)$ be the positive solution of $\rho_t = 1-\exp(-t\rho_t)$. Then $\rho_t$ is the survival probability of the Galton-Watson Branching process with progeny distribution $\mathrm{Poisson}(t)$. Define, 

 \begin{align}\label{eqn:ER-surplus-giant-defns}
    \eta(t) &= (1-\rho_t)\left(1-\frac{t(1-\rho_t)}{2}\right), \qquad s(t) = (t-1)\rho_t - \frac{t\rho_t^2}{2}, \qquad l(t) = \rho_t.
\end{align} Let $\tilde{\Sigma}(t)$ be the $3 \times 3$ symmetric matrix with 
\begin{align}\label{eqn:er-surplus-giant-variance}
    \tilde{\Sigma}_{1,1}(t) &= (1-\rho_t)\left[\rho_t + \frac{(1-\rho_t)t}{2}\right], \   \tilde{\Sigma}_{1,2}(t) = \frac{-\rho_t(1-\rho_t)}{1-t(1-\rho_t)}, \ \tilde{\Sigma}_{1,3}(t) = -(t-1) \rho_t (1-\rho_t) \nonumber\\
    \tilde{\Sigma}_{2,2}(t) &= \frac{\rho_t(1-\rho_t)}{\left[1-t(1-\rho_t)\right]^2}, \  \tilde{\Sigma}_{2,3}(t) = \frac{\rho_t(1-\rho_t)(t-1)}{1-t(1-\rho_t)},\ \tilde{\Sigma}_{3,3}(t) = \rho_t(1-\rho_t) + t\rho_t\left(\frac{3\rho_t}{2} -1\right).
\end{align}

Let $N_n(t), L_n(t), S_n(t)$ be as in the previous section, but now corresponding to the setting of the \erdos graph considered here.
As a special case of Theorem \ref{thm:fclt-giant-surplus} we obtain the following joint FCLT for these three functionals in the super-critical regime of \erdos random graph and a FCLT for the first functional in the subcritical regime.
\begin{thm}[FCLT for \erdos random graph]\label{thm:joint-fclt-erdos} 
The statements in Theorem \ref{thm:fclt-giant-surplus} hold for the \erdos graph where $t_c=1$ and the coefficients $a, \Gamma$ and $G$ in the definition of the limit process $\vX$ are defined by setting $\Psi=0$, $\Lambda=0$,  $K=1$,  $\mu(1)=1$, and $\kappa(1,1)=1$.
Furthermore, for  $t>1$, denoting by $V(t)$ the $\bR^3$ valued random variable in the parenthesis on the right side of \eqref{eq:843},
we have that
 $V(t)$ is a Gaussian process  with $V(t) \sim N(0,\tilde{\Sigma}(t))$ for all $t>1$.
\end{thm}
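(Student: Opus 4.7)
The plan is to identify the dynamic \erdos graph as the specialization of the $\IRG$ of Definition \ref{def:irg} obtained by taking $K=1$, $\mvx^n\equiv \vone$, and $\kappa_n\equiv 1$, and to read off the FCLT from Theorems \ref{thm:fclt-irg}--\ref{thm:fclt-giant-surplus}. Under this specialization, $\mu_n=\delta_1=\mu$ and $\kappa_n=\kappa$ hold deterministically, so Assumption \ref{ass:irg} is satisfied with the degenerate fluctuation measure $\nu=\delta_0$ (hence $\Psi\equiv 0$) and $\Lambda=0$; also $t_c=\|T_{\kappa,\mu}\|^{-1}=1$. The $\bD$-space convergence asserted in the first part of the theorem then follows immediately from the general theorems applied in this setting.

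The next step is to observe that the limit process is centered Gaussian. Because $\Psi\equiv 0$ and $\Lambda=0$, the drift vector $a(t)$ of \eqref{eqn:fclt-drift} vanishes identically and the conditional mean $m(T_1)$ from Proposition \ref{prop:mean-variance}(i) is zero; moreover, the initial datum $\vX(0)$ of \eqref{eqn:inital-point-x} is $\mvzero$ in the subcritical case, and an unconditional centered Gaussian with the deterministic covariance $\Sigma(T_1)$ in the supercritical case. In either regime the limit SDE \eqref{eqn:sde-vector} reduces to a linear equation with deterministic coefficients, Gaussian noise, and a centered Gaussian initial condition, so $\vX(\cdot)$ is a centered Gaussian process; since $V(t)$ is a fixed linear image of $\vX(t)$, it is also centered Gaussian.

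What remains is the identification of the covariance matrix $\tilde{\Sigma}(t)$ at fixed $t>1$. Specializing Proposition \ref{prop:mean-variance}(ii) to $K=1$, $\mu(1)=1$, $\kappa(1,1)=1$ gives
\[
\Sigma_{0,l}(t)=\pi(l,t)(l-1-tl),\qquad \Sigma_{l,k}(t)=\delta_{l,k}\pi(l,t)+(t-1)\,lk\,\pi(l,t)\pi(k,t),\qquad l,k\geq 1,
\]
and the $(0,0)$ entry is obtained directly from the SDE \eqref{eqn:fdd-sde}: since $V_0$ evolves as a pure diffusion with time-independent diffusion coefficient $[\theta_\kappa(\mu,\mu)/2]^{1/2}$, one has $\mathrm{Var}(V_0(t))=t/2$. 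Each of the six entries of $\mathrm{Cov}(V)$ is then a polynomial in the three moments $\sum_l\pi(l,t)$, $\sum_l l\pi(l,t)$, and $\sum_l l^2\pi(l,t)$. Using Proposition \ref{prop:ode-mbp-irg} and the fact that $|\MBP_1(t,1)|$ is Borel$(t)$-distributed, together with Lemma \ref{lem:conditional-dual} which says that conditional on extinction (probability $1-\rho_t$) the process is a subcritical Poisson$(\hat t)$ Galton-Watson tree with $\hat t:=t(1-\rho_t)<1$, elementary total-progeny identities yield
\[
\sum_{l\geq 1} l\pi(l,t)=1-\rho_t,\qquad \sum_{l\geq 1} l^2\pi(l,t)=\frac{1-\rho_t}{1-\hat t},
\]
while the sum $\sum_{l\geq 1}\pi(l,t)=\eta(t)$ is obtained by integrating the ODE for $\eta$ derived from \eqref{eqn:ode-irg}.

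I expect the main obstacle to be the final algebraic reduction of these six sums to the closed forms in \eqref{eqn:er-surplus-giant-variance}, which requires repeated use of the defining relation $1-\rho_t=e^{-t\rho_t}$ and $1-\hat t=1-t(1-\rho_t)$. The only analytical ingredient is convergence of $\sum_l l^2\pi(l,t)$ in the supercritical regime, which is guaranteed by the subcriticality of the dual in Lemma \ref{lem:conditional-dual}.
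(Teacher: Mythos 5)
Your approach is correct and is essentially the intended one: the paper treats Theorem \ref{thm:joint-fclt-erdos} as an immediate specialization of Theorem \ref{thm:fclt-giant-surplus} together with the covariance formulas of Proposition \ref{prop:mean-variance}, and offers no separate proof; your argument supplies exactly the details that are omitted. Two small remarks on the $(\mv0,\mv0)$ entry. First, your conclusion $\var(X(\mv0,t))=t/2$ is correct, and in fact Proposition \ref{prop:mean-variance}(ii) as printed, $\Sigma_{\mv0,\mv0}(t)=\tfrac{1}{2}\theta_\kappa(\mu,\mu)$, appears to be missing a factor of $t$: since the $\mv0$-th row of $\Gamma(t)$ vanishes and $\Phi_{\mv0,\mv0}(t)\equiv\tfrac{1}{2}\theta_\kappa(\mu,\mu)$, the Lyapunov ODE $\dot\Sigma=\Gamma\Sigma+\Sigma\Gamma^T+\Phi$ with $\Sigma_{\mv0,\mv0}(0)=0$ gives $\Sigma_{\mv0,\mv0}(t)=\tfrac{t}{2}\theta_\kappa(\mu,\mu)$, which is what your algebra needs to match $\tilde\Sigma_{1,3}$, $\tilde\Sigma_{2,3}$, $\tilde\Sigma_{3,3}$. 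Second, the phrase ``pure diffusion with diffusion coefficient $[\theta_\kappa(\mu,\mu)/2]^{1/2}$'' is slightly loose, since $G(t)$ is the matrix square root of $\Phi(t)$ so $G_{\mv0,\mv0}\neq\Phi_{\mv0,\mv0}^{1/2}$ in general; the correct reason for $\var(X(\mv0,t))=\int_0^t(GG^T)_{\mv0,\mv0}\,ds=\int_0^t\Phi_{\mv0,\mv0}\,ds$ is that the zeroth row of $\Gamma$ vanishes, not that $X(\mv0,\cdot)$ is driven by a single scalar Brownian motion. With those clarifications, your reduction to the three moments $\sum_l\pi$, $\sum_l l\pi$, $\sum_l l^2\pi$, the dual-branching-process identity $\sum_l l^2\pi(l,t)=(1-\rho_t)/(1-t(1-\rho_t))$, and the algebraic simplifications all check out and recover \eqref{eqn:er-surplus-giant-variance}.
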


\begin{rem}
\label{rem:know}
The FCLT  for the size of the giant component (i.e. $L_n(\cdot)$) in \erdos random graphs was recently established in \cite{enriquez2024processfluctuationsgiantcomponent}, and the result concerning the number of components in the sub-critical regime was shown in \cite{corujo2024numberconnectedcomponentssubcritical}. Both of these results follow as  consequence of Theorem \ref{thm:joint-fclt-erdos}.  More recently, for the \erdos random graph, using the ``simultaneous breadth first exploration algorithm" constructed in \cite{limic2019eternal}, a different proof for the FCLT for the size of the giant component in the \erdos random graph was established in \cite{corujo2024novel} allowing one to even establish central limit theorems for the giant component in the so-called barely supercritical regime. Personal communication from the authors indicate that in work in progress by Corujo, Enriquez, Faraud, Lemaire and  Limic, for the \erdos random graph, the authors are able to recover the above joint distributional result in this setting using different (neighborhood exploration and associated random walk) techniques.
\end{rem}

\begin{rem}
    The fixed time result { for the joint distributional convergence for the number of components and size of largest component in }the \erdos random graph goes back to the foundational work of Pittel \cite{pittel1990tree}.
\end{rem}

\subsection{Graphon modulated dense graphs}
\label{sec:def-graphon-mst}
We now describe how the above results in the sparse regime also allow one to derive asymptotics for various functionals in the dense regime, as a consequence of the close connection between the world of graphons (dense graph limits) and inhomogeneous random graphs\cite{bollobas2010percolation}. Consider a kernel $\kappa_n$ on $[K]$ with entries $\kappa_n(i,j) \in (0,1)$ for $i,j \in [K]$ and let $\mu$ be a probability measure on $[K]$. to simplify presentation, we allow the kernel to depend on $n$ and keep $\mu$ to be fixed; the results below can  be extended to the more general case. 

The parameters $\kappa_n$ and $\mu$ are used to construct a finite dimensional graphon as follows: 
\begin{enumeratea}
    \item Partition the unit interval $[0,1]$ into the sub-intervals $\cI_j = (\sum_{i=1}^{j-1} \mu(i), ~  \sum_{i=1}^{j} \mu(i)] $ for $1\leq i\leq K$. Include $0$ with $\cI_1$.
    \item Define the symmetric measureable function $\thickbar{\kappa}_n:[0,1]\times [0,1] \to [0,1]$ via $\thickbar{\kappa}_n(u,v) = \kappa_n(i,j)$ for $u\in \cI_i, v\in \cI_j$. The function $\thkappa_n$ is called the graphon associated with $\kappa_n$ and $\mu$.
\end{enumeratea}
Let $\cG_n^{\dense}(\thickbar{\kappa}_n,\mu)$ be the random graph (sometimes referred to as the $\thickbar{\kappa}_n$-random graph \cite{lovasz-book}) on vertex set $[n]$ generated as follows: first generate $\vU_n:= \set{U_i:1\leq i\leq n}$ i.i.d. $U[0,1]$ random variables. Next connect $i,j \in [n]$, $i\neq j$, with probability $\thickbar{\kappa}_n(U_i, U_j)$, independent across edges.

\subsubsection{{\bf Percolation on dense graphs}}
\label{sec:podg}
With  $\kappa_n$ as above, suppose that $\kappa_n \to \kappa$ pointwise and that $(\mu, \kappa)$ is an irreducible pair.  
  Fix any $t\ge 0$ and write $\cG_n^{\dense}[\thkappa_n,\mu; t/n]$ for the percolated graph which is defined as the graph obtained by retaining each edge in $\cG_n^{\dense}(\thkappa_n,\mu)$, independently, with probability $t/n$. 
 The graph process $\{\cG_n^{\dense}[\thkappa_n,\mu; t/n], t \ge 0\}$ is closely related to the IRG process introduced in Definition \ref{def:irg}. Specifically, let $\{x_i, i \in \bN\}$ be an i.i.d. sequence distributed as $\mu$ and let $\mvx^n = \set{x_i \in \cS:i\in [n]}$. Then, for a fixed $t\ge 0$, the  only difference between the graphs 
 $\cG_n(t,\mvx^n,\kappa_n)$ and $\cG_n^{\dense}[\thkappa_n,\mu; t/n]$,
 is that, independently across all $i,j \in [n]$, $i\neq j$, in the former an edge between vertices $i$ and $j$  occurs with probability $\sum_{x,y \in [K]}(1- e^{-\kappa_n(x,y)t/n})\mu(x)\mu(y)$ while in the latter it occurs with probability $ t/n \ \sum_{x,y \in [K]}\kappa_n(x,y)\mu(x)\mu(y)$.
 For the IRG model given by  
 $\{\cG_n(t,\mvx^n,\kappa_n)\}$ note that parts (a) and (b) of Assumption \ref{ass:irg} are automatically satisfied, where $\nu$ is a Normal distribution with mean $\mvzero$ and $K\times K$ covariance matrix $\Sigma$
with $\Sigma_{i,j} = \delta_{i,j} \mu(i) - \mu(i)\mu(j)$, $i, j \in [K]$.
Assume in addition that part (c) of Assumption \ref{ass:irg} also holds. Then it follows that
the results of Sections \ref{sec:res-micro}, \ref{sec:res-micro-inf}, and \ref{sec:res-macro}
(specifically, Theorems \ref{thm:odes-lln}, \ref{thm:fdd-fclt}, \ref{thm:fclt-irg}, \ref{thm:fclt-giant-surplus}, \ref{thm:359}) hold with $\{\cG_n(t,\mvx^n,\kappa_n), t \ge 0\}$ 
and with $\Psi$ a Normal random variable with mean $0$ and covariance matrix $\Sigma$.
From these results and using the observations in  \cite{janson2010asymptotic}*{Example 3.1}, for fixed $t > t_c$, one can easily obtain as a corollary, a joint central limit theorem for the size of giant, its surplus, vector of number of type of vertices in it, and number of components in $\cG_n^{\dense}[\thkappa_n,\mu; t/n]$.

\begin{rem}
The general case of percolation on a dense graph sequence $\set{\cG_n:n\geq 1}$ converging to a Graphon \emph{in the cut norm}, and the corresponding formulation of the phase transition for the emergence of a giant component, has been studied in \cite{bollobas2010percolation}. It would be interesting to understand, what additional assumptions one needs in this general setting, both on the limit graphon, as well as on the rates of convergence, for central limit theorems for the giant component to  hold in this setting. The ``$\bL^3$ -Graphon'' conditions introduced in \cite{baslingker2023scaling} to understand the \emph{critical regime} of such models  could be a good starting point for this research direction. 
\end{rem}

\subsubsection{{\bf Fluctuations results for the weight of the MST on dense random graphs}}
\label{sec:res-mst-weight}
We now introduce the final object of interest. 

\begin{defn}[Random weight MST on dense random graphs]\label{def:mst-dense}
Let $\mu, \kappa_n, \thkappa_n$ be as introduced above.
    For each edge $\ve$ in
    $\cG_n^{\dense}(\thickbar{\kappa}_n,\mu)$, let $w_{\ve}\sim Exp(1)$, independent across edges. We refer to $w_{\ve}$ as the weight of the edge $\ve$. Let $\cW(\cG_n^{\dense}(\thickbar{\kappa}_n,\mu))$ be the {\bf weight} of the corresponding minimal spanning tree on $\cG_n^{\dense}(\thickbar{\kappa}_n,\mu)$, namely the sum of the weights of edges in a spanning tree for $\cG_n^{\dense}(\thickbar{\kappa}_n,\mu)$, for which this sum is minimal.   We use the convention that $\cW(\cG_n^{\dense}(\thickbar{\kappa}_n,\mu)) = \infty$ if $\cG_n^{\dense}(\thickbar{\kappa}_n,\mu)$ is not connected. 
\end{defn}

\begin{rem}
    One may consider other distributions than Exponential distributions for weights, e.g. $U[0,1]$. We believe that for the next result, the exact distribution of the weights should not be important and one only needs a suitable control over the density of the weight distribution at zero (see e.g.  Frieze \cite{frieze1985value} and Steele \cite{steele1987frieze} for a similar observation for the random weight MST problem on the complete graph).  However for simplicity  we choose Exponential weights since in this case the weight of the MST can be directly analyzed using our results on the functional CLT for the number of components for sparse IRG presented in Theorem \ref{thm:fclt-giant-surplus}.
\end{rem}

For $(\mu, \kappa_n, \kappa)$ as in Section \ref{sec:podg}, suppose that Assumption \ref{ass:irg} (c) is satisfied.
 For any $\mvl,\mvk\in \fT\setminus\set{\mv0}$, $s,t\geq 0$, and $N = \max(\norm{\mvl},\norm{\mvk})$, \ab{let $\vU^{\fT_N}$ be the mean $0$ Gaussian process given by the SDE
\begin{align}\label{eqn:fdd-sdeU}
     d\vU^{\fT_N}(t) =  \Gamma^{\fT_N}(t) \vU^{\fT_N}(t) dt + G^{\fT_N}(t)d\mvB^{\fT_N}(t), \; \vU^{\fT_N}(0) = 0.
\end{align} 
Let
 \begin{align}\label{eqn:two-point-covariance}
    \Sigma(\mvl,\mvk,s,t) = \E(\vU^{\fT_N}(\mvl,t) \vU^{\fT_N}(\mvl,s)),
\end{align}}
namely the covariations between two different time points.
Note that $\Sigma(\mvl,\mvk,t,t)$ is given by (ii) in Proposition \ref{prop:mean-variance}.
Now, define \begin{align}\label{eqn:mst-mean-variance}
 \sigma_\infty = \sum_{\mvk,\mvl \in \fT\setminus\set{\mv0}}\int_0^\infty\int_0^\infty \Sigma(\mvl,\mvk,s,t) ds dt.
\end{align} 
Well-definedness and finiteness of the above constant is proven in Section \ref{sec:proofs-dense-mst}. 

\begin{thm}[Weight of the MST on dense graphs]\label{thm:clt-mst-weight}
Let $(\mu, \kappa_n, \kappa)$ be as in Section \ref{sec:podg}. 
\abb{Consider $\cG^{\dense}_n(\thkappa_n,\mu)$ defined as in Section \ref{sec:def-graphon-mst} using i.i.d. $U[0,1]$ random variables $\vU_n:= \set{U_i:1\leq i\leq n}$.}
 Suppose   that   Assumption \ref{ass:irg} (c) is satisfied. Then there exist a sequence  $\set{\cK_n(\thkappa):n\geq 1}$, \ab{where $\cK_n(\thkappa)$ is a measurable function of \abb{$\vU_n$}}, such that
$$\sqrt{n}\left(\cW(\cG^{\dense}_n(\thkappa_n,\mu)) -\cK_n(\thkappa)\right) \convd \fN_\infty:= \cN(0,\sigma_{\infty}),$$
 where $\sigma_{\infty}$ is as defined in \eqref{eqn:mst-mean-variance}, and further, $\cK_n(\thkappa) \to \cK(\thkappa) $, \ab{in probability}, as $n\to\infty$, where \begin{align}\label{eqn:mst-limit-bp}
     \cK(\thkappa) = \sum_{k=1}^\infty \int_0^\infty  \frac{\pr\left(\abs{\MBP_\mu(t\kappa,\mu)} = k\right)}{k}dt.
 \end{align} 
\end{thm}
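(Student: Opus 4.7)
I would start from the classical MST integral representation: on the high-probability event that $\cG^{\dense}_n(\thkappa_n,\mu)$ is connected,
\begin{equation*}
\cW(\cG^{\dense}_n(\thkappa_n,\mu)) \;=\; \int_0^\infty (N_n^{\dense}(s) - 1)\, ds
\;=\; \frac{1}{n}\int_0^\infty (N_n^{\dense}(t/n) - 1)\, dt,
\end{equation*}
where $N_n^{\dense}(s)$ denotes the number of components of the subgraph of $\cG^{\dense}_n$ consisting of edges with weight at most $s$. Conditional on the types $\vx^n$ induced by $\vU_n$, the percolated graph at cutoff $t/n$ has edge-inclusion probabilities $\thkappa_n(U_i,U_j)(1 - e^{-t/n})$, which differ from the IRG edge probabilities $1 - \exp(-t\kappa_n(x_i,x_j)/n)$ of $\cG_n(t,\vx^n,\kappa_n)$ by only $O(t^2/n^2)$ per pair. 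A standard monotone coupling will identify $N_n^{\dense}(t/n)$ with the dynamic component count $N_n(t)$ of the IRG with a discrepancy that, after integrating over $[0, C\log n]$, contributes $o_\pr(1)$ to $\sqrt{n}\,\cW$.

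\textbf{Centering and application of the FCLT.} The natural choice of centering is
\begin{equation*}
\cK_n(\thkappa) \;:=\; \int_0^\infty \eta_n(t)\, dt, \qquad \eta_n(t) \;:=\; \sum_{\mvl \neq \mv0}\pi(\mvl,t;\mu_n,\kappa_n),
\end{equation*}
where $\pi(\mvl,t;\mu_n,\kappa_n)$ is the branching-process expression of Proposition \ref{prop:ode-mbp-irg} evaluated at the empirical measure $\mu_n$ of $\vU_n$; this is a measurable function of $\vU_n$. With this centering,
\begin{equation*}
\sqrt{n}(\cW - \cK_n(\thkappa)) \;=\; \int_0^\infty \sqrt{n}\!\left(\frac{N_n(t)}{n} - \eta_n(t)\right)dt \;+\; o_\pr(1).
\end{equation*}
By Theorem \ref{thm:fclt-giant-surplus}, on any compact interval avoiding $t_c$, the integrand converges in distribution to $\sum_{\mvl \neq \mv0}(X(\mvl,t) - m(\mvl,t))$, i.e.\ to the \emph{mean-zero} part of the FCLT limit, which solves the homogeneous SDE \eqref{eqn:fdd-sdeU}. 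Using $\mu_n$-based (rather than $\mu$-based) centering is what cancels the $\Psi$-dependent conditional mean $m(\mvl,t) = \nabla_\mu\pi(\mvl,t)\cdot\Psi + \nabla_\kappa\pi(\mvl,t)\cdot\Lambda$ of Proposition \ref{prop:mean-variance}(i), via a first-order Taylor expansion of $\pi(\mvl,t;\cdot,\cdot)$ around $(\mu,\kappa)$. Integrating the mean-zero limit yields a Gaussian with variance $\sigma_\infty$ as in \eqref{eqn:mst-mean-variance}, while Proposition \ref{prop:ode-mbp-irg} and Fubini give
\begin{equation*}
\cK_n(\thkappa) \;\convp\; \int_0^\infty \sum_{k=1}^\infty \frac{\pr(|\MBP_\mu(t\kappa,\mu)|=k)}{k}\, dt \;=\; \cK(\thkappa).
\end{equation*}

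\textbf{Main obstacle.} The principal difficulty is justifying the interchange of the limit $n\to\infty$ with the integration $\int_0^\infty dt$, since Theorem \ref{thm:fclt-giant-surplus} only covers compact intervals $[T_1,T_2]$ that avoid $t_c$. The argument must therefore handle three delicate regions separately. First, the critical window $[t_c - \delta, t_c+\delta]$: as noted below \eqref{eqn:sde-gamma-defn}, the coefficients of the limit SDE diverge at $t_c$ and $\pi(\mvl,t)$ loses its exponential decay in $\norm{\mvl}$. I would bound this contribution by establishing a uniform $L^2$-estimate on $\sqrt{n}(N_n(t)/n - \eta_n(t))$ over $t \in [t_c-\delta, t_c+\delta]$, so that the associated integral is $O_\pr(\delta)$ and vanishes upon letting $\delta\downarrow 0$ after $n\to\infty$. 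Second, the super-critical tail $t \gtrsim C\log n$ up to the connectivity time: an exponential bound $\pr(\cG_n(t)\text{ disconnected}) \le e^{-c't}$ valid uniformly for $t \ge C\log n$ (available from irreducibility and super-criticality of $(\kappa,\mu)$), combined with the coupling above, shows that this tail contributes $o_\pr(1)$ to $\sqrt{n}\,\cW$. Third, a uniform integrability / tightness criterion is needed to upgrade the pointwise FCLT for the integrand to convergence of its $L^1([0,\infty))$-integral; here Lemma \ref{lem:all-moments-finite} is crucial, providing the exponential decay of $\pi(\mvl,t)$ in $\norm{\mvl}$ away from $t_c$ that both guarantees $\sigma_\infty < \infty$ and supplies the dominating envelope required for termwise integration.
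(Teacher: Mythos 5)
Your high-level architecture is sound: the MST integral identity, a coupling between the dense percolation process and the IRG, the three-region split of the $t$-integral (subcritical / critical window / supercritical), and the identification of the limit variance $\sigma_\infty$ via the mean-zero homogeneous SDE \eqref{eqn:fdd-sdeU}, with Lemma \ref{lem:all-moments-finite} supplying the dominating envelope. However, there are two concrete gaps relative to what a complete proof requires.

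\textbf{Choice of centering.} You propose centering at $\cK_n(\thkappa) = \int_0^\infty \eta_n(t)\,dt$ where $\eta_n(t) = \sum_{\mvl\neq\mv0}\pi(\mvl,t;\mu_n,\kappa_n)$ is built from the branching-process limit evaluated at the empirical pair $(\mu_n,\kappa_n)$. The paper instead takes $\cK_n(\thkappa) = \E(\cW_n^{\fanz}\mid \vU_n)$, the exact conditional expectation for the auxiliary heterogeneous-weight model $\fanz$, which after the identity $\cW_n^{\fanz}= n^{-1}\int_0^\infty(N_n(t)-1)\,dt$ amounts to centering $\pi_n(\mvl,t)$ by its conditional mean $\E_{\mu_n}(\pi_n(\mvl,t))$. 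This is not a cosmetic difference. With the paper's centering, the integrand $Y_n(\mvl,t)=\sqrt{n}(\pi_n(\mvl,t)-\E_{\mu_n}\pi_n(\mvl,t))$ has exactly zero conditional mean, so the key $L^2$ control near $t_c$ is a pure conditional-variance bound (Lemma \ref{lem:variance-sum-bound} gives $n\var_{\mu_n}(\sum_{\mvl\in\cA}\pi_n(\mvl,t))\le (1+\norm{\kappa_n}_\infty t)\pr(\fF(\cC_n(U_n,t))\in\cA)\le 1+\norm{\kappa_n}_\infty t$, valid uniformly including at $t_c$). With your centering, the integrand additionally carries the integrated bias $\int\sum_{\mvl}\sqrt{n}\,(\E_{\mu_n}\pi_n(\mvl,t)-\pi(\mvl,t;\kappa_n,\mu_n))\,dt$. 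Away from $t_c$ this is negligible (combine Lemma \ref{lem:TV-approximation} with exponential decay of $\pi$), but inside the critical window the decay of $\pi(\mvl,t)$ in $\norm{\mvl}$ is only polynomial, and neither the $O(t\norm{\mvl}/\sqrt{n})$ TV bound nor the trivial bound $\sqrt{n}(\E\pi_n+\pi)$ tames the sum over $\mvl$ at scale $\sqrt{n}$. You would need a genuinely new estimate here; the paper explicitly flags (in the Remark after Remark \ref{rem:8}) that moving the centering to a deterministic constant ``requires additional work'' for precisely this reason, and avoids the issue entirely by centering at the exact conditional mean.

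\textbf{Supercritical region needs a moderate-deviations estimate.} You handle $t\gtrsim C\log n$ via a connectivity bound, which is fine for the far tail. But the intermediate supercritical range $t\in[t_c+\eta, A\log n]$ is a separate matter: to control the tail sum $\sum_{\mvl\in\fT\setminus\fT_k}Y_n(\mvl,t)$ there, one needs to know that \emph{with probability $1-o(1/\sqrt{n})$} there is a single giant of size at least $\alpha n$ and the second-largest component is at most $M\log n$, uniformly over $t\ge t_c+\eta$. The $\sqrt{n}$ rate is essential (you are proving a CLT, so a $\sqrt{n}$-scaled integral), and a standard concentration bound does not automatically deliver it. The paper obtains this via a moderate-deviations principle for the type-vector of the giant (Lemma \ref{lem:mst-supercritical-bounds}, invoking \cite{mdp-irg-yu}), which gives $\sqrt{n}\,\pr(F_n^{B_1}\cup E_n^{B_1,B_2})\to 0$. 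This ingredient is absent from your sketch and cannot be replaced by an exponential connectivity bound, which only kicks in well above $A\log n$.

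These two points — the uncontrolled integrated bias near $t_c$ induced by the centering choice, and the missing $\sqrt{n}$-speed moderate-deviations estimate for the supercritical region — are the substantive gaps. The rest of the outline (coupling, finite-dimensional FCLT, $\sigma_\infty$, the use of Lemma \ref{lem:all-moments-finite}) is on the mark.
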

Recall the $\pi(\mvl,t)$ given as the solution of the ODE \eqref{eqn:ode-irg} in Proposition \ref{prop:ode-solutions}, with  $\pi({\mvl}, 0)$ as in \eqref{eqn:intial-point}. From Proposition \ref{prop:ode-mbp-irg} we see that the limit constant has the alternate formula $\cK(\thkappa)= \int_{0}^\infty \sum_{\mvl \in \fT}\pi(\mvl,t)dt$.  

\begin{rem}\label{rem:8}
  For the case where $\kappa_n \equiv \mvone$ for all $n$, namely $\cG^{\dense}_n(\thkappa_n,\mu)$ is a  complete graph, Frieze\cite{frieze1985value} shows that the limit constant $\cK(\mvone) = \zeta(3) = \sum_{k=1}^\infty 1/k^3$ while Janson \cite{janson1995minimal} proved that a central limit theorem as in the above theorem holds and in this case, $\cK_n(\thkappa)$ can be taken to be $\cK(\mvone) = \zeta(3)$ and 
  the limit random variable $\fN_\infty = N(0, \sigma^2)$, where,  
  \[\sigma^2 = \frac{\pi^4}{45} - 2 \sum_{i=0}^{\infty} \sum_{j=1}^{\infty} \sum_{k=1}^{\infty} \frac{(i+k-1)! k^k (i+j)^{i-2} j}{i! k! (i+j+k)^{i+k+2}}.\] 
  Theorem \ref{thm:clt-mst-weight}, extends these results to a more general setting of Graphon modulated dense graphs. 
   The law of large numbers for the weight of the MST are available on more general types of dense graph sequences than those considered in Theorem \ref{thm:clt-mst-weight}, see \cite{hladky2023random}.  These results in particular show that $\cK_n(\thkappa)$ converges as $n \to \infty$ and the limit constant $\cK(\thkappa)$ is finite (see \cite{hladky2023random}*{Fact 6}). 
\end{rem}

\begin{rem}
      In Theorem \ref{thm:clt-mst-weight}, as will be seen in the proof,  the centering $\cK_n(\thkappa)$ is taken to be equal to  \abb{$\E(\cW_n^{\fanz}\mid \vU_n)$} where $\cW_n^{\fanz}$ is the weight of the MST in a related model on the complete graph  with heterogeneous (i.e. type dependent) edge weights. We believe that the Theorem should be true with $\cK_n(\thkappa)$ replaced by the limit $\cK(\thkappa)$, however this would require showing that $\sqrt{n}|\cK_n(\thkappa)-\cK(\thkappa)| = o(1)$ which will require additional work.
\end{rem}

\section{Discussion}
\label{sec:disc}

The goal of this section is to provide general context to place the results in this paper. 
\subsection{Related work and other regimes}
The main goal of this paper is to study {\bf second order fluctuations} of various functionals of interest for inhomogeneous random graphs, both microscopic functionals for general $t> 0$ and macroscopic functionals for $t> t_c$.  The critical regime $t= t_c$ and more generally the  critical scaling window of the form $t = t_c + \lambda n^{-1/3}$ for fixed $\lambda \in \bR$ was addressed in \cite{baslingker2023scaling} where all maximal components are of the same order in the sense that the ordered list of normalized component sizes $\vC_n(t_c+\lambda n^{-1/3}) = n^{-2/3}(|\cC_{\sss(i),n}(t_c+\lambda n^{-1/3})|: i\geq 1) \convd \mvxi_{\IRG}(\lambda)$, 
where $|\cC_{\sss(i),n}(t)|$ is the size of the $i$-th largest component and $\mvxi_{\IRG}(\lambda)$ is an 
infinite dimensional random vector closely related to Aldous's standard multiplicative coalescent \cite{aldous1997brownian}. In particular this result says that the fluctuations of the maximal component in this regime are of order $n^{2/3} \gg \sqrt{n}$. This 
anomalous order of fluctuations is also manifested in the  blow up of the variances in the CLT of giant component as $t\downarrow t_c$ (see e.g. the formula for $\tilde \Sigma_{2,2}(t)$ in \eqref{eqn:er-surplus-giant-variance}). 

Closer to the current work is \cite{andreis2023large} which studies large deviation properties of both densities of microscopic components as well as large components at fixed time $t$ for (general type space) $\IRG$. Building on finite state space settings and deriving detailed connections to multitype branching processes, they are able to derive rate functions for both the microscopic and macroscopic functionals for fixed time $t$. Two directions would be worth investigating: \begin{inparaenuma}
    \item Understanding the connection to the dynamics used in the current paper;  in particular whether our dynamic formulation  can  be used to derive such, and perhaps stronger path space, large deviation results. The authors of \cite{andreis2023large} observe the connection between objects in their rate function and equations derived from certain `inhomogeneous coagulation models', however their proofs of the LDP hinge on fixed time calculations using careful analysis of approximating multitype branching processes.
    \item Understanding the connections between Theorem \ref{thm:fclt-irg} and the domain of ``mesoscopic portion''  of the mass conjectured in \cite{andreis2023large}*{Remark 1.5}.
\end{inparaenuma}

 Our approach to the CLT for the weight of the MST in Theorem \ref{thm:clt-mst-weight}  is inspired by Janson \cite{janson1995minimal} who studied the fluctuations of the  weight of the MST on the complete graph with i.i.d. edge weights. The key observation of \cite{janson1995minimal} was that for such a CLT one needs a suitable FCLT for the number of components $N_n(t)/n = \sum_{k=1}^{\infty} \pi_n(k,t)$ for $t\geq 0$ in the \erdos~random graph process. Here,  $\pi_n(k,t)$  denotes the density of components of size $k$ in an \erdos random graph with connection probability $t/n$. 
The paper \cite{janson1995minimal} proceeds by first establishing
 a finite dimensional FCLT of the form in Theorem \ref{thm:fdd-fclt}, namely for the truncated sums $\sum_{k=1}^K \pi_n(k,t) $ for ``large $K$''. This  coupled with explicit computations, to obtain uniform control over tail terms  $\sum_{K+1}^\infty \pi_n(k,t)$, which are  tractable for the \erdos random graph,  allowed him to analyze the  functional $N_n(\cdot)$ and consequently obtain a CLT for the weight of the MST on the complete graph. 

Our work considers a more general setting, however even in the setting of \cite{janson1995minimal}, this paper posed the   question of an infinite dimensional FCLT of the form in Theorem \ref{thm:fclt-irg}.   The current paper answers this question in the affirmative, in the more general setting of inhomogeneous random graphs and shows how one can use such FCLTs  to establish an FCLT for functionals analogous to $N_n(\cdot)$ (Theorem \ref{thm:fclt-giant-surplus}) and understand fluctuations of the weight of the MST for broad families of  dense graph sequences.

\subsection{Proof techniques in the supercritical regime}
\label{sec:proofidea}
At a high level,  two general techniques used in the literature for understanding second order fluctuations of the giant component in the supercritical regime are as follows 
\begin{inparaenuma}
    \item {\bf Exploration and walk based techniques:} In this approach, one explores components of a given random graph starting from a fixed vertex and typically  associates a ``random walk'' type process with the exploration procedure whose hitting times of specific subsets of $\bR$ describe sizes of component explorations; then using e.g. the martingale FCLT for such exploration walks,  one derives fluctuations around laws of large numbers for these hitting times which result in CLTs for the size of the largest component. Examples of this approach include \cites{bollobas2012asymptotic,Puhalskii} for the \erdos random graph and \cite{riordan2012phase} for the configuration model; \cite{clancy2025centrallimittheoremgiant} for fixed time CLT for the giant component in the IRG; \cites{enriquez2024processfluctuationsgiantcomponent,corujo2024novel} for FCLT type results for the \erdos random graph and \cite{aldous1997brownian} for understanding the entire critical regime;
    \item {\bf Analyzing microscopic components:} This is the approach taken in the current paper.
    In this approach, one firsts studies fluctuations of the microscopic components and then uses the associated central limit theorems to get results on the largest component.  This approach was initiated by Pittel for the \erdos random graph \cite{pittel1990tree} and was particularly influential for our work.
    
    In a different direction, fluctuation analysis for the microscopic components of the configuration model using Stein's method can be found in \cite{barbour2019central}. A closely related paper is \cite{seierstad2013normality} which also uses dynamics to provide conditions for asymptotic normality of the giant component and applies these techniques to random graph processes driven by degree based evolutions. 
\end{inparaenuma}

\subsection{Work in progress and open questions}
As noted previously, we are currently investigating extensions  of the functional central limit theorems in the current work  to the setting of more general type spaces. 
More generally, the question of understanding second order fluctuations of densities of microscopic components for general coagulation systems was already raised in the survey of Aldous in \cite{aldous1999deterministic}*{Section 5.5}. One of our future goals is to explore the techniques of the  current paper to understand second order fluctuations for various other types of coagulation models for which laws of large numbers are already known e.g. \cite{norris1999smoluchowski}.
Other related questions include: \begin{inparaenuma}
    \item Path space large deviations results extending the fixed time results of   \cite{andreis2023large} in the setting of dynamic graph valued processes; 
    \item Concentration inequalities and rates of convergence for the various macroscopic functionals; 
    \item Extensions of functional central limit results in Theorem \ref{thm:fclt-giant-surplus} to include other functionals such as the two-core and tree mantle -- fixed time instant CLT for these objects for the supercritical \erdos random graph have been studied in \cite{pittel2005counting}.  
\end{inparaenuma}

\subsection{Overview of the Proofs}\label{sec:proofs}

We begin in Section \ref{sec:fdd-fclt} with the proof of the finite-dimensional functional central limit theorem and characterizing the distribution of the limiting object namely all the results in Section \ref{sec:res-micro}. Next, Section \ref{sec:proofs-inf-dim} considers the infinite dimensional formulation, proving the Theorems in \ref{sec:res-micro-inf}. Section \ref{sec:proofs-macro-supercrit} then uses these results to prove fluctuation results of macroscopic functionals in the supercritical regime, namely the results in Section \ref{sec:res-macro}. Finally, Section \ref{sec:proofs-dense-mst} proves Theorem \ref{thm:clt-mst-weight} on the fluctuations of the weight of the MST on dense graphs.

\section{Finite Dimensional Functional Central limit theorems}
\label{sec:fdd-fclt}

In this section, we prove the finite dimensional LLN given in Theorem \ref{thm:odes-lln} and the  FCLT in Theorem \ref{thm:fdd-fclt}, and establish the mean and variance formulas presented in Proposition \ref{prop:mean-variance}. Throughout the section, fix $N \in \bN$.
For the rest of this section we will assume without loss of generality that 
\begin{equation} \label{eq:nonrandom}
\mbox{ the collection $\mvx^n$ is nonrandom and $\sqrt{n} (\mu_n-\mu)$ converges to a (nonrandom) vector $\Psi$ in $\bR^K$.}
\end{equation}
The fact that we can take $\mvx^n$ to be nonrandom is a consequence of the following elementary lemma.
\begin{lemma}
\label{lem:weakcgce}
$S_i$, $i=1,2,3,4$ be Polish spaces. Let, for each $n \in \bN$, $\{Y_n(u), u \in S_1\}$ be a collection of $S_2$ valued random variables given on some probability space $(\Omega, \cF, \bP)$, such that $(u,\omega) \mapsto Y_n(u,\omega)$ is a measurable map from $S_1\times \Omega$ to $S_2$.
Let $S^n_1$ be a measurable subset of $S_1$ and $\Phi_n: S^n_1 \times S_2 \to S_4$ be a measurable map.
Let $G: S_1 \times S_3 \to S_4$ be another measurable map. Let $Z$ be a $S_3$ valued random variable.
Suppose that for any sequence $\{u_n \in S^n_1, n \in \bN\}$, with $u^n \to u \in S_1$
\begin{equation}\label{eq:sn1}
\Phi_n(u_n, Y_n(u_n)) \weakc G(u, Z) \mbox{ as } n\to \infty.\end{equation}
Also suppose that for any $\tilde u_n \to \tilde u$ in $S_1$, $G(\tilde u_n, Z) \weakc G(\tilde u,Z)$, as $n \to \infty$.
Let, for each $n$
$U_n$ be a  $S_1^n$ valued random variable, defined on $(\Omega, \cF, \bP)$ and independent of $\{Y_n(x), x \in S_1\}$ such that $U_n \weakc U$, where $U$ is a $S_1$ valued random variable defined on the same space where $Z$ is defined and is independent of $Z$. Then
\begin{equation}\label{eq:sn2}
\Phi_n(U_n, Y_n(U_n))  \weakc G(U, Z) \mbox{ as } n \to \infty.\end{equation}
\end{lemma}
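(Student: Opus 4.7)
The plan is to reduce to a standard continuous-convergence statement by conditioning on $U_n$ and exploiting independence. For any bounded continuous $f:S_4\to \bR$, define
\[
\psi_n(u) := \E\bigl[f(\Phi_n(u, Y_n(u)))\bigr], \quad u \in S_1^n, \qquad \psi(u) := \E\bigl[f(G(u,Z))\bigr], \quad u \in S_1.
\]
By the joint measurability assumption on $(u,\omega)\mapsto Y_n(u,\omega)$, $\psi_n$ is measurable, and by the independence of $U_n$ from $\{Y_n(x): x\in S_1\}$, Fubini's theorem gives $\E[f(\Phi_n(U_n,Y_n(U_n)))] = \E[\psi_n(U_n)]$. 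Similarly, independence of $U$ and $Z$ gives $\E[f(G(U,Z))] = \E[\psi(U)]$. So the conclusion \eqref{eq:sn2} reduces to showing $\E[\psi_n(U_n)] \to \E[\psi(U)]$ for every bounded continuous $f$.

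The key observation is that $\psi_n$ converges to $\psi$ in the sense of \emph{continuous convergence}: if $u_n \in S_1^n$ with $u_n \to u \in S_1$, then hypothesis \eqref{eq:sn1} together with the Portmanteau theorem gives $\psi_n(u_n) \to \psi(u)$. Moreover, the assumption $G(\tilde u_n, Z) \weakc G(\tilde u, Z)$ along any convergent sequence $\tilde u_n \to \tilde u$ in $S_1$ implies that $\psi$ itself is continuous on $S_1$.

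To conclude, apply the Skorohod representation theorem to the weak convergence $U_n \weakc U$ (valid since $S_1$ is Polish): on an auxiliary probability space there exist random variables $\tilde U_n \stackrel{d}{=} U_n$ and $\tilde U \stackrel{d}{=} U$ with $\tilde U_n \to \tilde U$ a.s. Since $\tilde U_n$ takes values in $S_1^n$ a.s., the continuous-convergence property gives $\psi_n(\tilde U_n) \to \psi(\tilde U)$ a.s. The sequence $\{\psi_n\}$ is uniformly bounded by $\|f\|_\infty$, so the bounded convergence theorem yields $\E[\psi_n(\tilde U_n)] \to \E[\psi(\tilde U)]$, which is the same as $\E[\psi_n(U_n)] \to \E[\psi(U)]$. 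Since $f$ was arbitrary, \eqref{eq:sn2} follows.

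\textbf{Main obstacle.} There is no serious obstacle; the only mild subtlety is the bookkeeping that $\psi_n$ is defined only on the measurable subset $S_1^n$, which is handled once one notes that Skorohod coupling preserves the distribution of $\tilde U_n$ and hence $\tilde U_n \in S_1^n$ almost surely, so that $\psi_n(\tilde U_n)$ is well defined a.s.\ and the continuous-convergence argument applies on a full-measure set.
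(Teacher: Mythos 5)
Your proof is correct. You and the paper both make the same Fubini reduction: define $\psi_n(u)=\E[f(\Phi_n(u,Y_n(u)))]$ on $S_1^n$ and $\psi(u)=\E[f(G(u,Z))]$ on $S_1$, use independence to rewrite both sides as $\E[\psi_n(U_n)]$ and $\E[\psi(U)]$, and note that hypothesis \eqref{eq:sn1} plus Portmanteau gives continuous convergence $\psi_n(u_n)\to\psi(u)$ whenever $u_n\in S_1^n$, $u_n\to u$. Where you diverge is in how this continuous-convergence property is turned into convergence of expectations. The paper writes
\[
\E[\psi_n(U_n)]-\E[\psi(U)] = \int (\psi_n-\psi)\,d\theta_n + \left(\int \psi\,d\theta_n - \int \psi\,d\theta\right),
\]
handles the second term by weak convergence $\theta_n\Rightarrow\theta$ together with continuity of $\psi$ (this is where the extra hypothesis $G(\tilde u_n,Z)\weakc G(\tilde u,Z)$ is actually used), and handles the first term by tightness of $\{\theta_n\}$ plus a compactness/subsequence argument that uses both continuous convergence of $\psi_n$ and continuity of $\psi$. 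You instead invoke the Skorohod representation theorem for $U_n\weakc U$ and apply continuous convergence pathwise on the coupled space, followed by bounded convergence. This is a clean shortcut: it avoids the split, avoids the tightness argument, and — notably — never actually uses the second hypothesis on $G$ (you state that $\psi$ is continuous but your Skorohod step only needs $\psi_n(u_n)\to\psi(u)$ along sequences $u_n\in S_1^n$, which is \eqref{eq:sn1} alone). So your argument in fact establishes the conclusion under a weaker set of hypotheses; the paper's proof genuinely relies on the continuity of $u\mapsto G(u,Z)$ in distribution because of the way it splits the difference of integrals. The bookkeeping point you flag at the end (that $\tilde U_n\in S_1^n$ a.s.\ so that $\psi_n(\tilde U_n)$ is well-defined and the hypothesis applies pathwise) is indeed the only subtlety, and you handle it correctly.
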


The proof of the lemma can be found in the Appendix.
To see how the lemma is applied to reduce the study to the setting of nonrandom $\mvx^n$, we consider for example the setting of Theorem \ref{thm:fdd-fclt}. Similar observations apply to other results proved in this section and those in Sections \ref{sec:proofs-inf-dim} - \ref{sec:proofs-dense-mst}. For a vector $\mvy^n = \{y^n_i \in \cS, i \in [n]\} \in \cS^n$, let $\mu_n(\mvy^n) : = \frac{1}{n} \sum_{i=1}^n \delta_{y^n_i}$. Let $S_1= \{v \in \bR^K: \sum_{i\in [K]} v_i =0\}$ and for $n \in \bN$, let
$$S^n_1 = \{v \in S_1: v_j= \mvv^n_j(\mvy^n)   = \sqrt{n}(\mu_n(\mvy^n)(j) - \mu(j)), \, j \in [K], \mbox{ for some } \mvy^n \in \cS^n\}.$$
Consider now for a fixed $\mvx^n \in \cS^n$ the corresponding $\pi_n^{\fT_N}
= \pi_n^{\fT_N}(\mvx^n)$ which can be represented as in \eqref{eqn:irg-poisson-eqn}. The second term on the right side of this equation can be viewed as a random field with values in $S_2 = \bD([0,\infty): \bR^{|\fT_N|})$, indexed by $\mvx^n \in \cS^n$, or more precisely, by $u_n = \mvv^n(\mvx^n) \in S^n_1$. We denote this random field as $\{\tilde Y_n(u_n), u_n \in S^n_1\}$. Also, from the ODE in Proposition \ref{prop:ode-solutions} we have that
$$\pi^{\fT_N}(t) = \pi^{\fT_N}(0) + \bar Y (t)$$
where $\bar Y$ is an element of  $\bD([0,\infty): \bR^{|\fT_N|})$ defined as
$\bar Y_{\mvl}(t) = \int_0^t F^{\fT_N}_{\mvl}(\pi^{\fT_N}(s),\kappa,\mu) ds$ for $t \ge 0$ and $\mvl \in \fT_N$.
Let $\{ Y_n(u_n), u_n \in S^n_1\}$ be a $S_2$ valued random field defined as
$Y_n(u_n) = \sqrt{n}(\tilde Y_n(u_n) - \bar Y)$.
Also, for suitable maps $\eta_n: S^n_1 \to \bR^{|\fT_N|}$, we have that,
$\sqrt{n}(\pi_n^{\fT_N}(0) - \pi^{\fT_N}(0)) = \eta^n(\mvv^n(\mvx^n)) = \eta^n(u_n)$.
Then, for any given $\mvx^n \in \cS^n$, the corresponding $\vX^{\fT_N}_n$
in Theorem \ref{thm:fdd-fclt} can be written as
$\vX^{\fT_N}_n = \Phi_n(u_n, Y_n(u_n))$ where $u_n = \mvv^n(\mvx^n)$ and, with
$S_4 = S_2$,
$\Phi_n:S^n_1 \times S_2 \to S_4$ is defined as 
$\Phi_n(u_n, Y_n(u_n)) = \eta^n(u_n) + Y_n(u_n)$.
Also from classical SDE theory we have that, associated with a solution of 
\eqref{eqn:fdd-sde}, with $S_3 = \bC([0,\infty): \bR^{|\fT_N|})$, there is a measurable map
$G: S_1\times S_3 \to S_4$ such that with $\mvB^{\fT_N}$ as in Section \ref{sec:res-micro}
and $u \in S_1$, setting $Z = \mvB^{\fT_N}$, $G(u,Z)$ is the solution of \eqref{eqn:fdd-sde} with $\vV^{\fT_N}_{\mvl}(0) = 
\sum_{k \in [K]}u_k \ind\set{\mvl = \ve_k}$, $\mvl \in \fT_N$.
Furthermore one has that $G(\tilde u_n, Z) \to G(\tilde u,Z)$ whenever
$\tilde u_n \to \tilde u$ in $S_1$. 

Then Lemma \ref{lem:weakcgce} says that once Theorem  \ref{thm:fdd-fclt} has been established for all nonrandom $\mvx^n$ satisfying \eqref{eq:nonrandom} (namely \eqref{eq:sn1} holds whenever $u_n \to u$), then the theorem also holds for all $\mvx^n$ that are random as in Assumption \ref{ass:irg} (namely \eqref{eq:sn2} holds whenever  $U_n \weakc U$).

Recall the finite dimensional truncation of the densities of types of components $\mvpi_n^{\fT_N}(t) = (\pi_n(\mvl,t) : \mvl \in \fT_N)$ defined in Section \ref{sec:results}. Our first step is to obtain a semi-martingale representation of  $\mvpi^{\fT_N}_n(t)$, by understanding the evolution  of $\mvpi_n^{\fT_N}(\cdot)$ .

\subsection{Dynamics of finite dimensional component densities}
\label{subsec:three-events}

Recall the definition of the quadratic form $\theta_{\tilde{\kappa}}$ in \eqref{eqn:thetakapp-def} as well as $\Delta_{\mvl,\mvk}^{\fT_N}$, and $\Delta_{\mvl}$ defined in \eqref{eqn:Delta-N-defns}. For $\mvl \in \fT$, define the functions $\varphi_{\mvl}: \cL \times \cM \times \bR_+^K \to \bR$, and $ \varphi_{\mvl,\mvk}: \cL \times \cM \to \bR$ for $\mvl,\mvk \in \fT \setminus \set{\mv0}$:

\begin{equation}\label{eqn:rate-functions}
\begin{aligned}
\varphi_{\mv0}(\mvx,\tilde\kappa,\tilde\mu) &= \frac{1}{2}\left[\theta_{\tilde\kappa}(\tilde\mu,\tilde\mu) - 2\sum_{\mvl\in \fT} x_{\mvl}\theta_{\tilde\kappa}(\mvl,\tilde\mu) + \sum_{\mvl,\mvk \in \fT} x_{\mvl}x_{\mvk} \theta_{\tilde\kappa}(\mvl,\mvk)\right]\\
\varphi_{\mvl}(\mvx,\tilde\kappa,\tilde\mu) &= x_{\mvl}\left[\theta_{\tilde\kappa}(\mvl,\tilde\mu) - \sum_{\mvk\in \fT}x_{\mvk}\theta_{\tilde\kappa}(\mvl,\mvk)\right],\qquad \mvl \neq \mv0\\
\varphi_{\mvl,\mvk}(\mvx,\tilde\kappa) &= 2^{-\ind \set{\mvl = \mvk}}x_{\mvl} x_{\mvk}\theta_{\tilde\kappa}(\mvl,\mvk), \qquad \mvl,\mvk \neq \mv0.
\end{aligned}
\end{equation}
We note that from the definition of $\cL$ (see \eqref{eqn:cl-IRG}) the infinite sums in the above display are well defined.

Following are the four ways, the addition of an edge can change $\mvpi^{\fT_N}_n(t)$:
\begin{enumeratei}
\item An edge is created between two components of type $\mvl$ and $\mvk$ with $\mv0 < \mvl < \mvk \in \fT_N$. This event happens at a rate of \begin{align*}
    n\varphi_{\mvl,\mvk}(\mvpi^{\fT_N}_n(t),\kappa_n) = [n\pi_n(\mvl,t)][n\pi_n(\mvk,t)]\frac{\theta_{\kappa_n}(\mvl,\mvk)}{n}.
\end{align*} This event changes $\mvpi_n^{\fT_N}(t)$ by $\Delta^{\fT_n}_{\mvl,\mvk}/n.$

\item Two \emph{different} components of type $\mvl  \in \fT_N\setminus\{ \mv0\}$ are merged by the addition of an edge. This event occurs at rate, 
$$n\varphi^*_{\mvl,\mvl}(\mvpi^{\fT_N}_n(t),\kappa_n) = \binom{n\pi_n(\mvl,t)}{2}\frac{\theta_{\kappa_n}(\mvl,\mvl)}{n}.$$ The first term in the RHS of the above equation is the number of ways of choosing two different components of type $\mvl \in \fT_N$. Recalling the definition of $\varphi_{\mvl,\mvl}$ in \eqref{eqn:rate-functions}, we have \begin{align}
    \varphi_{\mvl,\mvl}^*(\mvpi^{\fT_N}_n(t),\kappa_n) &= \varphi_{\mvl,\mvl}(\mvpi^{\fT_N}_n(t),\kappa_n) + \epsilon_n(\mvl,t) \label{eqn:phi*-l-defn}\\
     \epsilon_n(\mvl, t) &= -\frac{1}{2n}\pi_n(\mvl,t)\theta_{\kappa_n}(\mvl,\mvl)\label{eqn:epsilon_l-defn}.
\end{align}Note that for all $t\geq 0$, we have \begin{align}\label{eqn:epsilon_l-bound}
    |\epsilon_n(\mvl, t)| \leq \frac{1}{n}\norm{\kappa_n}_{\infty}\norm{\mvl}.
\end{align} This event changes $ \mvpi^{\fT_N}_n(t)$ by $\Delta^{\fT_N}_{\mvl,\mvl}/n$.
\item A component of type $\mvl  \in \fT_N\setminus\{ \mv0\}$ merges with a component of type \emph{not} in $\fT_N$. This event occurs at a rate of \begin{align*}
    n\varphi_{\mvl}(\mvpi^{\fT_N}_{n}(t),\kappa_n,\mu_n) &= [n\pi_n(\mvl,t)]\left[\sum_{i=1}^K l_i \left(\sum_{j=1}^K \left(\mu_n(j)- \sum\limits_{\mvk \in \fT_N} k_j \pi_n(\mvk,t)\right) \kappa_n(i,j)\right)\right]\\
    &= [n\pi_n(\mvl,t)]\left[\theta_{\kappa_n}(\mvl,\mu_n) - \sum_{\mvk\in \fT_N}\pi_n(\mvk,t)\theta_{\kappa_n}(\mvl,\mvk)\right].
\end{align*} This event changes $\mvpi^{\fT_N}_{n}(t)$ by $\Delta_{\mvl}/n$.

\item An edge is added either (a) between two vertices in \emph{the same} component of type in $\fT_N$ or (b) between two vertices in components of type not in $\fT_N$. We first compute the rate of these two events individually.
\begin{enumeratea}
\item For any $\mvl  \in \fT_N\setminus\{ \mv0\}$, following is the rate at which edges are added between vertices in a given component of type $\mvl$:\begin{align*}
    R_{n}(\mvl,t) &=\frac{1}{n}\left[ \sum_{1\leq i\leq j\leq K} 2^{-\ind\set{i=j}} l_i(l_j-\delta_{i,j}) \kappa_n(i,j)\right] = \frac{1}{2n} \left[\theta_{\kappa_n}(\mvl,\mvl) - \sum_{i=1}^K l_i\kappa_n(i,i)\right].
\end{align*} 
Therefore, the total rate at which edges are added in the same component in $\fT_N$ is, 
$$
R^{(a)}_n(t) = \sum_{\mvl \in \fT_N} n \pi_n(\mvl,t) R_n(\mvl,t) = \frac{1}{2} \left[\sum_{\mvl \in \fT_N} \pi_n(\mvl,t) \left(\theta_{\kappa_n}(\mvl,\mvl) - \sum_{i=1}^K l_i\kappa_n(i,i)\right)\right].$$
\item Recall $\varphi_{\mv0}$ defined in \eqref{eqn:rate-functions}. The rate at which edges are created between vertices in components of type \emph{not} in $\fT_N$ is  given by $R_n^{(b)}(t)$ defined as
\begin{align*}
       &\frac{1}{n}\left[\sum_{1\leq i\leq j\leq K} 2^{-\ind\set{i=j}}\left(n \mu_n(i) -\sum_{\mvl \in \fT_N} n\pi_n(\mvl,t) l_i\right)\left(n \mu_n(j)-\sum_{\mvl \in \fT_N} n\pi_n(\mvl,t) l_j -\delta_{i,j}\right)\kappa_n(i,j)\right]\\
    &= n \varphi_{\mv0}(\mvpi^{\fT_N}_n(t),\kappa,\mu) - \frac{1}{2} \sum_{i=1}^K \mu_n(i) \kappa_n(i,i) + \frac{1}{2}\sum_{\mvl \in \fT_N} \pi_n(\mvl,t) \sum_{i=1}^K l_i\kappa_n(i,i).
\end{align*}
\end{enumeratea}
Therefore, the total rate of occurrence of either event (a) or (b) is \begin{align}
    n\varphi_{\mv0}^*(\mvpi^N(t),\kappa_n,\mu_n) &=  R_n^{(a)}(t) + R_b^{(b)}(t)  =  n\varphi_{\mv0}(\mvpi^N(t),\kappa_n,\mu_n) + n \epsilon_n(\mv0, t) \qquad \text{ where}\label{eqn:phi^*-defn}\\
    \epsilon_n(\mv0,t) &= \frac{1}{2n} \left[\sum_{\mvl \in \fT_N} \pi_n(\mvl,t) \theta_{\kappa_n}(\mvl,\mvl) - \sum_{i=1}^K \mu_n(i)\kappa_n(i,i)\right] \label{eqn:epsilon-0-defn}
\end{align} Note that for all $t\geq 0$, we have \begin{align}\label{eqn:epsilon-0-bound}
    |\epsilon_n(\mv0, t)| \leq \frac{ \norm{\kappa_n}_\infty (N^{2} + 1) }{2n} \leq \frac{ \norm{\kappa_n}_\infty N^{2} }{n}.
\end{align} 

\end{enumeratei}
In the remainder of the section, to reduce notational overhead,  we suppress dependence on $\kappa_n,\mu_n$ and, e.g., write $\varphi_{\mvl,\mvk}(\pi^{\fT_N}_{n}(t))$ for $\varphi_{\mvl,\mvk}(\pi^{\fT_N}_n(t),\kappa_n)$ etc.

\subsection{Proof of Theorem \ref{thm:fdd-fclt} and Theorem \ref{thm:odes-lln}:} 
\label{sec:pfllnclt} 
In this subsection, we prove Theorems \ref{thm:odes-lln} and \ref{thm:fdd-fclt} using classical results from \cite{kurtz-ethier} (see  Theorem \ref{thm:fdd-fclt-kurtz}).

Let $\set{Y_{\mvl,\mvk}: \mvl \leq \mvk \in \fT\setminus \set{\mv0}}$ and $\set{Y_{\mvl}: \mvl \in \fT}$ be a collection of independent Poisson processes of rate $1$. From the description of jumps of 
 $\mvpi^{\fT_N}_{n}(t)$  given in the previous section, one can give a distributionally equivalent  representation for $\mvpi^{\fT_N}_{n}$ in terms of these Poisson processes as follows(see Chapter 6 of \cite{kurtz-ethier}).
\begin{align}\label{eqn:irg-poisson-eqn}
    \mvpi^{\fT_N}_n(t) &= \mvpi^{\fT_N}_n(0) + \frac{1}{n}\bigg[\sum_{\mvl \in \fT_N \setminus \set{\mv0}} \Delta_{\mvl} Y_{\mvl}\left(n\int_{0}^t \varphi_{\mvl}(\mvpi^{\fT_N}_n(s))ds\right) + e_{\mv0} Y_{\mv0}\left(n \int_0^t \varphi_{\mv0}^*(\mvpi_n^{\fT_N}(s))ds\right)\\
    &\hspace{-2mm}+\sum_{\mvl,\mvk \in \fT_N \setminus\set{\mv0}, \mvl<\mvk}\Delta^{\fT_N}_{\mvl,\mvk} Y_{\mvl,\mvk}\left(n \int_{0}^t \varphi_{\mvl,\mvk}(\mvpi^{\fT_N}_n(s))ds\right) +  \sum_{\mvl \in \fT_N\setminus \set{\mv0}}\Delta^{\fT_N}_{\mvl,\mvl} Y_{\mvl,\mvl}\left(n \int_{0}^t \varphi^*_{\mvl,\mvl}(\mvpi^{\fT_N}_n(s))ds\right)\bigg].\nonumber
\end{align} 

\noindent We now rewrite the above process using the notation of Theorem \ref{thm:fdd-fclt-kurtz}. We can view  the vector $(\pi_n^{\fT_n}(\cdot),\kappa_n,\mu_n) = (\mvxi_n^{\fT_N}(\cdot), \mvxi_n^{(0)}(\cdot),\mvxi_n^{(1)}(\cdot))$ as continuous time jump Markov process with on $\bR_+^{|\fT_N|+K^2 + K}$ with \begin{align*}
    \xi_n^{\fT_N}(t) = \pi_n^{\fT_N}(t),\qquad\xi_n^{(0)}(t) = \xi_n^{(0)}(0) = \kappa_n,\qquad \xi_n^{(1)}(t) = \xi_n^{(1)}(0) = \mu_n
\end{align*} where the rate functions for $\mvz = (\mvx^{\fT_N},\tilde\kappa,\tilde\mu)$ are given by $\beta_{\mvl}(\mvz) = \varphi_{\mvl}(\mvx^{\fT_N},\tilde\kappa,\tilde\mu)$, and $\beta_{\mvl,\mvk}(\mvz) = \varphi_{\mvl,\mvk}(\mvx^{\fT_N},\tilde\kappa,\tilde\mu)$ for $\mvl,\mvk \in \fT_{N}$ with associated jump vectors given by $\tilde\Delta_{\mvl} = (\Delta_{\mvl},\mv0,\mv0)$ and $\tilde\Delta^{\fT_N}_{\mvl,\mvk} = (\Delta^{\fT_N}_{\mvl,\mvk},\mv0,\mv0)$. From \eqref{eqn:rate-functions}, we see that the associated rate functions are differentiable with continuous derivatives. The error functions are  $\epsilon_{n,\mvl}(s) = \ind\set{\mvl = \mv0}\epsilon_n(\mv0,s)$ and $\epsilon_{n,\mvl,\mvk}(s) = \ind\set{\mvk = \mvl} \epsilon_n(\mvl,\mvl,s)$, where $\epsilon_n(\mv0,s)$
and $\epsilon_n(\mvl,\mvl,s)$ are
as defined in \eqref{eqn:epsilon-0-defn} and \eqref{eqn:epsilon_l-defn}. 

From \eqref{eqn:epsilon_l-bound} and \eqref{eqn:epsilon-0-bound}, we see that the error functions are bounded by $\gamma/n$ where $\gamma = \sup_{n}\norm{\kappa_n} N^2$. This shows that the first two assumptions of Theorem \ref{thm:fdd-fclt-kurtz} are satisfied.

Consider now Theorem \ref{thm:odes-lln} and suppose that Assumption \ref{ass:irg}(a) holds. Then, with
$\mvxi(0) = (\mvpi^{\fT_N}(0),\kappa,\mu)$, $\mvxi_n(0) \to \mvxi(0)$ in probability. Also, from Proposition \ref{prop:ode-solutions}, part (iv) of the assumption in Theorem \ref{thm:fdd-fclt-kurtz} is satisfied. Theorem \ref{thm:odes-lln} is now immediate from the first assertion in this theorem.

Now consider Theorem \ref{thm:fdd-fclt} and suppose that Assumption \ref{ass:irg} holds. Then, from
\eqref{eqn:inital-point-x}, we have $\mvxi(0) = (\mvpi^{\fT_N}(0),\kappa,\mu)$ and $\tilde{\vX}(0) = \sqrt{n}\left(\mvxi_n(0)- \mvxi(0)\right)\convd (\vX^{\fT_N}(0),\Lambda,\Psi)$. This verifies the additional condition (below (iv)) in  Theorem \ref{thm:fdd-fclt-kurtz}.

Therefore, as an immediate consequence of Theorem \ref{thm:fdd-fclt-kurtz}, we now have $\vX_n^{\fT_N}(\cdot) \convd \vX^{\fT_N}(\cdot)$ which is a solution to a \emph{linear} sde. To describe the coefficients of the  SDE, we make the following observation on the gradients of the function $F_{\mvl}$ for $\mvl \in \fT_N$ defined in \eqref{eqn:F_l-defn}: 
    \begin{align} \label{eqn:gradient-relations}
    \nabla_{\mvx}F_{\mvl}(\mvx,\kappa,\mu) = \bigg[\sum\limits_{\mvk_1 + \mvk_2 = \mvl}  x_{\mvk_1}  \theta_{\kappa}(\mvk_1,\mvk_2) e_{\mvk_2}\bigg] &- \theta_{\kappa}(\mvl,\mu)e_{\mvl},\hspace{2mm} \nabla_{\kappa}F_{\mvl}(\mvx,\kappa,\mu)\at[\big]{\kappa= \Lambda} \cdot \Lambda = F_{\mvl}(\mvx,\Lambda,\mu) \\
  \nabla_{\mu}F_{\mv0}(\mvx,\kappa,\mu)\at[\big]{\mu = \Psi} \cdot \Psi = \theta_{\kappa}(\mvl,\Psi), \hspace{0.5cm} &\nabla_{\mu}F_{\mvl}(\mvx,\kappa,\mu)\at[\big]{\mu = \Psi} \cdot \Psi = -x_{\mvl}\theta_{\kappa}(\mvl,\Psi), \hspace{2mm} \mvl \neq \mv0.
 \end{align}
Using the above, and from the SDE in \eqref{eqn:thm-kurtz-sde}, we see that the $\vX_n^{\fT_N}(\cdot)$ is the solution to the SDE in \eqref{eqn:fdd-sde}. The result follows. 
\qed

\subsection{Proof of Proposition \ref{prop:mean-variance}}
   The goal of this section is to  compute the mean and variance of the solution to the \eqref{eqn:fdd-sde}. Conditional on $\Psi$, the SDE \eqref{eqn:fdd-sde} is a linear SDE  and thus (cf.  \cite{karatzas2014brownian}*{Section 5.6}), the mean vector $m^{\fT_N}(t) = (m(\mvl,t):\mvl \in \fT_N)$ satisfies the differential equation $\frac{d}{dt}m^{\fT_N}(t) = \Gamma^{\fT_N}(t)m^{\fT_N}(t) + a^{\fT_N}(t)$. In particular, for $\mvl \in \fT_N$, we have, 
   \begin{equation}\label{eqn:mean-ode}
   \begin{aligned}
   m'(\mv0,t) &=  F_{\mv0}(\mvpi^{\fT_N}(t),\Lambda,\mu) + \theta_{\kappa}(\mvl,\Psi)\\
        m'(\mvl,t) &= \sum_{\mvk_1 + \mvk_2 = \mvl} \pi(\mvk_1,t)m(\mvk_2,t)\theta_{\kappa}(\mvk_1,\mvk_2) - \theta_{\kappa}(\mvl,\mu)m(\mvl,t) + F_{\mvl}(\mvpi^{\fT_N}(t),\Lambda,\mu) - \pi(\mvl,t)\theta_{\kappa}(\mvl,\Psi)\\
        &= (\Gamma^{\fT_N}(t) m(\cdot, t))_{\mvl} + F_{\mvl}(\mvpi^{\fT_N}(t),\Lambda,\mu) - \pi(\mvl,t)\theta_{\kappa}(\mvl,\Psi).
        \end{aligned}
    \end{equation}
    Using the ODEs in \eqref{eqn:ode-irg} for $\{\pi(\mvl,t)\}$, one can verify that $m(\mvl,t) = \nabla_{\kappa}\pi(\mvl,t)^T \cdot \Lambda + \nabla_{\mu}\pi(\mvl,t)^T \cdot \Psi$ solves the system of ODE \eqref{eqn:mean-ode}.

 Next,  recall $\Sigma_{\mvl,\mvk}(t) = \E\left((X(\mvl,t) - m(\mvl,t))(X(\mvk,t) - m(\mvk,t))| \Psi\right)$. Applying \cite{karatzas2014brownian}*{Equation 6.31}, $\Sigma^{\fT_N}(t)= (\Sigma_{\mvl,\mvk}(t))_{\mvl,\mvk \in \fT_N}$ satisfies the differential equation (coordinate-wise):

 \begin{align*}
        \frac{d}{dt}\Sigma^{\fT_N}(t) = \Gamma^{\fT_N}(t) \Sigma^{\fT_N}(t) + \Sigma^{\fT_N}(t)\Gamma^{\fT_N}(t)^T + \Phi^{\fT_N}(t).
    \end{align*}

\noindent One can verify that $\Sigma_{\mvl,\mvk}(t)$ given in \eqref{eqn:variance-covariance-matrix-irg} solves the above ODE.
This completes the proof of Proposition \ref{prop:mean-variance}. \qed

\section{Proofs: convergence in \texorpdfstring{$\ell_{1,\delta}$}{l1d}}
\label{sec:proofs-inf-dim}
We now develop the technical tools to extend the finite dimensional convergence of component densities established in Theorem \ref{thm:fdd-fclt} to a convergence result in infinite dimensions, namely Theorem \ref{thm:fclt-irg}. A starting point is 
establishing tightness of the process  $\set{X_n^{\fT_{M\log n}}(t)}_{n\geq 1}$ where $X_n^{\fT_{M\log n}}(t) = \left(\sqrt{n}(\pi_n(\mvl,t) -\pi(\mvl,t)):\mvl \in \fT_{M\log n}\right)$ for which we need suitable moment bounds, the crux of Section \ref{sec:moment-bounds}. As in Section \ref{sec:fdd-fclt}, we will assume without loss of generality that \eqref{eq:nonrandom} holds. Assumption \ref{ass:irg} will be in force throughout the section.

\subsection{Moment Bounds}\label{sec:moment-bounds}
 In this section, we prove bounds on the first two moments, in Theorem \ref{thm:mean-moment-bound-main} and Theorem \ref{thm:variance-moment-bound-main} respectively, which are the main tools in establishing tightness in Section \ref{sec:tightness}. For the rest of the paper, we use $C,C',C'',\dots, C_1,C_2,\dots$ to denote generic finite constants that might change from one line to the next. 

\begin{thm}[First Moment Bound]
\label{thm:mean-moment-bound-main}For any $M, \delta \geq 0$, and a compact interval $\cI \subseteq \bR_+$ \emph{not} containing $t_c$, we have
\begin{align*}
            \sup_{n \geq 1}\sup_{t\in \cI} \bigg[\sum_{\mvl \in \fT_{M\log n}} \norm{\mvl}^\delta \E(\pi_n(\mvl,t))\bigg] < \infty.
        \end{align*} 
\end{thm}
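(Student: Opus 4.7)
The strategy is to translate the type-vector sum into a vertex-indexed sum. Using the definition of $\pi_n(\mvl,t)$ in \eqref{eqn:pin-irg-def} and exchanging the order of summation gives the identity
\begin{equation*}
\sum_{\mvl \in \fT_{M\log n}\setminus\{\mv0\}} \norm{\mvl}^\delta \, \E[\pi_n(\mvl,t)] = \frac{1}{n}\sum_{i=1}^n \E\bigl[|\cC_n(i,t)|^{\delta-1}\, \ind\{1\le |\cC_n(i,t)|\le M\log n\}\bigr],
\end{equation*}
while the contribution from $\mvl = \mv0$ is at most $\E[\pi_n(\mv0,t)]\le \tfrac12 t\norm{\kappa_n}_\infty$, which is uniformly bounded on $\cI$ by Assumption \ref{ass:irg}(a). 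It therefore suffices to bound the vertex-indexed average uniformly in $n\ge 1$ and $t\in\cI$.

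The crux is a uniform exponential tail bound: there exist constants $C,c\in(0,\infty)$ depending only on $(\kappa,\mu,\cI)$ such that for all $n$ large, $i\in[n]$, $t\in\cI$, and $k\ge 1$,
\begin{equation*}
\pr\bigl(|\cC_n(i,t)|\ge k,\ |\cC_n(i,t)|\le M\log n\bigr)\le C e^{-c k}.
\end{equation*}
When $\cI\subset[0,t_c)$, the operator norm $\norm{T_{t\kappa,\mu}}$ is strictly less than $1$ uniformly on $\cI$, so by the pointwise convergence $(\kappa_n,\mu_n)\to(\kappa,\mu)$ the branching process $\MBP_{x_i^n}(t\kappa_n,\mu_n)$ is uniformly subcritical for all large $n$; a standard BFS coupling shows $|\cC_n(i,t)|\lestoch |\MBP_{x_i^n}(t\kappa_n,\mu_n)|$, and a Perron--Frobenius argument applied to the Poisson offspring generating function gives the desired exponential tail uniformly. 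When $\cI\subset(t_c,\infty)$, the dominating process is supercritical and direct dominance is insufficient, so we invoke the duality of Lemma \ref{lem:conditional-dual}: the distribution of $\MBP_{x_i^n}(t\kappa_n,\mu_n)$ conditioned on extinction is that of the corresponding subcritical dual branching process. On the event $\{|\cC_n(i,t)|\le M\log n\}$, which by Theorem \ref{thm:irg-boll}(b) coincides whp with the event that $i$ lies outside the giant component, the BFS exploration from $i$ can be coupled with the subcritical dual process (via a sprinkling / two-round exposure argument in the spirit of \cite{bollobas2007phase}), yielding the same exponential tail uniformly in $n$, $i$, and $t\in\cI$.

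Given this tail estimate, the conclusion is a routine summation. For $\delta\in[0,1]$ the elementary bound $k^{\delta-1}\le 1$ for $k\ge 1$ immediately gives $\E[|\cC_n(i,t)|^{\delta-1}\ind\{|\cC_n(i,t)|\le M\log n\}]\le 1$. For $\delta>1$, Abel summation combined with the exponential tail yields
\begin{equation*}
\sum_{k=1}^{\lfloor M\log n\rfloor} k^{\delta-1}\,\pr(|\cC_n(i,t)|=k)\le \sum_{k=1}^\infty k^{\delta-1}\,\pr(|\cC_n(i,t)|\ge k)\le C\sum_{k=1}^\infty k^{\delta-1} e^{-ck}<\infty,
\end{equation*}
uniformly in $n$, $i$, and $t\in\cI$; averaging over $i$ completes the argument. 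The main obstacle is the uniform exponential tail in the supercritical regime, where the coupling with the subcritical dual branching process must be controlled uniformly in $n$; once that is in place, everything else reduces to bookkeeping and standard subcritical branching-process tail estimates.
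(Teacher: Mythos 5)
Your reorganization — rewriting the type-indexed sum as a vertex-indexed sum $\tfrac1n\sum_i\E[|\cC_n(i,t)|^{\delta-1}\ind\{1\le|\cC_n(i,t)|\le M\log n\}]$ — is correct and is essentially the same object the paper studies (the paper phrases it via a uniformly chosen vertex $U_n$). The core technical ingredients you invoke (branching-process approximation, duality, uniform subcriticality of the dual) are exactly the paper's: your subcritical stochastic-domination argument parallels Lemma \ref{lem:mgf-mbp} and Lemma \ref{lem:dual-norm}, and your appeal to Lemma \ref{lem:conditional-dual} matches the paper's Lemma \ref{lem:all-moments-finite}.

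There is, however, a genuine gap in the supercritical case, which you yourself flag as "the main obstacle." You assert a pure uniform exponential tail $\pr(k\le|\cC_n(i,t)|\le M\log n)\le Ce^{-ck}$ via a ``sprinkling / two-round exposure'' coupling of the truncated BFS with the subcritical dual $\MBP$. No such stochastic-domination coupling is available in any direct form: the conditional law of the exploration given that it stops before $M\log n$ steps is not monotone in the dual process. What does exist is a total-variation bound (the paper's Lemma \ref{lem:TV-approximation}, extending \cite{bollobas2007phase}*{Lemma 9.6}), which compares $\pr(\fF(\cC_n(U_n,t))=\mvl)$ to $\pr(\MBP_{\mu_n}(t\kappa_n,\mu_n)\in\bT_\cS(\mvl))$ type by type with an additive error $Ct\|\mvl\|^2/n$. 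Summing this error over $k\le\|\mvl\|\le M\log n$ produces an additive $O((\log n)^{K+2}/n)$ term, which is not exponentially small in $k$; it can be absorbed into $Ce^{-ck}$ only by making $c$ depend on $M$ and taking $n$ large, a point your constants-depend-only-on-$(\kappa,\mu,\cI)$ claim elides. The paper handles this cleanly by explicitly decomposing the sum into $R_n^{(0)},\ldots,R_n^{(3)}$ and showing the TV and kernel-perturbation error terms are $o(1)$ (via Lemmas \ref{lem:TV-approximation} and \ref{lem:mbp-tv-approx}) before applying the exponential decay of the limiting BP (Lemma \ref{lem:all-moments-finite}). Separately, your final display has a slip: the middle expression $\sum_k k^{\delta-1}\pr(|\cC_n(i,t)|\ge k)$ drops the truncation, and the untruncated tail does not decay in the supercritical regime (a vertex lands in the giant with probability $l(t)>0$); the Abel summation must be run against the truncated tail $\pr(k\le|\cC_n(i,t)|\le M\log n)$.
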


The proof of the theorem requires several lemmas which we prove first. The proof is broadly divided into two steps: (i) approximating $\E(\pi_n(\mvl,t))$ by the limit density $\pi(\mvl,t)$ uniformly in $\mvl$ and $t$;  (ii) establishing decay of $\pi(\mvl,t)$ with $\norm{\mvl}$, uniformly in an interval not containing $t_c$. We now focus on the first step.

The following lemma is needed to approximate laws of $\IRG$ by the distribution of the multi-type branching processes. The proof is a modification of the argument found in Lemma 9.6 in \cite{bollobas2007phase}. 

\begin{lemma}
\label{lem:TV-approximation}
    Let $U_n$ be a vertex uniformly picked at random from $[n]$, independent of other random processes defining the dynamic IRG model. Then, there is a $C\ge 0$, such that for any $\mvl  \in \fT\setminus\{\mv0\}$, $t \geq 0$, and $n \in \bN$, we have \begin{align*}
        \bigg|\pr\left(\fF(\cC_n(U_n,t)) = \mvl)\right) - \pr\left(\MBP_{\mu_n}(t\kappa_n,\mu_n)\in \bT_{\cS}(\mvl\right)\bigg| \leq \frac{Ct\norm{\mvl}^2}{n}.
    \end{align*} 
\end{lemma}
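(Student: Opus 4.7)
The plan is to couple the breadth-first exploration of $\cC_n(U_n,t)$ in $\bar{\cG}_n(t,\mvx^n,\kappa_n)$ with the breadth-first exploration of $\MBP_{\mu_n}(t\kappa_n,\mu_n)$ and estimate the total-variation cost step by step. Throughout I would condition on the type sequence $\mvx^n$, the stated bound being uniform in $\mvx^n$. Since $U_n$ is uniform on $[n]$, its type $x^n_{U_n}$ has distribution $\mu_n$, which matches the initial measure of the branching process. Moreover, since $\cG_n(t)$ and $\bar{\cG}_n(t,\mvx^n,\kappa_n)$ share the same component structure, I would work with the latter, in which each edge $\{i,j\}$ is present independently with probability $p_{ij} = 1 - e^{-t\kappa_n(x^n_i, x^n_j)/n}$.

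First I would set both BFS roots to have the common type $x^n_{U_n}$, then at each subsequent step couple the offspring vectors of the currently processed vertex. If this vertex has type $x$, then on the IRG side the number of newly discovered type-$y$ children is $\mathrm{Bin}(a_y, q_{xy})$, where $a_y$ is the number of unexplored type-$y$ vertices remaining and $q_{xy} = 1 - e^{-t\kappa_n(x,y)/n}$; on the branching-process side it is $\mathrm{Poisson}(t\kappa_n(x,y)\mu_n(y))$, independent across $y \in [K]$. I would couple these distributions using the maximal total-variation coupling, declaring a coupling failure the first time the two disagree. The event that the final BFS trees differ is then contained in the event of some coupling failure.

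The key estimate is a bound on the per-step total-variation cost. Using the Le Cam / Barbour--Hall inequality $d_{\mathrm{TV}}(\mathrm{Bin}(N,p), \mathrm{Poi}(Np)) \leq Np^2$ together with $q_{xy} \leq t\norm{\kappa_n}_{\infty}/n$ handles the Binomial-to-Poisson step, giving a contribution of order $t^2/n$ per type $y$. The remaining depletion-plus-Taylor error $|a_y q_{xy} - t\kappa_n(x,y)\mu_n(y)|$ is bounded by $C t j /n + O(t^2/n)$, where $j$ denotes the number of vertices already processed, using $|a_y - n\mu_n(y)| \leq j$ and $|q_{xy} - t\kappa_n(x,y)/n| = O(t^2/n^2)$. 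Summing over the $K$ types and the at most $\norm{\mvl}$ BFS steps that need to be carried out before either tree exceeds size $\norm{\mvl}$, the total coupling-failure probability is at most $C t \norm{\mvl}^2/n$, with $C$ depending on $K$ and $\sup_n \norm{\kappa_n}_{\infty}$. Since both target events are measurable with respect to the respective BFS trees, the desired bound on the difference of probabilities follows.

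The main technical obstacle is the careful bookkeeping of the depletion error as the exploration progresses: the shrinking pool of unexplored type-$y$ vertices is precisely what produces the quadratic $\norm{\mvl}^2$ factor (rather than a linear $\norm{\mvl}$) in the bound. A related subtlety is the cutoff of the BFS coupling once either side has exceeded $\norm{\mvl}$ vertices, which is permissible since both target events force the corresponding tree to have type vector exactly $\mvl$ and hence total size $\norm{\mvl}$.
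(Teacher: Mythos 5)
Your proposal is correct and takes essentially the same approach as the paper's proof: both couple the breadth-first explorations of $\cC_n(U_n,t)$ and of $\MBP_{\mu_n}(t\kappa_n,\mu_n)$, bound the per-step total-variation cost via a Binomial-to-Poisson approximation plus an $O(tj/n)$ depletion error at step $j$, and sum over the at most $\norm{\mvl}$ exploration steps to produce the $\norm{\mvl}^2$ factor. One small remark: the $Np^2$ form of the Le Cam bound that you invoke gives an $O(t^2/n)$ per-step Binomial-to-Poisson cost; to recover the $t$-linear dependence asserted in the lemma one should instead use the refinement $d_{\mathrm{TV}}(\mathrm{Bin}(N,p),\mathrm{Poi}(Np))\le \min(Np^2,p)\le p\le t\norm{\kappa_n}_\infty/n$, which is what underlies the paper's per-step estimate $C_2\, t\norm{\kappa_n}_\infty\norm{\mvl}/n$.
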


\begin{proof}
Consider the exploration of a component of a chosen vertex, where we begin by revealing all its adjacent edges one by one, and adding any discovered neighbors to a list of unexplored vertices. Select one vertex from this list, reveal its neighborhood, add to the list, and continue this process until either (i) the number of vertices of type $i$ exceed $l_i$ for some $i \in [K]$ or (ii) if there are no more vertices to explore.

In the exploration process, if at some stage there are $u_j$ many vertices of type $j$ which are not discovered in the exploration.  Then,  $n\mu_n(j) - l_j \leq u_j \leq n\mu_n(j)$, and the number of the type $j$ vertices discovered in the next step, provided that that the chosen vertex at that stage is of type $i$, follows $\text{Binomial}\left(u_j, 1-\exp(-\frac{t\kappa_n(i,j)}{n})\right)$ distribution. Also, we have \begin{align*}
         \norm{\text{Binomial}\left(u_j, 1-\exp\left(-\frac{t\kappa_n(i,j)}{n}\right)\right) - \text{Poisson}\left(t\kappa_n(i,j)\mu_n(j)\right)}_{TV}&\\
         &\hspace{-10cm}\leq  \norm{\text{Binomial}\left(u_j, 1-\exp\left(-\frac{t\kappa_n(i,j)}{n}\right)\right) - \text{Poisson}\left(t\kappa_n(i,j)\frac{u_j}{n}\right)}_{TV} \\
         &\hspace{-9.5cm}+ \norm{\text{Poisson}\left(t\kappa_n(i,j)\frac{u_j}{n}\right) - \text{Poisson}\left(t\kappa_n(i,j)\mu_n(j)\right)}_{TV} \\
         &\hspace{-10cm}\leq \frac{C_1 t\kappa_n(i,j)}{n} + C_1't\kappa_n(i,j) \left|\mu_n(j)- \frac{u_j}{n}\right| \leq \frac{C_2t\norm{\kappa_n}_\infty\norm{\mvl}}{n}
     \end{align*} for some constants $C_1,C_1',C_2 \geq 0$, not depending on $i,j, n$ or $t$. The last line follows from the upper bounds on the total variational distance for Poisson and Binomial distributions (see \cite{TV-Poisson}). The rest of the argument follows from the same reasoning as in Lemma $9.6$ in \cite{bollobas2007phase}. Since the exploration process ends after at most $\norm{\mvl}$ steps, we have $$
     |\pr\left(\fF(\cC_n(U_n,t)) = \mvl\right) - \pr\left(\MBP_{\mu_n}(t\kappa_n,\mu_n) \in \bT_{\cS}(\mvl)\right)| \leq \norm{\mvl} \cdot \frac{C_2t\norm{\mvl}\norm{\kappa_n}_\infty}{n},$$ for some constant $C_2 \geq 0$. This finishes the proof. 
\end{proof}

Following is a simple bound on the difference between distribution of two multi-type branching processes.
\begin{lemma}
\label{lem:mbp-tv-approx}
   Fix kernels $\tilde\kappa_1,\tilde\kappa_2$ and measures $\tilde\mu_1,\tilde\mu_2$ on $\cS$. For any $\mvl \in \fT\setminus\set{\mv0}$, we have \begin{align*}
      \sup_x \left|\pr\left(\MBP_x(\tilde\kappa_1,\tilde\mu_1) \in \bT_{\cS}(\mvl)\right)-\pr(\MBP_x(\tilde\kappa_2,\tilde\mu_2) \in \bT_{\cS}(\mvl))\right| \leq C \norm{\mvl}\left[\norm{\tilde\kappa_1-\tilde\kappa_2}_\infty + \norm{\tilde\mu_1-\tilde\mu_2}_\infty\right],
   \end{align*}
   where $C = C' \left[ \tilde\mu_2(\cS) + \norm{\tilde\kappa_1}_\infty \right]K$ for some constant $C'$.
\end{lemma}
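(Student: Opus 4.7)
The plan is a coupling argument. I would realize both branching processes on a common probability space by running a single breadth-first exploration that draws, at each popped vertex, a pair of offspring $K$-vectors under $(\tilde\kappa_1,\tilde\mu_1)$ and $(\tilde\kappa_2,\tilde\mu_2)$ using the maximal coupling of each coordinate. The two processes will then agree whenever the coupling succeeds at every step needed to decide the event $\bT_{\cS}(\mvl)$.

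First I would record the per-vertex total variation bound. At a type-$y$ vertex, the offspring under measure $i$ is a vector of independent $\Pois(\tilde\kappa_i(y,z)\tilde\mu_i(z))$ random variables indexed by $z\in[K]$. Using independence and the standard one-dimensional bound $d_{TV}(\Pois(\lambda_1),\Pois(\lambda_2))\le|\lambda_1-\lambda_2|$, the TV distance of the two offspring vectors is at most
$$
\sum_{z\in[K]}\bigl|\tilde\kappa_1(y,z)\tilde\mu_1(z)-\tilde\kappa_2(y,z)\tilde\mu_2(z)\bigr|.
$$
Splitting each summand as $\tilde\kappa_1(y,z)(\tilde\mu_1(z)-\tilde\mu_2(z))+(\tilde\kappa_1(y,z)-\tilde\kappa_2(y,z))\tilde\mu_2(z)$ and applying the triangle inequality, this sum is at most
$$
K\,\norm{\tilde\kappa_1}_\infty\norm{\tilde\mu_1-\tilde\mu_2}_\infty+\norm{\tilde\kappa_1-\tilde\kappa_2}_\infty\tilde\mu_2(\cS)\le K\bigl[\norm{\tilde\kappa_1}_\infty+\tilde\mu_2(\cS)\bigr]\bigl[\norm{\tilde\kappa_1-\tilde\kappa_2}_\infty+\norm{\tilde\mu_1-\tilde\mu_2}_\infty\bigr],
$$
uniformly in $y$. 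Call the right-hand side $\gamma$.

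Next I would observe that the event $\{\MBP_x\in\bT_{\cS}(\mvl)\}$ is determined by at most $\norm{\mvl}$ BFS-pops: if the tree has fewer than $\norm{\mvl}$ vertices the queue empties before $\norm{\mvl}$ pops (and the event fails); if the BFS queue is still nonempty after $\norm{\mvl}$ pops then the tree is strictly larger than $\norm{\mvl}$ (and the event also fails); otherwise the event is read off from the types of the first $\norm{\mvl}$ popped vertices. Consequently, if the maximal coupling succeeds at each of these at-most-$\norm{\mvl}$ popped vertices, the two explorations produce identical partial trees and queues, so the $\bT_{\cS}(\mvl)$-indicators agree for both processes.

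A union bound over the (at most) $\norm{\mvl}$ coupling steps now yields
$$
\bigl|\pr(\MBP_x(\tilde\kappa_1,\tilde\mu_1)\in\bT_{\cS}(\mvl))-\pr(\MBP_x(\tilde\kappa_2,\tilde\mu_2)\in\bT_{\cS}(\mvl))\bigr|\le \norm{\mvl}\,\gamma,
$$
uniformly in $x\in\cS$, which is the claimed inequality with $C'=1$. The only (mild) subtlety is that once a coupling step fails the two subsequent BFS traces may diverge; this is harmless since that event is already charged to the union bound, and on its complement the queue and tree structure after each successful pop are literally identical for the two processes. I do not expect any serious obstacle: everything reduces to the Poisson TV bound together with careful bookkeeping of the BFS horizon needed to decide $\bT_{\cS}(\mvl)$-membership.
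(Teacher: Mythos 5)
Your proposal is correct and is essentially the same argument as the paper's: both bound the per-vertex offspring total-variation distance by $\sum_z|\tilde\kappa_1(y,z)\tilde\mu_1(z)-\tilde\kappa_2(y,z)\tilde\mu_2(z)|$ via the elementary Poisson TV bound and then union-bound over the at-most-$\norm{\mvl}$ offspring draws that decide membership in $\bT_{\cS}(\mvl)$. The only cosmetic difference is that the paper phrases this as ``the event is determined by $l_i$ independent Poisson vectors for type $i$'' and sums $\sum_{i,j} l_i\,\|\Pois-\Pois\|_{TV}$, whereas you recast the same counting as a breadth-first exploration with maximal coupling at each popped vertex; your BFS-queue discussion simply makes the paper's determination claim explicit.
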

\begin{proof} Observe that the event $\set{\MBP_x(\tilde\kappa_1,\tilde\mu_1) \in \bT_{\cS}(\mvl)}$ is determined by $l_i$ many collection of independent $\set{\text{Poisson}\left(\tilde\kappa_1(i,j)\tilde\mu_1(j)\right):j\in [K]}$ random variables for $i\in [K]$. Hence, \begin{align*}
\bigg|\pr\left(\MBP_x(\tilde\kappa_1,\tilde\mu_1) \in \bT_{\cS}(\mvl)\right)-\pr\left(\MBP_x(\tilde\kappa_2,\tilde\mu_2) \in \bT_{\cS}(\mvl)\right)\bigg| &\\
        &\hspace{-9cm}\leq    \sum_{1\leq i,j\leq K} \norm{\Pois\left(\tilde\kappa_1(i,j)\tilde\mu_1(j)\right) -\Pois\left(\tilde\kappa_2(i,j)\tilde\mu_2(j)\right) }_{TV} l_i\\ 
        &\hspace{-9cm}\leq C' \sum_{1\leq i,j\leq K}|\tilde\kappa_1(i,j)\tilde\mu_1(j) - \tilde\kappa_2(i,j)\tilde\mu_2(j)| l_i,
    \end{align*}  for some constant $C'$ given by the bound on total variational distance between Poisson random variables.
    Triangle inequality and algebraic manipulations now yield the result.
\end{proof}

We next state the following result, proved in \cite{bhamidi2015aggregation}*{Corollary $6.16$ and Lemma $6.17$}, which is an important tool to bound the moments of the total population for a sub-critical multi-type branching process. 
Let $\ind: \cS \to \bR$ be defined as $\ind(x) = 1$ for all $x \in \cS$.
\begin{lemma}
\label{lem:mgf-mbp}
Fix a kernel $\kappa'$ and a measure $\mu'$ on $\cS$.     For $x\in \cS$, let $\MBP_x(\kappa',\mu')$ be as introduced below Definition \ref{defn:MBP}. assume $\norm{T_{\kappa',\mu'}} < 1$. For any $\delta \geq 0 $ such that $(1+\delta)\norm{T_{\kappa',\mu'}} < 1$, define $\phi_{\delta}(x) = \sum\limits_{i=0}^\infty (1+\delta)^iT_{\kappa',\mu'}^i\ind(x)$. For any $\epsilon < \log (1+\delta)/\norm{\phi_{\delta}}_{\infty}$, we have \begin{align*}
        \sup_{x \in \cS}\E\left(\exp\left(
\epsilon |\MBP_{x}(\kappa',\mu')|\right)\right) \leq \exp\left(\epsilon \left(1+ \norm{\kappa'}_{\infty}\mu'(\cS) \frac{1+\delta}{1-(1+\delta)\norm{T_{\kappa',\mu'}}}\right)\right).
    \end{align*}
\end{lemma}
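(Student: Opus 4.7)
The plan is to derive a functional equation for the moment generating function $f(x) := \E[\exp(\epsilon|\MBP_x(\kappa',\mu')|)]$, extract pointwise bounds on it via Picard iteration using the elementary inequality $e^y - 1 \leq (1+\delta)y$ valid for $y \in [0, \log(1+\delta)]$, and then convert these to the claimed estimate by controlling $\|\phi_\delta\|_\infty$ through a Cauchy--Schwarz argument. Decomposing $|\MBP_x|$ into the root and the independent subtrees hanging off its first-generation children, and using the probability generating function identity $\E[z^N] = e^{\lambda(z-1)}$ for $N \sim \Pois(\lambda)$, one obtains
\[ f(x) = e^{\epsilon}\prod_{y\in \cS}\exp\bigl(\kappa'(x,y)\mu'(y)(f(y)-1)\bigr) = \exp\bigl(\epsilon + T_{\kappa',\mu'}(f-\ind)(x)\bigr). \]
Writing $h(x) := f(x) - 1 \geq 0$, the desired bound reduces to $\log f(x) = \epsilon + T_{\kappa',\mu'}h(x) \leq \epsilon(1+M)$, where $M := \|\kappa'\|_{\infty}\mu'(\cS)(1+\delta)/(1-(1+\delta)\|T_{\kappa',\mu'}\|)$.

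Next I would approximate by truncation: let $h^{(k)}(x) := \E[\exp(\epsilon|\MBP^{(k)}_x|)] - 1$, where $\MBP^{(k)}_x$ denotes $\MBP_x$ cut off after the $k$-th generation, so that $h^{(0)} = e^\epsilon - 1$, $h^{(k+1)}(x) = \exp(\epsilon + T_{\kappa',\mu'}h^{(k)}(x)) - 1$, and $h^{(k)} \uparrow h$ by monotone convergence. The crucial identity $T_{\kappa',\mu'}\phi_\delta = (\phi_\delta - \ind)/(1+\delta)$, immediate from the series definition of $\phi_\delta$, drives an induction showing $h^{(k)} \leq \epsilon(1+\delta)\phi_\delta$: assuming the bound on $h^{(k)}$ and using $\phi_\delta \geq \ind$ for the base case,
\[ \epsilon + T_{\kappa',\mu'}h^{(k)} \leq \epsilon + \epsilon(\phi_\delta - \ind) = \epsilon\phi_\delta \leq \epsilon\|\phi_\delta\|_{\infty} \leq \log(1+\delta), \]
where the last inequality is the hypothesis on $\epsilon$. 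The elementary inequality then applies, yielding $h^{(k+1)} \leq (1+\delta)(\epsilon + T_{\kappa',\mu'}h^{(k)}) \leq \epsilon(1+\delta)\phi_\delta$. Passing to the limit gives the stronger bound $\log f(x) = \epsilon + T_{\kappa',\mu'}h(x) \leq \epsilon\phi_\delta(x) \leq \epsilon\|\phi_\delta\|_{\infty}$.

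Finally, to see $\|\phi_\delta\|_{\infty} \leq 1+M$, I would combine the fixed-point relation $\phi_\delta = \ind + (1+\delta)T_{\kappa',\mu'}\phi_\delta$ with the Cauchy--Schwarz estimate
\[ |T_{\kappa',\mu'}g(x)| \leq \Bigl(\sum_{y\in\cS}\kappa'(x,y)^2\mu'(y)\Bigr)^{1/2}\|g\|_2 \leq \|\kappa'\|_{\infty}\mu'(\cS)^{1/2}\|g\|_2, \]
and the $L^2$ bound $\|\phi_\delta\|_2 \leq \sum_{i\geq 0}(1+\delta)^i\|T_{\kappa',\mu'}\|^i\|\ind\|_2 = \mu'(\cS)^{1/2}/(1-(1+\delta)\|T_{\kappa',\mu'}\|)$ that follows from the triangle inequality and the operator-norm assumption. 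This yields $\|\phi_\delta\|_{\infty} \leq 1 + \|\kappa'\|_{\infty}\mu'(\cS)(1+\delta)/(1-(1+\delta)\|T_{\kappa',\mu'}\|) = 1 + M$, and hence $f(x) \leq \exp(\epsilon(1+M))$.

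The main subtle point is the bookkeeping at the end of the induction: it is essential to extract the improved bound $\log f \leq \epsilon\|\phi_\delta\|_{\infty}$ rather than the naive $(1+\delta)\epsilon\|\phi_\delta\|_{\infty}$ that one would obtain by applying $\log(1+a)\le a$ to $h \leq (1+\delta)\epsilon\phi_\delta$. The saving of one factor of $(1+\delta)$ is what matches the exponent claimed in the lemma, and it is available only because the identity $T_{\kappa',\mu'}\phi_\delta = (\phi_\delta - \ind)/(1+\delta)$ absorbs exactly this factor when bounding $\log f$ rather than $f-1$.
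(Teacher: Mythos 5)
The paper does not prove this lemma; it cites it as \cite{bhamidi2015aggregation}*{Corollary 6.16 and Lemma 6.17}, so there is no in-text proof to compare against. Your argument is correct and self-contained, and the appearance of $\phi_\delta$ and the threshold $\epsilon < \log(1+\delta)/\|\phi_\delta\|_\infty$ in the lemma statement strongly suggest the cited source proceeds by essentially the same mechanism (MGF functional equation plus the majorant $\phi_\delta$ and the elementary bound $e^y-1\le(1+\delta)y$ on $[0,\log(1+\delta)]$).

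Two small checks that make the argument airtight, both of which you handle implicitly: (i) since $\|T_{\kappa',\mu'}\|<1$ the branching process is subcritical and extinct a.s., so $|\MBP_x|<\infty$ a.s.\ and the monotone-convergence passage $f^{(k)}\uparrow f$ is justified; (ii) the base case of the induction uses $\phi_\delta\ge\ind$, hence $\|\phi_\delta\|_\infty\ge 1$, hence $\epsilon<\log(1+\delta)$, which is what makes the elementary inequality applicable at step $0$. The observation in your last paragraph — that one must bound $\log f=\epsilon+T_{\kappa',\mu'}h$ directly (getting $\epsilon\phi_\delta$) rather than take $\log(1+h)\le h\le(1+\delta)\epsilon\phi_\delta$ — is exactly the point that recovers the stated exponent; the fixed-point identity $T_{\kappa',\mu'}\phi_\delta=(\phi_\delta-\ind)/(1+\delta)$ is doing real work there. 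The Cauchy–Schwarz step to convert the $L^2$ operator-norm hypothesis into a sup-norm bound on $\phi_\delta$ is also needed, because $\|T_{\kappa',\mu'}\|$ in the paper's conventions is the $L^2(\mu')$-operator norm, not a sup-norm, and your handling of this is correct.
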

The following  consequence of the above lemma, which is proved below the proof of Lemma \ref{lem:dual-norm}, will be used in the proof of tightness.

\begin{lem}\label{lem:all-moments-finite}
    For any compact interval $\cI \subseteq \bR_+$  not containing $t_c$, and $\delta \geq 0$, we have \begin{align*}
        \sup_{t\in \cI} \left[\sum_{\mvl \in \fT} \norm{\mvl}^{\delta} \pi(\mvl,t)\right] \leq \sup_{x\in [K]}\sup_{t\in \cI} \left[\sum_{\mvl \in \fT\setminus \set{\mv0}} \norm{\mvl}^{\delta-1} \pr\left(\MBP_x(t\kappa,\mu) \in \bT_{\cS}(\mvl)\right)\right]  < \infty.
    \end{align*}
\end{lem}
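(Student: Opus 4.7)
The first inequality is a direct rewriting via the branching-process formula in Proposition \ref{prop:ode-mbp-irg}. For every $\mvl \in \fT \setminus \set{\mv0}$,
\[
\norm{\mvl}^{\delta}\pi(\mvl,t) \;=\; \norm{\mvl}^{\delta-1}\sum_{x \in [K]}\mu(x)\,\pr(\MBP_x(t\kappa,\mu) \in \bT_\cS(\mvl)),
\]
and summing over $\mvl \neq \mv0$, interchanging the two sums, and using $\sum_{x \in [K]}\mu(x) = 1$ bounds $\sum_{\mvl \neq \mv0} \norm{\mvl}^{\delta}\pi(\mvl,t)$ above by the supremum over $x \in [K]$ of $\sum_{\mvl \neq \mv0}\norm{\mvl}^{\delta-1}\pr(\MBP_x(t\kappa,\mu) \in \bT_\cS(\mvl))$, which is the first inequality. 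The contribution of $\mvl = \mv0$ to the left-hand side vanishes under the natural convention $\norm{\mv0}^{\delta} = 0$.

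For the finiteness claim, my plan is to reduce to moment bounds on a subcritical branching process via the duality in Lemma \ref{lem:conditional-dual}, and then invoke Lemma \ref{lem:mgf-mbp}. Since $\bT_\cS(\mvl) \subseteq \set{|\MBP_x(t\kappa,\mu)| < \infty}$ for every $\mvl \in \fT \setminus \set{\mv0}$, Lemma \ref{lem:conditional-dual} gives
\[
\pr(\MBP_x(t\kappa,\mu) \in \bT_\cS(\mvl)) \;=\; (1-\rho_{t\kappa,\mu}(x))\,\pr(\MBP_x(t\kappa,\hat\mu_t) \in \bT_\cS(\mvl)) \;\le\; \pr(\MBP_x(t\kappa,\hat\mu_t) \in \bT_\cS(\mvl)).
\]
Since $\MBP_x(t\kappa,\hat\mu_t)$ is (strictly) subcritical and hence a.s.\ finite, summing against $\norm{\mvl}^{\delta-1}$ and using $\norm{\mvl}^{\delta-1} \leq 1 + \norm{\mvl}^{\delta}$ for $\norm{\mvl} \ge 1$ yields
\[
\sum_{\mvl \neq \mv0}\norm{\mvl}^{\delta-1}\pr(\MBP_x(t\kappa,\mu) \in \bT_\cS(\mvl)) \;\le\; \E\!\left[|\MBP_x(t\kappa,\hat\mu_t)|^{\delta - 1}\right] \;\le\; 1 + \E\!\left[|\MBP_x(t\kappa,\hat\mu_t)|^{\delta}\right],
\]
and Lemma \ref{lem:mgf-mbp} furnishes an exponential moment, hence all polynomial moments, of $|\MBP_x(t\kappa,\hat\mu_t)|$ as soon as $\norm{T_{t\kappa,\hat\mu_t}} < 1$.

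The main technical point, and the one I expect to be the real work, is upgrading the pointwise-in-$t$ bound above to one that is uniform over $t$ in the compact set $\cI$. This reduces to showing $\sup_{t \in \cI}\norm{T_{t\kappa,\hat\mu_t}} < 1$. If $\cI \subseteq [0,t_c)$ this is immediate: then $\rho_{t\kappa,\mu} \equiv 0$, $\hat\mu_t = \mu$, and $\norm{T_{t\kappa,\mu}} = t/t_c$, which is uniformly bounded away from $1$ by compactness of $\cI$. If $\cI \subseteq (t_c,\infty)$ I would first establish continuity of the map $t \mapsto \rho_{t\kappa,\mu}$ on $(t_c,\infty)$, for instance by a standard implicit-function argument applied to the strictly convex fixed-point equation $\rho = \Phi_{t\kappa,\mu}(\rho)$ together with the uniqueness statement in \cite{bollobas2007phase}*{Theorem 6.2}; this continuity then propagates to $t \mapsto \hat\mu_t$ and to $t \mapsto \norm{T_{t\kappa,\hat\mu_t}}$. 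Combined with the strict sub-criticality $\norm{T_{t\kappa,\hat\mu_t}} < 1$ for each $t > t_c$ from the lemma just above (a consequence of \cite{bollobas2007phase}*{Lemma 6.6 and Theorem 6.7}) and compactness of $\cI$, this yields the desired uniform bound. Having secured $\sup_{t \in \cI}\norm{T_{t\kappa,\hat\mu_t}} < 1$, one chooses the parameters $\delta^\ast$ and $\epsilon$ in Lemma \ref{lem:mgf-mbp} independently of $t \in \cI$ to obtain a uniform exponential moment bound on $|\MBP_x(t\kappa,\hat\mu_t)|$, and hence the required finite uniform polynomial moment, also uniform in $x \in [K]$ by finiteness of the type space.
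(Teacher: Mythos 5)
Your proposal is correct, and the overall architecture -- rewrite via Proposition~\ref{prop:ode-mbp-irg}, pass to the dual subcritical process via Lemma~\ref{lem:conditional-dual}, get uniform subcriticality $\sup_{t\in\cI}\norm{T_{t\kappa,\hat\mu_t}}<1$, and close with Lemma~\ref{lem:mgf-mbp} -- matches the paper's. The one genuine divergence is in how you obtain the uniform subcriticality. The paper invokes its own Lemma~\ref{lem:dual-norm} (continuity of dual operator norms, which in turn rests on Theorem~6.4 of \cite{bollobas2007phase}) to get that $t\mapsto\norm{\hat{T}_{t\kappa,\mu}}$ is continuous on $[0,\infty)$, and then simply uses compactness of $\cI$; no case split on which side of $t_c$ the interval lies is needed. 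You instead propose, in the supercritical case, to re-derive the continuity of $t\mapsto\rho_{t\kappa,\mu}$ from scratch by an implicit-function argument applied to the fixed-point equation $\rho=\Phi_{t\kappa,\mu}(\rho)$. That argument does work (at the maximal fixed point in the supercritical phase the Jacobian $I-\mathrm{Diag}(e^{-T_{t\kappa,\mu}\rho})T_{t\kappa,\mu}$ is invertible, precisely because the dual is subcritical), but it duplicates machinery the paper already has on the shelf and is more work than citing Lemma~\ref{lem:dual-norm}. A second, cosmetic difference: you pass through the polynomial moment $\E[|\MBP_x(t\kappa,\hat\mu_t)|^{\delta}]$, whereas the paper pulls the exponential moment out directly and applies Markov's inequality to $e^{\epsilon k}$, folding the factor $k^{\delta-1}$ into a convergent geometric-type series $\sum_k k^{\delta-1}e^{-\epsilon k}$; both routes are fine, but the paper's is a touch shorter.
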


The proof of the above lemma  relies on the following continuity property of the norms of the integral operators associated with the dual MBP (see Section \ref{subsec:irg-definitions}), when the sequence of kernels converge to a kernel, and the measure $\mu$ is fixed. 

\begin{lemma}[Continuity of dual norms]
\label{lem:dual-norm}
Consider a sequence of kernels $\kappa'_n$  converging to $\kappa'$ and let $\mu_0$ be a finite measure  on $\cS$. Consider the associated dual operators $\hat{T}_{\kappa'_n,\mu_0} = T_{\kappa'_n,\hat{\mu}_0}$, and similarly define $\hat{T}_{\kappa',\mu_0}$. We then have \begin{align*}
  \lim_{n\to \infty}  \norm{\hat{T}_{\kappa'_n,\mu_0}} = \norm{\hat{T}_{\kappa',\mu_0}}.
\end{align*}
\end{lemma}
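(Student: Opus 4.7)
The plan is to reduce the claim to the continuity of the maximal fixed point $\rho_{\kappa,\mu_0}$ of $\Phi_{\kappa,\mu_0}$ with respect to $\kappa$. Since $\cS=[K]$ is finite, each $T_{\kappa,\mu}$ can be identified with a $K\times K$ matrix whose entries depend bilinearly on $(\kappa,\mu)$ and whose operator norm, being the spectral norm of the symmetric matrix $\text{diag}(\mu)^{1/2}\kappa\,\text{diag}(\mu)^{1/2}$, is continuous in these entries. Writing $\hat\mu_0^{(n)}(x)=(1-\rho_{\kappa'_n,\mu_0}(x))\mu_0(x)$ and $\hat\mu_0(x)=(1-\rho_{\kappa',\mu_0}(x))\mu_0(x)$, so that $\hat T_{\kappa'_n,\mu_0}=T_{\kappa'_n,\hat\mu_0^{(n)}}$ and $\hat T_{\kappa',\mu_0}=T_{\kappa',\hat\mu_0}$, it suffices to show $\rho_{\kappa'_n,\mu_0}\to \rho_{\kappa',\mu_0}$ coordinatewise; the entries of $\hat T_{\kappa'_n,\mu_0}$ will then converge to those of $\hat T_{\kappa',\mu_0}$ and the conclusion follows from continuity of the operator norm.

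For the upper bound $\limsup_n \rho_{\kappa'_n,\mu_0}\le \rho_{\kappa',\mu_0}$, I would use that $\Phi_{\kappa,\mu_0}$ is monotone and $\Phi_{\kappa,\mu_0}(\mvone)\le \mvone$, so that $\Phi_{\kappa,\mu_0}^k(\mvone)\downarrow \rho_{\kappa,\mu_0}$ as $k\to\infty$ (the decreasing limit is a fixed point and, since any fixed point $h$ satisfies $h\le\mvone$ and hence $h=\Phi^k h\le \Phi^k\mvone$, it dominates every fixed point). Given $\varepsilon>0$, pick $k$ with $\Phi_{\kappa',\mu_0}^k(\mvone)\le \rho_{\kappa',\mu_0}+\varepsilon$; continuity of $\Phi$ in the matrix entries yields $\Phi_{\kappa'_n,\mu_0}^k(\mvone)\to \Phi_{\kappa',\mu_0}^k(\mvone)$, whence $\rho_{\kappa'_n,\mu_0}\le \Phi_{\kappa'_n,\mu_0}^k(\mvone)\le \rho_{\kappa',\mu_0}+2\varepsilon$ for $n$ large.

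For the matching $\liminf$, if $\|T_{\kappa',\mu_0}\|\le 1$ then $\rho_{\kappa',\mu_0}=\mvzero$ by Theorems 6.1--6.2 of \cite{bollobas2007phase} and there is nothing to prove. The real content is the supercritical case $\|T_{\kappa',\mu_0}\|>1$, where (under the standing irreducibility assumption on the limiting kernel) Perron--Frobenius supplies a strictly positive eigenvector $f$ with $T_{\kappa',\mu_0}f=\lambda f$ and $\lambda>1$. A Taylor expansion of $\Phi_{\kappa',\mu_0}(cf)=\mvone-e^{-c\lambda f}$ shows that for $c>0$ sufficiently small one has $\Phi_{\kappa',\mu_0}(cf)\ge cf$ coordinatewise (the constraint is $c\lambda^2 f(x)/2 \le \lambda-1$, which holds uniformly in $x$ since $f$ is bounded on the finite space). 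By continuity of $\Phi$ in the matrix entries the same inequality persists with $\kappa'$ replaced by $\kappa'_n$ (possibly after shrinking $c$) for all $n$ large, and iterating the monotone map $\Phi_{\kappa'_n,\mu_0}$ from $cf$ yields a fixed point $\ge cf$; this fixed point is in turn dominated by $\rho_{\kappa'_n,\mu_0}$, giving $\rho_{\kappa'_n,\mu_0}\ge cf$ uniformly in $n$. Any subsequential limit of $\rho_{\kappa'_n,\mu_0}$ is then a fixed point of $\Phi_{\kappa',\mu_0}$ bounded below by $cf>\mvzero$, and since the only strictly positive fixed point of $\Phi_{\kappa',\mu_0}$ in the irreducible supercritical case is $\rho_{\kappa',\mu_0}$, this limit must equal $\rho_{\kappa',\mu_0}$.

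The main obstacle is the supercritical regime: one must prevent $\rho_{\kappa'_n,\mu_0}$ from collapsing to the unstable fixed point $\mvzero$ of $\Phi_{\kappa',\mu_0}$. The Perron--Frobenius eigenfunction together with the quadratic behavior of $\Phi$ near $\mvzero$ supplies the uniform-in-$n$ lower bound that rules this out.
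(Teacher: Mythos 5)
Your proof is correct, and it reaches the conclusion by a genuinely more self-contained route than the paper. The paper's argument is short: it cites \cite{bollobas2007phase}*{Theorem 6.4} as a black box to obtain $\norm{\rho_{\kappa'_n,\mu_0}-\rho_{\kappa',\mu_0}}_\infty\to 0$ directly, and then bounds $\big|\norm{\hat{T}_{\kappa'_n,\mu_0}}-\norm{\hat{T}_{\kappa',\mu_0}}\big|$ by an explicit entrywise estimate involving $\norm{\kappa'_n-\kappa'}_\infty$ and $\norm{\rho_{\kappa'_n,\mu_0}-\rho_{\kappa',\mu_0}}_\infty$. You instead reprove the convergence of the survival probability from scratch, exploiting the finiteness of $\cS$: the monotone iteration $\Phi^k_{\kappa,\mu_0}(\mvone)\downarrow\rho_{\kappa,\mu_0}$ combined with joint continuity of $\Phi^k$ in the kernel gives the $\limsup$ bound, and in the supercritical regime a Perron--Frobenius eigenvector supplies a uniform strictly positive subsolution $cf$ of $\Phi_{\kappa'_n,\mu_0}$, which together with uniqueness of the nonzero fixed point forces all subsequential limits of $\rho_{\kappa'_n,\mu_0}$ to equal $\rho_{\kappa',\mu_0}$; you then finish, as the paper does in effect, by continuity of the spectral norm of $\text{diag}(\mu_0)^{1/2}\kappa'\,\text{diag}(\mu_0)^{1/2}$ in its entries. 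Your route trades a citation for a transparent finite-dimensional argument, which is a reasonable exchange here. The one thing worth making explicit is that both your Perron--Frobenius step and your appeal to uniqueness of the positive fixed point require irreducibility of $(\kappa',\mu_0)$; this is not stated in the lemma but is satisfied in every application (the lemma is invoked for kernels $t\kappa$ with $(\kappa,\mu)$ irreducible and $t>0$, the case $t=0$ being trivial), and it is the same hypothesis that the cited \cite{bollobas2007phase}*{Theorem 6.4} needs.
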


\begin{proof}
    By Theorem $6.4$ in \cite{bollobas2007phase}, we have $\norm{\rho_{\kappa'_n,\mu_0} -\rho_{\kappa',\mu_0}}_\infty  \to 0$. Also, recalling the definition of $\hat \mu_0$ from \eqref{eqn:dual-prob}, we have \begin{align*}
        \left|  \norm{\hat{T}_{\kappa'_n,\mu_0}} -\norm{\hat{T}_{\kappa',\mu_0}} \right| \leq\left[ \norm{\kappa'_n-\kappa'}_{\infty} + \norm{\kappa'}_{\infty} \norm{\rho_{\kappa'_n,\mu_0} -\rho_{\kappa',\mu_0}}_\infty\right] \mu_0(\cS)^2.
    \end{align*}The result is now immediate.
\end{proof}

We now give a proof of  Lemma \ref{lem:all-moments-finite} using  Lemma \ref{lem:dual-norm} and Lemma \ref{lem:mgf-mbp}. 

\begin{proof}[Proof of Lemma \ref{lem:all-moments-finite}:]
The first inequality in the assertion is immediate from the first statement in Proposition \ref{prop:ode-mbp-irg}. So let us now prove the finiteness assertion. Since the dual multi-type branching process  goes extinct almost surely, we have $\norm{\hat{T}_{t\kappa,\mu}} < 1$ for all $t\neq t_c$. Also, by Lemma \ref{lem:dual-norm}, we have $t\to \norm{\hat{T}_{t\kappa,\mu}}$ is a continuous function on $[0,\infty)$. Since $\cI$ is compact, we have $\sup_{t\in \cI} \norm{\hat{T}_{t\kappa,\mu}} < 1$.
Hence, there exists $\delta > 0$ such that $(1+\delta)\norm{\hat{T}_{t\kappa,\mu}} < 1$ for all $t\in \cI$.  By Lemma \ref{lem:mgf-mbp}, there exists $\epsilon > 0$, such that for all $x\in \cS$,  $\sup\limits_{t\in \cI}\E\left(\exp\left(\epsilon |\MBP_{x}(t\kappa,\hat{\mu}_t)|\right)\right) < \infty$, where $\hat \mu_t$ is as in Proposition \ref{prop:ode-mbp-irg}. Thus, for any $\delta \geq 0$ and $x\in \cS$, by Markov's inequality,
\begin{multline*}
    \sup_{t\in \cI}\left[\sum_{\mvl \in \fT} \norm{\mvl}^{\delta-1} \pr(\MBP_x(t\kappa,\mu) \in \bT_{\cS}(\mvl))\right]\le  \sup_{t\in \cI}\left[\sum_{k=1}^\infty k^{\delta-1} \pr(|\MBP_{x}(t\kappa,\hat{\mu}_t)| = k) \right] \\
    \leq  \left[\sup_{t\in \cI}\E(\exp\left(\epsilon|\MBP_{x}(t\kappa,\hat{\mu}_t)|\right) \right] \left[\sum_{k=1}^\infty k^{\delta-1} \exp(-\epsilon k)\right] < \infty,
\end{multline*} 
where the first inequality uses Lemma \ref{lem:conditional-dual}.
This finishes the proof.
\end{proof}

\begin{rem}\label{rem:exp-decay}
    The key observation in the above proof is the following. For every $t\neq t_c$, there exists $\delta(t) > 0$ and $C(t)>0$ such that $\pi(\mvl,t) \leq C(t)\exp\left(-\delta(t)\norm{\mvl}\right)$. Furthermore, for any compact interval $\cI$ not containing $t_c$, we have $\sup_{\cI}C(t) < \infty$ and \abb{$\inf_{\cI}\delta(t) > 0$}.
\end{rem}

We now provide a proof of Theorem \ref{thm:mean-moment-bound-main} using the above Lemmas.
\begin{proof}[Proof of Theorem \ref{thm:mean-moment-bound-main}]
Fix $M,\delta \geq 0$, and a compact interval $\cI$ not containing $t_c$. Let $T = \sup \cI$. For $n\geq 1$, let $U_n$ be a uniformly picked vertex from $[n]$. Observe that $$\sum\limits_{\mvl \in \fT_{M\log n}} \norm{\mvl}^\delta \E\left(\pi_n(\mvl,t)\right) = \sum_{i=0}^{3} R_n^{(i)}(t) $$ where \begin{align}\label{eqn:r_i's-defn}
R_n^{(0)}(t)&= \E(\pi_n(\mv0,t))\\
    R_n^{(1)}(t) &= \sum_{\mvl \in \fT_{M\log n}\setminus\{\mv0\}} \norm{\mvl}^{\delta-1} \bigg[\pr(\fF(\cC_n(U_n,t)) = \mvl) -\pr(\MBP_{\mu_n}(t\kappa_n,\mu_n) \in \bT_{\cS}(\mvl))\bigg]\\
    R_n^{(2)}(t) &= \sum_{\mvl \in \fT_{M\log n}\setminus\{\mv0\}}\norm{\mvl}^{\delta-1} \bigg[\pr(\MBP_{\mu_n}(t\kappa_n,\mu_n) \in \bT_{\cS}(\mvl)) -\pr(\MBP_{\mu_n}(t\kappa,\mu)\in \bT_{\cS}(\mvl))\bigg]\\
    R_n^{(3)}(t) &= \sum_{\mvl \in \fT_{M\log n}\setminus\{\mv0\}} \norm{\mvl}^{\delta-1}  \pr(\MBP_{\mu_n}(t\kappa,\mu)\in \bT_{\cS}(\mvl)).
\end{align}
To prove the result, it is sufficient to show that $
\sup_n \sup_{t\in \cI} \left|R_n^{(i)}(t)\right| < \infty$ for $0\leq i\leq 3.$ For the first term, note that $\{n\pi_n(\mv0,t)\}_{t\ge 0}$ is stochastically bounded by a Poisson process with rate $n\theta_{\kappa_n}(\mu_n,\mu_n)$. Therefore, $$\sup_{t\in \cI}R_n^{(0)}(t) \leq  T \norm{\kappa_n}_\infty.$$ For $R_n^{(1)}(t)$, using Lemma \ref{lem:TV-approximation}, we have 
\begin{align}\label{eqn:r_1,n-bound}
\sup_n \sup_{t\in \cI}\left|R_n^{(1)}(t)\right| \leq \sup_n \sum_{\mvl \in \fT_{M\log n}\setminus \{\mv0\}} \norm{\mvl}^{\delta-1} \frac{C_1T\norm{\mvl}^2}{n} \leq \sup_n \frac{C_1' T(\log n)^{\delta+K+1}}{n} < \infty    
\end{align} for some positive constants $C_1$. 
 For the next term $R_n^2(t)$, by Lemma \ref{lem:mbp-tv-approx}, we have 
\begin{align}\label{eqn:r_2,n-bound}
         \sup_n \sup_{t\in \cI} R_n^{(2)}(t) &\leq C_2 T\sup_n \left(\norm{\mu_n-\mu}_\infty + \norm{\kappa_n-\kappa}_\infty\right)  \left[\sum_{\mvl \in \fT_{M\log n}} \norm{\mvl}^{\delta}\right] \nonumber\\
         &\leq sup_n \frac{C_2'T(\log n)^{\delta+K}}{\sqrt{n}} <\infty
      \end{align} for some constants $C_2, C_2' \geq 0$. 

For the last term, by Lemma \ref{lem:all-moments-finite}, we have \begin{align}\label{eqn:r_3,n-bound}
   \sup_n \sup_{t\in \cI} R_n^{(3)}(t) \leq \sup_{x \in [K]}\sup_{t\in \cI} \left[\sum_{\mvl \in \fT_{M\log n}\setminus\{\mv0\}} \norm{\mvl}^{\delta-1} \pr(\MBP_{x}(t\kappa,\mu)\in \bT_{\cS}(\mvl))\right] < \infty.
\end{align} Thus, by \eqref{eqn:r_1,n-bound}, \eqref{eqn:r_2,n-bound},\eqref{eqn:r_3,n-bound}, we have \begin{align*}
    \sup_{n}\sup_{t\in \cI} \left[\sum_{\mvl \in \fT_{M \log n}} \norm{\mvl}^\delta \E\left(\pi_n(\mvl,t)\right)\right] < \infty.
\end{align*} This finishes the proof.
\end{proof}

We now focus on the second moment bound.  
\begin{thm}[Second Moment Bound]
\label{thm:variance-moment-bound-main}
Let $\cI$ be a compact interval of $\bR_+$ not containing $t_c$, then for any $M,\delta \geq 0$, we have
    \begin{align*}
            \sup_{n}\sup_{t\in \cI} \left[\E\left(X_n(\mv0,t)\right)^2 + \sum_{\mvl \in \fT_{M \log n}\setminus\set{\mv0}} \norm{\mvl}^\delta \E\left(X_n(\mvl,t)\right)^2 \right] < \infty
        \end{align*} 
\end{thm}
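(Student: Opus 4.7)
My plan is to establish the bound via the bias-variance decomposition
\[\E X_n(\mvl, t)^2 = n\var(\pi_n(\mvl, t)) + n(\E\pi_n(\mvl, t) - \pi(\mvl, t))^2,\]
and to reduce the weighted sum over $\mvl \in \fT_{M\log n}$ to the first-moment bound of Theorem \ref{thm:mean-moment-bound-main} together with the exponential-decay observation in Remark \ref{rem:exp-decay}. The case $\mvl = \mv0$ is straightforward: $n \pi_n(\mv0, t)$ is stochastically dominated by a Poisson variable with rate $t \theta_{\kappa_n}(\mu_n, \mu_n)/2 \cdot n$, so $\E X_n(\mv0, t)^2 = O(1)$ uniformly on $\cI$.

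For the variance piece, write $\norm{\mvl}\, n\, \pi_n(\mvl, t) = \sum_{i=1}^n Y_i$ with $Y_i = \ind\{\fF(\cC_n(i,t)) = \mvl\}$. Setting $p_n(\mvl) = \pr(Y_1 = 1) = \norm{\mvl}\E\pi_n(\mvl, t)$, the task reduces to bounding $\cov(Y_1, Y_2)$. A two-vertex extension of Lemma \ref{lem:TV-approximation} (joint exploration of the components of vertices $1$ and $2$ coupled with two independent copies of $\MBP_{\mu_n}(t\kappa_n, \mu_n)$) yields
\[\pr(Y_1 = Y_2 = 1) = p_n(\mvl)^2 + O(\norm{\mvl}^2 p_n(\mvl)/n) + O(\norm{\mvl} p_n(\mvl)/n),\]
the last term coming from the small chance that the two vertices lie in the same component. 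This gives $n\var(\pi_n(\mvl, t)) \leq C(1 + \norm{\mvl})\E\pi_n(\mvl, t)/\norm{\mvl}$, and summation against $\norm{\mvl}^\delta$ reduces to Theorem \ref{thm:mean-moment-bound-main}.

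For the bias piece, split $\E\pi_n - \pi = (\E\pi_n - \tilde\pi_n) + (\tilde\pi_n - \pi)$ with the intermediate $\tilde\pi_n(\mvl, t) := \norm{\mvl}^{-1} \pr(\MBP_{\mu_n}(t\kappa_n, \mu_n) \in \bT_\cS(\mvl))$. For the first summand, Lemma \ref{lem:TV-approximation} combined with $(a-b)^2 \leq |a-b|(a+b)$ and exponential decay of $\E\pi_n + \tilde\pi_n$ on $\cI$ yields
\[n(\E\pi_n - \tilde\pi_n)^2 \leq Ct\norm{\mvl}(\E\pi_n + \tilde\pi_n),\]
whose weighted sum is again controlled by Theorem \ref{thm:mean-moment-bound-main}. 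The second summand is more delicate: the bound $|\tilde\pi_n - \pi| \leq C/\sqrt{n}$ from Lemma \ref{lem:mbp-tv-approx} gives only $n(\tilde\pi_n - \pi)^2 = O(1)$ per $\mvl$, which is not summable over $\fT_{M\log n}$. The resolution is a first-order Taylor expansion in the parameters,
\[\tilde\pi_n(\mvl, t) - \pi(\mvl, t) = \nabla_\mu \pi(\mvl, t)\cdot(\mu_n - \mu) + \nabla_\kappa \pi(\mvl, t)\cdot(\kappa_n - \kappa) + O(\norm{\mvl}^2/n)\,\pi(\mvl, t),\]
combined with exponential decay of the parameter gradients $\nabla_\mu \pi(\mvl, t)$ and $\nabla_\kappa \pi(\mvl, t)$ in $\norm{\mvl}$, uniformly in $t \in \cI$, which produces $n(\tilde\pi_n - \pi)^2 \leq C e^{-c\norm{\mvl}}$ and hence summability with any polynomial weight.

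The main obstacle is establishing the uniform-in-$t \in \cI$ exponential decay of the gradients $\nabla_\mu \pi(\mvl, t)$ and $\nabla_\kappa \pi(\mvl, t)$ in $\norm{\mvl}$. This follows by differentiating the identity $\norm{\mvl}\pi(\mvl, t) = \pr(\MBP_\mu(t\kappa, \mu) \in \bT_\cS(\mvl))$, noting that for each rooted tree shape of type $\mvl$ the contributing MBP probability is an explicit polynomial in the mean parameters $t\kappa(i,j)\mu(j)$ (so differentiation is unproblematic), and applying the subcritical generating-function bound of Lemma \ref{lem:mgf-mbp} to a slightly perturbed dual kernel. The requisite subcriticality of the perturbed dual on $\cI$ is preserved by the continuity in Lemma \ref{lem:dual-norm}, provided the perturbation in $(\mu, \kappa)$ is taken sufficiently small, which is guaranteed by Assumption \ref{ass:irg}(a), (c) for all large $n$.
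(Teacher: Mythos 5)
Your proposal follows the same overall strategy as the paper: the bias--variance split, the Poisson treatment of $\mvl = \mv0$, the reduction of the variance piece to a covariance bound with a \emph{multiplicative} error, and the split of the bias into $\E\pi_n - \tilde\pi_n$ (controlled by Lemma~\ref{lem:TV-approximation}) and $\tilde\pi_n - \pi$ (controlled by a first-order expansion in $(\kappa_n,\mu_n)$ together with exponential decay of the gradients, which is exactly Lemma~\ref{lem:abs-cont-mbp}/\ref{lem: MBP-approx}). The one genuine technical difference is in the covariance bound: you propose a two-vertex coupling, whereas the paper's Lemma~\ref{lem:individial-terms-variance} and Lemma~\ref{lem:variance-sum-bound} obtain the key inequality
\[
\pr(Y_1=Y_2=1,\,2\notin\cC_n(1))-p_n(\mvl)^2 \;\le\; \frac{\norm{\kappa_n}_\infty\norm{\mvl}^2 t}{n}\,\pr(Y_1=Y_2=1,\,2\notin\cC_n(1))
\]
by an explicit binomial/exponential computation. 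Your coupling route can deliver the same thing, but you should be explicit about the crucial point: a naive two-vertex TV bound gives only the \emph{additive} error $O(t\norm{\mvl}^2/n)$, which after multiplying by $n(n-1)$ in the variance formula produces an $O(nt\norm{\mvl}^2)$ contribution whose weighted sum diverges. The multiplicative form you write down is what makes the argument close; it arises by first conditioning on $\{\fF(\cC_n(1,t))=\mvl\}$, then arguing that the conditional law of the second exploration (in the depleted vertex pool) is within TV-distance $O(t\norm{\mvl}^2/n)$ of $\MBP_{\mu_n}(t\kappa_n,\mu_n)$ --- the leading factor $p_n(\mvl)$ comes from the conditioning, not from the TV estimate itself.

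One small arithmetic slip: from your stated covariance bound $\cov(Y_1,Y_2) = O(\norm{\mvl}^2 p_n(\mvl)/n)$ one obtains $n\var(\pi_n(\mvl,t)) \le C(1+\norm{\mvl}^2)\E\pi_n(\mvl,t)/\norm{\mvl} = O(\norm{\mvl}\,\E\pi_n(\mvl,t))$, matching Corollary~\ref{cor:var-inequality}, rather than the tighter $O(\E\pi_n(\mvl,t))$ you wrote. The weaker (correct) bound still yields the theorem after applying Theorem~\ref{thm:mean-moment-bound-main} with exponent $\delta+1$, so the conclusion is unaffected, but the stated intermediate estimate should be corrected.
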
 

We need four lemmas before commencing on the proof of the theorem.

\begin{lem}\label{lem:individial-terms-variance}
Fix $v,u \in [n]$, $v \neq u$, and $\mvl,\mvk \in \fT\setminus\set{\mv0}$. Then 
\begin{align*}
    &\pr\left(\fF(\cC_n(v,t))= \mvl,\fF(\cC_n(u,t)) = \mvk, u\not\in \cC_n(v,t) \right) -\pr\left(\fF(\cC_n(v,t))= \mvl\right)\pr\left(\fF(\cC_n(u,t))= \mvk\right) \\
    & \sa{\leq \frac{\norm{\kappa_n}_\infty \norm{\mvl}\norm{\mvk}t}{n} \pr(\fF(\cC_n(v,t))= \mvl,\fF(\cC_n(u,t)) = \mvk, u \not\in \cC_n(v,t)).}
\end{align*}    
\end{lem}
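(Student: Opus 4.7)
The plan is to decompose each side of the inequality into sums over the possible realizations $(A,B)$ of the pair $(\cC_n(v,t),\cC_n(u,t))$ and compare term by term. Fix disjoint $A,B\subseteq [n]$ with $v\in A$ of type $\mvl$ and $u\in B$ of type $\mvk$. Write $q_{ij}(t) = \exp(-t\kappa_n(x^n_i,x^n_j)/n)$ for the no-edge probability in $\cG_n(t)$, and let $\alpha_A$ (resp.\ $\alpha_B$) denote the probability that the induced subgraph of $\cG_n(t)$ on $A$ (resp.\ $B$) is connected. Independence of edges across distinct vertex pairs gives
\begin{align*}
\pr(\cC_n(v,t)=A) &= \alpha_A \prod_{i\in A,\, j\notin A} q_{ij}(t), \qquad \pr(\cC_n(u,t)=B) = \alpha_B \prod_{i\in B,\, j\notin B} q_{ij}(t), \\
\pr(\cC_n(v,t)=A,\,\cC_n(u,t)=B) &= \alpha_A \alpha_B \prod_{i\in A,\, j\in B} q_{ij}(t) \prod_{i\in A\cup B,\, j\notin A\cup B} q_{ij}(t),
\end{align*}
where the last identity uses $A\cap B=\emptyset$: the boundary constraint for the joint event is exactly absence of edges between $A\cup B$ and its complement together with absence of edges between $A$ and $B$. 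Forming the ratio cancels the internal connectivity factors $\alpha_A,\alpha_B$ and exactly one layer of $q_{ij}$'s between $A$ and $B$, yielding
$$\pr(\cC_n(v,t)=A,\,\cC_n(u,t)=B) = \pr(\cC_n(v,t)=A)\,\pr(\cC_n(u,t)=B)\, e^{\Lambda_{A,B}},$$
where $\Lambda_{A,B}=(t/n)\sum_{i\in A,\,j\in B}\kappa_n(x^n_i,x^n_j) \le t\norm{\kappa_n}_\infty \norm{\mvl}\norm{\mvk}/n$, since $|A|=\norm{\mvl}$ and $|B|=\norm{\mvk}$.

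Applying the elementary bound $1-e^{-x}\le x$ for $x\ge 0$ to $e^{\Lambda_{A,B}}-1$ then immediately gives the per-pair estimate
$$\pr(\cC_n(v,t)=A,\,\cC_n(u,t)=B) - \pr(\cC_n(v,t)=A)\pr(\cC_n(u,t)=B) \le \frac{t\norm{\kappa_n}_\infty \norm{\mvl}\norm{\mvk}}{n}\, \pr(\cC_n(v,t)=A,\,\cC_n(u,t)=B).$$
To finish, I will sum this over all disjoint pairs $A\ni v$ of type $\mvl$ and $B\ni u$ of type $\mvk$. The right-hand side sums exactly to the desired right-hand side of the lemma. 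For the left, I use
$$\pr(\fF(\cC_n(v,t))=\mvl)\,\pr(\fF(\cC_n(u,t))=\mvk) = \sum_{A,B} \pr(\cC_n(v,t)=A)\,\pr(\cC_n(u,t)=B),$$
an unrestricted sum over $A\ni v$ of type $\mvl$ and $B\ni u$ of type $\mvk$, which dominates the corresponding disjoint-pair sum. Consequently subtracting this product-of-marginals from the disjoint-pair sum of joint probabilities is bounded above by the sum of per-pair differences over disjoint pairs, and the lemma follows.

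Overall the argument is essentially bookkeeping; the only genuine input is the identity $e^{\Lambda_{A,B}}$ for the ratio of joint probability to the product of marginals, which records the single layer of absent edges between $A$ and $B$ that would be double-counted if one simply multiplied the two marginals. For this reason I do not anticipate any serious obstacle.
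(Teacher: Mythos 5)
Your proof is correct and takes a genuinely cleaner route than the paper's. The paper proves the lemma (essentially only for $K=2$, with the observation that the general case is similar) by writing the joint and marginal probabilities as explicit products of binomial coefficients, connectivity probabilities, and exponential no-boundary-edge factors, and then manipulating these aggregate expressions directly. You instead decompose into sums over realizations $(A,B)$ of the pair of components; the key identity
$$\pr(\cC_n(v,t)=A,\cC_n(u,t)=B)=\pr(\cC_n(v,t)=A)\,\pr(\cC_n(u,t)=B)\,e^{\Lambda_{A,B}},\qquad \Lambda_{A,B}=\tfrac{t}{n}\sum_{i\in A,\,j\in B}\kappa_n(x^n_i,x^n_j),$$
is then exact for each disjoint pair, and the rest is the (correct) observation that restricting the product-of-marginals sum to disjoint pairs can only help. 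This buys two things: it works uniformly for arbitrary $K$ with no extra case analysis, and it sidesteps a small imprecision in the paper's derivation -- the paper's last ``$=$'' step ignores the discrepancy between $\binom{n_1-1}{l_1-1}$ and $\binom{n_1-2}{l_1-1}$, so strictly speaking its chain gives only a bound inflated by a factor $\frac{n_1-1}{n_1-l_1}\ge 1$, whereas your per-pair identity is exact and yields the stated constant directly. One cosmetic remark: you should phrase the elementary inequality step as applying $1-e^{-x}\le x$ to $x=\Lambda_{A,B}$ after rewriting $\text{joint}-\text{prod}=\text{joint}\,(1-e^{-\Lambda_{A,B}})$, rather than ``to $e^{\Lambda_{A,B}}-1$''; the per-pair estimate you then display is the right one.
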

\begin{proof}
    For simplicity of exposition, we only  prove the result when  $K = 2$. The proof  for a general $K $ can be completed in a similar manner. Without loss of generality, assume both vertices $v$ and $u$ are of type $1$. Let $n_i = n \mu_n(i)$ for $i=1,2$, and $\mvn = (n_1,n_2)$. We then have\begin{align}\label{eqn:81239}
        &\pr(\fF(\cC_n(v,t))= \mvl,\fF(\cC_n(u,t)) = \mvk, u \not\in \cC_n(v,t))\nonumber\\
        & = \binom{n_1 -2}{l_1-1} \binom{n_2}{l_2} \binom{n_1-l_1-1}{k_1-1}\binom{n_2-l_2}{k_2}\nonumber \\
        & \quad \exp\left(-\frac{t}{n}\left[\theta_{\kappa_n}(\mvl,\mvn-\mvl) + \theta_{\kappa_n}(\mvk,\mvn-\mvl-\mvk)\right]\right) p_n(\mvl,t) p_n(\mvk,t)
    \end{align} where $p_{n}(\mvl,t)$ is the probability that a graph with $l_i$ many vertices of type $i$ and edge connection probability between a vertex of type $x$ and type $y$ given by $1-\exp(-\frac{t\kappa_n(x,y)}{n})$ is connected. Similarly, we have \begin{align}
        &\pr(\fF(\cC_n(v,t))= \mvl) \pr(\fF(\cC_n(u,t)=\mvk)\notag \\
        &= \binom{n_1-1}{l_1-1}\binom{n_2}{l_2} \binom{n_1-1}{k_1-1} \binom{n_2}{k_2} \exp\left(-\frac{t}{n}\left[\theta_{\kappa_n}(\mvl,\mvn-\mvl) + \theta_{\kappa_n}(\mvk,\mvn-\mvk)\right]\right) p_n(\mvl,t) p_n(\mvk,t). \label{eqn:62342}
    \end{align} 
    Combining \eqref{eqn:81239} and \eqref{eqn:62342}, we have \begin{align*}
        &\pr(\fF(\cC_n(v,t))= \mvl,\fF(\cC_n(u,t)) = \mvk, u \not\in \cC_n(v,t))- \pr(\fF(\cC_n(v,t))= \mvl) \pr(\fF(\cC_n(u,t)) = \mvk)\\
        &\leq  \pr\left(\fF(\cC_n(v,t)) = \mvl \right) \binom{n_1-l_1 - 1}{k_1-1} \binom{n_2-l_2}{k_2}  p_n(\mvk,t) \\
        &\quad \quad\exp\left(-\frac{t\theta_{\kappa_n}(\mvk,\mvn-\mvl-\mvk)}{n}\right) \left[1-\exp\left(-\frac{t\theta_{\kappa_n}(\mvl,\mvk)}{n}\right) \right]\\
        &\leq \pr\left(\fF(\cC_n(v,t)) = \mvl \right) \binom{n_1-l_1 - 1}{k_1-1} \binom{n_2-l_2}  {k_2}  p_n(\mvk,t) \exp\left(-\frac{t\theta_{\kappa_n}(\mvk,\mvn-\mvl-\mvk)}{n}\right) \frac{t\theta_{\kappa_n}(\mvl,\mvk)}{n}\\
        &\sa{ = \frac{t\theta_{\kappa_n}(\mvl,\mvk)}{n} \pr(\fF(\cC_n(v,t))= \mvl,\fF(\cC_n(u,t)) = \mvk, u \not\in \cC_n(v,t))}\\
        &\sa{\leq \frac{\norm{\kappa_n}_\infty \norm{\mvl}\norm{\mvk}t}{n} \pr(\fF(\cC_n(v,t))= \mvl,\fF(\cC_n(u,t)) = \mvk, u \not\in \cC_n(v,t)).} 
    \end{align*} The second inequality follows from the observation that $|1-e^{-z}| \leq z$ for all $z\in \bR_+$.
    By symmetry, the same inequality holds with $(v, \mvl)$ replaced with $(u, \mvk)$. This finishes the proof.
\end{proof}

\sa{\begin{lem}\label{lem:variance-sum-bound}
     For any $\cA \subseteq \fT\setminus{0}$, we have $$
     \ab{n}\var\left(\sum_{\mvl \in \cA} \pi_n(\mvl,t)\right) \leq (1+\norm{\kappa_n}_\infty t) \pr\left(\fF(\cC_n(U_n,t)) \in \cA \right)$$
 \end{lem}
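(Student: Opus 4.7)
The plan is to linearize the sum as a statistic over vertices. For each $i \in [n]$ set
\[ Y_i \;=\; \frac{1}{\norm{\fF(\cC_n(i,t))}}\,\ind\set{\fF(\cC_n(i,t)) \in \cA}, \]
so that $\sum_{\mvl \in \cA}\pi_n(\mvl,t) = \frac{1}{n}\sum_{i=1}^n Y_i$ and therefore $n\var\bigl(\sum_{\mvl \in \cA}\pi_n(\mvl,t)\bigr) = \frac{1}{n}\var\bigl(\sum_i Y_i\bigr)$. I will expand the variance as
\[ \var\Bigl(\sum_i Y_i\Bigr) \;=\; \sum_{i} \E Y_i^2 + \sum_{i\neq j} \E(Y_iY_j\,\ind\set{j \in \cC_n(i,t)}) + \sum_{i\neq j}\bigl[\E(Y_iY_j\,\ind\set{j \notin \cC_n(i,t)}) - \E Y_i \E Y_j\bigr] - \sum_i (\E Y_i)^2, \]
and handle the ``same component'' and ``different component'' cross terms separately; the trailing $-\sum_i(\E Y_i)^2$ is nonpositive and will be discarded.

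For the same-component contribution, I will exploit the fact that $Y_j$ is constant over $j \in \cC_n(i,t)$ (equal to $Y_i$) and $\norm{\fF(\cC_n(i,t))} = |\cC_n(i,t)|$, so summing over $j \in \cC_n(i,t)$ telescopes:
\[ Y_i^2 + \sum_{j\neq i,\, j\in \cC_n(i,t)}Y_iY_j \;=\; |\cC_n(i,t)|\,Y_i^2 \;=\; Y_i. \]
Summing this identity over $i$ and taking expectations yields
\[\sum_i \E Y_i^2 + \sum_{i\neq j}\E(Y_iY_j\,\ind\set{j \in \cC_n(i,t)}) = \sum_i \E Y_i \le n\,\pr(\fF(\cC_n(U_n,t)) \in \cA),\]
where the last inequality uses $Y_i \le \ind\set{\fF(\cC_n(i,t)) \in \cA}$ together with the definition of $U_n$ as a uniformly chosen vertex.

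For the different-component contribution I will apply Lemma \ref{lem:individial-terms-variance} termwise. Writing each expectation as a double sum over $\mvl,\mvk \in \cA$ weighted by $(\norm{\mvl}\norm{\mvk})^{-1}$, the $\norm{\mvl}\norm{\mvk}$ factor produced by the lemma exactly cancels the weight, so the pairwise difference is bounded by $\frac{\norm{\kappa_n}_\infty t}{n}\,\pr(\fF(\cC_n(i,t))\in\cA,\,\fF(\cC_n(j,t))\in\cA)$, which is at most $\frac{\norm{\kappa_n}_\infty t}{n}\,\pr(\fF(\cC_n(i,t))\in\cA)$. Summing over the at most $n^2$ ordered pairs $(i,j)$ yields $n\,\norm{\kappa_n}_\infty t\,\pr(\fF(\cC_n(U_n,t)) \in \cA)$. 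Adding this to the same-component bound and dividing by $n$ produces the claimed inequality.

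The main subtlety, which is also what makes the bound sharp, is the cancellation between the $1/\norm{\mvl}$ weight in the definition of $\pi_n(\mvl,t)$ and the $\norm{\mvl}\norm{\mvk}$ factor that Lemma \ref{lem:individial-terms-variance} introduces: without this compensation one would pick up an unbounded combinatorial factor upon summing $\mvl \in \cA$ when $\cA$ contains vectors of norm up to $M\log n$. These two observations, together with the telescoping identity for the same-component terms, are what produce the clean constant $(1+\norm{\kappa_n}_\infty t)$ in the bound.
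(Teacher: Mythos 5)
Your proof is correct and takes essentially the same route as the paper's: both decompose the second moment of $\sum_{\mvl\in\cA}\pi_n(\mvl,t)$ into a same-component and a different-component contribution, observe that the same-component piece telescopes to $\frac{1}{n}\E\bigl(\sum_{\mvl\in\cA}\pi_n(\mvl,t)\bigr)\le\frac{1}{n}\pr(\fF(\cC_n(U_n,t))\in\cA)$, and control the different-component piece via Lemma~\ref{lem:individial-terms-variance}, with the $\norm{\mvl}\norm{\mvk}$ factor from that lemma cancelling the $(\norm{\mvl}\norm{\mvk})^{-1}$ weight. Your sum over ordered pairs $(i,j)$ is just the unrolled version of the paper's expectation over two independent uniformly chosen vertices $U_n,V_n$, and your explicit $Y_i$ bookkeeping arguably makes the factor of $n$ cleaner (the paper's display \eqref{eqn:first-term-21} elides a $1/n$, though the final inequality is correct).
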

 \begin{proof}
     Let $U_n$ and $V_n$ be two iid Uniform random variables on $[n]$ that are independent of the process $\set{\cG_n(t):t\geq 0}$. Note that \begin{align}\label{eqn:square-20}
    \E\left(\sum_{\mvl\in \cA}\pi_n(
\mvl,t)\right)^2 &= \E\left(\frac{\ind\set{\fF(\cC_n(U_n,t) \in \cA,\fF(\cC_n(V_n,t) \in \cA}}{\abs{\cC_n(U_n,t)} \abs{\cC_n(V_n,t)}}\right)\nonumber\\
&= \E\left(\frac{\ind\set{\fF(\cC_n(U_n,t) \in \cA, V_n \in \cC_n(U_n,t)}}{\abs{\cC_n(U_n,t)}^2}\right)\nonumber \\
&\hspace{1cm}+ \E\left(\frac{\ind\set{\fF(\cC_n(U_n,t) \in \cA,\fF(\cC_n(V_n,t) \in \cA, V_n \notin \cC_n(U_n,t)}}{\abs{\cC_n(U_n,t)}\abs{\cC_n(V_n,t)} }\right).
\end{align} 
Observe that the first term in the above is \begin{align}\label{eqn:first-term-21}
\E\left(\frac{\ind\set{\fF(\cC_n(U_n,t) \in \cA, V_n \in \cC_n(U_n,t)}}{\abs{\cC_n(U_n,t)}^2}\right) &= \E\left(\frac{\ind\set{\fF(\cC_n(U_n,t) \in \cA}}{\abs{\cC_n(U_n,t)}^2} \pr\left( V_n \in \cC_n(U_n,t)| \cC_n(U_n,t)\right)\right)\nonumber\\
&=\E\left(\frac{\ind\set{\fF(\cC_n(U_n,t) \in \cA}}{\abs{\cC_n(U_n,t)}^2} \ \frac{\abs{\cC_n(U_n,t)}}{n}\right)\nonumber\\
&= \E\left(\sum_{\mvl\in \cA}\pi_n(\mvl,t)\right) \leq \pr\left(\fF(\cC_n(U_n,t)) \in \cA\right).
\end{align}
Also, expanding the second term in \ref{eqn:square-20} and using Lemma \ref{lem:individial-terms-variance}, we get
\begin{align}\label{eqn:second-term-22}
&\E\left(\frac{\ind\set{\fF(\cC_n(U_n,t)) \in \cA,\fF(\cC_n(V_n,t)) \in \cA, V_n \notin \cC_n(U_n,t)}}{\abs{\cC_n(U_n,t)} \abs{\cC_n(V_n,t)} }\right) - \left(\E\left(\sum_{\mvl\in\cA} \pi_n(\mvl,t)\right)\right)^2 \nonumber\\
&\leq \frac{1}{n^2} \Big[ \sum_{\substack{i\neq j \in [n]\\{\mvl,\mvk\in \cA}}}\nonumber\\
&\quad\quad\frac{\pr\left(\fF(\cC_n(i,t)) = \mvl,\fF(\cC_n(j,t)) = \mvk, i \notin \cC_n(j,t)\right) - \pr\left(\fF(\cC_n(i,t)) = \mvl\right)\pr\left(\fF(\cC_n(j,t)) = \mvk\right)}{\norm{\mvl}\norm{\mvk}} \Big]\nonumber \\
&\leq \frac{\norm{\kappa_n}_\infty t}{n^3}\left[ \sum_{i\neq j \in [n]}\sum_{\mvl,\mvk\in \cA} \pr\left(\fF(\cC_n(i,t)) = \mvl,\fF(\cC_n(j,t)) = \mvk, i \notin \cC_n(j,t)\right)  \right]\nonumber\\
&=\frac{\norm{\kappa_n}_\infty t}{n} \pr\left(\fF(\cC_n(U_n,t)) \in \cA,\fF(\cC_n(V_n,t)) \in \cA, V_n \notin \cC_n(U_n,t)\right).
\end{align} 
Combining \eqref{eqn:first-term-21} and \eqref{eqn:second-term-22}, we have $$n \var\left(\sum_{\mvl \in \cA} \pi_n(\mvl,t)\right) \leq (1+\norm{\kappa_n}_\infty t) \pr\left(\fF(\cC_n(U_n,t)) \in \cA\right).
$$ This completes the proof
 \end{proof}
We note that as a special case, letting $\cA = \set{\mvl}$ for $\mvl \in \fT\setminus\set{\mv0}$, we get the following.
\begin{cor}
\label{cor:var-inequality}
    For any $\mvl \in \fT\setminus\set{\mv0}$ and $t\geq 0$, we have\begin{align*}
        n \var(\pi_n(\mvl,t)) \leq \norm{\mvl} \E(\pi_n(\mvl,t)) \left( 1 +  \norm{\kappa_n}_\infty t \right).
    \end{align*}
\end{cor}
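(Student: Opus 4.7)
The plan is to derive this directly as a specialization of Lemma \ref{lem:variance-sum-bound}. Taking $\cA = \set{\mvl}$ in the lemma, the sum $\sum_{\mvl' \in \cA} \pi_n(\mvl',t)$ collapses to $\pi_n(\mvl,t)$, so the lemma yields
\begin{equation*}
    n \var(\pi_n(\mvl,t)) \leq (1+\norm{\kappa_n}_\infty t)\, \pr\left(\fF(\cC_n(U_n,t)) = \mvl\right).
\end{equation*}

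The only remaining task is to rewrite the probability on the right in terms of $\E(\pi_n(\mvl,t))$. Since $U_n$ is uniform on $[n]$ and independent of the graph process,
\begin{equation*}
    \pr\left(\fF(\cC_n(U_n,t)) = \mvl\right) = \frac{1}{n} \sum_{i=1}^n \pr\left(\fF(\cC_n(i,t)) = \mvl\right) = \E\!\left[\frac{1}{n}\sum_{i=1}^n \ind\set{\fF(\cC_n(i,t)) = \mvl}\right].
\end{equation*}
Comparing with the definition of $\pi_n(\mvl,t)$ in \eqref{eqn:pin-irg-def}, the bracketed expectation equals $\norm{\mvl}\, \E(\pi_n(\mvl,t))$. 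Substituting this identity into the previous display yields the claimed inequality.

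There is essentially no obstacle here; the corollary is a two-line bookkeeping consequence of Lemma \ref{lem:variance-sum-bound} together with the size-biasing identity relating the law of $\fF(\cC_n(U_n,t))$ to the type density $\pi_n(\mvl,t)$. I would present the proof in exactly the two short steps above, with no further estimates needed.
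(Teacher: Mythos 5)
Your proof is correct and matches the paper's own approach exactly: the paper obtains the corollary by taking $\cA = \set{\mvl}$ in Lemma \ref{lem:variance-sum-bound}, and your explicit verification of the size-biasing identity $\pr(\fF(\cC_n(U_n,t))=\mvl) = \norm{\mvl}\,\E(\pi_n(\mvl,t))$ is the only bookkeeping step the paper leaves implicit.
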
}

The following lemma provides  more refined estimates on the perturbations of distributions of multi-type branching processes than those provided by Lemma \ref{lem:mbp-tv-approx}. 
\begin{lemma}
\label{lem:abs-cont-mbp}
    Let $\tilde\kappa_1,\tilde\kappa_2$ be  kernels on $\cS$ and $\tilde\mu_1,\tilde\mu_2$ be  probability measures on $\cS$ such that all entries of $\tilde\kappa_i$ and $\tilde\mu_i$ are bounded below by $\alpha \in (0,1)$ for $i=1,2$. For any $\mvl \in \fT\setminus\set{\mv0}$, we have    \begin{align*}
\abs{\pi(\mvl,t;\tilde\kappa_1,\tilde\mu_1) - \pi(\mvl,t;\tilde\kappa_2,\tilde\mu_2)}\leq C\norm{\mvl}(1+t) \int_0^1 \pi(\mvl,t;\tilde\kappa^{(u)},\tilde\mu^{(u)})  du
    \end{align*} where $\tilde\kappa^{(u)} = u\tilde\kappa_1 + (1-u)\tilde\kappa_2$, $\tilde\mu^{(u)} = u\tilde\mu_1 + (1-u)\tilde\mu_2$, and $C = 2\left(\norm{\tilde\kappa_1-\tilde\kappa_2}_\infty + \norm{\tilde\mu_1-\tilde\mu_2}_\infty\right)/\alpha$.
\end{lemma}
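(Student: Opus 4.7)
\medskip\noindent
\textbf{Proof plan.} My approach is to interpolate linearly between the two parameter pairs and apply the fundamental theorem of calculus to the map $u \mapsto \pi(\mvl, t; \tilde\kappa^{(u)}, \tilde\mu^{(u)})$. The starting point is the probabilistic representation in Proposition \ref{prop:ode-mbp-irg}, which gives
\begin{equation*}
\pi(\mvl, t; \tilde\kappa, \tilde\mu) \;=\; \frac{1}{\norm{\mvl}}\sum_{\bt \in \bT_\cS(\mvl)} P(\bt; t, \tilde\kappa, \tilde\mu),
\end{equation*}
where $P(\bt; t, \tilde\kappa, \tilde\mu) := \pr(\MBP_{\tilde\mu}(t\tilde\kappa, \tilde\mu) = \bt)$. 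By Definition \ref{defn:MBP}, this probability factors explicitly: if $\bt$ has root of type $x_r$ and for each vertex $v\in \bt$ of type $x_v$ we let $c_{v,y}$ be the number of its type-$y$ children, then
\begin{equation*}
P(\bt; t, \tilde\kappa, \tilde\mu) \;=\; \tilde\mu(x_r)\prod_{v\in \bt}\prod_{y\in [K]} e^{-t\tilde\kappa(x_v,y)\tilde\mu(y)}\,\frac{\bigl(t\tilde\kappa(x_v,y)\tilde\mu(y)\bigr)^{c_{v,y}}}{c_{v,y}!}.
\end{equation*}

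\medskip\noindent
The plan is then to show the pointwise derivative bound
\begin{equation}\label{eq:key-deriv-bound}
\left|\frac{d}{du} P(\bt; t, \tilde\kappa^{(u)}, \tilde\mu^{(u)})\right| \;\le\; C\norm{\mvl}(1+t)\,P(\bt; t, \tilde\kappa^{(u)}, \tilde\mu^{(u)})
\end{equation}
for every $\bt \in \bT_\cS(\mvl)$ and $u \in [0,1]$, with $C$ as in the statement. Once \eqref{eq:key-deriv-bound} is in hand, writing the difference of the two probabilities as $\int_0^1 \frac{d}{du} P(\bt;t,\tilde\kappa^{(u)},\tilde\mu^{(u)})\, du$, taking absolute values inside the integral, summing over $\bt \in \bT_\cS(\mvl)$, dividing by $\norm{\mvl}$, and Fubini yields the claimed inequality.

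\medskip\noindent
To prove \eqref{eq:key-deriv-bound} I would differentiate $\log P$ directly. Each factor in $P$ is affine or a power of an affine function of $u$ (since $\tilde\kappa^{(u)}$ and $\tilde\mu^{(u)}$ are), so
\begin{equation*}
\frac{d}{du}\log P(\bt;t,\tilde\kappa^{(u)},\tilde\mu^{(u)}) \;=\; \frac{(\tilde\mu_1-\tilde\mu_2)(x_r)}{\tilde\mu^{(u)}(x_r)} \;+\; \sum_{v\in \bt}\sum_{y\in [K]} \Bigl[-t\,\dot\rho_{v,y}(u) + c_{v,y}\,\frac{\dot\rho_{v,y}(u)}{\rho_{v,y}(u)}\Bigr],
\end{equation*}
where $\rho_{v,y}(u) = \tilde\kappa^{(u)}(x_v,y)\tilde\mu^{(u)}(y)$. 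Using the lower bound $\tilde\kappa^{(u)}, \tilde\mu^{(u)} \ge \alpha$, the logarithmic derivative $|\dot\rho_{v,y}/\rho_{v,y}|$ is bounded by $(\norm{\tilde\kappa_1-\tilde\kappa_2}_\infty + \norm{\tilde\mu_1-\tilde\mu_2}_\infty)/\alpha$, and similarly for the root term. For the $-t\dot\rho_{v,y}$ contribution one uses that $\tilde\mu^{(u)}$ is a probability measure (so sums over $y$ telescope) together with the lower bound on $\tilde\kappa^{(u)}$ (which turns $\tilde\kappa^{(u)} = \alpha \cdot \tilde\kappa^{(u)}/\alpha \le \tilde\kappa^{(u)} \cdot \tilde\mu^{(u)}(y)/\alpha$ after multiplying in the relevant sums) to rewrite the term in a form comparable to $\rho_{v,y}/\alpha$. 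Summing the $c_{v,y}$'s gives $\norm{\mvl}-1$ and summing the constants over $v \in \bt$ gives a factor of $\norm{\mvl}$, yielding the overall bound $C\norm{\mvl}(1+t)$ on the log-derivative. Multiplying by $P$ recovers \eqref{eq:key-deriv-bound}.

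\medskip\noindent
\textbf{Main obstacle.} The nontrivial step is handling the term $-t\sum_{v,y}\dot\rho_{v,y}(u)$ arising from differentiating the exponential factors, because the naive bound $|\dot\rho_{v,y}| \le \norm{\tilde\kappa_1-\tilde\kappa_2}_\infty \tilde\mu^{(u)}(y) + \tilde\kappa^{(u)}(x_v,y)\norm{\tilde\mu_1-\tilde\mu_2}_\infty$ gives only $|\dot\rho_{v,y}| \le \rho_{v,y}(u)\cdot (\norm{\tilde\kappa_1-\tilde\kappa_2}_\infty + \norm{\tilde\mu_1-\tilde\mu_2}_\infty)/\alpha$ after dividing through by $\alpha$, and one needs to make sure the resulting $t\sum_{v,y}\rho_{v,y}(u)$ aggregate over the tree stays controlled by $\norm{\mvl}(1+t)$ times the stated constants. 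The fact that $\tilde\mu^{(u)}$ is a probability measure (so $\sum_y \tilde\mu^{(u)}(y) = 1$) plus the uniform lower bound $\alpha$ is what keeps this term proportional to $\norm{\mvl}$ rather than growing with the upper bound of $\tilde\kappa^{(u)}$.
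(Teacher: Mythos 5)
Your proof follows essentially the same route as the paper: both use the probabilistic representation $\pi(\mvl,t;\tilde\kappa,\tilde\mu) = \frac{1}{\norm{\mvl}}\sum_{\vt \in \bT_\cS(\mvl)} \pr(\MBP_{\tilde\mu}(t\tilde\kappa,\tilde\mu)=\vt)$, write out the Poisson factorization of the tree probability, observe that it is continuously differentiable in $(\tilde\kappa,\tilde\mu)$, bound its logarithmic derivative along the linear interpolation $u\mapsto(\tilde\kappa^{(u)},\tilde\mu^{(u)})$ by $C\norm{\mvl}(1+t)$ using the lower bound $\alpha$ on all entries and the fact that $\tilde\mu^{(u)}$ is a probability measure, and then apply the fundamental theorem of calculus and sum over trees. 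The only cosmetic difference is that you differentiate $\log P$ directly in $u$ whereas the paper computes the gradients $\nabla_{\tilde\kappa}$ and $\nabla_{\tilde\mu}$ separately and then applies the chain rule; the resulting inequalities are identical (and both, like the paper, implicitly absorb a harmless $\norm{\tilde\kappa^{(u)}}_\infty$ and $K$ dependence into the constant $C$ in the term coming from differentiating the exponential factors with respect to $\tilde\mu$).
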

\begin{proof}
    Fix $\vt \in \bT_{\cS}(\mvl)$ and $t\geq 0$. For a vertex $v \in \vt$, let $x_v$ be its type and $d_v(j)$ to be the number of off-springs of $v$ that are of type $j \in [K]$. Then, for any kernel $\tilde\kappa$ and a measure $\tilde\mu$, we have\begin{align}\label{eqn:123142156775}
\pr\left(\MBP_x\left(t\tilde\kappa,\tilde\mu\right)=\vt\right) &= \prod\limits_{v \in \vt} \prod\limits_{j=1}^K \pr\left(\Pois\left(t\tilde\kappa\left(x_v,j)\tilde\mu(j)\right)=d_{v}(j)\right)\right)\nonumber\\
        &=\prod\limits_{v \in \vt} \prod\limits_{j=1}^K \frac{\left(t\tilde\kappa(x_v,j)\tilde\mu(j)\right)^{d_{v}(j)}}{d_v(j)!} \exp\left(-t\tilde\kappa(x_v,j)\tilde\mu(j)\right).
    \end{align}
The above expression when viewed as a function of $\tilde\kappa$ and $\tilde\mu$ is continuously differentiable. In particular, for  $\tilde\Lambda \in \bR^{K\times K}$ and $\tilde\Psi \in \bR^K$, we have 
\begin{align}\label{eqn:gradient-trees-kappa}
&\abs{\nabla_{\tilde\kappa}\pr\left(\MBP_{\tilde\mu}\left(t\tilde\kappa,\tilde\mu\right)=\vt\right)\cdot \tilde\Lambda}\nonumber\\
&\leq \frac{\norm{\tilde\Lambda}_\infty}{(\inf_{x,y \in \cS}\tilde\kappa(x,y))\wedge 1} \left[ \sum_{v\in \vt}\sum_{j=1}^K d_v(j) +  t \sum_{v\in \vt}\sum_{j=1}^K \tilde\mu(j)\right] \pr\left(\MBP_{\tilde\mu}\left(t\tilde\kappa,\tilde\mu\right)=\vt\right)\nonumber\\
&\leq \frac{2\norm{\tilde\Lambda}_\infty }{(\inf_{x,y \in \cS}\tilde\kappa(x,y))\wedge 1} \norm{\mvl} (1+t) \pr\left(\MBP_{\tilde\mu}\left(t\tilde\kappa,\tilde\mu\right)=\vt\right).
\end{align} 
In the above display, the second line follows from the observation that $\sum_{v\in \vt} \sum_{j=1}^K d_v(j)$ is the total number of vertices in the tree $\vt$ which is bounded by $\norm{\mvl}$. Similarly, one also has \begin{align}\label{eqn:gradient-trees-mu}
\abs{\nabla_{\tilde\mu}\pr\left(\MBP_{\tilde\mu}\left(t\tilde\kappa,\tilde\mu\right)=\vt\right)\cdot \tilde\Psi} &\leq \frac{2\norm{\tilde\Psi}_\infty }{\inf_{x \in \cS}\tilde\mu(x)} \norm{\mvl} (1+ \norm{\tilde\kappa}_\infty t) \pr\left(\MBP_{\tilde\mu}\left(t\tilde\kappa,\tilde\mu\right)=\vt\right).
\end{align}
Finally, by \eqref{eqn:gradient-trees-kappa} and \eqref{eqn:gradient-trees-mu}, along with Proposition \ref{prop:ode-mbp-irg}, we have \begin{align}\label{eqn:908908}
\abs{\pi(\mvl,t;\tilde\kappa_1,\tilde\mu_1) - \pi(\mvl,t;\tilde\kappa_2,\tilde\mu_2)} &= \abs{ \int_0^1 \frac{d}{d u}\pi(\mvl,t;\tilde\kappa^{(u)},\tilde\mu^{(u)}) d u } \\
&\leq \frac{1}{\norm{\mvl}} \sum_{\vt \in \bT_{\cS}(\mvl)} \int_0^1 \abs{\frac{d}{d u}\pr\left(\MBP_{\tilde\mu^{(u)}}(t\tilde{\kappa}^{(u)},\tilde\mu^{(u)}) \in \bT_{\cS}(\mvl)\right)} d u \nonumber\\
&\leq  C \norm{\mvl} (1+t) \int_0^1 \pi(\mvl,t;\tilde\kappa^{(u)},\tilde\mu^{(u)}) d u \nonumber
\end{align} where $C$ is the constant mentioned in the statement of the lemma.\end{proof}
Recall from Assumption \ref{ass:irg} and from the assumption in \eqref{eq:nonrandom} that $\sqrt{n}(\kappa_n-\kappa)$ and $\sqrt{n}(\mu_n-\mu)$ converge to nonrandom constants $\Lambda$ and $\nu$ respectively.
Thus
there exists $L\geq 0$ such that 
\begin{equation}\label{eq:bdonerr}
\sqrt{n}\left(\norm{\kappa_n-\kappa}_{\infty} + \norm{\mu_n-\mu}_\infty\right) \leq L.
\end{equation}

\begin{lemma}
\label{lem: MBP-approx}
   For any $T>0$ and $0\le t\leq T$, there exists $n_0 \in \bN$ and a constant $\tilde{C} = \tilde{C}(L,T)<\infty$ such that for all $\mvl \in \fT\setminus\set{\mv0}$ we have  \begin{align*}
\sqrt{n}\ \left|\pi(\mvl,t;\kappa_n,\mu_n) - \pi(\mvl,t;\kappa,\mu) \right| \leq \tilde{C}\norm{\mvl}\left(\frac{\norm{\mvl}}{\sqrt{n}} +  \pi(\mvl,t;\kappa,\mu)\right),\qquad \text{ for all } n\geq n_0
    \end{align*} 
\end{lemma}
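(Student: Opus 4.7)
The approach is to apply Lemma \ref{lem:abs-cont-mbp} twice: once to bound $\pi(\mvl,t;\kappa_n,\mu_n)-\pi(\mvl,t;\kappa,\mu)$ by an integral of $\pi$ along the line segment from $(\kappa,\mu)$ to $(\kappa_n,\mu_n)$, and a second time in a self-referential way to set up a Gronwall comparison that controls this integral by $\pi(\mvl,t;\kappa,\mu)$ itself. At the end one splits on whether $\norm{\mvl}$ is small or large compared to $\sqrt{n}$.

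First I would check that Lemma \ref{lem:abs-cont-mbp} applies uniformly along the interpolation. Irreducibility of $(\kappa,\mu)$ gives some $\alpha_0\in(0,1]$ with $\kappa(x,y)\wedge\mu(x)\ge \alpha_0$ for all $x,y\in [K]$, and the pointwise convergence of $\kappa_n,\mu_n$ then produces $n_0\in \bN$ such that for all $n\ge n_0$ and $u\in [0,1]$, the convex combinations $\kappa^{(u)}:=u\kappa_n+(1-u)\kappa$ and $\mu^{(u)}:=u\mu_n+(1-u)\mu$ have all entries bounded below by $\alpha:=\alpha_0/2$. Set $g(u):=\pi(\mvl,t;\kappa^{(u)},\mu^{(u)})$. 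Applying Lemma \ref{lem:abs-cont-mbp} to the pair $(\kappa^{(u)},\mu^{(u)})$ versus $(\kappa,\mu)$ (noting that $\norm{\kappa^{(u)}-\kappa}_\infty+\norm{\mu^{(u)}-\mu}_\infty\le uL/\sqrt{n}$ by \eqref{eq:bdonerr}, and that the interpolant at parameter $v$ is $(\kappa^{(vu)},\mu^{(vu)})$), the substitution $w=vu$ in the resulting integral produces
\begin{equation*}
g(u)\le g(0)+\frac{2L(1+T)\norm{\mvl}}{\alpha\sqrt{n}}\int_0^u g(w)\,dw,\qquad u\in [0,1].
\end{equation*}
Gronwall's inequality then yields $g(u)\le g(0)\,e^{Du}$, where $D:=2L(1+T)\norm{\mvl}/(\alpha\sqrt{n})$.

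A direct application of Lemma \ref{lem:abs-cont-mbp} to $(\kappa_n,\mu_n)$ versus $(\kappa,\mu)$, using $\norm{\kappa_n-\kappa}_\infty+\norm{\mu_n-\mu}_\infty\le L/\sqrt{n}$, gives
\begin{equation*}
\sqrt{n}\,\abs{\pi(\mvl,t;\kappa_n,\mu_n)-\pi(\mvl,t;\kappa,\mu)}\le \frac{2L(1+T)\norm{\mvl}}{\alpha}\int_0^1 g(u)\,du\le \sqrt{n}\,g(0)\,(e^D-1),
\end{equation*}
where the last step uses $\int_0^1 g(u)\,du\le g(0)(e^D-1)/D$ together with the identity $2L(1+T)\norm{\mvl}/(\alpha D)=\sqrt{n}$.

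The remaining step is to split on the size of $\norm{\mvl}$. If $\norm{\mvl}\le \sqrt{n}$ then $D\le D_0:=2L(1+T)/\alpha$ is bounded, so $e^D-1\le D\,e^{D_0}$ and the right-hand side above simplifies to $(2L(1+T)e^{D_0}/\alpha)\,\norm{\mvl}\,\pi(\mvl,t;\kappa,\mu)$, which is the $\pi(\mvl,t;\kappa,\mu)$ term in the target bound. If instead $\norm{\mvl}>\sqrt{n}$ the Gronwall bound is useless since $D$ blows up; here I would use the trivial estimate $\pi(\mvl,t;\tilde\kappa,\tilde\mu)\le 1/\norm{\mvl}$ (immediate from Proposition \ref{prop:ode-mbp-irg}), which gives $\sqrt{n}\abs{\pi(\mvl,t;\kappa_n,\mu_n)-\pi(\mvl,t;\kappa,\mu)}\le 2\sqrt{n}/\norm{\mvl}\le 2\norm{\mvl}^2/\sqrt{n}$ (using $\norm{\mvl}^3\ge n^{3/2}\ge n$), matching the $\norm{\mvl}/\sqrt{n}$ term. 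Taking $\tilde C:=\max(2L(1+T)e^{D_0}/\alpha,\,2)$ completes the proof. The only mildly subtle step is assembling the self-referential Gronwall inequality in the right form via the second application of Lemma \ref{lem:abs-cont-mbp}; handling the large-$\norm{\mvl}$ regime is then just a one-line estimate.
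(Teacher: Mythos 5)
Your proof is correct, but it takes a genuinely different route from the paper's. After the first application of Lemma \ref{lem:abs-cont-mbp}, the paper does not set up a Gronwall inequality: instead it bounds the integrand $\pi(\mvl,t;\kappa_n^{(u)},\mu_n^{(u)})$ uniformly in $u$ by $\pi(\mvl,t;\kappa,\mu) + O(1/\sqrt n)$, invoking Proposition \ref{prop:ode-mbp-irg} together with the total-variation estimate of Lemma \ref{lem:mbp-tv-approx} (which gives a bound on $|\pi(\mvl,t;\kappa_n^{(u)},\mu_n^{(u)}) - \pi(\mvl,t;\kappa,\mu)|$ of order $L/\sqrt n$ after dividing by $\|\mvl\|$); the final bound then follows by absorbing the $1/\sqrt n$ into $\|\mvl\|/\sqrt n$ via $\|\mvl\|\ge 1$, with no case split needed. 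Your argument instead applies Lemma \ref{lem:abs-cont-mbp} a second time in the self-referential form $g(u)\le g(0) + D\int_0^u g$, closes it with Gronwall to get $g(u)\le g(0)e^{Du}$, and handles the degenerate regime $D\gg 1$ (i.e.\ $\|\mvl\|>\sqrt n$) by the trivial bound $\pi(\mvl,t;\cdot,\cdot)\le 1/\|\mvl\|$. Both approaches are sound; the paper's is shorter because it has Lemma \ref{lem:mbp-tv-approx} already available, while yours is self-contained modulo Lemma \ref{lem:abs-cont-mbp} and makes the exponential-in-$\|\mvl\|/\sqrt n$ dependence explicit, at the price of the case split on $\|\mvl\|$.
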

\begin{proof}
Since the entries of $\kappa$ and $\mu$ are strictly positive, there exists $n_0\in \bN$ and $\alpha_* >0 $, $\inf_{x,y} \kappa_n(x,y) \geq \alpha_*$ and $\inf_{x} \mu_n(x) \geq \alpha_*$ for all $n\geq n_0$. Therefore, by Lemma \ref{lem:abs-cont-mbp}, we have \begin{align*}
\left|\pi(\mvl,t;\kappa_n,\mu_n) - \pi(\mvl,t;\kappa,\mu) \right| \leq  C_1 \norm{\mvl}\int_0^{1} \pi(\mvl,t;\kappa_n^{(u)},\mu_n^{(u)}) d u,\qquad \mvl \in \fT\setminus\set{\mv0}, n\geq n_0,
\end{align*} 
where $\kappa_n^{(u)} = u\kappa + (1-u)\kappa_n$ and $\mu_n^{(u)} = u\mu + (1-u)\mu_n$
and $C_1$ is a constant that depends only on $L,T$ and $\alpha_*$. Now using Proposition \ref{prop:ode-mbp-irg} and Lemma \ref{lem:mbp-tv-approx}, the result is immediate. 
\end{proof}
We now complete the proof of Theorem \ref{thm:variance-moment-bound-main} using Lemma \ref{lem: MBP-approx}.
\begin{proof}[Proof of Theorem \ref{thm:variance-moment-bound-main}]
Fix $M,\delta \geq 0$. Let $T = \sup \cI$. Note that the process $\set{n\pi_n(\mv0,t):t\geq 0}$ is in distribution equal to a Poisson process with rate $$\xi_n = \frac{n}{2} \left[\theta_{\kappa_n}(\mu_n,\mu_n) - \frac{1}{n}\sum_{i=1}^K \kappa_n(i,i)\mu_n(i)\right].$$ Furthermore, for $\mvl \in \fT$, we have $\E(X_n(\mvl,t))^2 = n \var(\pi_n(\mvl,t)) + n \left[\E(\pi_n(\mvl,t)) - \pi(\mvl,t)\right]^2.$ Therefore, $$\sup_n\sup_{t\in \cI}\E\left(X_n(\mv0,t) \right)^2 \leq \sup_n\left[ \frac{T\xi_n }{n} + \left(1+2K\norm{\kappa
}\right)^2 T^2L^2  + \frac{KT^2\norm{\kappa_n}_\infty}{n}\right] < \infty.$$

By Corollary \ref{cor:var-inequality} and Theorem \ref{thm:mean-moment-bound-main}, we have \begin{align*}
        \sup_{n \geq 1}\sup_{t\in \cI} \ n \left[\sum_{\mvl \in \fT_{M\log n}\setminus\set{\mv0}}\norm{\mvl}^\delta \var(\pi_n(\mvl,t)) \right] < \infty.
    \end{align*} Therefore, it suffices to show that \begin{align}\label{eqn:variance-sufficient-condition}
\sup_{n \geq 1}\sup_{t\in \cI} \left[ n\left(   \sum_{\mvl \in \fT_{M\log n}\setminus\set{\mv0}}\norm{\mvl}^\delta \left[\E(\pi_n(\mvl,t)) - \pi(\mvl,t)\right]^2 \right)\right] < \infty.
\end{align} 
 By Proposition \ref{prop:ode-mbp-irg}, we have, with $U_n$ as in Lemma \ref{lem:TV-approximation},
\begin{align}\label{eqn:9029}
\left[\E(\pi_n(\mvl,t)) - \pi(\mvl, t)\right]^2 &=\frac{1}{\norm{\mvl}^2} \bigg[\pr(\fF(\cC_n(U_n,t))=\mvl) - \pr(\MBP_{\mu}(t\kappa,\mu) \in \bT_{\cS}(\mvl))\bigg]^2\nonumber\\
     &\leq 2 \sum_{i=1}^2 q_n^{(i)}(\mvl,t),\end{align} where \begin{align*} 
           q_n^{(1)}(\mvl,t) &= \frac{1}{\norm{\mvl}^2}\left[\pr\left(\fF(\cC_n(U_n,t)) =\mvl\right) - \pr(\MBP_{\mu_n}(t\kappa_n,\mu_n)\in \bT_{\cS}(\mvl))\right]^2 \\
           q_n^{(2)}(\mvl,t) &= \left[\pi(\mvl,t;\kappa_n,\mu_n) - \pi(\mvl,t;\kappa,\mu)\right]^2.
       \end{align*} Therefore, to prove \eqref{eqn:variance-sufficient-condition}, from \eqref{eqn:9029}, it enough to show \begin{align}\label{eqn:Q's-sufficient-condition}
     \sup_{n}\sup_{t\in \cI}Q_n^{(i)}(t) < \infty,\qquad i=1,2
 \end{align} where \begin{align*}
     Q_n^{(i)}(t) = n \cdot \bigg[\sum_{\mvl\in \fT_{M\log n}\setminus\set{\mv0}} \norm{\mvl}^{\delta} q_n^{(i)}(\mvl,t)\bigg], \qquad  i=1,2.
 \end{align*} To this end, by Lemma \ref{lem:TV-approximation}, for some constant $C_1 \geq 0$,  we have $q_n^{(1)}(t) \leq C_1 t^2 \norm{\mvl}^4/n^2$, for  all $\mvl \in \fT$, $n \in \bN$ and $t\ge 0$.  Therefore, we have
 \begin{align}\label{eqn:Q_n^1-bound}
     \sup_n \sup_{t\in \cI} \ Q_n^{(1)}(t)\leq C_1 T^2 \sup_n \left[ \frac{\sum_{\mvl \in \fT_{M\log n}} \norm{\mvl}^{\delta+2}}{n}\right] \leq C_1'T^2 \sup_n \left[\frac{(\log n)^{\delta+2+K}}{n}\right] < \infty.
  \end{align} 
  For $q_n^{(2)}$, by  Lemma \ref{lem: MBP-approx}, we have\begin{align*}
       q_n^{(2)}(\mvl, t) &\leq C_2 \left(\frac{\norm{\mvl}^4}{n} + \norm{\mvl}^2 \pi(\mvl,t;\kappa,\mu)^2\right)
    \end{align*} for all $n \geq n_0$ where $n_0$ is as given by Lemma \ref{lem: MBP-approx}. Therefore, we have \begin{align*}
        Q_n^{(2)}(t) &\leq C_2 \left(\frac{1}{n}\sum_{\mvl\in \fT_{M\log n}\setminus\set{\mv0}} \norm{\mvl}^{\delta+4} + \sum_{\mvl\in \fT_{M\log n}\setminus\set{\mv0}} \norm{\mvl}^{\delta+2}\pi(\mvl,t;\kappa,\mu)\right)\\
        &\leq C_2' \left(\frac{(M\log n)^{\delta+K+3}}{n} + \sum_{\mvl\in \fT_{M\log n}\setminus\set{\mv0}} \norm{\mvl}^{\delta+2}\pi(\mvl,t;\kappa,\mu)\right),\qquad n\geq n_0.
    \end{align*}  Hence, by Lemma \ref{lem:all-moments-finite}, we have $\sup_n \sup_{t\in \cI} Q_n^{(2)}(t) < \infty$. This finishes the proof.
\end{proof}

\subsection{Semi-Martingale Representation }
Throughout this section, we fix $M \geq 0$ . 
To reduce notation, we suppress the superscript $M\log n $  and simply write $\mvpi_n(t)$ and $\vX_n(t)$ for $\mvpi_n^{\fT_{M\log n}}(t)$ and $\vX_n^{\fT_{M\log n}}(t)$ respectively. Similarly,  write $\Delta_{\mvl,\mvk}^{\fT_{M\log n}} $ as $\Delta_{\mvl,\mvk}^n$ for all $\mvl,\mvk \in \fT_{M\log n}$, and so on. For fixed $n\geq 1$, by letting $N = M\log n$ in \eqref{eqn:irg-poisson-eqn}, we have \begin{align}\label{eqn:irg-poisson-eqn-biggggggggg}
    &\mvpi_n(t) = \mvpi_n(0)+ \frac{1}{n}\bigg[\sum_{\mvl \in \fT_{M\log n} \setminus \set{\mv0}} \Delta_{\mvl} Y_{\mvl}\left(n\int_{0}^t \varphi_{\mvl}(\mvpi_n(s))ds\right) + e_{\mv0} Y_{\mv0}\left(n \int_0^t \varphi_{\mv0}^*(\mvpi_n(s))ds\right)\\
    &+\sum_{\mvl, \mvk \in \fT_{M\log n} \setminus\set{\mv0}, \mvl<\mvk}\Delta^{n}_{\mvl,\mvk} Y_{\mvl,\mvk}\left(n \int_{0}^t \varphi_{\mvl,\mvk}(\mvpi_n(s))ds\right) +  \sum_{\mvl \in \fT_{M\log n}\setminus \set{\mv0}}\Delta_{\mvl,\mvl} Y_{\mvl,\mvl}\left(n \int_{0}^t \varphi^*_{\mvl,\mvl}(\mvpi_n(s))ds\right)\bigg].\nonumber
\end{align} By adding and subtracting the compensators of the Poisson processes, we have the following semi-martingale decomposition for $\mvpi_n$, 
 \begin{align}\label{eqn:biggggggg-semi-mg}
    \mvpi_n(t) =\mvpi_n(0) + \mvA_{n}(t) + \mvM_{n}(t), 
\end{align} where \begin{align}\label{eqn:fdd-pi-mg-term-infinite}
\mvM_{n}(t) &= \frac{1}{n}\bigg[\sum_{\mvl \in \fT_{M\log n}\setminus\set{\mv0}} \Delta_{\mvl} N_{\mvl}\left(n \int_{0}^t \varphi_{\mvl}(\mvpi_n(s))ds\right) + e_{\mv0} N_{\mv0}\left(n\int_0^t \varphi_{\mv0}^*(\mvpi_n(s))ds\right) \\
&\hspace{-0.7cm}+\sum_{\mvl,\mvk \in\fT_{M\log n}\setminus \set{\mv0}, \mvl<\mvk}\Delta^{n}_{\mvl,\mvk} N_{\mvl,\mvk}\left(n\int_{0}^t \varphi_{\mvl,\mvk}(\mvpi_n(s))ds\right) +\sum_{\mvl \in\fT_{M\log n}\setminus\set{\mv0}} \Delta^{n}_{\mvl,\mvl} N_{\mvl,\mvl}\left(n\int_{0}^t \varphi^*_{\mvl,\mvl}(\mvpi_n(s))ds\right) \bigg]\nonumber
\end{align} where $N_{\mvl,\mvk}(t) = Y_{\mvl,\mvk}(t) - t$ and $N_{\mvl}(t) =Y_{\mvl}(t) -t$ for $t\geq 0$. Therefore, $\mvM_n(t)$ is a square-integrable martingale with respect to a suitable filtration $\set{\cF_n(t):t\geq 0}$ satisfying  $\cF_n(t) \supset \sigma\{\mvpi_n(s): s \le t\}$, $t\ge 0$. Here $\mvA_n(t)$ is given as 
\begin{align}\label{eqn:bdd-process-defn}
    A_n(\mvl,t) = \int_{0}^t F_{\mvl}(\mvpi_n(s),\kappa_n,\mu_n) ds + \int_{0}^t r_{n}(\mvl,s)ds, \qquad \mvl\in \fT_{M\log n}.
\end{align} where $F_{\mvl}$ are as defined in \eqref{eqn:F_l-defn} and, with $\epsilon_n$ as in \eqref{eqn:epsilon_l-defn} and \eqref{eqn:epsilon-0-defn},
\begin{align}\label{eqn:r_n-defn}
r_n(\mv0,t) = \sum_{\mvl \in \fT_{M\log n}} \epsilon_n(\mvl,t),\;\;
    r_n(\mvl,t) = \ind\set{\mvl/2 \in \fT_{M\log n}} \epsilon_n(\mvl/2,t) - 2\epsilon_n(\mvl,t),\; \mvl  \in \fT_{M\log n}\setminus\{\mv0\}.
\end{align}Note that from \eqref{eqn:epsilon_l-bound} and \eqref{eqn:epsilon-0-bound}, for all $t\geq 0$, we have \begin{align}\label{eqn:r_n-bound}
    |r_n(\mvl,t)| \leq (K+2)M^2\norm{\kappa_n}_\infty \frac{(\log n)^2}{n}, \qquad \mvl \in \fT_{M\log n}.
\end{align}
Using the ODEs in \eqref{eqn:ode-irg} for the limit process $\mvpi$, we obtain a semi-martingale representation for the process $\vX_n = \vX_n^{\fT_{M\log n}}$ as \begin{align}\label{eqn:semi-mg-rep-X}
    \vX_n(t) = \vX_n(0) + \mvA^c_n(t) + \mvM_n^c(t),
\end{align} where \begin{align}\label{eqn:centered-defns}
    A_n^c(\mvl,t) = \sqrt{n}\cdot\left(A_n(t) - \int_0^t F_{\mvl}(\mvpi(s),\kappa,\mu)ds\right), \qquad M_n^c(\mvl,t ) = \sqrt{n}\cdot M_n(\mvl,t), \qquad \mvl\in \fT_{M\log n}.
\end{align}

\subsection{Tightness}\label{sec:tightness}
Once more we fix $M\ge 0$ throughout this section.
In this section, we establish tightness of the process of fluctuations $\vX_n^{\fT_{M\log n}}(\cdot) = \vX_n(\cdot)$. This is proved by verifying the Aldous-Kurtz tightness criterion (see \cite{kurtz1981approximation} and  Theorem \ref{thm:AppSemiMartTight} in Appendix \ref{sec:appendix}).

We first focus on establishing tightness in the sub-critical regime. The tightness in the super-critical regime follows from minor modifications to the arguments in the sub-critical regime, which we briefly discuss in the ensuing subsection. 
\subsubsection{Tightness in Sub-critical Regime:} We first prove the tightness of the martingale process $\mvM_n^c$ in Proposition \ref{lem:mg-tightness-sub}, and then the tightness of process $\vX_n$. This will also yield the tightness of $\mvA_n^c$.

The following two inequalities are used multiple times throughout this section. So we state them here in a lemma.
\begin{lem}\label{lem:norm-inequalities}
For any $\nu > 0$,  $\mvx,\mvy \in \bR_+^{\fT}$, and $\fT_0 \subset \fT$, the following hold
\begin{enumeratei}
    \item \begin{align}\label{eqn:product-inequality}
 &\sum_{\mvl \in \fT_0} \norm{\mvl}^{\nu} \left[\sum_{\mvk_1 + \mvk_2 =\mvl} \norm{\mvk_1} \norm{\mvk_2} x_{\mvk_1} y_{\mvk_2}\right] \\
  &\leq 2^{\nu} \left[\left(\sum_{\mvl\in \fT_0}\norm{\mvl}^{\nu+1}x_{\mvl}\right)\cdot \left(\sum_{\mvl\in \fT_0}\norm{\mvl}y_{\mvl}\right) + \left(\sum_{\mvl\in \fT_0}\norm{\mvl}^{\nu+1}y_{\mvl}\right)\cdot \left(\sum_{\mvl\in \fT_0}\norm{\mvl}x_{\mvl}\right)\right].\nonumber\end{align}
    \item There exists a constant $C_{\nu}$ depending on $\nu$.\begin{align}\label{eqn:CS-inequality-norm}
    \left(\sum_{\mvl\in \fT_0} \norm{\mvl}^{\nu}x_{\mvl}\right)^2 \leq C_\nu \left[\sum_{\mvl \in \fT_0} \norm{\mvl}^{2\nu +K+1}x_{\mvl}^2\right]
\end{align} 
\end{enumeratei}
\end{lem}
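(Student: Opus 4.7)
Both inequalities are elementary and rely on two structural features of $\fT = \bN_0^K$: additivity of $\norm{\cdot}$ under componentwise addition, and polynomial growth of the number of $\mvl\in\fT$ with $\norm{\mvl}=n$. I expect no serious obstacle in either proof; the only minor point of care is an implicit downward-closedness of $\fT_0$ in (i) (which holds in the intended applications $\fT_0=\fT_N$ or $\fT_0=\fT_{M\log n}$).

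For (i), the plan is to start from $\mvk_1+\mvk_2=\mvl \Longrightarrow \norm{\mvk_1}+\norm{\mvk_2}=\norm{\mvl}$, so $\norm{\mvl}\le 2\max(\norm{\mvk_1},\norm{\mvk_2})$ and hence $\norm{\mvl}^\nu \le 2^\nu(\norm{\mvk_1}^\nu+\norm{\mvk_2}^\nu)$. Substituting this into the LHS splits it into two symmetric pieces, the first being
\[
2^\nu \sum_{\mvl\in\fT_0}\sum_{\mvk_1+\mvk_2=\mvl}\norm{\mvk_1}^{\nu+1}\norm{\mvk_2}\,x_{\mvk_1}y_{\mvk_2}.
\]
Exchanging the order of summation (Fubini, all terms nonnegative), this equals $2^\nu \sum_{\mvk_1,\mvk_2\in\fT:\ \mvk_1+\mvk_2\in\fT_0}\norm{\mvk_1}^{\nu+1}\norm{\mvk_2}\,x_{\mvk_1}y_{\mvk_2}$. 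Since $\fT_0$ is downward closed in the relevant usage, $\mvk_1+\mvk_2\in\fT_0$ forces $\mvk_1,\mvk_2\in\fT_0$, and the double sum factors as the product $\bigl(\sum_{\mvl\in\fT_0}\norm{\mvl}^{\nu+1}x_{\mvl}\bigr)\bigl(\sum_{\mvl\in\fT_0}\norm{\mvl}y_{\mvl}\bigr)$. The second symmetric piece is handled identically and contributes the other term on the RHS.

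For (ii), first note that the $\mvl=\mvzero$ contribution to the LHS vanishes for $\nu>0$, so all sums can be restricted to $\fT_0\setminus\{\mvzero\}$. Apply Cauchy--Schwarz with the splitting $\norm{\mvl}^\nu x_{\mvl} = \norm{\mvl}^{-(K+1)/2}\cdot\norm{\mvl}^{\nu+(K+1)/2}x_{\mvl}$ to obtain
\[
\Bigl(\sum_{\mvl\in\fT_0\setminus\{\mvzero\}}\norm{\mvl}^\nu x_{\mvl}\Bigr)^{\!2} \le \Bigl(\sum_{\mvl\in\fT\setminus\{\mvzero\}}\norm{\mvl}^{-(K+1)}\Bigr)\Bigl(\sum_{\mvl\in\fT_0}\norm{\mvl}^{2\nu+K+1}x_{\mvl}^2\Bigr).
\]
The first factor is a fixed finite constant: the number of $\mvl\in\fT$ with $\norm{\mvl}=n$ equals $\binom{n+K-1}{K-1}=O(n^{K-1})$, so the series is dominated by $\sum_{n\ge 1} n^{K-1}\cdot n^{-(K+1)}=\sum_{n\ge 1}n^{-2}<\infty$. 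Taking $C_\nu$ to be this constant (in fact independent of $\nu$) completes the proof.
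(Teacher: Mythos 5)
Your proof is correct and follows essentially the route the paper sketches in one line: part (i) rests on $\norm{\mvk_1+\mvk_2}^\nu \le 2^\nu(\norm{\mvk_1}^\nu + \norm{\mvk_2}^\nu)$ followed by Fubini and factoring, and part (ii) is Cauchy--Schwarz against the weight $\norm{\mvl}^{-(K+1)/2}$ together with the polynomial count $\#\{\mvl:\norm{\mvl}=m\}=O(m^{K-1})$. Your caveat in (i) about downward-closedness of $\fT_0$ is a real point that the paper elides: as literally stated for arbitrary $\fT_0 \subset \fT$ the inequality can fail (take $K=1$, $\fT_0=\{3\}$, $x_3=y_3=0$ but $x_1,x_2,y_1,y_2>0$: the left side is positive while the right side vanishes), though it holds for the slices $\fT_N$ and $\fT_{M\log n}$ used in the paper, so nothing breaks downstream. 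Your further observation that the constant in (ii) can be taken independent of $\nu$ is a small but correct sharpening.
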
  
Both inequalities are easy to verify. For the first we use the observation that $\norm{\mvk_1+\mvk_2}^{\nu} \le 2^{\nu}(\norm{\mvk_1}^{\nu} + \norm{\mvk_2}^{\nu})$ for
$\mvk_1, \mvk_2 \in \fT$ and the second
 is a direct consequence of the Cauchy-Schwarz inequality.
\begin{prop}\label{lem:mg-tightness-sub}
    For $0< T < t_c$ and $\delta \geq 0$, the collection $\set{ \mvM_n^c}_{n\geq 1}$ is a $\bC$-tight sequence of $\ \bb{D}([0,T]:\ell_{1,\delta})$ random variables.
\end{prop}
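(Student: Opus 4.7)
I will verify the Aldous--Kurtz tightness criterion (Theorem \ref{thm:AppSemiMartTight}) for the semi-martingale $\mvM_n^c$ taking values in the separable Banach space $\ell_{1,\delta}$. Since a single Poisson jump in \eqref{eqn:fdd-pi-mg-term-infinite} changes at most four coordinates of $M_n(\mvl)$ by $\pm 1/n$ or $\pm 2/n$, a jump of $\mvM_n^c = \sqrt n\, \mvM_n$ has $\ell_{1,\delta}$-norm at most $C(M\log n)^{\delta}/\sqrt n \to 0$; hence once tightness is established, $\bC$-tightness follows automatically.

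For the compact containment condition, I use that for any $\delta' > \delta$, norm-bounded subsets of $\ell_{1,\delta'}$ are precompact in $\ell_{1,\delta}$ (since tails decay uniformly at rate $N^{\delta-\delta'}$). Thus it suffices to show $\sup_n \E \sup_{t \le T} \norm{\mvM_n^c(t)}_{1,\delta'} < \infty$ for some $\delta' > \delta$. A direct computation of the predictable quadratic variation from \eqref{eqn:fdd-pi-mg-term-infinite}--\eqref{eqn:centered-defns}, using the bounds on $\varphi_{\mvl}$ and $\varphi_{\mvk_1,\mvk_2}$ in \eqref{eqn:rate-functions}, gives
$$ \E \la M_n^c(\mvl)\ra_T \le C \int_0^T \Bigl[\norm{\mvl}\, \E\pi_n(\mvl,s) + \sum_{\mvk_1+\mvk_2=\mvl} \norm{\mvk_1}\norm{\mvk_2}\, \E\bigl(\pi_n(\mvk_1,s)\,\pi_n(\mvk_2,s)\bigr)\Bigr]\,ds. $$
Doob's $L^2$ inequality applied coordinate-wise yields $\E\sup_{t\le T}|M_n^c(\mvl,t)| \le 2\sqrt{\E \la M_n^c(\mvl)\ra_T}$, and summing against $\norm{\mvl}^{\delta'}$ with an auxiliary Cauchy--Schwarz weight $\norm{\mvl}^{-\alpha/2}$ for any $\alpha > K$ (using $\sum_{\mvl \in \fT} \norm{\mvl}^{-\alpha} < \infty$) reduces the task to bounding $\sum_{\mvl \in \fT_{M\log n}} \norm{\mvl}^{2\delta'+\alpha}\,\E \la M_n^c(\mvl)\ra_T$ uniformly in $n$. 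The linear (``fragmentation'') term is handled directly by Theorem \ref{thm:mean-moment-bound-main} applied with the high polynomial weight $2\delta'+\alpha+1$. For the convolution (``coalescence'') term, I split $\norm{\mvl}^{2\delta'+\alpha} \le C(\norm{\mvk_1}^{2\delta'+\alpha}+\norm{\mvk_2}^{2\delta'+\alpha})$, use $\E(\pi_n(\mvk_1)\pi_n(\mvk_2)) \le \sqrt{\E\pi_n(\mvk_1)^2\,\E\pi_n(\mvk_2)^2}$, estimate the second moments via Corollary \ref{cor:var-inequality}, and conclude with another application of Theorem \ref{thm:mean-moment-bound-main}.

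The Aldous oscillation estimate follows from the same quadratic-variation / weighted Cauchy--Schwarz machinery: for stopping times $\tau_n \le T$ and deterministic $h_n \downarrow 0$, integrating the QV rate bound above over $[\tau_n, \tau_n + h_n]$ gives
$$ \E\norm{\mvM_n^c(\tau_n+h_n) - \mvM_n^c(\tau_n)}_{1,\delta} \le C\sqrt{h_n} \to 0, $$
which together with compact containment establishes tightness in $\bD([0,T]:\ell_{1,\delta})$. The $\bC$-tightness assertion then follows from the jump size bound above.

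The principal technical obstacle is the convolution term in the quadratic variation, which encodes the coalescence of components of types $\mvk_1$ and $\mvk_2$ into one of type $\mvk_1+\mvk_2$. Naive term-by-term bounds introduce combinatorial factors that grow like $|\fT_{M\log n}| \sim (\log n)^K$, and absorbing them uniformly in $n$ is only possible because the polynomial weight in Theorem \ref{thm:mean-moment-bound-main} can be taken arbitrarily large, which itself relies on the exponential decay of $\pi(\mvl,s)$ in $\norm{\mvl}$ for $s \le T$ noted in Remark \ref{rem:exp-decay}. This exponential decay uses precisely the sub-critical hypothesis $T < t_c$, explaining why the super-critical case must be treated separately.
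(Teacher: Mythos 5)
Your overall strategy matches the paper's: verify the Aldous--Kurtz criterion using first- and second-moment bounds on the weighted QV, conclude $\bC$-tightness from the vanishing jump size. Two of your choices are genuinely different from the paper's and both are fine: (i) for compact containment you use the embedding $\ell_{1,\delta'} \hookrightarrow \ell_{1,\delta}$ (norm-bounded sets of $\ell_{1,\delta'}$ are precompact in $\ell_{1,\delta}$), while the paper exhibits an explicit compact set $\cK_{\eta,\delta}$ and does a coordinate-wise Markov/union bound; (ii) for the convolution term you estimate $\E(\pi_n(\mvk_1)\pi_n(\mvk_2))$ via Cauchy--Schwarz and Corollary~\ref{cor:var-inequality}, while the paper bounds the weighted convolution \emph{pathwise} using $\sum_{\mvk}\norm{\mvk}\pi_n(\mvk,t)\le 1$ (Lemma~\ref{lem:norm-inequalities}(i)) before taking expectations, which is shorter and avoids a second Cauchy--Schwarz. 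Both work.

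There is, however, a genuine gap in your Aldous oscillation estimate. You write that ``integrating the QV rate bound above over $[\tau_n, \tau_n + h_n]$ gives $\E\norm{\mvM_n^c(\tau_n+h_n)-\mvM_n^c(\tau_n)}_{1,\delta} \le C\sqrt{h_n}$''. But the QV rate bound you derived is of the form $\E\la M_n^c(\mvl)\ra_T \le C\int_0^T[\,\ldots\E\ldots\,]\,ds$, and this does \emph{not} directly carry over to the random interval $[\tau_n,\tau_n+h_n]$: what one actually has is $\E[\la M_n^c(\mvl)\ra_{\tau_n+h_n}-\la M_n^c(\mvl)\ra_{\tau_n}] = \E\int_{\tau_n}^{\tau_n+h_n} r_{\mvl}(s)\,ds$, where the rate $r_{\mvl}(s)$ is \emph{random} and $\tau_n$ is a \emph{random} stopping time, so one cannot simply interchange the expectation and the integral to get the factor $h_n$. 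The pathwise bound $r_{\mvl}(s)\le C$ does yield $\E\int_{\tau_n}^{\tau_n+h_n}r_{\mvl}(s)ds\le Ch_n$ for each fixed $\mvl$, but the constant does not decay in $\norm{\mvl}$, so the weighted sum $\sum_{\mvl\in\fT_{M\log n}}\norm{\mvl}^{2\delta+\alpha}\E[\Delta\la M_n^c(\mvl)\ra]$ blows up like $h_n(\log n)^{2\delta+\alpha+K}$, which is not uniform in $n$. To obtain a uniform bound one must insert a Cauchy--Schwarz-in-time step of the form $\int_{\tau_n}^{\tau_n+h_n}R_n(s)\,ds \le \sqrt{h_n}\bigl(\int_0^T R_n(s)^2\,ds\bigr)^{1/2}$ to decouple the random interval from the random rate $R_n(s)=\sum_\mvl\norm{\mvl}^{2\delta'}r_{\mvl}(s)$; after taking expectations and using Theorem~\ref{thm:mean-moment-bound-main}, this produces the overall rate $h_n^{1/4}$, not $\sqrt{h_n}$. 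The conclusion is unaffected (any $o(1)$ rate suffices for Condition~$A$), but as written your step is not justified and your claimed rate is too strong.

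\end{document}
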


\begin{proof}
   Fix $\delta,\epsilon \geq 0$ and $T < t_c$. We start by verifying condition $T_1$ in Theorem \ref{thm:AppSemiMartTight}. 
\vspace{0.2cm}
    
\noindent \textbf{Verifying Condition $\text{T}_1:$} For any $\eta > 0$, consider the set \begin{align}\label{eqn:compact-set}
        \cK_{\eta,\delta} = \set{x\in \ell_{1,\delta}: |x_{\mv0}| \leq \eta \  \text{ and } \ |x_{\mvl}| \leq  \eta \norm{\mvl}^{-(K+\delta+2)} \text{ for } \mvl \in \fT\setminus\set{\mv0}}.
    \end{align}
    Since $\abs{\set{\mvl \in \fT:\norm{\mvl} = m}} \leq (m+1)^{K-1}$ for all $m\in \bN$, the set $\cK_{\eta,\delta}$ is a compact subset of $\ell_{1,\delta}$. Let $\delta' = K+\delta+2$. For any $t < T$, by union bound, we have
\begin{align*}
    \pr\left(\mvM_n^{c}(t)\not\in \cK_{\eta,\delta}\right) &\leq \pr\left(\abs{M_n^{c}(\mv0,t)}\geq \eta\right) + \sum\limits_{\mvl \in \fT_{M\log n}\setminus\set{\mv0}} \pr\left(\abs{M_n^{c}(\mvl,t)}\geq \eta \norm{\mvl}^{-\delta'}\right).
\end{align*}
Observe that from \eqref{eqn:fdd-pi-mg-term-infinite} we have
\begin{align}\label{eqn:21319-qv-m_0}
\la M^{c}_n(\mv0) \ra_t &= \int_0^t \bigg[\sum_{\mvl, \mvk \in \fT_{M\log n}\setminus\set{\mv0}, \mvl<\mvk}\varphi_{\mvl,\mvk}(\mvpi_n(s)) + \sum_{\mvl \in \fT_{M\log n}\setminus\set{\mv0}}\varphi_{\mvl}(\mvpi_n(s)) \nonumber \\
&\hspace{2cm}+ \sum_{\mvl\in \fT_{M\log n}\setminus\set{\mv0}}\varphi_{\mvl,\mvl}^*(\mvpi_n(s)) + \varphi_{\mv0}^{*}(\mvpi_n(s))\bigg]ds \nonumber \\
&=\frac{\theta_{\kappa_n}(\mu_n,\mu_n) t}{2} + \int_0^t r_n(\mv0,s)ds.
\end{align} Furthermore, by \eqref{eqn:r_n-bound}, we have $\int_0^t |r_n(\mv0,s)|ds \leq 4\norm{\kappa_{n}}_\infty M^2 t (\log n)^2/n$. Therefore, by Markov's inequality, for suitably large choice of $\eta$, we have $\pr\left(\abs{M_n^{c}(\mv0,t)}\geq \eta\right) \leq \epsilon$. Similarly, we also have
    \begin{align}\label{eqn:T_1-condition-bound-mg}
       \sum\limits_{\mvl \in \fT_{M\log n}\setminus\set{\mv0}} \pr\left(\abs{M_n^{c}(\mvl,t)}\geq \eta \norm{\mvl}^{-\delta'}\right) &\leq \frac{1}{\eta^2} \cdot \left[ \sum_{\mvl \in \fT_{M\log n}\setminus\set{\mv0}} \norm{\mvl}^{2\delta'} \E\langle M_n^{c}(\mvl,t) \rangle\right],
    \end{align}
\noindent
where for $\mvl\in \fT_{M\log n}\setminus\set{\mv0}$\begin{align}\label{eqn:qv-centered-mg-l}
    \la M_n^c(\mvl) \ra_t &= \int_0^t \bigg[\sum_{\mvk_1,\mvk_2 \in \fT_{M\log n}\setminus\set{\mv0}, \mvk_1<\mvk_2} \ind\set{\mvk_1+\mvk_2 = \mvl} \varphi_{\mvk_1,\mvk_2}(\mvpi_n(s))  + \sum_{\mvk,\mvl\in \fT_{M\log n}\setminus\set{\mv0}, \mvk\neq \mvl} \varphi_{\mvl,\mvk}(\mvpi_n(s))   \nonumber\\
    &\hspace{1cm}+4 \varphi_{\mvl,\mvl}^*(\mvpi_n(s))  + \ind\set{\mvl/2 \in \fT_{M\log n}} \varphi_{\mvl/2,\mvl/2}^*(\mvpi_n(s)) + \varphi_{\mvl}(\mvpi_n(s))\bigg]ds \nonumber\\
    &=\int_{0}^t  \frac{1}{2}\left[\sum_{\mvk_1+\mvk_2 = \mvl}\pi_n(\mvk_1,s)\pi_n(\mvk_2,s)\theta_{\kappa_n}(\mvk_1,\mvk_2)\right]ds + \int_{0}^t \pi_n(\mvl,s)\theta_{\kappa_n}(\mvl,\mu_n) ds \nonumber \\
    & \hspace{2cm}+ \int_0^t \pi_n(\mvl,s)^2 \theta_{\kappa_n}(\mvl,\mvl)ds + \int_0^t \tilde{r}_n(\mvl,s)ds, \qquad \mvl \in \fT_{M\log n}\setminus\set{\mv0}\end{align} where $\tilde{r}_{n}(\mvl,s) = 4\epsilon_n(\mvl,s) + \ind\set{\mvl \text{ is even}}\epsilon_n(\mvl/2,s)$, with $\epsilon_n$'s as defined in \eqref{eqn:epsilon_l-defn}.  From \eqref{eqn:epsilon_l-bound}, we see that $\tilde{r}_{n}(\mvl,s) \leq \frac{5}{n} \norm{\kappa_n}_\infty  \norm{\mvl}.$ Hence, for all $t\leq T$ and $\mvl\in \fT_{M\log n}\setminus\set{\mv0}$, we have \begin{align}\label{eqn:QV-mg-bound}
       \la M_n^{c}(\mvl) \ra_t &\leq \norm{\kappa_n}_\infty \left[\int_0^t \Big[\sum_{\mvk_1+\mvk_2 = \mvl}\norm{\mvk_1}\norm{\mvk_2}\pi_n(\mvk_1,s)\pi_n(\mvk_2,s)\Big]ds + 2\int_0^t \norm{\mvl}\pi_n(\mvl,s)ds + \frac{5\norm{\mvl}T}{n} \right].
    \end{align}

\noindent Now, using  Lemma \ref{lem:norm-inequalities}(i), we have \begin{align}\label{eqn:mg-sub-first-term-final-bound}
\sum_{\mvl \in \fT_{M\log n}\setminus\set{\mv0}} \norm{\mvl}^{2\delta'} \left[\sum_{\mvk_1+\mvk_2 = \mvl} \norm{\mvk_1}\norm{\mvk_2}\pi_n(\mvk_1,t)\pi_n(\mvk_2,t)\right] \leq 2^{2\delta'+1}\left[\sum_{\mvl \in \fT_{M\log n}\setminus\set{\mv0}} \norm{\mvl}^{2\delta'+1}\pi_n(\mvl,t)\right].
\end{align} The last inequality follows from the observation that $\sum_{\mvl\in \fT_{M\log n}}\norm{\mvl}\pi_n(\mvl,t)$ is the proportion of vertices in components of size at most $M\log n$, and is therefore bounded by $1$. Using the bound in \eqref{eqn:mg-sub-first-term-final-bound} and \eqref{eqn:QV-mg-bound}, we get\begin{align}\label{eqn:QV-bound-mg-final}
    \sum_{\mvl\in \fT_{M\log n}\setminus\set{\mv0}} \norm{\mvl}^{2\delta'} \langle M_n^{c}(\mvl)\rangle_t &\leq 2^{2\delta'+2}\norm{\kappa_n}_\infty \int_{0}^t  \sum_{\mvl \in \fT_{M\log n}\setminus\set{\mv0}} \norm{\mvl}^{2\delta'+1}\pi_n(\mvl,s)ds \nonumber\\
    &\qquad + \frac{5T\norm{\kappa_n}_\infty }{n}\sum_{\mvl \in \fT_{M\log n}\setminus\set{\mv0}} \norm{\mvl}^{2\delta'+1}\nonumber\\
    &\leq C_1 \left[ \int_{0}^t  \sum_{\mvl \in \fT_{M\log n}\setminus\set{\mv0}} \norm{\mvl}^{2\delta'+1}\pi_n(\mvl,s)ds + \frac{T(\log n)^{2\delta' + K}}{n}\right],
\end{align} 
for some constant $C_1 \geq 0$. Taking expectations on both sides in the above inequality and using the bound in  \eqref{eqn:T_1-condition-bound-mg}, together with Theorem \ref{thm:mean-moment-bound-main}, we conclude that the condition $\text{T}_1$ is satisfied.\\ 

\noindent\textbf{Verifying Condition A:} We now check Condition A in Theorem \ref{thm:AppSemiMartTight} Fix $\epsilon,\eta >0$. Consider a collection  $\set{\tau_n}_{n\in \bN}$ such that, for each $n \in \bN$, $\tau_n$  is a $\cF_n(t)$-stopping time satisfying $\tau_n \le  T-\epsilon$ a.s. By Markov's inequality,  for any $n\geq 1$ and $\beta \leq \epsilon \wedge 1$, \begin{align}
    &\pr\left(\norm{\mvM_n^{c}(\tau_n + \beta) - \mvM_n^{c}(\tau_n)}_{1,\delta} \geq \eta\right)\leq \frac{1}{\eta}\E\left(\norm{\mvM_n^{c}(\tau_n + \beta) - \mvM_n^{c}(\tau_n)}_{1,\delta}\right) \nonumber\\ 
    &= \frac{1}{\eta} \left[\E\left(\left|M_n^{c}(\mv0,\tau_n+\beta) - M_n^c(\mv0,\tau_n)\right|\right) + \sum_{\mvl \in \fT_{M\log n}\setminus\set{\mv0}}\norm{\mvl}^\delta \E\left(\left|M_n^{c}(\mvl,\tau_n + \beta) - M_n^{c}(\mvl,\tau_n)\right|\right)\right]\nonumber\\
    &\leq \frac{1}{\eta} \bigg[\bigg(\E(\la M_n^{c}(\mv0)\ra_{\tau_n+\beta} - \la M_n^c(\mv0)\ra_{\tau_n})\bigg)^{1/2}\nonumber\\ &\hspace{1.5cm}+ \sum_{\mvl \in \fT_{M\log n}\setminus\set{\mv0}}\norm{\mvl}^\delta \bigg(\E(\la M_n^{c}(\mvl)\ra_{\tau_n + \beta} - \la M_n^{c}(\mvl)\ra_{\tau_n})\bigg)^{1/2} \bigg] \label{eq:723n}\end{align}

Note that by \eqref{eqn:21319-qv-m_0} we have \begin{align}\label{eqn:cond-A-mv0}
 \E(\la M_n^{c}(\mv0)\ra_{\tau_n+\beta} - \la M_n^c(\mv0)\ra_{\tau_n})  \leq \beta  \left(\theta_{\kappa_n}(\mu_n,\mu_n) + \norm{\kappa_n}_\infty M^2 \frac{(\log n)^2}{n}\right).
\end{align}

Therefore, using (ii) of Lemma \ref{lem:norm-inequalities}, for $\delta' = \delta+ K+2$, we have
\begin{align}\label{eqn:9378}
    \sum_{\mvl \in \fT_{M\log n}\setminus\set{\mv0}}\norm{\mvl}^\delta \bigg(&\E(\la M_n^{c}(\mvl)\ra_{\tau_n + \beta} - \la M_n^{c}(\mvl)\ra_{\tau_n})\bigg)^{1/2} \nonumber \\ &\leq  C_\delta\left[\sum_{\mvl \in \fT_{M\log n}\setminus\set{\mv0}}\norm{\mvl}^{2\delta'}
    \E(\la M_n^{c}(\mvl)\ra_{\tau_n + \beta} - \la M_n^{c}(\mvl)\ra_{\tau_n})\right]^{1/2}
\end{align}for some constant $C_\delta \geq 0$. Therefore, to verify Condition $A$, one needs to suitably estimate the term in the square braces above.

\noindent To this end, using the arguments similar to those to obtain the bounds in \eqref{eqn:QV-mg-bound}-\eqref{eqn:QV-bound-mg-final}, we have \begin{align}\label{eqn: QV-stopping time-final-bound}
    \sum_{\mvl \in \fT_{M\log n}\setminus\set{\mv0}}\norm{\mvl}^{2\delta'}& 
    (\la M^c_n(\mvl)\ra_{\tau_n + \beta} - \la M^c_n(\mvl)\ra_{\tau_n}) \nonumber\\
    &\leq C_3 \int_{\tau_n}^{\tau_n+\beta}  \left(\sum_{\mvl \in \fT_{M\log n}\setminus\set{\mv0}} \norm{\mvl}^{2\delta'+1}\pi_n(\mvl,s)\right)ds + \frac{C_3'\beta(\log n)^{2\delta'+K}}{n}\nonumber\\
    &\leq C_3\sqrt{\beta} \left[\int_{0}^{T}\left(\sum_{\mvl \in \fT_{M\log n}\setminus\set{\mv0}} \norm{\mvl}^{2\delta'+1}\pi_n(\mvl,s)\right)^2ds\right]^\frac{1}{2}+ \frac{C_3'\beta(\log n)^{2\delta'+K}}{n}.
\end{align} 

Next, we have \begin{align*}
&\E\left[\int_{0}^{T}\left(\sum_{\mvl \in \fT_{M\log n}\setminus\set{\mv0}} \norm{\mvl}^{2\delta'+1}\pi_n(\mvl,s)\right)^2ds\right]^\frac{1}{2} \\
&\leq \left[\E\left(\int_{0}^{T}\left(\sum_{\mvl \in \fT_{M\log n}\setminus\set{\mv0}} \norm{\mvl}^{2\delta'+1}\pi_n(\mvl,s)\right)^2ds\right)\right]^\frac{1}{2}  \leq \sqrt{T} \left[\sup_{t\leq T} \E\left(\sum_{\mvl \in \fT_{M\log n}\setminus\set{\mv0}} \norm{\mvl}^{2\delta'+1}\pi_n(\mvl,t)\right)^2\right]^\frac{1}{2}.
\end{align*}  

Using  Lemma \ref{lem:norm-inequalities}(ii)
$$
\left(\sum_{\mvl \in \fT_{M\log n}\setminus\set{\mv0}} \norm{\mvl}^{2\delta'+1}\pi_n(\mvl,s)\right)^2 \leq  C_4 \left(\sum_{\mvl \in \fT_{M\log n}\setminus\set{\mv0}}\norm{\mvl}^{4\delta'+ K+3}\pi_n(\mvl,s) \right),$$ 
for some constant $C_4$ depending only on $\delta$. Thus,
\begin{align*}
\E\left[\int_{0}^{T}\left(\sum_{\mvl \in \fT_{M\log n}\setminus\{\mv0\}} \norm{\mvl}^{2\delta'+1}\pi_n(\mvl,s)\right)^2ds\right]^\frac{1}{2} \leq \sqrt{C_4 T} \left[\sup_{t\leq T}\sum_{\mvl \in \fT_{M\log n}\setminus\{\mv0\}}\norm{\mvl}^{4\delta'+ K+3}\E(\pi_n(\mvl,t))\right]^{\frac{1}{2}}.
\end{align*} 
Hence, for some constant $C_5 \geq 0$, we have \begin{align}\label{eqn:202421}
    \sum_{\mvl \in \fT_{M\log n}\setminus\set{\mv0}}\norm{\mvl}^\delta &\bigg(\E\la M_n^{c}(\mvl,\tau_n + \beta) - M_n^{c}(\mvl,\tau_n)\ra\bigg)^{1/2} \nonumber \\
    &\leq 
    C_5\beta^{1/4} \left[\Big(\sup_{t\leq T}\sum_{\mvl \in \fT_{M\log n}\setminus\{\mv0\}}\norm{\mvl}^{4\delta'+ K+3}\E(\pi_n(\mvl,t))\Big)^{1/2} + \frac{(\log n)^{2\delta'+K}}{n}\right]^{1/2}
\end{align} 
From \eqref{eq:723n}, \eqref{eqn:cond-A-mv0}, \eqref{eqn:9378}, and \eqref{eqn:202421} and applying Theorem \ref{thm:mean-moment-bound-main}, we have that for a sufficiently small $\zeta > 0$ \begin{align*}
    \sup_{n}\sup_{\beta \leq \zeta}\pr\left(\norm{\mvM_n^{c}(\tau_n + \beta) - \mvM_n^{c}(\tau_n)}_{1,\delta} \geq \eta \right) \leq \epsilon.
\end{align*} 
This shows that the Condition $A$ in Theorem \ref{thm:AppSemiMartTight} holds and completes the proof of the lemma.
The asserted $\bC$-tightness in the statement follows since $\|\Delta M_n^c\|_{1,\delta} \le C n^{-1/2}$, a.s., for some $C>0$.
\end{proof}

\begin{rem} The above calculations, together with and Doob's $\cL^2$-inequality, also show that, for all $\nu\ge 0$, \begin{align}\label{eqn:doob-mg-sub}
   \sup_{n \in \bN}\E\left( \left[\sup_{t\leq T}(M^c_n(\mv0,t))^2 + \sum_{\mvl\in \fT_{M\log n}\setminus\set{\mv0}}\norm{\mvl}^{\nu} \sup_{t\leq T}(M_n^c(\mvl,t))^2\right] \right) < \infty .
\end{align}
\end{rem}

\noindent  We next focus on tightness of $X_n$. For this we will use  the second-moment bound established in Theorem \ref{thm:variance-moment-bound-main} and the estimate in the following lemma.
\begin{lem}
\label{lem:705}
Fix $T < t_c$, $\delta\ge 0$ and $M\ge 0$. There 
 exists a  $C \geq 0$  such that, for every collection  $\set{\tau_n}_{n\in \bN}$, such that for each $n \in \bN$, $\tau_n$  is a $\cF_n(t)$-stopping time satisfying $\tau_n \le  T-\epsilon$ a.s., and $\beta \le \epsilon \wedge 1$, 
    \begin{align}\label{lem:sufficient-condition-A}
      \sup_{n\geq 1} \sqrt{n} \cdot \left[\sum_{\mvl \in \fT_{M\log n}\setminus\set{\mv0}} \norm{\mvl}^{\delta} \E\left(\int_{\tau_n}^{\tau_n+\beta}  \left|F_{\mvl}(\mvpi_n(s),\kappa_n,\mu_n) - F_{\mvl}(\mvpi(s),\kappa,\mu)\right|ds \right)\right] \leq C \beta^{1/4}.
 \end{align} 
\end{lem}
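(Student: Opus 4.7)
The plan is to establish the stronger uniform bound
$$\sup_{n\geq 1}\sup_{s\leq T}\sqrt{n}\sum_{\mvl\in \fT_{M\log n}\setminus\set{\mv0}}\norm{\mvl}^{\delta}\,\E\bigl|F_{\mvl}(\mvpi_n(s),\kappa_n,\mu_n) - F_{\mvl}(\mvpi(s),\kappa,\mu)\bigr| < \infty,$$
which by Tonelli and the trivial estimate $\int_{\tau_n}^{\tau_n+\beta}ds\leq \beta$ dominates the left side of \eqref{lem:sufficient-condition-A} by $C\beta$; since $\beta\leq 1$, this in turn is at most $C\beta^{1/4}$. The structural starting point is that $F_{\mvl}$ is linear in $(\tilde\kappa,\tilde\mu)$ and bilinear in $\mvx$, which allows the decomposition
$$F_{\mvl}(\mvpi_n,\kappa_n,\mu_n) - F_{\mvl}(\mvpi,\kappa,\mu) = D_1(\mvl) + D_2(\mvl),$$
where $D_1(\mvl) = F_{\mvl}(\mvpi_n,\kappa_n,\mu_n) - F_{\mvl}(\mvpi_n,\kappa,\mu)$ is a ``parameter piece'' and $D_2(\mvl) = F_{\mvl}(\mvpi_n,\kappa,\mu) - F_{\mvl}(\mvpi,\kappa,\mu)$ is a ``density piece''.

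For the parameter piece, the linearity of $\theta_{\tilde\kappa}$ in $\tilde\kappa$ gives $|\theta_{\kappa_n-\kappa}(\mvk_1,\mvk_2)|\leq\|\kappa_n-\kappa\|_\infty\norm{\mvk_1}\norm{\mvk_2}$, and analogously in $\tilde\mu$; combined with \eqref{eq:bdonerr}, this yields a pathwise bound $\sqrt{n}|D_1(\mvl)|\leq C\sum_{\mvk_1+\mvk_2=\mvl}\pi_n(\mvk_1)\pi_n(\mvk_2)\norm{\mvk_1}\norm{\mvk_2} + C\norm{\mvl}\pi_n(\mvl)$ in which the $\sqrt{n}$ prefactor has been absorbed. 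Applying Lemma \ref{lem:norm-inequalities}(i) with weights $\norm{\mvl}^\delta$ reorganizes the convolution sum into a product of the form $\bigl(\sum\norm{\mvl}^{\delta+1}\pi_n(\mvl)\bigr)\bigl(\sum\norm{\mvl}\pi_n(\mvl)\bigr)$. The second factor is pathwise bounded by $1$, and the first has expectation uniformly bounded in $n,s$ by Theorem \ref{thm:mean-moment-bound-main}.

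For the density piece, substitute $\pi_n = \pi + X_n/\sqrt{n}$ and expand. Multiplying by $\sqrt{n}$ converts differences $\pi_n(\mvk_1)\pi_n(\mvk_2)-\pi(\mvk_1)\pi(\mvk_2)$ into linear-in-$X_n$ bilinear cross terms, yielding
$$\sqrt{n}|D_2(\mvl)|\leq C\sum_{\mvk_1+\mvk_2=\mvl}\norm{\mvk_1}\norm{\mvk_2}\bigl[|X_n(\mvk_1)|\pi_n(\mvk_2)+\pi(\mvk_1)|X_n(\mvk_2)|\bigr] + C\norm{\mvl}|X_n(\mvl)|.$$
Applying Lemma \ref{lem:norm-inequalities}(i) to the weighted sum $\sum_{\mvl}\norm{\mvl}^\delta(\cdots)$ factors each convolution into one-dimensional sums $A_n = \sum\norm{\mvl}^{\delta+1}|X_n(\mvl)|$, $A'_n = \sum\norm{\mvl}^{\delta+1}\pi_n(\mvl)$ (or the analogue with $\pi$), $B_n = \sum\norm{\mvl}|X_n(\mvl)|$, and $B'_n = \sum\norm{\mvl}\pi_n(\mvl)\leq 1$. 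The resulting cross products $\E[A_n B'_n]$, $\E[A'_n B_n]$ etc.\ are estimated by Cauchy-Schwarz in expectation, and each squared $\ell_1$-type norm is then bounded via Lemma \ref{lem:norm-inequalities}(ii) by a weighted $\ell_2$-type sum. Specifically, $\E[A_n^2]\leq C\,\E\sum\norm{\mvl}^{2\delta+K+3}X_n(\mvl)^2$ is uniformly bounded by Theorem \ref{thm:variance-moment-bound-main}; for $\E[(A'_n)^2]\leq C\,\E\sum\norm{\mvl}^{2\delta+K+3}\pi_n(\mvl)^2$, the pathwise bound $\pi_n(\mvl)\leq 1/\norm{\mvl}$ gives $\pi_n(\mvl)^2\leq \pi_n(\mvl)/\norm{\mvl}$, reducing matters to a first-moment estimate covered by Theorem \ref{thm:mean-moment-bound-main}.

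The main bookkeeping obstacle will be the density piece, where cross products $\E[SS']$ appear in which neither factor is almost surely bounded. The essential device will be to combine Cauchy-Schwarz with Lemma \ref{lem:norm-inequalities}(ii), trading squared $\ell_1$-type norms against $\ell_2$-type norms carrying polynomially heavier weights, and to exploit the pathwise inequality $\pi_n(\mvl)^2\leq \pi_n(\mvl)/\norm{\mvl}$ to reduce second-moment estimates on $\pi_n$ to first-moment estimates covered by Section \ref{sec:moment-bounds}. The specific value of the stopping time $\tau_n$ plays no role beyond localizing the integration to a length-$\beta$ sub-interval of $[0,T]$.
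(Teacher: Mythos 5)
Your proposed reduction has a genuine gap at the very first step: the passage from a uniform-in-$s$ pointwise moment bound to the bound over a random time interval $[\tau_n,\tau_n+\beta]$.

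You claim that if $\sup_{n}\sup_{s\le T}\E[G_n(s)]\le C_0$ (where $G_n(s)$ denotes the $\sqrt n$-scaled weighted sum of $|F_{\mvl}(\mvpi_n(s),\kappa_n,\mu_n)-F_{\mvl}(\mvpi(s),\kappa,\mu)|$), then ``by Tonelli and $\int_{\tau_n}^{\tau_n+\beta}ds\le\beta$'' the left side of \eqref{lem:sufficient-condition-A} is $\le C\beta$. This does not follow. Since $\tau_n$ is a random stopping time that depends on the process, the identity
$$\E\left(\int_{\tau_n}^{\tau_n+\beta}G_n(s)\,ds\right)=\int_{\tau_n}^{\tau_n+\beta}\E[G_n(s)]\,ds$$
is not valid — the limits of integration cannot be pulled outside the expectation. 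Tonelli applied correctly gives only
$$\E\left(\int_{\tau_n}^{\tau_n+\beta}G_n(s)\,ds\right)=\int_0^T\E\bigl[G_n(s)\,\ind\{\tau_n\le s\le\tau_n+\beta\}\bigr]\,ds,$$
and since the event $\{\tau_n\le s\le\tau_n+\beta\}$ is not independent of $G_n(s)$, the best one can extract from $\sup_s\E[G_n(s)]\le C_0$ alone is the bound $T\,C_0$, which does not vanish as $\beta\to 0$ — exactly the $\beta$-dependence that Condition A of Theorem \ref{thm:AppSemiMartTight} requires. To get a $\beta$-dependent bound directly by this route you would need control of $\E[\sup_{s\le T}G_n(s)]$, not $\sup_{s\le T}\E[G_n(s)]$, and that sup-moment is not supplied by Theorems \ref{thm:mean-moment-bound-main} and \ref{thm:variance-moment-bound-main}.

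The paper's proof confronts exactly this obstacle, and it is the reason the exponent $\beta^{1/4}$ (rather than $\beta$ or $\beta^{1/2}$) appears. After establishing the same pathwise domination $G_n(s)\le C''U_n(s)V_n(s)$ that you derive, it splits the time integral according to whether $U_n(s)\le A$ or $U_n(s)>A$. On the truncated part, $U_n$ is replaced by the constant $A$, and the Cauchy--Schwarz inequality in the $ds$-variable converts $\int_{\tau_n}^{\tau_n+\beta}V_n(s)\,ds$ into $\sqrt{\beta}\,\bigl(\int_0^T V_n^2\bigr)^{1/2}$, a quantity with fixed (nonrandom) integration limits, whose expectation is controlled by $\sup_{s\le T}\E[V_n^2(s)]$ via Jensen — this is where the $\beta$-decay actually comes from, and it is precisely the step your argument skips. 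On the tail part $\{U_n>A\}$, a Markov/Chebyshev estimate with $\E[U_n^4]$ and $\E[V_n^2]$ produces a factor $A^{-1}$. Choosing $A=\beta^{-1/4}$ balances the two contributions and yields $\beta^{1/4}$. Your verification that $\E[U_n^2]$, $\E[U_n^4]$, $\E[V_n^2]$ are uniformly bounded is correct and matches the paper's usage, but without the truncation-plus-Cauchy--Schwarz device the argument cannot close.
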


\begin{proof}
Using the expressions for $F_{\mvl}$ for $\mvl \in \fT_{M\log n}\setminus\set{\mv0}$ from \eqref{eqn:F_l-defn}, we have for some $C'\ge 0$, and all $t\ge 0$ and $n \in \bN$,
\begin{align*}
   &\sqrt{n}\left|F_{\mvl}(\mvpi_n(t),\kappa_n,\mu_n) - F_{\mvl}(\mvpi(t),\kappa,\mu)\right|\\
    &\leq C'  \bigg[\sum_{\mvk_1+\mvk_2 = \mvl} \norm{\mvk_1}\norm{\mvk_2}\bigg(\pi_n(\mvk_1,t)|X_n(\mvk_2,t)| + |X_n(\mvk_1,t)|\pi(\mvk_2,t) + \pi(\mvk_1,t)\pi(\mvk_2,t)\bigg) \\
    &\hspace{8cm}+ \norm{\mvl} \left(|X_n(\mvl,t)| + \pi(\mvl,t)\right)  \bigg].
\end{align*} where $C' = 2 \max\left(\sup_{n} \norm{\kappa_n}, L\right)$. Therefore, using Lemma \ref{lem:all-moments-finite} and Lemma \ref{lem:norm-inequalities}(i), for all $t \leq T$, 
\begin{align*}
    \sqrt{n} \bigg[\sum_{\mvl\in \fT_{M\log n}\setminus\set{\mv0}}\norm{\mvl}^{\delta}\left|F_{\mvl}(\mvpi_n(t),\kappa_n,\mu_n) - F_{\mvl}(\mvpi(t),\kappa,\mu)\right|\bigg] \leq C'' U_n(t)  V_n(t)
\end{align*} 
for some constant $C''$ depending on $M,T$ and $L$, where \begin{align}\label{eqn:defn-cond-A-UV}
U_n(t) := 1 + \sum_{\mvl \in \fT_{M\log n}\setminus\set{\mv0}} \norm{\mvl}^{\delta+1}|\pi_n(\mvl,t)|, \qquad V_n(t) := 1+ \sum_{\mvl \in \fT_{M\log n}\setminus\set{\mv0}} \norm{\mvl}^{\delta+1}|X_n(\mvl,t)| 
\end{align}

\noindent By Lemma \ref{lem:norm-inequalities}(ii) and Theorem \ref{thm:variance-moment-bound-main}, we have $\sup_n \sup_{t\leq T} (\E V_n^2(t))^{1/2} = C_1< \infty$. Thus, for any $A \geq 0$, we have \begin{align}\label{eqn:head12302}
    \sup_n \E\left(\int_{\tau_n}^{\tau_n+\beta}U_n(s)V_n(s) \ind\set{U_n(s)\leq A}ds\right) \leq  A\sup_n \E\left(\int_{\tau_n}^{\tau_n+\beta}V_n(s) ds \right) \leq C_1\sqrt{T} A\sqrt{\beta}.
\end{align}  Also, we have 
\begin{align}\label{eqn:tail90293}
  \E\left(\int_{\tau_n}^{\tau_n+\beta} U_n(s)V_n(s) \ind\set{U_n(s)\geq A}ds\right) &\leq \frac{1}{A} \E\left(\int_{\tau_n}^{\tau_n+\beta} U^2_n(s)V_n(s) \, ds \right)\nonumber\\
  &\leq \frac{T}{A}\sup_{t\leq T}\E\left(U_n^2(t) V_n(t)\right) \nonumber\\
  &\leq \frac{T}{A}\sup_{t\leq T}\left[\E U_n^4(t)\right]^{1/2}  \sup_{t\leq T} \left[\E V_n^2(t)\right]^{1/2}.
\end{align} 
By applying  Lemma \ref{lem:norm-inequalities}(ii) twice and using Theorems \ref{thm:mean-moment-bound-main} and \ref{thm:variance-moment-bound-main}, we have $\sup_n \sup_{t\leq T} \E U_n^4(t) = C_2 < \infty$. Therefore, combining \eqref{eqn:head12302} and \eqref{eqn:tail90293}, we have 
\begin{align*}
    &\sup_n \sqrt{n} \bigg[\sum_{\mvl\in \fT_{M\log n}\setminus\set{\mv0}}\norm{\mvl}^{\delta}\E\left(\int_{\tau_n}^{\tau_n+\beta} \left|F_{\mvl}(\mvpi_n(s),\kappa_n,\mu_n) - F_{\mvl}(\mvpi(s),\kappa,\mu)\right| ds\right)\bigg] \\
    &\leq C''(C_1\sqrt{T} A \sqrt{\beta} + \frac{1}{A} T C_2^{1/2}C_1).
\end{align*} Choosing $A = \beta^{-1/4}$, we get the desired result. This finishes the proof of the lemma.\end{proof}

\begin{prop}\label{lem:tightness-x-sub}
     For any $0< T < t_c$ and $\delta >0$, the collection $\set{\vX_n}_{n\geq 1}$ is a $\bC$-tight sequence in $ \bb{D}([0,T]:\ell_{1,\delta})$.
\end{prop}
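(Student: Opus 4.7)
Recalling the semi-martingale representation in \eqref{eqn:semi-mg-rep-X}, namely $\vX_n(t) = \vX_n(0) + \mvA_n^c(t) + \mvM_n^c(t)$, my plan is to establish $\bC$-tightness of each of the three summands in $\bD([0,T]:\ell_{1,\delta})$. First, by \eqref{eqn:inital-point-x} and the fact that $\vX_n(0)$ is supported on the finite-dimensional subspace spanned by $\{e_{\ve_k}: k \in [K]\}$, we have that $\vX_n(0)$ converges in distribution to a $\ell_{1,\delta}$-valued random variable; since $\vX_n(0)$ is constant in $t$ (viewed as a process), this gives trivial $\bC$-tightness for this term. Next, $\bC$-tightness of $\mvM_n^c$ in $\bD([0,T]:\ell_{1,\delta})$ is the content of Proposition \ref{lem:mg-tightness-sub}. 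It therefore remains to show $\bC$-tightness of $\mvA_n^c$.

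For $\mvA_n^c$, I will verify the Aldous--Kurtz conditions $T_1$ and $A$ of Theorem \ref{thm:AppSemiMartTight}. Note first that $\mvA_n^c$ is absolutely continuous in $t$, so once tightness is established, $\bC$-tightness follows automatically. From \eqref{eqn:bdd-process-defn}--\eqref{eqn:centered-defns}, we can decompose
\begin{align*}
A_n^c(\mvl, t) = \sqrt{n}\int_0^t \bigl[F_{\mvl}(\mvpi_n(s),\kappa_n,\mu_n) - F_{\mvl}(\mvpi(s),\kappa,\mu)\bigr] ds + \sqrt{n}\int_0^t r_n(\mvl,s)\, ds.
\end{align*}
The second summand is uniformly negligible: by the bound \eqref{eqn:r_n-bound}, $|r_n(\mvl,s)| \le C(\log n)^2/n$ for all $\mvl \in \fT_{M\log n}$, so summing the $\sqrt{n}$ multiple over the weights $\norm{\mvl}^\delta$ yields a quantity of order $(\log n)^{\delta+K+2}/\sqrt{n} \to 0$ uniformly over $s \in [0,T]$. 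Consequently, this summand is identically convergent to $0$ in $\bD([0,T]:\ell_{1,\delta})$.

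For the main summand, Condition $A$ is exactly the content of Lemma \ref{lem:705}: given $\epsilon, \eta > 0$ and stopping times $\tau_n \le T - \epsilon$, for $\beta \le \epsilon \wedge 1$ Markov's inequality yields
\begin{align*}
\pr\bigl(\norm{\mvA_n^c(\tau_n+\beta) - \mvA_n^c(\tau_n)}_{1,\delta} \ge \eta\bigr) \le \frac{1}{\eta}\,C\beta^{1/4} + o(1),
\end{align*}
which can be made smaller than any prescribed $\epsilon' > 0$ by choosing $\beta$ small. For Condition $T_1$, I propose to reuse the compact set $\cK_{\eta,\delta}$ constructed in \eqref{eqn:compact-set}: using a union bound, Markov's inequality at the first coordinate and Chebyshev's inequality at the remaining coordinates, together with the uniform second moment bound of Theorem \ref{thm:variance-moment-bound-main} and the $L^1$ version of the argument underlying Lemma \ref{lem:705} applied with $\tau_n=0$ and $\beta = t$, give
\begin{align*}
\pr\bigl(\mvA_n^c(t) \notin \cK_{\eta,\delta}\bigr) \le \frac{1}{\eta}\sum_{\mvl \in \fT_{M\log n}} \norm{\mvl}^{\delta + K + 2}\, \E\abs{A_n^c(\mvl,t)} < \epsilon
\end{align*}
for $\eta$ sufficiently large, uniformly in $n$. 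With both conditions verified, Theorem \ref{thm:AppSemiMartTight} gives tightness of $\mvA_n^c$ in $\bD([0,T]:\ell_{1,\delta})$, and absolute continuity in $t$ promotes this to $\bC$-tightness. Summing the three $\bC$-tight contributions yields the desired $\bC$-tightness of $\vX_n$.

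The main obstacle is already handled by Lemma \ref{lem:705}: controlling the fluctuations of the nonlinear drift $F_{\mvl}(\mvpi_n)-F_{\mvl}(\mvpi)$ on short time intervals in the $\ell_{1,\delta}$ norm, uniformly in $n$, where the key issue is the growing index set $\fT_{M\log n}$. The splitting argument over $\{U_n(s) \le A\}$ and its complement in the proof of that lemma, combined with the fourth-moment control obtained by iterating Lemma \ref{lem:norm-inequalities}(ii), is what makes the verification of Condition $A$ tractable.
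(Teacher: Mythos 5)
Your proof is correct and follows essentially the same skeleton as the paper's: the semi-martingale decomposition $\vX_n = \vX_n(0) + \mvA_n^c + \mvM_n^c$, Proposition \ref{lem:mg-tightness-sub} for $\mvM_n^c$, the Aldous--Kurtz criterion for the remaining part, with Lemma \ref{lem:705} doing the heavy lifting for Condition $A$ and the bound \eqref{eqn:r_n-bound} dispatching the $r_n$ term. The one structural difference is in Condition $T_1$: the paper checks $T_1$ directly for $\vX_n$ using the second-moment bound of Theorem \ref{thm:variance-moment-bound-main} (so that only Condition $A$ needs to be addressed via the decomposition), whereas you check $T_1$ for $\mvA_n^c$ separately via a first-moment Markov bound on $\E|A_n^c(\mvl,t)|$. (Your text mentions ``Chebyshev'' and ``the second moment bound of Theorem \ref{thm:variance-moment-bound-main}'', which does not match your displayed first-moment estimate; this is a presentational wobble, not a substantive error.) The uniform bound $\sup_n \sup_{t\le T}\sum_{\mvl}\norm{\mvl}^{\delta'}\E|A_n^c(\mvl,t)|<\infty$ that your $T_1$ step requires is exactly the content of the paper's remark \eqref{eqn:doob-bdd-pr-sub}, and it does indeed follow from the ingredients of Lemma \ref{lem:705} — though as stated that lemma restricts $\beta\le\epsilon\wedge1$, so to apply it with $\tau_n=0$, $\beta=t$ for all $t\le T$ you would either split $[0,t]$ into short subintervals or, more simply, bypass the truncation and apply Cauchy--Schwarz directly to $\E(U_n(s)V_n(s))$ using the uniform bounds on $\E U_n^2$ and $\E V_n^2$ from that lemma's proof. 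Your term-by-term route is slightly longer but more modular; the paper's direct $T_1$ check on $\vX_n$ is slightly more economical. Both are correct, and your final step — that $\bC$-tightness of the three summands yields $\bC$-tightness of the sum — is valid (the modulus-of-continuity characterization of $\bC$-tightness is subadditive).
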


\begin{proof}
Similar to the proof of Proposition  \ref{lem:mg-tightness-sub}, we will verify the two conditions of Theorem \ref{thm:AppSemiMartTight}. Fix $\delta \geq 0$ and $T < t_c$. 
Recall $L$ from \eqref{eq:bdonerr}.

\noindent \textbf{Verifying Condition $\mathbf{T_1}$:} Using union bound and Markov inequality as in the proof of Proposition \ref{lem:mg-x-tightness-super}, we have, for $\eta>0$ and
$K_{\eta,\delta}$ as in \eqref{eqn:compact-set},
\begin{align}
\pr\left(\vX_n(t)\not\in K_{\eta,\delta}\right) &\leq \frac{1}{\eta^2} \left[\E\left(X_n(\mv0,t)\right)^2 + \sum_{\mvl \in \fT_{M\log n}\setminus\set{\mv0
}} \norm{\mvl}^{2\delta'} \E\left(X_n(\mvl,t)\right)^2\right].\label{eq:726n}
\end{align} 

 Applying Theorem \ref{thm:variance-moment-bound-main}, we now conclude that Condition $T_1$ is satisfied.

\noindent\textbf{Verifying Condition A:} 
Since by Proposition \ref{lem:mg-tightness-sub} the collection $\set{\mvM_n^c}_{n\geq 1}$ satisfies Condition $A$ , in view of \eqref{eqn:semi-mg-rep-X}, it is enough to verify that Condition $A$ is satisfied by $\mvA^c_n$. Fix $\epsilon,\eta > 0$. Also, fix a collection  $\set{\tau_n}_{n\in \bN}$ such that 
$\tau_n$ is a $\cF_n(t)$-stopping times  bounded by $T-\epsilon$ a.s. for all $n$.
 By Markov's inequality, we have 
\begin{align}
    &\pr\left(\norm{\mvA_n^{c}(\tau_n + \beta) - \mvA_n^{c}(\tau_n)}_{1,\delta} \geq \eta\right) \nonumber\\ 
    &\leq \frac{1}{\eta} \left[\E\left(\left|A_n^c(\mv0,\tau_n+\beta) - A_n^c(\mv0,\tau_n)\right|\right)+ \sum_{\mvl \in \fT_{M\log n}\setminus\set{\mv0}}\norm{\mvl}^\delta \E\bigg(|A_n^c(\mvl,
\tau_n + \beta) - A_n^c(\mvl,\tau_n)|\bigg)\right].\label{eq:956}
\end{align} Note that from \eqref{eqn:centered-defns} and \eqref{eqn:bdd-process-defn}, for all $\mvl \in \fT_{M\log n}$,  
\begin{align}
    &\left|A_n^c(\mvl,\tau_n+\beta) -  A^c_n(\mvl,\tau_n)\right| \nonumber\\
    &\leq \sqrt{n} \left[\int\limits_{\tau_n}^{\tau_n + \beta} |F_{\mvl}(\mvpi_n(s),\kappa_n,\mu_n) - F_{\mvl}(\mvpi(s),\kappa,\mu)|ds\right] + \sqrt{n} \left[\int\limits_{\tau_n}^{\tau_n + \beta} |r_n(\mvl,s)|ds\right].\label{eq:1001}
\end{align} 
By \eqref{eqn:r_n-bound}, $|r_n(\mvl,t)| \leq (K+2)M^2 \norm{\kappa_n}_\infty (\log n)^2/n$ for all $\mvl\in \fT_{M\log n}$ and $t\geq 0$. Therefore, \begin{align}\label{eqn:error-term-condition-A-X}
  \sqrt{n} \left[\int_{\tau_n}^{\tau_n+\beta} |r_n(\mv0,s)|ds+ \sum_{\mvl \in \fT_{M\log n}\setminus \{\mv0\}} \norm{\mvl}^{\delta}  \int_{\tau_n}^{\tau_n+\beta} |r_n(\mvl,s)|ds\right] \leq \frac{C\beta(\log n)^{\delta+K+1}}{\sqrt{n}}
 \end{align}for some constant $C \geq 0$. 
 Further, by \eqref{eqn:F_l-defn},
 \begin{align}\label{eqn:zero-term-condition-A-X}
\sqrt{n} \E\left(\left|F_{\mv0}(\pi_n(t),\kappa_n,\mu_n) - F_{\mv0}(\pi(t),\kappa,\mu)\right|\right) \leq  L(1+2K\norm{\kappa}_\infty).
\end{align} 

\noindent  Combining \eqref{eqn:zero-term-condition-A-X} and \eqref{eqn:error-term-condition-A-X} with Lemma \ref{lem:705}  and \eqref{eq:956},  \eqref{eq:1001},  completes the proof.
\end{proof}

\begin{rem} The above calculations  also show that for all $\nu \geq 0$,  \begin{align}\label{eqn:doob-bdd-pr-sub}
   \sup_{n\in \bN} \E\left(\left[\sup_{t\leq T} |A^{c}_n(\mv0,t)| + \sum_{\mvl\in \fT_{M\log n}\setminus\set{\mv0}}\norm{\mvl}^{\nu} \sup_{t\leq T}|A_n^c(\mvl,t)|\right] \right) < \infty .
\end{align} 
\end{rem}

Combining  Proposition \ref{lem:mg-tightness-sub} and Proposition \ref{lem:tightness-x-sub}, we have that the collection of random variables $\set{X_n,M_n^c,A_n^c}_{n\geq 1}$ is $\bC$-tight in $\bD([0,T]:\ell_{1,\delta}\times\ell_{1,\delta}\times\ell_{1,\delta} )$  for all $T < t_c$. This establishes the desired tightness in the sub-critical regime.

\subsubsection{Tightness in the super-critical regime:}
In this section, we briefly discuss the minor modifications to the above arguments needed to establish tightness in the super-critical regime. Fix $t_c < T_1 < T_2$. From the semi-martingale representation of $\mvpi_n(t)$ in \eqref{eqn:biggggggg-semi-mg}, for all  $t\in [T_1,T_2]$, we  have 
\begin{align}\label{eqn:semi-mg-rep-super}
   \mvpi_n(t) = \mvpi_n(T_1) + \tilde{\mvA\,}_n(t) + \tilde{\mvM}_n(t),
\end{align} 
where
\begin{align}\label{eqn:super-critical-sm-rep-defn}
    \tilde{\mvA\,}_n(t) = \mvA_n(t)-\mvA_n(T_1),\qquad \tilde{\mvM}_n(t) = \mvM_n(t)-\mvM_n(T_1).
\end{align}

    The processes $\tilde{\mvA\,}_n$ and $\tilde{\mvM}_n$ depend on $T_1$, but we suppress $T_1$ in the notation.

Similar to the sub-critical regime, for $t \in [T_1,T_2]$, we have\begin{align}\label{eqn:semi-mg-rep-x-super}
    \vX_n(t) = \vX_n(T_1) + \tilde{\mvA\,}^c_n(t) +  \tilde{\mvM}^c_n(t),& \qquad \text{where} \\
    \tilde{A}^c_n(\mvl,t) = \sqrt{n}\left[\tilde{A}_n(\mvl,t) - \int_{T_1}^t F_{\mvl}(\mvpi(s),\kappa,\mu)ds\right],& \hspace{0.5cm}
    \tilde{M}_n^c(\mvl,t) = \sqrt{n}\tilde{M}_n(\mvl,t),  \hspace{0.5cm} \mvl\in \fT_{M\log n}. \label{eqn:centerd-process-super}
\end{align}

As an immediate consequence of Theorem \ref{thm:variance-moment-bound-main} and the inequality in \eqref{eq:726n} we have the following pointwise tightness result.
\begin{prop}\label{lem:pointwise-tightness-super}
    For every $t \neq t_c$ and $\delta \geq 0$, the collection of random variables $\set{\vX_n(t)}_{n\geq 1}$ is a tight sequence in $\ell_{1,\delta}$.
\end{prop}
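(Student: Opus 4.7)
The plan is to derive this tightness directly from the second moment bound in Theorem \ref{thm:variance-moment-bound-main} combined with the compactness of the sets $\cK_{\eta,\delta}$ introduced in \eqref{eqn:compact-set}. Recall that $\cK_{\eta,\delta}$ requires that the coordinate in position $\mv0$ is bounded by $\eta$ and coordinates in positions $\mvl \neq \mv0$ decay at rate $\eta \norm{\mvl}^{-(K+\delta+2)}$. Using the standard bound $|\{\mvl \in \fT: \norm{\mvl}=m\}| \le (m+1)^{K-1}$, it is easy to verify that the tail of the $\|\cdot\|_{1,\delta}$-norm on $\cK_{\eta,\delta}$ decays uniformly (like $\sum_{m \ge N} m^{K-1} \cdot m^\delta \cdot m^{-(K+\delta+2)} = \sum_{m\ge N}m^{-3}$), so that $\cK_{\eta,\delta}$ is indeed a compact subset of $\ell_{1,\delta}$; this is implicit in the proof of Proposition \ref{lem:mg-tightness-sub} and I would simply recall it.

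Next, fix $t \neq t_c$ and $\delta \ge 0$, and apply the union bound together with Markov's inequality exactly as in \eqref{eq:726n}, to obtain, with $\delta' = K+\delta+2$,
\begin{align*}
    \pr\bigl(\vX_n(t) \not\in \cK_{\eta,\delta}\bigr)
    &\le \pr\bigl(|X_n(\mv0,t)| > \eta\bigr) + \sum_{\mvl \in \fT_{M\log n}\setminus\set{\mv0}} \pr\bigl(|X_n(\mvl,t)| > \eta \norm{\mvl}^{-\delta'}\bigr) \\
    &\le \frac{1}{\eta^{2}}\left[\E\bigl(X_n(\mv0,t)\bigr)^2 + \sum_{\mvl \in \fT_{M\log n}\setminus\set{\mv0}} \norm{\mvl}^{2\delta'}\,\E\bigl(X_n(\mvl,t)\bigr)^2\right].
\end{align*}
Since $t\neq t_c$, the singleton $\cI=\{t\}$ is a compact interval not containing $t_c$, so Theorem \ref{thm:variance-moment-bound-main}, applied with $\delta$ replaced by $2\delta'$, yields a finite constant $C=C(t,\delta,M)$, independent of $n$, bounding the expression in brackets. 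Hence $\pr(\vX_n(t)\not\in\cK_{\eta,\delta}) \le C/\eta^{2}$ uniformly in $n$.

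Given $\epsilon>0$, choosing $\eta$ sufficiently large so that $C/\eta^2 < \epsilon$ gives $\sup_{n \ge 1}\pr(\vX_n(t)\not\in \cK_{\eta,\delta}) \le \epsilon$, which is precisely the tightness criterion. No further work is needed, and there is no genuine obstacle here—the statement is essentially a packaging of the uniform second-moment bound already established in Theorem \ref{thm:variance-moment-bound-main}. The only point that might require one sentence of care is confirming that the second moment bound survives the passage from the index set $\fT_{M\log n}$ used by $\vX_n(t)$ to the infinite set $\fT$ over which $\ell_{1,\delta}$ is defined; this is immediate because coordinates of $\vX_n(t)$ outside $\fT_{M \log n}$ are identically zero, and hence lie trivially inside the constraint $|x_\mvl| \le \eta \norm{\mvl}^{-\delta'}$ defining $\cK_{\eta,\delta}$.
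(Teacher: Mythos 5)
Your proof is correct and is exactly the argument the paper gives: the paper simply invokes Theorem~\ref{thm:variance-moment-bound-main} together with the Markov/union-bound inequality \eqref{eq:726n} (which already handles the reduction to the compact sets $\cK_{\eta,\delta}$), stating the proposition as an immediate consequence. Your closing remark about coordinates outside $\fT_{M\log n}$ being identically zero is a nice clarification of a point the paper leaves implicit.
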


The above proposition, along with Theorem \ref{thm:fdd-fclt}, implies that for any fixed $t\neq t_c$ the sequence  $\vX_n(t)$ as a collection of  $\ell_{1,\delta}$ valued random variables converges in distribution to a random variable $\vX(t)$ whose finite dimensional distribution is given by  Proposition \ref{prop:mean-variance}(iii). Furthermore, we have the following result.

\begin{prop}\label{lem:mg-x-tightness-super}
     For $t_c < T_1 < T_2$ as above and $\delta \geq 0$, the collection of random variables $\set{\vX_n,\tilde{\mvM}_n^c}_{n\geq 1}$ is a $\bC$-tight sequence in $\bD([T_1,T_2]:\ell_{1,\delta}\times\ell_{1,\delta} )$.
\end{prop}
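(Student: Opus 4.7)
The plan is to mirror the two-step scheme used for the subcritical regime (Propositions \ref{lem:mg-tightness-sub} and \ref{lem:tightness-x-sub}), working now on the compact interval $[T_1,T_2]$ which is disjoint from $t_c$. The crucial observation is that both moment bounds (Theorems \ref{thm:mean-moment-bound-main} and \ref{thm:variance-moment-bound-main}) and the exponential decay estimates from Lemma \ref{lem:all-moments-finite} and Remark \ref{rem:exp-decay} apply \emph{uniformly} on any compact interval avoiding $t_c$; hence all the analytic inputs used in the subcritical proofs carry over verbatim, with integrals over $[0,t]$ replaced by integrals over $[T_1,t]$ and initial values replaced by the values at $T_1$.

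First, I would show $\bC$-tightness of $\{\tilde{\mvM}_n^c\}$ in $\bD([T_1,T_2]:\ell_{1,\delta})$ via the Aldous--Kurtz criterion (Theorem \ref{thm:AppSemiMartTight}). The quadratic variation computations \eqref{eqn:21319-qv-m_0}--\eqref{eqn:QV-bound-mg-final}, when carried out on $[T_1,t]$ rather than $[0,t]$, give the same structural bound of the form
\[
\sum_{\mvl\in\fT_{M\log n}\setminus\{\mv0\}}\norm{\mvl}^{2\delta'}\langle \tilde M_n^c(\mvl)\rangle_t\le C_1\left[\int_{T_1}^t\sum_{\mvl\in\fT_{M\log n}\setminus\{\mv0\}}\norm{\mvl}^{2\delta'+1}\pi_n(\mvl,s)\,ds+\frac{T_2(\log n)^{2\delta'+K}}{n}\right],
\]
with $\delta'=K+\delta+2$. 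Taking expectations and applying Theorem \ref{thm:mean-moment-bound-main} on $[T_1,T_2]$ verifies $\mathrm{T}_1$; the stopping-time estimate (Condition A) follows in exactly the same way as in \eqref{eqn: QV-stopping time-final-bound}--\eqref{eqn:202421}, using Lemma \ref{lem:norm-inequalities} and Theorem \ref{thm:mean-moment-bound-main}. Since $\|\Delta \tilde M_n^c\|_{1,\delta}\le C n^{-1/2}$ a.s., the tightness is automatically $\bC$-tightness.

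Second, I would establish $\bC$-tightness of $\{\vX_n\}$ in $\bD([T_1,T_2]:\ell_{1,\delta})$. Condition $\mathrm{T}_1$ is handled by Proposition \ref{lem:pointwise-tightness-super} applied at each $t\in[T_1,T_2]$ (this is where the second-moment bound Theorem \ref{thm:variance-moment-bound-main} is used). In view of the semimartingale decomposition \eqref{eqn:semi-mg-rep-x-super} and Step 1, Condition A for $\vX_n$ reduces to Condition A for $\tilde{\mvA\,}_n^c$. Using \eqref{eqn:centerd-process-super} and \eqref{eqn:bdd-process-defn}, we split $|\tilde A_n^c(\mvl,\tau_n+\beta)-\tilde A_n^c(\mvl,\tau_n)|$ into the error part controlled by \eqref{eqn:error-term-condition-A-X} (of order $(\log n)^{\delta+K+1}/\sqrt n$) and the main part whose increment is bounded as in Lemma \ref{lem:705}; the latter proof only invoked Lemma \ref{lem:all-moments-finite} and the first- and second-moment bounds on a compact interval away from $t_c$, all of which are valid on $[T_1,T_2]$. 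The $\bC$-tightness of the pair then follows from the joint tightness and the $O(n^{-1/2})$ jump bound on $\vX_n$.

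The main technical point to watch, rather than a genuine obstacle, is to verify that the analogue of Lemma \ref{lem:705} on $[T_1,T_2]$ gives the $\beta^{1/4}$ control uniformly in $n$; this requires uniform-in-$n$ fourth-moment bounds on $U_n(t)$ of \eqref{eqn:defn-cond-A-UV}, which reduce via Lemma \ref{lem:norm-inequalities}(ii) to Theorem \ref{thm:mean-moment-bound-main} with a larger exponent $\delta$, and second-moment bounds on $V_n(t)$, which reduce to Theorem \ref{thm:variance-moment-bound-main}. Since both theorems are established for arbitrary $\delta\ge 0$ on any compact $\cI\not\ni t_c$, no additional estimate is needed. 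This completes the proof plan.
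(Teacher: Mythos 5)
Your proposal is correct and takes essentially the same approach as the paper, which itself only sketches the argument: mirror the subcritical proof (Propositions \ref{lem:mg-tightness-sub} and \ref{lem:tightness-x-sub}) on $[T_1,T_2]$, relying on the fact that Theorems \ref{thm:mean-moment-bound-main}, \ref{thm:variance-moment-bound-main}, Lemma \ref{lem:all-moments-finite}, and the error bounds \eqref{eqn:epsilon_l-bound}, \eqref{eqn:r_n-bound} all hold uniformly on any compact interval avoiding $t_c$. You fill in the details the paper omits, and your treatment is accurate.
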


The proof of Proposition \ref{lem:mg-x-tightness-super} follows using the decomposition in \eqref{eqn:semi-mg-rep-x-super} together with similar arguments as in Proposition \ref{lem:mg-tightness-sub} and Proposition \ref{lem:tightness-x-sub}, and the tightness result in Proposition \ref{lem:pointwise-tightness-super}. 
The main observation here is that the estimates \eqref{eqn:epsilon_l-bound} and \eqref{eqn:r_n-bound}  hold for all $t\ge 0$ and that Theorem \ref{thm:mean-moment-bound-main}, Lemma \ref{lem:all-moments-finite}, and Theorem \ref{thm:variance-moment-bound-main}, which were the main ingredients of the proof in the sub-critical regime, can be applied with $\cI = [T_1, T_2]$.

We omit the details.

\subsection{Convergence in \texorpdfstring{$\ell_{1,\delta}$}{l1d}}\label{subsec:convergence}
The previous section proved tightness of $\vX_n$ as well as associated processes in the sub and supercritical regime.  In this section, we show uniqueness (in distribution) of limit points thus completing the proof of the Theorem. 

Recall the space $\tilde\ell_{1}$ and $\ell_2$ defined in \eqref{eqn:l1-subspace} and \eqref{eqn:l_2,delta-space} respectively. The Banach space $\ell_{1,\delta}$ is contained in the Hilbert space $\ell_{2}$ for any $\delta \geq 0$. 
Recall the operators $\Gamma(t)$ and $\Phi(t)$ defined in \eqref{eqn:sde-gamma-defn} and \eqref{eqn:sde-phi-defn} respectively; and the operator $G(t)$ defined as the nonnegative square-root of  $\Phi(t)$. The next lemma collects properties of these operators which will be used in showing the well-definedness of the SDE in \eqref{eqn:sde-vector}.

\begin{lem}\label{lem:operator-properties} The following properties of above operators hold. \begin{enumeratei}
        \item For $\mvx\in \tilde{\ell}_1$ and for $t \neq t_c$, we have $\Gamma(t)(\mvx)$ is an element of $\tilde{\ell}_1$.
        \item For each $t \neq t_c$,   $\Phi(t)$ is a well defined map on $\ell_2$. Furthermore, it is a non-negative symmetric bounded operator on $\ell_2$.
        \item Furthermore, if $\cI$ is a compact interval not containing $t_c$, $$\sup_{t\in \cI} \left[\la e_{\mv0}, \Phi(t) e_{\mv0} \ra + \sum_{\mvl\in \fT\setminus\set{\mv0}} \norm{\mvl}^{\delta} \la e_{\mvl}, \Phi(t) e_{\mvl} \ra \right] < \infty. $$ 
        In particular, $\Phi(t)$ is a trace class operator on $\ell_2$ for all $t \in \cI$,
        $\sup_{t \in \cI} \|\Phi(t)\|_{\mbox{\tiny{tr}}}<\infty$ and $\sup_{t \in \cI} \|G(t)\|_{\mbox{\tiny{HS}}}<\infty$, where $\|\cdot\|_{\mbox{\tiny{tr}}}$ and $\|\cdot\|_{\mbox{\tiny{HS}}}$ denote the trace class and Hilbert-Schmidt norms, respectively.
    \end{enumeratei}
\end{lem}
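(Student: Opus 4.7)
\textbf{Proof proposal for Lemma \ref{lem:operator-properties}.} The three parts will be handled in order, using as the principal analytic input the exponential decay of $\pi(\mvl,t)$ in $\|\mvl\|$, uniformly over compact $t$-intervals $\cI$ not containing $t_c$ (Lemma \ref{lem:all-moments-finite} together with Remark \ref{rem:exp-decay}).

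For (i), I will fix $t \neq t_c$, $\mvx \in \tilde\ell_1$ and any $\delta \geq 0$, and estimate $\|\Gamma(t)(\mvx)\|_{1,\delta}$ by splitting it into a convolution term and a diagonal term. The diagonal term $\sum_{\mvl} \|\mvl\|^\delta |x_\mvl|\,\theta_\kappa(\mvl,\mu)$ is immediately bounded by $\|\kappa\|_\infty\,\mu(\cS)\sum_\mvl \|\mvl\|^{\delta+1}|x_\mvl|$, which is finite since $\mvx\in\tilde\ell_1$. For the convolution term $\sum_\mvl\|\mvl\|^\delta\sum_{\mvk_1+\mvk_2=\mvl}\pi(\mvk_1,t)|x_{\mvk_2}|\theta_\kappa(\mvk_1,\mvk_2)$, I will use the crude estimate $\theta_\kappa(\mvk_1,\mvk_2)\le\|\kappa\|_\infty\|\mvk_1\|\|\mvk_2\|$ and invoke the convolution inequality of Lemma \ref{lem:norm-inequalities}(i) to bound it by a sum of products of weighted $\ell_1$ norms of $\pi(\cdot,t)$ and of $|\mvx|$, both of which are finite by Lemma \ref{lem:all-moments-finite} and by $\mvx\in\tilde\ell_1$. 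Since $\delta\ge 0$ is arbitrary, $\Gamma(t)\mvx\in\tilde\ell_1$.

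For (ii), I will analyze the three summands of $\Phi(t)$ separately. The first is a scalar multiple of $e_{\mv0}e_{\mv0}^T$, with scalar $\tfrac12\theta_\kappa\bigl(\mu-\sum_\mvl\mvl\pi(\mvl,t),\,\mu-\sum_\mvl\mvl\pi(\mvl,t)\bigr)\ge 0$, where non-negativity of this vector's entries follows from the mass-conservation inequality $\sum_\mvl l_i\pi(\mvl,t)\le\mu(i)$ noted just before \eqref{eqn:fdd-sde}. The second and third summands are infinite sums of rank-one non-negative operators $c_\mvl\Delta_\mvl\Delta_\mvl^T$ and $\tilde c_{\mvl,\mvk}\Delta_{\mvl,\mvk}\Delta_{\mvl,\mvk}^T$, where the scalars $c_\mvl$ and $\tilde c_{\mvl,\mvk}$ are non-negative by the same mass-conservation argument applied to the bracket in \eqref{eqn:sde-phi-defn}. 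Using $\|\Delta_\mvl\|_2\le\sqrt 2$, $\|\Delta_{\mvl,\mvk}\|_2\le 2$, and the estimates $c_\mvl\le\|\kappa\|_\infty\mu(\cS)\|\mvl\|\pi(\mvl,t)$ and $\tilde c_{\mvl,\mvk}\le\|\kappa\|_\infty\|\mvl\|\|\mvk\|\pi(\mvl,t)\pi(\mvk,t)$, summability of $\sum_\mvl c_\mvl$ and $\sum_{\mvl,\mvk}\tilde c_{\mvl,\mvk}$ follows from Lemma \ref{lem:all-moments-finite}. This gives convergence of the partial sums in the strong operator topology (in fact in operator norm), boundedness on $\ell_2$, and both symmetry and non-negativity coordinate-wise.

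For (iii), since $\Phi(t)\ge 0$, the trace class norm equals $\sum_\mvl\langle e_\mvl,\Phi(t)e_\mvl\rangle$, and since $G(t)$ is the non-negative square root of $\Phi(t)$, $\|G(t)\|_{\mathrm{HS}}^2 = \mathrm{tr}(G(t)^2) = \mathrm{tr}(\Phi(t))$; thus the trace-class estimate gives the Hilbert--Schmidt estimate for free. I will compute $\langle e_\mvl,\Phi(t)e_\mvl\rangle$ explicitly, observing that a given $\mvl$ receives a contribution from the $\Delta_\mvk$-term only when $\mvl\in\{\mv0,\mvk\}$, and from the $\Delta_{\mvk_1,\mvk_2}$-term only when $\mvl\in\{\mv0,\mvk_1,\mvk_2,\mvk_1+\mvk_2\}$. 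Collecting the resulting cases, multiplying by the weight $\|\mvl\|^\delta$ and summing, the required bound reduces to a finite combination of expressions of the form $\sum_\mvl\|\mvl\|^{\delta+a}\pi(\mvl,t)$ and convolution-type sums $\sum_{\mvk_1,\mvk_2}\|\mvk_1+\mvk_2\|^\delta\|\mvk_1\|\|\mvk_2\|\pi(\mvk_1,t)\pi(\mvk_2,t)$, each of which is controlled uniformly in $t\in\cI$ via Lemma \ref{lem:norm-inequalities}(i), Lemma \ref{lem:all-moments-finite}, and the uniform exponential decay of Remark \ref{rem:exp-decay}. The main obstacle is the bookkeeping in (iii): organizing the five contribution patterns for each $\mvl$ and then handling the convolution-type double sums. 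Once the structural decomposition is in place, the exponential decay ensures absolute and uniform convergence of every resulting series on $\cI$.
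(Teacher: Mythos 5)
Your proof is correct and takes essentially the same approach as the paper: all three parts reduce to Lemma \ref{lem:norm-inequalities}(i) together with the uniform exponential decay of $\pi(\mvl,t)$ away from $t_c$ supplied by Lemma \ref{lem:all-moments-finite}, and the trace-class/Hilbert--Schmidt assertions in (iii) follow by summing diagonal entries exactly as you describe. The only minor variation is in part (ii): you establish boundedness by a direct trace-class estimate (summing the positive scalar coefficients against $\|\Delta_\mvl\|_2^2$, $\|\Delta_{\mvl,\mvk}\|_2^2$ in the rank-one decomposition), whereas the paper bounds the Hilbert--Schmidt quantity $\sum_{\mvl}\langle\Phi_i(t)e_\mvl,\Phi_i(t)e_\mvl\rangle$; both routes use the same decay input and are equivalent in force.
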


\begin{proof}
    (i) Fix $t\neq t_c$, and $\mvx \in \tilde{\ell}_{1}$. Let $\mvy = \Gamma(t)(\mvx)$. For all $\mvl\in \fT$, we have \begin{align*}
        |y_{\mv0}| = 0,\qquad \abs{y_{\mvl}} \leq \norm{\kappa}_\infty  \left[\sum_{\mvk_1+\mvk_2 = \mvl}\pi(\mvk_1,t)|x_{\mvk_2}|\norm{\mvk_1}\norm{\mvk_2} + |x_{\mvl}| \norm{\mvl}\right], \hspace{0.2cm} \mvl \neq \mv0.
    \end{align*}  By Lemma \ref{lem:all-moments-finite}, we have $\norm{\mvpi(t)}_{1,\nu} < \infty$ for all $\nu \geq 0$. Therefore, by (i) of Lemma \ref{lem:norm-inequalities}, we have \begin{align*}
        \norm{\mvy}_{1,\delta} &=\sum_{\mvl \in \fT\setminus\set{\mv0}} \norm{\mvl}^{\delta} |y_{\mvl}| \leq   \norm{\kappa}_\infty  \left[\norm{\mvx}_{1,\delta+1} + 2^{\delta} \norm{\mvx}_{1,1}\norm{\mvpi(t)}_{1,\delta+1} + 2^{\delta} \norm{\mvpi(t)}_{1,1}\norm{\mvx}_{1,\delta+1}\right].
        \end{align*} Therefore, $\mvy \in \tilde{\ell}_1$. This proves (i).

     (ii) Fix $t \neq t_c$. Denote the first (resp. second) infinite sum in \eqref{eqn:sde-phi-defn} by $\Phi_1(t)$ (resp.  $\Phi_2(t)$). Note that, for each $\mvl \in \fT$,
     $\Phi_i(t) e_{\mvl}$ is well defined (as discussed below \eqref{eqn:sde-phi-defn}) as an element of $\ell_2$.
     To show that $\Phi_i(t)$ are well defined maps on $\ell_2$ (namely, for $\Phi_1(t)$,  the sum in \eqref{eq:ssse} converges for every $x \in \ell_2$, and similarly for $\Phi_2(t)$)
       and they are bounded linear operators on $\ell_2$ it  suffices to show $\sum_{\mvl\in \fT} \la \Phi_i(t)e_{\mvl}, \Phi_i(t)e_{\mvl}\ra < \infty.$ 

     For $i=1$, note that we have $$\sum_{\mvl\in \fT}\la \Phi_1(t)e_{\mvl}, \Phi_1(t)e_{\mvl} \ra \leq  \left(\sum_{\mvl\in \fT\setminus\set{\mv0}}\norm{\mvl} \pi(\mvl,t)\right)^2 < \infty.$$ The last inequality is an immediate consequence of Lemma \ref{lem:all-moments-finite}. Similar calculations, and using Lemma \ref{lem:all-moments-finite}, show that $\Phi_2(t)$ also has this property. The non-negative definiteness of the operator $\Phi(t)$ follows from the discussion preceding \eqref{eqn:fdd-sde}. In particular, the terms within the two sets of braces in \eqref{eqn:fdd-fclt-bm} are nonnegative if $\fT_N$ there is replaced by $\fT\setminus\{0\}$.
     
    (iii) Fix $\delta \geq 0$, and a compact interval $\cI$ not containing $t_c$. From the definition of $\Phi(t)$ in \eqref{eqn:sde-phi-defn}, we have \begin{align*}
  \la e_{\mv0},\Phi(t)e_{\mv0}\ra &= \frac{1}{2} \theta_{\kappa}(\mu,\mu) \leq \norm{\kappa}_\infty, \\
  \la e_{\mvl},\Phi(t)e_{\mvl}\ra &= \frac{1}{2}\left[\sum_{\mvk_1+\mvk_2 = \mvl} \pi(\mvk_1,t)\pi(\mvk_2,t)\theta_{\kappa}(\mvk_1,\mvk_2)\right] + \frac{1}{2}\pi(\mvl,t) \theta_{\kappa}(\mvl,\mu) + \pi(\mvl,t)^2 \theta_\kappa(\mvl,\mvl) \\
  &\leq 2\norm{\kappa}_\infty \left[\sum_{\mvk_1+\mvk_2 = \mvl}\norm{\mvk_1}\norm{\mvk_2}\pi(\mvk_1,t)\pi(\mvk_2,t) + \norm{\mvl}\pi(\mvl,t)\right],  \ \ \mvl \neq 0.\\
    \end{align*} Therefore, by (i) of Lemma \ref{lem:norm-inequalities}, and Lemma \ref{lem:all-moments-finite}, we have \begin{align}\label{eqn:9825}
        \sup_{t\in \cI} \left[\la e_{\mv0}, \Phi(t) e_{\mv0}\ra + \sum_{\mvl \in \fT\setminus\set{\mv0}} \norm{\mvl}^{\delta} \la e_{\mvl}, \Phi(t) e_{\mvl}\ra \right] \leq C_1\left[1 + \sup_{t\in \cI} \sum_{\mvl \in \fT\setminus\set{\mv0}}\norm{\mvl}^{\delta+1} \pi(\mvl,t)\right] < \infty
    \end{align} for some constants $C_1>0$. 
    This finishes the proof of the lemma.\end{proof}

We note the following  integrability property of the process $\vX_n$.

\begin{lem}\label{lem:ui-property}
For any compact interval $\cI$ not containing $t_c$ and $\nu \geq 0$, we have  \begin{align*}
    \sup_n \E\left( \left[\sup_{t \in \cI}|X_n(\mv0,t)| + \sum_{\mvl \in \fT_{M\log n}\setminus\set{\mv0}}\norm{\mvl}^{\nu} \sup_{t \in \cI}|X_n(\mvl,t)|\right]\right) < \infty
\end{align*}
\end{lem}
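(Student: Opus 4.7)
Since $\cI$ is compact and does not contain $t_c$, we may assume $\cI \subseteq [0,t_c)$ (sub-critical case) or $\cI \subseteq (t_c,\infty)$ (super-critical case). The plan is to bound the quantity in the statement using the semi-martingale decomposition of $\vX_n$ together with Doob's $\cL^2$-inequality and a Cauchy--Schwarz trick that converts $\cL^2$ bounds on individual coordinates into $\cL^1$ bounds on the weighted sum.

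For the sub-critical case, write $\cI \subseteq [0,T]$ with $T < t_c$ and use the decomposition \eqref{eqn:semi-mg-rep-X} to bound, for each $\mvl$,
$$\sup_{t \in \cI}|X_n(\mvl,t)| \leq |X_n(\mvl,0)| + \sup_{t\leq T}|A_n^c(\mvl,t)| + \sup_{t\leq T}|M_n^c(\mvl,t)|.$$
The initial term contributes only through $\mvl = \ve_k$ for $k \in [K]$ by \eqref{eqn:inital-point-x} (and the observation below \eqref{eqn:intial-point}), and its expectation is uniformly bounded by Assumption \ref{ass:irg}(b). The bounded-variation contribution is handled by \eqref{eqn:doob-bdd-pr-sub}. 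For the martingale contribution, for any $\epsilon>0$, apply Cauchy--Schwarz:
\begin{align*}
\sum_{\mvl \in \fT_{M\log n}\setminus\{\mv0\}} \norm{\mvl}^{\nu}\, \E\!\sup_{t\leq T}|M_n^c(\mvl,t)|
&\leq \left(\sum_{\mvl \in \fT\setminus\{\mv0\}} \norm{\mvl}^{-K-1-\epsilon}\right)^{\!1/2} \\
&\quad \times \left(\sum_{\mvl \in \fT_{M\log n}\setminus\{\mv0\}} \norm{\mvl}^{2\nu+K+1+\epsilon}\, \E\!\sup_{t\leq T}M_n^c(\mvl,t)^2 \right)^{\!1/2}.
\end{align*}
The first factor is finite because $|\{\mvl \in \fT : \norm{\mvl}=m\}| = O(m^{K-1})$, hence the sum is $O(\sum_m m^{-2-\epsilon}) < \infty$. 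The second factor is uniformly bounded in $n$ by \eqref{eqn:doob-mg-sub} applied with $\nu$ replaced by $2\nu+K+1+\epsilon$. The $\mv0$-coordinate is handled directly by the same estimates.

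For the super-critical case $\cI \subseteq [T_1,T_2]$ with $t_c < T_1 < T_2$, use the decomposition \eqref{eqn:semi-mg-rep-x-super}:
$$\sup_{t \in \cI}|X_n(\mvl,t)| \leq |X_n(\mvl,T_1)| + \sup_{T_1 \leq t \leq T_2}|\tilde A_n^c(\mvl,t)| + \sup_{T_1 \leq t \leq T_2}|\tilde M_n^c(\mvl,t)|.$$
The last two terms are bounded by exact analogues of \eqref{eqn:doob-mg-sub} and \eqref{eqn:doob-bdd-pr-sub} over $[T_1,T_2]$; these follow from the same computations as in Propositions \ref{lem:mg-tightness-sub} and \ref{lem:tightness-x-sub} (as noted in the discussion surrounding Proposition \ref{lem:mg-x-tightness-super}), since the only inputs are the first-moment bound of Theorem \ref{thm:mean-moment-bound-main} and the bounds \eqref{eqn:epsilon_l-bound}, \eqref{eqn:r_n-bound}, which all apply on any compact interval not containing $t_c$. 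The pointwise term $|X_n(\mvl,T_1)|$ is controlled via the same Cauchy--Schwarz device: for any $\epsilon > 0$,
\begin{align*}
\sum_{\mvl \in \fT_{M\log n}\setminus\{\mv0\}} \norm{\mvl}^{\nu}\, \E|X_n(\mvl,T_1)|
\leq C_\epsilon \left(\sum_{\mvl \in \fT_{M\log n}\setminus\{\mv0\}} \norm{\mvl}^{2\nu+K+1+\epsilon}\, \E X_n(\mvl,T_1)^2 \right)^{\!1/2},
\end{align*}
which is uniformly bounded in $n$ by Theorem \ref{thm:variance-moment-bound-main}.

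The only mildly delicate point is ensuring that the weighted second-moment bounds on the supremum of $M_n^c$ are uniform over both the $\mvl$-summation and $n$. However, this is already contained in the calculations leading to \eqref{eqn:doob-mg-sub}, which yield the required estimate with any polynomial weight $\norm{\mvl}^{\nu'}$. Combining the three ingredients (initial condition, bounded variation, martingale) and summing as above completes the proof.
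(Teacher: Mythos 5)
Your proof is correct and takes essentially the same route as the paper: you use the semi-martingale decomposition \eqref{eqn:semi-mg-rep-X} (resp.\ \eqref{eqn:semi-mg-rep-x-super}), then invoke \eqref{eqn:doob-bdd-pr-sub} for the bounded-variation part and \eqref{eqn:doob-mg-sub} for the martingale part, which is exactly what the paper does. The one place you are usefully more explicit than the paper is in spelling out the Cauchy--Schwarz step (equivalently, Lemma \ref{lem:norm-inequalities}(ii)) that converts the weighted $\cL^2$ bound on $\sup_t M_n^c(\mvl,t)$ from \eqref{eqn:doob-mg-sub} into the weighted $\cL^1$ bound demanded by the lemma; the paper labels this as ``immediate.''
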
 
\begin{proof}
In the sub-critical regime, this is an immediate consequence of  \eqref{eqn:doob-mg-sub} and \eqref{eqn:doob-bdd-pr-sub}, along with the semi-martingale representation in \eqref{eqn:semi-mg-rep-X}. For the super-critical regime one can proceed in a similar fashion by observing that the estimates in \eqref{eqn:doob-mg-sub} and \eqref{eqn:doob-bdd-pr-sub} continue to hold if $\sup_{t\le T}$ is replaced with $\sup_{t \in [T_1, T_2]}$ for some $t_c< T_1 <T_2$ and $\mvM_n^c$, $\mvA_n^c$ replaced with $\tilde \mvM_n^c$ and $\tilde{\mvA\,}_n^c$, respectively, where the latter quantities are as introduced in \eqref{eqn:centerd-process-super}. 
\end{proof}
\begin{proof}[Proof of (a) in Theorem \ref{thm:fclt-irg}:]
We mainly focus on the proof of (i) in Theorem \ref{thm:fclt-irg} i.e functional central limit theorem in the sub-critical regime. The proof in the super-critical regime follows from similar arguments.
 Fix $T < t_c$. Recall the processes $\vX_n(t)$ and $\mvM^c_n(t)$ defined in \eqref{eqn:semi-mg-rep-X} and \eqref{eqn:centered-defns} respectively on the interval $[0,T]$. By Proposition \ref{lem:tightness-x-sub}, and Proposition \ref{lem:mg-tightness-sub}, the collection of random variables $\set{\vX_n(\cdot),\mvM^c_n(\cdot)}_{n\geq 1}$ is $\bC$-tight in $\bD([0,T]:\ell_{1,\delta}\times\ell_{1,\delta})$ for any $\delta \geq 0$.

For the rest of the proof, fix $\delta \geq 0$. Let $\set{\vX,\mvM}$ be a weak limit of a sub-sequence $\set{\vX_n,\mvM^c_n}_{n\geq 1}$ (also indexed by $\set{n}$) in $\bD([0,T]:\ell_{1,\delta}\times\ell_{1,\delta})$, given on some probability space $(\Omega, \cF, \bP)$. We first characterize the limit of the martingale term $\mvM$, and then $\vX$ in the following lemma.

 Fix $m \in \bN$, and a continuous and bounded function, $R:(\ell_{1,\delta}\times \ell_{1,\delta})^m \to \bR$.  For any $0\leq s\leq t\leq T$ and $0\leq t_1\leq t_2 \leq \dots\leq t_m \leq s$, let $\mvxi_i^n = (\vX_n(t_i),\mvM_n^{c}(t_i))$ and $\mvxi_i = (\vX(t_i),\mvM(t_i))$. Then, for all $\mvl \in \fT$, we have \begin{align*}
    \E\left(R(\mvxi_1,\mvxi_2,\dots, \mvxi_m) [M(\mvl,t) - M(\mvl,s)]\right) &= \lim_{n\to \infty}\E\left(R(\mvxi^n_1,\mvxi^n_2,\dots, \mvxi^n_m) [M_n^{c}(\mvl,t) -M_n^{c}(\mvl,s)]\right) =0
\end{align*} The first equality follows from the  integrability property of $\mvM_n^{c}(t)$, noted in \eqref{eqn:doob-mg-sub}, and the second follows from the martingale property of $\mvM_n^c(t)$. Thus, we have $\mvM(\cdot)$ is a $\cF_t$-adapted martingale where $\cF_t = \sigma\left(\set{\vX(s),\mvM(s):s\leq t}\right)$. Furthermore, by \eqref{eqn:doob-mg-sub} and Fatou's Lemma, the following holds 
\begin{align}\label{eqn:0007}
     \E \left[\sup_{t\in [0,T]}(M(\mv0,t))^2 + \sum_{\mvl \in \fT\setminus\set{\mv0}} \norm{\mvl}^{\nu} \sup_{t\in [0,T]}(M(\mvl,t))^2\right] < \infty
\end{align}
for all $\nu \geq 0.$ Therefore, we have $\mvM(t) = (M(\mvl,t):\mvl\in \fT)$ is a collection of $\cF(t)$-square-integrable martingale processes on $[0,T]$. Also, \eqref{eqn:0007} implies that $\mvM(t) \in \tilde{\ell}_1$, a.s., for all $t\in [0,T]$. 
Furthermore, from \eqref{eqn:21319-qv-m_0}, \eqref{eqn:qv-centered-mg-l}, the error estimates \eqref{eqn:epsilon_l-bound},  \eqref{eqn:r_n-bound}, and the fact that, from the tightness established in Proposition \ref{lem:tightness-x-sub}, $\mvpi_n(\cdot) \convp \mvpi(\cdot)$ in $\bD([0,T]: \ell_{1,\delta})$, we see that $$\sup_{t\leq T}\left|\la M^c_n(\mvl)\ra_t - \int_0^t \la e_{\mvl},\Phi(s)e_{\mvl} \ra ds\right| \convp 0.$$ From similar computations, using \eqref{eqn:fdd-pi-mg-term-infinite}, it follows that for any $\mvl,\mvk \in \fT$ \begin{align}\label{eqn:qv-convergence-lk}
\sup_{t\leq T}\left|\la M^c_n(\mvl), M^c_n(\mvk)\ra_t - \int_0^t \la e_{\mvl},\Phi(s)e_{\mvk}\ra ds\right| \convp 0.  
\end{align}
Furthermore, using estimates analogous to \eqref{eqn:QV-mg-bound}, it can be checked that  
$\set{M_n^c(\mvl,t)M_n^v(\mvk,t)}_{n\geq 1}$ and
$\set{\la M_n^c(\mvl),M_n^v(\mvk)\ra_t}_{n\geq 1}$ are uniformly integrable collection of random variables for all $\mvl,\mvk \in \fT$, and $t\leq T$. Therefore, similar to above arguments, for any $s < t < T$, we have \begin{align}
     &\E\left(R(\mvxi_1,\mvxi_2,\dots, \mvxi_m) [ M(\mvl,t)M(\mvk,t)-  M(\mvl,s)M(\mvk,s) -\int_s^t \la e_{\mvl}, \Phi(u) e_{\mvk}\ra du]\right) \nonumber\\
     &=\lim_{n\to \infty}\E\left(R(\mvxi_1,\mvxi_2,\dots, \mvxi_m) [M^c_n(\mvl,t)M^c_n(\mvk,t)-  M^c_n(\mvl,s)M^c_n(\mvk,s) -\int_s^t \la e_{\mvl}, \Phi(u) e_{\mvk}\ra du]\right) \nonumber\\
     &= \lim_{n\to \infty}\E(R(\mvxi^n_1,\mvxi^n_2,\dots, \mvxi^n_m) [\la M^c_n(\mvl),M^c_n(\mvk) \ra_t- \la M^c_n(\mvl),M^c_n(\mvk)\ra_s -\int_s^t \la e_{\mvl}, \Phi(u) e_{\mvk}\ra du]) =0.\nonumber
\end{align} This shows that the quadratic variation of the martingale process $\mvM(\cdot)$ is given by \begin{align}
    \la M(\mvl,t),M(\mvk, t) \ra = \int_0^t \la e_{\mvl}, \Phi(s) e_{\mvk}\ra ds, \qquad \mvl,\mvk \in \fT.
\end{align} 
Since $\bC([0,T]:  \ell_1) \subset \bC([0,T]: \ell_2)$, we see that $\{\mvM(t): 0 \le t \le T\}$ is a continuous $\ell_2$-valued martingale with quadratic variation given by the above expression in terms of the trace class operators $\{\Phi(s)\}$ for which $\sup_{t \in [0,T]} \|\Phi(t)\|_{\mbox{\tiny{tr}}}<\infty$.
It follows by \cite{Da_Prato_Zabczyk_2014}*{Theorem 8.2}  that on some extension $\left(\bar{\Omega}, \bar{\cF}, \{\bar{\cF}_t\}, \bar{\bP}\right)$ of the space $\left(\Omega, \cF, \{\cF_t\}, \bP\right)$, we are given an iid sequence of standard Brownian motions $\{B_{\mvk}, \mvk \in \fT\}$, independent of $\vX(0)$, such that
 \begin{align}\label{eqn:mg-is-bm}
    \mvM(t) = \int_0^t G(s) d\mvB(s),\end{align} 
    where recall that $G(s)$ is the nonnegative square root of $\Phi(s)$ and $\sup_{t \in [0,T]} \|G(t)\|_{\mbox{\tiny{HS}}}<\infty$.
For details on the definition and properties of   the above stochastic integral, see \cite{Da_Prato_Zabczyk_2014}*{Chapter 4}.

We next focus on characterizing the process $\vX(\cdot)$. Recall the semi-martingale representation of $\vX_n(\cdot)$ in \eqref{eqn:semi-mg-rep-X}. We first show that the process $\mvA_n^{c}(t)$ (jointly with $(\vX_n,\mvM_n^{c})$) converges in distribution to $\int_0^t \mvH(\vX(s),s)ds$ (and  $(\vX,\mvM))$, along the  chosen subsequence, where the function $\mvH:\tilde{\ell}_1 \times [0,T] \to \tilde{\ell}_1$ is defined as 
$$\mvH(\mvx,t) = \left[\frac{1}{2}\theta_{\Lambda}(\mu,\mu) + \theta_{\kappa}(\Psi,\mu)\right] e_{\mv0} + \sum_{\mvl \in \fT\setminus\set{\mv0}} \left[ F_{\mvl}(\mvpi(t),\Lambda,\mu) - \pi(\mvl,t)\theta_{\kappa}(\mvl,\Psi)\right]e_{\mvl} + \Gamma(t)\mvx.$$  
Recall from \eqref{eqn:bdd-process-defn} and \eqref{eqn:centered-defns}  that for any $\mvl\in \fT_{M\log n}$ we have
$$
A^c_n(\mvl,t)= \sqrt{n}\int_0^t \left[F_{\mvl}(\mvpi_n(s),\kappa_n,\mu_n) - F_{\mvl}(\mvpi(s),\kappa,\mu) \right] ds + \sqrt{n} \int_0^t r_n(\mvl,t).$$ 
From \eqref{eqn:r_n-bound}, we have \begin{align*}
   \sup_{t\leq T} \sqrt{n} \left[ |r_n(\mv0,t)| + \sum_{\mvl \in \fT_{M\log n}\setminus\set{\mv0}} \norm{\mvl}^{\delta}|r_n(\mvl,t)| \right] = O_{\pr}((\log n)^{\delta+K+1}/\sqrt{n}).
\end{align*}
Therefore, we have \begin{align}\label{eqn:bdd-pr-error-928}
    \sup_{t\leq T}\norm{\mvA_n^{c}(t) - \int_0^t \mvQ_n(s) ds }_{1,\delta} \convp 0,\qquad & \text{where}\\
    \mvQ_n(t) = \bigg(\sqrt{n} \left[ F_{\mvl}(\pi_n(t),\kappa_n,\mu_n) -F_{\mvl}(\pi(t),\kappa,\mu) \right] & : \mvl  \in \fT_{M\log n}\bigg).
\end{align}  
Define the $\ell_{1,\delta}$-valued process $\mvH_n(t) = (H_{\mvl}(\vX_n(t),t):\mvl \in \fT_{M\log n})$. We next claim that 
\begin{equation}\label{lem:bdd-var-approx}
 \int_0^T \norm{\mvQ_n(s) - \mvH_n(s)}_{1,\delta} ds \convp 0.
\end{equation}
We now finish the proof of the theorem using the claim in  \eqref{lem:bdd-var-approx}. Using \eqref{eqn:bdd-pr-error-928} and the claim, it is sufficient to show that along the chosen subsequence \begin{align}\label{eqn:sufficient-weak-convergence}
    \left(\vX_n,\mvM_n^{c}, \int_0^\cdot \mvH_n(s) ds\right) \convd \left(\vX,\mvM, \int_0^\cdot \mvH(\vX(s),s)ds\right)
\end{align} in $\bD([0,T]:\ell_{1,\delta}\times \ell_{1,\delta} \times \ell_{1,\delta})$.
Furthermore, since the tightness in Proposition \ref{lem:tightness-x-sub} holds for every $\delta\ge 0$, we can assume without loss of generality that, along the chosen subsequence, the above convergence in fact holds in $\bD([0,T]:\ell_{1,1+\delta}\times \ell_{1,\delta} \times \ell_{1,\delta})$.
By Skorohod representation theorem, we can assume $(\vX_n,\mvM_n^{c})$ converges almost surely to $(\vX,\mvM)$ in $\bD([0,T]:\ell_{1,1+\delta}\times\ell_{1,\delta})$. Also, from (ii) of Lemma \ref{lem:norm-inequalities} and Lemma \ref{lem:ui-property}, and Fatou's lemma we have that $\vX(t) \in \tilde{\ell}_1$. In fact, we also have $\sup_{t\leq T} \norm{\vX(t)}_{1,\nu} < \infty$ for all $\nu \geq 0$ almost surely. Also, for all $\mvl \in \fT_{M\log n}\setminus\set{\mv0}$, we see from \eqref{eqn:sde-gamma-defn} that  \begin{align*}
    |H_{\mvl}(\vX_n(s),s) - H_{\mvl}(\vX(s),s)|&\\
    &\hspace{-1cm}\leq C\left[\sum_{\mvk_1 + \mvk_2 = \mvl} \norm{\mvk_1}\norm{\mvk_2} \pi(\mvk_1,s) |X_n(\mvk_2,s) - X(\mvk_2,s)| + \norm{\mvl}|X_n(\mvl,s) - X(\mvl,s)|\right].
\end{align*} 
 
Therefore, using (ii) of Lemma \ref{lem:norm-inequalities}, we have \begin{align}\label{eqn:91}
    B_n^{(1)} =  \sup_{0\le s\leq T}\sum_{\mvl\in\fT_{M\log n}\setminus\set{\mv0}} \norm{\mvl}^{\delta} |H_{\mvl}(\vX_n(s),s) - H_{\mvl}(\vX(s),s)| \leq C \sup_{0\le s\leq T}\norm{\vX_n(s)-\vX(s)}_{1,\delta+1} \convp 0
\end{align} for some constant $C$ depending on $\delta,M$ and $T$. Also, we have 
\begin{align}\label{eqn:90}
    B_n^{(2)} &=\sup_{0\le s\leq T}\sum_{\mvl\in\fT\setminus\fT_{M\log n}}\norm{\mvl}^{\delta} |H_{\mvl}(\vX(s),s)|\\
    &\leq C' \left[\sup_{0\le s\leq T}\sum_{\mvl\in \fT\setminus\fT_{M\log n}}\norm{\mvl}^{\delta+1}|X(\mvl,s)|+ \sup_{0\le s\leq T}\sum_{\mvl\in \fT\setminus\fT_{M\log n}}\norm{\mvl}^{\delta+1}\pi(\mvl,s)\right] \nonumber\\
    &\leq \frac{C''}{M\log n} \left[\sup_{0\le s\leq T}\norm{\vX(s)}_{1,\delta+2} + 1\right]\convp 0.
\end{align} 

Therefore, by \eqref{eqn:90} and \eqref{eqn:91}, we have \begin{align*}
    \int_0^T \norm{\mvH_n(s) - \mvH(\vX(s),s) }_{1,\delta} ds \leq T\left[B^{(1)}_n +  B^{(2)}_n\right] \convp 0.
\end{align*} Hence, \eqref{eqn:sufficient-weak-convergence} holds. Therefore, by \eqref{eqn:inital-point-x}, \eqref{eqn:mg-is-bm} and the above, we have $$\vX(t) = \vX(0) + \int_0^t H(\vX(s),s)ds + \int_0^t G(s)d\mvB(s)$$ which is the SDE in \eqref{eqn:sde-vector}. Finally, the pathwise uniqueness of the solution is immediate from the observation that the operator $\Gamma(t)$ is lower-triangular for all $t\neq t_c$. This finishes the proof of part (a), under the assumption that the claim \eqref{lem:bdd-var-approx} holds. We now prove this claim.\\

\noindent{\bf Proof of \eqref{lem:bdd-var-approx}.}
    Let $\Lambda_n =\sqrt{n}(\kappa_n-\kappa)$ and $\Psi_n = \sqrt{n}(\mu_n-\mu)$. We have $\Lambda_n \to \Lambda$ and $\Psi_n \to \Psi$. Observe that \begin{align*}
        &\abs{Q_n(\mv0,t) - H_0(\vX_n(t),t)} \\
        &\leq \left[\norm{\Lambda_n-\Lambda}_\infty + 2 K \norm{\kappa}_\infty \norm{\Psi_n-\Psi}_\infty \right] +  \frac{K^2}{\sqrt{n}} \left[ \norm{\Lambda_n}_\infty\norm{\Psi_n}_\infty+ \norm{\Lambda}_\infty \norm{\Psi_n}_\infty + \norm{\kappa}_\infty\norm{\Psi}_\infty^2 \right].
    \end{align*} Therefore, we have $\sup_{t\leq T}\abs{Q_n(\mv0,t) - H_0(\vX_n(t),t)} \to 0$ as $n\to \infty$. Also, for $\mvl \in \fT_{M\log n}\setminus\set{\mv0}$, we have \begin{align}
    &\abs{Q_n(\mvl,t) - H_{\mvl}(\vX_n(t),t)}\nonumber\\
    &\leq \frac{C_1}{\sqrt{n}}\left[\sum_{\mvk_1 + \mvk_2 = \mvl} \norm{\mvk_1}\norm{\mvk_2} \bigg(|X_n(\mvk_1,t)||X_n(\mvk_2,t)| + \pi(\mvk_2,t)|X_n(\mvk_1,t)|\bigg) + \norm{\mvl}|X_n(\mvl,t)|\right] \nonumber\\
    &  + C_2 \left[\sum_{\mvk_1+ \mvk_2 = \mvl} \norm{\mvk_1} \norm{\mvk_2} \pi(\mvk_1,t) \pi(\mvk_2,t) \norm{\Lambda_n-\Lambda}_\infty + \norm{\mvl}\pi(\mvl,t) \left(\norm{\Lambda_n-\Lambda}_\infty + \norm{\Psi_n-\Psi}_\infty \right)\right]
\end{align} for some constants $C_1,C_2$. Therefore, using Lemma \ref{lem:all-moments-finite} and the inequalities in Lemma \ref{lem:norm-inequalities}, we have \begin{align*}
    \sum_{\mvl\in \fT_{M\log n}\setminus\set{\mv0}}\norm{\mvl}^{\delta}\abs{Q_n(\mvl,t) - H_{\mvl}(\vX_n(t),t)} &\leq \frac{C'_1}{\sqrt{n}} \left[\sum_{\mvl\in \fT_{M\log n}}\norm{\mvl}^{\delta+k+1} |X_n(\mvl,t)|^2\right] \\
    &\hspace{0.5cm}+ C_2' \left(\norm{\Lambda_n-\Lambda}_\infty + \norm{\Psi_n-\Psi}_\infty \right)
\end{align*} for some constants $C_1',C_2'$ depending on $M,T$. Therefore, as an immediate consequence of Theorem \ref{thm:variance-moment-bound-main}, we have $$\int_0^T \sum_{\mvl\in \fT_{M\log n}\setminus\set{\mv0}}\norm{\mvl}^{\delta}\abs{Q_n(\mvl,s) - H_{\mvl}(\vX_n(s),s)} ds \convp 0.$$ The above, along with \eqref{eqn:90}, completes the proof of the claim in \eqref{lem:bdd-var-approx}. 

The proof of part (b) in the theorem follows along similar lines. We omit the details.
\end{proof}

\section{Proofs: Macroscopic functionals in the supercritical regime}
\label{sec:proofs-macro-supercrit}
In this section, we apply Theorem \ref{thm:fclt-irg} to obtain  functional central limit theorems for the various macroscopic functionals for the IRG model, such as the number of connected components, the size of the giant component, the surplus in the giant, etc. We begin with the following lemma.
\begin{lem}\label{lem:auxillary-irg}
   Suppose Assumption \ref{ass:irg} holds. Then: \begin{enumerate}
        \item For any $T > t_c$, there exists $\beta = \beta(T)$ such that whp the second largest component in $\cG_n(t)$ is at most of size $\beta \log n$ for all $t\geq T$.
        \item For any $T, M\geq 0$,  the number of non-tree components of size at most $M \log n$ is uniformly, for $t\in [0,T]$, bounded by $(\log n)^2$ whp.
    \end{enumerate}
\end{lem}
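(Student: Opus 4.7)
For part (1), the plan is to strengthen the pointwise bound in Theorem \ref{thm:irg-boll}(b) to a uniform-in-$t$ exponential tail via a coupling with the dual branching process. Fix $T^* \in (T, \infty)$ and $t \in [T, T^*]$: a standard breadth-first exploration of $\cC_n(v, t)$ from a vertex $v \in [n]$ can be coupled so that $\abs{\cC_n(v,t)}$ is stochastically dominated by the total progeny of $\MBP_v(t\kappa_n, \mu_n)$, and on the event $v \notin \cC_{(1),n}(t)$ the exploration is effectively governed by the sub-critical dual process $\MBP_v(t\kappa_n, \hat{\mu}_{n,t})$ (cf.\ Lemma \ref{lem:conditional-dual}), where $\hat{\mu}_{n,t}$ denotes the dual measure associated with $\MBP(t\kappa_n, \mu_n)$. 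Combining Lemma \ref{lem:dual-norm} with the compactness of $[T, T^*]$ and the convergence $(\kappa_n, \mu_n) \to (\kappa, \mu)$ produces a uniform spectral gap $\sup_{n \ge n_0}\sup_{t \in [T, T^*]} \|\hat{T}_{t\kappa_n, \mu_n}\| \le 1 - \eps_0$ for some $n_0 \in \bN$ and $\eps_0 > 0$, so Lemma \ref{lem:mgf-mbp} furnishes constants $\alpha, C > 0$ for which $\sup_{x \in [K]} \pr(|\MBP_x(t\kappa_n, \hat\mu_{n,t})| \ge k) \le C e^{-\alpha k}$ uniformly in $n$, $t \in [T, T^*]$, and $k \ge 1$. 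A union bound over the $n$ vertices then yields the pointwise estimate $\pr(\abs{\cC_{(2),n}(t)} > \beta \log n) \le C n^{1-\alpha\beta}$.

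To extend this to hold uniformly in $t$, I will take a grid $T = t_0 < t_1 < \cdots < t_K = T^*$ with spacing $1/n^3$ and apply the pointwise bound at each $t_k$ with $\beta$ chosen so that $\alpha \beta > 5$; a union bound over the $K = \Theta(n^3)$ grid points gives $\max_k \abs{\cC_{(2),n}(t_k)} \le \beta \log n$ whp. For an arbitrary $s \in (t_k, t_{k+1})$, only edges added in $(t_k, s]$ can alter component structure, and in an interval of length $1/n^3$ the expected number of such additions is $O(1/n^2)$, so a union bound across all $K$ intervals shows that no interval contains two or more edge-additions whp. A single edge-addition merging components of sizes $a \le b$ can increase the second-largest component size by at most a factor of two when both components are non-giant (since in the super-critical regime $\abs{\cC_{(1),n}(s)} = \Theta_{\pr}(n)$ dominates $a + b \le 2\beta\log n$, so the merged component remains non-giant), and leaves $\abs{\cC_{(2),n}}$ unchanged when $b$ is the giant. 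Consequently $\abs{\cC_{(2),n}(s)} \le 2\beta\log n$ whp uniformly in $s \in [T, T^*]$, and extension to all $t \ge T$ follows by iterating the argument on successive finite horizons and using that $n - \abs{\cC_{(1),n}(t)} \to 0$ in probability as $t \to \infty$.

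For part (2), let $N(t)$ denote the number of non-tree components of size at most $M \log n$ in $\cG_n(t)$. A direct case analysis of how a single edge-addition alters the component structure shows that $N$ can increase only when the new edge is placed inside an existing tree component $C$ with $\norm{\fF(C)} \le M \log n$, in which case $C$ becomes unicyclic and $N$ increases by exactly one; every other event (merging two small trees, merging a tree with a non-tree, merging two non-trees, or producing a component of size exceeding $M\log n$) leaves $N$ unchanged or decreases it. Since $N(0) = 0$, it follows that $\sup_{t \in [0,T]} N(t) \le W(T)$, where $W(T)$ counts such ``bad'' edge-addition events during $[0, T]$. The instantaneous rate of bad events at time $t$ equals $\tfrac{1}{2n} \sum_C \theta_{\kappa_n}(\fF(C), \fF(C))$ summed over tree components $C$ with $\norm{\fF(C)} \le M\log n$, and is bounded pathwise by $\tfrac{1}{n}\|\kappa_n\|_\infty M \log n \sum_C \norm{\fF(C)} \le \|\kappa_n\|_\infty M \log n$. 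Integrating over $[0, T]$ gives $\E[W(T)] \le CT \log n$, and Markov's inequality yields $\pr(\sup_{t \le T} N(t) > (\log n)^2) \le CT / \log n \to 0$.

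The main technical obstacle is the uniform-in-$t$ step in part (1), since $t \mapsto \abs{\cC_{(2),n}(t)}$ is not monotone: two small components merging can temporarily push the second-largest size upward. The grid-plus-single-merge analysis above handles this, and relies crucially on the uniform spectral gap from Lemma \ref{lem:dual-norm} together with the sharp exponential tails from Lemma \ref{lem:mgf-mbp}; by contrast, part (2) is comparatively soft, requiring only a book-keeping of which edge additions can possibly create a new small non-tree component.
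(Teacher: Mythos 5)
Your part (2) is essentially the paper's argument: both proofs count edge-additions that can turn a small tree component into a non-tree one and apply Markov's inequality after observing that the instantaneous rate of such events is $O(\log n)$; your bookkeeping of the case analysis is marginally finer but nothing changes.

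Part (1) has a genuine gap at the pointwise estimate $\pr(|\cC_{(2),n}(t)| > \beta\log n) \le C n^{1-\alpha\beta}$. The step ``on the event $v \notin \cC_{(1),n}(t)$ the exploration is effectively governed by the sub-critical dual process'' is precisely the hard part, and it is not a consequence of Lemma~\ref{lem:conditional-dual}, which concerns the branching process conditioned on extinction, not the graph exploration conditioned on avoiding the giant. The quantitative graph-to-branching-process transfer that is actually available (Lemma~\ref{lem:TV-approximation}) carries a per-type error of order $t\norm{\mvl}^2/n$; summing over types $\mvl$ with $\norm{\mvl} \in [\beta\log n, M\log n]$ gives an error $O((\log n)^{K+2}/n)$ per vertex, and after your union bound over the $n$ vertices this contributes $\Theta((\log n)^{K+2})$, which does not vanish. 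So the route you propose does not deliver the claimed $n^{1-\alpha\beta}$ bound without a substantially sharper argument (e.g.\ a two-round sprinkling construction), and the grid-plus-single-merge machinery you build on top of it is also not needed.

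The paper avoids the branching process entirely for part (1). Let $\cN$ count the times, over $t \ge T$, that an edge is placed between a component of size $\ge \alpha n$ and a disjoint component of size $\ge \beta\log n$. If at any time $t \ge T$ two such components coexist, then, since edges are only added and each cross-pair has Poisson rate $\ge a_*/n$, some merging edge eventually fires a.s., so $\pr(\cA_n^{\alpha,\beta}) \le \pr(\cN \ge 1) \le \E(\cN)$. One bounds $\E(\cN)$ via the Poisson compensator, using that the probability that components of sizes $\ge \alpha n$ and $\ge \beta\log n$ are still disjoint at time $t$ is at most $\exp(-t\alpha\beta a_*\log n)$ (at least $\alpha n \cdot \beta\log n$ cross-pairs, each with no edge yet). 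Integrating over $t \ge T$ and summing over ordered pairs of vertices yields $\E(\cN) \le \frac{n\norm{\kappa_n}_\infty}{\alpha\beta a_*\log n}\,e^{-T\alpha\beta a_*\log n}$, which vanishes for $\beta = 2/(T\alpha a_*)$. Together with Theorem~\ref{thm:irg-boll}, which gives whp a component of size $\ge \alpha n$ for all $t \ge T$, this proves part (1) uniformly in $t$ in one step, with no grid and no branching-process comparison.
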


\begin{proof} Without loss of generality, assume that there exists $a_* > 0$ such that $\kappa_n(x,y) \geq a_*$ for all $x,y\in \cS = [K]$ and $n\geq 1$.

\noindent(1) Fix $T > t_c$. For any $\alpha,\beta > 0$ and $t \ge T$, define the event \begin{align*}
        \cA_{n}^{\alpha,\beta}(t) = \set{\text{$\cG_n(t)$ has  two  components, one with  size at least $\alpha n$  and the other  at least }  \beta \log n}.
    \end{align*} Let $\cA_n^{\alpha,\beta} = \cup_{t\geq T}\cA_{n}^{\alpha,\beta}(t)$. Recall the Poisson processes $\cP_{i,j}^n(\cdot)$ as in the Definition \ref{def:irg} for $1 \leq i < j \leq n$. 
    Also recall that $\cC_n(i,t)$ is the component containing the vertex $i$ in $\cG_n(t)$.
    Let  \begin{align*}\cN &= \sum_{1\leq i < j \leq n} \int_T^{\infty} \ind\set{|\cC_n(i,t-)| \geq \alpha  n, |\cC_n(j,t-)| \geq \beta \log n, \cC_n(i,t-) \neq \cC_n(j,t-) } d\cP_{i,j}^n(t)\\
    &\hspace{0.2cm}+\sum_{1\leq i < j \leq n} \int_T^{\infty} \ind\set{|\cC_n(i,t-)| \geq \beta\log n , |\cC_n(j,t-)| \geq \alpha n, \cC_n(i,t-) \neq \cC_n(j,t-) } d\cP_{i,j}^n(t)\end{align*}
    
    Then, we have \begin{align}\label{eqn:second-largest-component-probability-bound}
    &\pr\left(A_n^{\alpha,\beta}\right) \leq \pr(\cN \ge 1) \le \E(\cN) \nonumber\\
        &\leq \frac{\norm{\kappa_n}_\infty}{n} \sum_{1\leq i\neq j \leq n} \int_T^{\infty} \pr\left(|\cC_n(i,t)| \geq \alpha n, |\cC_n(j,t)| \geq \beta \log n, \cC_n(i,t) \neq \cC_n(j,t) \right) dt \nonumber\\
        &\leq \frac{\norm{\kappa_n}_\infty}{n} \sum_{1\leq i\neq j \leq n} \int_T^{\infty} \exp(-t\alpha \beta a_*\log n) dt \leq \frac{n\norm{\kappa_n}_\infty}{\alpha\beta a_*\log n} \exp(-T\alpha\beta a_* \log n).\end{align} Therefore, for every $\alpha > 0$ with $\beta(\alpha,T) = \frac{2}{T\alpha a_*}$, we have 
        \ab{\begin{equation}\label{eq:107n}
        \pr\left(\cA_n^{\alpha,\beta(\alpha,T)}\right) = \frac{\norm{\kappa_n}_\infty}{\alpha a_*\beta(\alpha,T)}\frac{1}{n\log n} \to 0, \; \mbox{ as } n\to \infty.
        \end{equation}}
         Also, by Theorem \ref{thm:irg-boll}, there exists $\alpha_* = \alpha_*(T) > 0$ such that there exists a component of size $\geq \alpha_* n$ for all $t\geq T$ whp. Therefore, we have $$\pr\left(\text{ second largest component } \leq \beta(\alpha_*,T) \log n \text{ for all } t\geq T\right) \to 1.$$
        (2) Fix $M\geq 0$. Let $N_n^{\text{non-tree}}(t)$ be the number of non-tree components of size at most $M \log n$ in $\cG_n(t)$. Also, let $\tilde{N}_n(t)$ be the number of times an edge is added between vertices in the same components of size at most $M \log n$ up to time $t$. Note that $N_n^{\text{non-tree}}(t) \leq \tilde{N}_n(t)$ for all $t \geq 0$. For vertices $1\leq i\neq j\leq n$, let $E_{i,j}(t) =\set{\cC_n(i,t) = \cC_n(j,t) \in \fT_{M\log n}}$. Recall the Poisson process $\cP_{i,j}^n$ associated with edge $i\neq j$ as in Definition \ref{def:irg}. We then have $$\tilde{N}_n(t) = \sum_{1\leq i\neq j\leq n}\int_0^t \ind\set{E_{i,j}(s-)}d\cP_{i,j}^n(s)$$ and hence \begin{align*}
       \E\left(\sup_{t\in [0,T]} N_n^{\text{non-tree}}(t)\right) \leq \E\left(\tilde{N}_n(T)\right) \leq MT\norm{\kappa_n}_\infty  \log n = O\left(\log n\right).
    \end{align*} The result now follows on applying Markov's inequality.\end{proof}

\begin{proof}[Proof of Theorem \ref{thm:fclt-giant-surplus} ]
    Fix $t_c < T_1 < T_2$. By (i) of Lemma \ref{lem:auxillary-irg}, there exists $A_1 = A_1(T_1)$ such that the second largest component in $\cG_n(t)$ is at most of size $A_1\log n$ for all $t\in [T_1,T_2]$ with high probability. Also, from Remark \ref{rem:exp-decay}, there exists $\epsilon > 0$ such that, for all $M >0$,
    \begin{equation}\label{eq:448}
    \sup_{t\in [T_1,T_2]} \sqrt{n} \sum_{\mvl\in \fT \setminus\fT_{M\log n}}\norm{\mvl}\pi(\mvl,t) \leq C\sqrt{n}\sum_{k\geq M\log n} e^{-\epsilon k} \leq C'\sqrt{n}e^{-M \epsilon \log n}.
    \end{equation}
    Therefore, for a sufficiently large choice of $A_2$, we have $$\sup_{t\in [T_1,T_2]} \sqrt{n} \sum_{\mvl\in \fT \setminus\fT_{A_2\log n}}\norm{\mvl}\pi(\mvl,t) \to 0$$

For the rest of the proof, let $A = \max\{A_1,A_2\}$. Therefore, number of components and size of the largest component are given, whp, by   \begin{align}\label{eqn:21312312-giant}
        N_n(t) = n \left[\sum_{\mvl \in \fT_{A\log n}\setminus\set{\mv0}}\pi_n(\mvl,t)\right], \hspace{1cm} L_n(t) = n \left[1 - \sum_{\mvl \in \fT_{A\log n}\setminus\set{\mv0}} \norm{\mvl}\pi_n(\mvl,t)\right].
    \end{align} We next focus on the surplus of the giant. Let $\gamma_n(t)$ be the number of edges in components of size at most $A \log n$. Then, by (ii) of Lemma \ref{lem:auxillary-irg}, we have  \begin{align}\label{eqn:21312321932193218-surplus}
        \frac{\gamma_n(t)}{n} = \left[\sum_{\mvl\in \fT_{A\log n}\setminus\set{\mv0}}(\norm{\mvl}-1)\pi_n(\mvl,t)\right] + O_{\pr}\left(\frac{(\log n)^4}{n}\right).
    \end{align} The second term in the above accounts for multiple edges and number of non-tree components of size at most $A\log n$. Also, number of edges in the giant, whp, is given by $$
    \frac{|\cE(\cC_{(1),n}(t))|}{n}= \pi_n(\mv0,t) - \frac{\gamma_n(t)}{n} = \sum_{\mvl \in \fT_{A\log n}}\pi_n(\mvl,t) - \sum_{\mvl \in \fT_{A\log n}}\norm{\mvl}\pi_n(\mvl,t).$$ Therefore, for all $t\in [T_1,T_2]$, the surplus of the giant, $S_n(t)$, is given, whp, by \begin{align*}
       \frac{S_n(t)}{n} =  \frac{|\cE(\cC_{(1),n}(t))|}{n} - \frac{L_n(t)}{n} = \sum_{\mvl \in \fT_{A\log n}}\pi_n(\mvl,t) - 1.
    \end{align*}  
    The above representations, together with the definitions in \eqref{eq:837}, along with  Theorem \ref{thm:fclt-irg}(b) and \eqref{eq:448}, complete the proof of the first assertion of the theorem. Similar arguments, along with  (c) of Theorem \ref{thm:irg-boll}, yield FCLT for the number of components in the sub-critical regime as stated in \eqref{eq:843bn}. \end{proof}

\begin{proof}[Proof of Theorem \ref{thm:359}]
    Let $L_n^{(i)}(t)$ be the number of vertices of type $i \in \cS$ in the giant component at time $t\geq 0$. Fix $t_c < T_1 < T_2$. By  Lemma \ref{lem:auxillary-irg}(1), there exists $A \geq 0$ such that with high probability \begin{align*}
        L_n^{(i)}(t) = n\left[\mu_n(i) - \sum_{\mvl\in \fT_{A\log n}\setminus\set{\mv0}} l_i \pi_n(\mvl,t)\right],\qquad i\in \cS \text{ and } t\in [T_1,T_2].
    \end{align*} Now, proceeding similarly to the proof of Theorem \ref{thm:fclt-giant-surplus}, we get the desired result.
\end{proof}

\section{Proofs:  Weight of the MST on dense graphs}
\label{sec:proofs-dense-mst}

Fix $\kappa_n, \mu$ and recall the corresponding graphon $\thkappa_n$  and the model in Definition \ref{def:mst-dense}. We write $\cW_n^{\fano}$ for the weight of the MST; we use ``$\fano$''  to distinguish this from a related model, written as $\fanz$, on the complete graph,  that turns out to be directly related to the results proven in Sections \ref{sec:fdd-fclt} - \ref{sec:proofs-macro-supercrit}.

\begin{defn}[MST Model $-\fanz$]
    \label{def:model-0}
    Start with the complete graph $\cK_n$ with vertex set $[n]$. \abb{Let $\vU_n:=\set{U_i:1\leq i\leq n}$ be i.i.d. Uniform $[0,1]$ random variables as in the  MST Model $-\fano$ in Definition \ref{def:mst-dense}.} Conditional on $\vU_n$, generate conditionally independent exponential edge weights but with heterogeneous weights for all edges in $\cK_n$ with edge $e = \set{i,j}$ having weight $\xi_e \sim \exp(\thkappa_n(U_i, U_j))$. Let $\cK_n^{\fanz}$ denote the weighted complete graph with the above edge weights and let $\cW_n^{\fanz}$ denote the corresponding weight of the MST on $\cK_n$ with the above edge weights. 
\end{defn}

We will prove Theorem \ref{thm:clt-mst-weight} with $\cK_n(\thkappa) = \abb{\E(\cW_n^{\fanz}\mid \vU_n)}$. The main goal of this section is to show the following.

\begin{thm}
    \label{thm:mst-wn0-convg}
     The \ab{first} assertion of Theorem \ref{thm:clt-mst-weight} holds for $\cW_n^{\fanz}$ namely $$\sqrt{n}\left(\cW_n^{\fanz} -\cK_n(\thkappa)\right) \convd \fN_\infty,$$ 
    where $\fN_\infty$ is as in Theorem \ref{thm:clt-mst-weight} and $\cK_n(\thkappa) = \abb{\E(\cW_n^{\fanz}\mid \vU_n)}$. 

\end{thm}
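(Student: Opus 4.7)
The starting point is the classical integral identity for the weight of the minimal spanning tree. For a graph with edge weights given by i.i.d. exponential variables (or more generally, any independent positive weights), if $N_n^{\fanz}(t)$ denotes the number of connected components in the subgraph obtained by keeping only edges with weight at most $t$, then
\[
\cW_n^{\fanz} \;=\; \int_0^{\infty}\bigl(N_n^{\fanz}(t)-1\bigr)\,dt.
\]
The rate of the exponential on edge $\{i,j\}$ in $\cK_n^{\fanz}$ is $\thkappa_n(U_i,U_j)=\kappa_n(x_i,x_j)$ where $x_i\in\cS$ corresponds to $U_i \in \cI_{x_i}$; therefore the process $\{N_n^{\fanz}(t/n):t\geq 0\}$ has the same law, conditional on $\vU_n$, as $\{N_n(t):t\geq 0\}$ for the IRG of Definition~\ref{def:irg} with $\kappa_n$ and $\mvx^n=(x_1,\dots,x_n)$. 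Substituting $\tau=nt$,
\[
\cW_n^{\fanz}\;=\;\frac{1}{n}\int_0^{\infty}\bigl(N_n(\tau)-1\bigr)\,d\tau,
\qquad
\cK_n(\thkappa)=\E[\cW_n^{\fanz}\mid\vU_n]\;=\;\frac{1}{n}\int_0^{\infty}\bigl(\E[N_n(\tau)\mid\vU_n]-1\bigr)\,d\tau,
\]
so that, writing $\tilde N_n(\tau)=n^{-1/2}\bigl(N_n(\tau)-\E[N_n(\tau)\mid\vU_n]\bigr)$,
\[
\sqrt{n}\bigl(\cW_n^{\fanz}-\cK_n(\thkappa)\bigr)\;=\;\int_0^{\infty}\tilde N_n(\tau)\,d\tau.
\]

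My plan is to split this integral into four regions, $[0,t_c-\eps]$, $[t_c-\eps,t_c+\eps]$, $[t_c+\eps,T]$, and $[T,\infty)$, and then let $\eps\downarrow 0$ and $T\uparrow \infty$. On the subcritical piece $[0,t_c-\eps]$ and the supercritical piece $[t_c+\eps,T]$, I would apply Theorem \ref{thm:fclt-giant-surplus} together with the second assertion of Theorem \ref{thm:fclt-irg} to get the joint weak convergence of $\tilde N_n(\cdot)$ to the centered Gaussian process $\tau\mapsto\sum_{\mvl\in\fT\setminus\{\mvzero\}}U(\mvl,\tau)$, where $\vU$ solves the homogenized (mean-zero) SDE \eqref{eqn:fdd-sdeU}. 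The centering by $\E[N_n(\tau)\mid\vU_n]$ is precisely what cancels the $\Psi$-dependent drift $a(t)$ in \eqref{eqn:fclt-drift}, leaving only the Gaussian fluctuations driven by $G(t)d\mvB(t)$. The continuous mapping theorem applied to the continuous integration functional $\eta\mapsto\int_a^b\langle\mvone,\eta(\tau)\rangle\,d\tau$ on $\bD([a,b]:\ell_{1,\delta})$ then gives weak convergence of the integrals on these two pieces to a centered Gaussian with variance
\[
\sigma_{\eps,T}\;:=\;\biggl(\int_0^{t_c-\eps}+\int_{t_c+\eps}^{T}\biggr)^{\!\!2}\sum_{\mvl,\mvk\in\fT\setminus\{\mvzero\}}\Sigma(\mvl,\mvk,s,t)\,ds\,dt .
\]

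The main technical obstacle, as anticipated in the introduction, will be showing that the two remaining regions contribute negligibly. For the far tail $[T,\infty)$ the idea is to bound $\E\bigl[\tilde N_n(\tau)^2\mid\vU_n\bigr]\leq \Var(N_n(\tau)\mid \vU_n)/n$ using Lemma \ref{lem:variance-sum-bound} with $\cA=\fT\setminus\{\mvzero\}$; after the giant emerges, $\pr(\fF(\cC_n(U_n,\tau))\neq\mvzero)$ is dominated by the mass of non-giant vertices, which in turn is controlled by $1-l(\tau)$ (and by a Borel--Cantelli style estimate analogous to \eqref{eq:107n}, by $O(e^{-c\tau})$ uniformly for $\tau\geq T$ large). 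Cauchy--Schwarz and Fubini then yield $\E\bigl|\int_T^\infty\tilde N_n(\tau)d\tau\bigr|\to 0$ as $T\to\infty$ uniformly in $n$. For the critical window $[t_c-\eps,t_c+\eps]$ I again use Lemma \ref{lem:variance-sum-bound} to write
\[
\Var\!\left(\int_{t_c-\eps}^{t_c+\eps}\tilde N_n(\tau)\,d\tau\,\Big|\,\vU_n\right)
\leq\int\!\!\int_{[t_c-\eps,t_c+\eps]^2}\frac{\lvert\Cov(N_n(s),N_n(t)\mid\vU_n)\rvert}{n}ds\,dt,
\]
and bound the covariance by $\sqrt{\Var(N_n(s))\Var(N_n(t))}/n$, each factor being $O_{\pr}(1)$ by Lemma \ref{lem:variance-sum-bound} since $\pr(\fF(\cC_n(U_n,\tau))\neq\mvzero)$ is uniformly bounded near $t_c$. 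This gives a bound of order $\eps^2$ which vanishes as $\eps\downarrow 0$, even though the two-time asymptotic covariance itself blows up there. The well-definedness and finiteness of $\sigma_\infty$ (to be verified in Section \ref{sec:proofs-dense-mst}) reflects exactly this integrable singularity: on $[0,t_c-\eps]\cup[t_c+\eps,T]$ the operator-valued covariance $\Sigma(\cdot,\cdot,s,t)$ is jointly continuous and exponentially summable in $\mvl,\mvk$ by Lemma \ref{lem:all-moments-finite} and Remark \ref{rem:exp-decay}, and the contribution of the critical neighborhood to the limiting double integral can be shown to be $O(\eps)$. Combining the four pieces, letting $\eps\downarrow 0$ and then $T\uparrow\infty$ and invoking a standard approximation lemma for weak convergence (e.g.\ \cite{billingsley-book}*{Theorem~3.2}) yields
\[
\sqrt{n}\bigl(\cW_n^{\fanz}-\cK_n(\thkappa)\bigr)\convd \cN(0,\sigma_\infty),
\]
as desired.
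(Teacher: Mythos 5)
Your overall architecture matches the paper's: reduce $\cW_n^{\fanz}$ to the integral of the number of IRG components via Lemma \ref{lem:mst-comp-janson}, decompose the integral into pieces, prove weak convergence on the sub/supercritical pieces via FCLT plus continuous mapping, and kill the critical window and the far time-tail via second moment bounds (Lemma \ref{lem:variance-sum-bound}). However, there is a genuine difference and a genuine gap.

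The \emph{difference} is the choice of truncation. You keep the full infinite sum over types and only truncate in time, leaning on the infinite-dimensional FCLT (Theorem \ref{thm:fclt-irg}) restricted to compact intervals away from $t_c$. The paper instead introduces a single parameter $k$ and truncates \emph{jointly} in time (to $[0,k]$) and in type-space (to $\fT_k$), which lets it invoke the \emph{finite}-dimensional FCLT (Theorem \ref{thm:fdd-fclt}); the crucial payoff is that the finite-dimensional FCLT holds over \emph{all} of $[0,\infty)$ with no exclusion around $t_c$, so the main weak-convergence step (Lemma \ref{lem:mst-wc-lemma}(i)) never has to avoid the critical point. The splitting around $t_c$ is then confined to the purely tail-controlling Lemma \ref{lem:mst-tail-lemma}. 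Your scheme couples the weak-convergence step itself to the critical singularity, which is workable but pushes more burden onto the second-moment estimates.

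The \emph{gap} is the centering. You write that the conditional-mean centering ``is precisely what cancels the $\Psi$-dependent drift,'' and this is a correct heuristic, but it is not a proof: Theorem \ref{thm:fclt-irg} gives convergence of $\vX_n$ centered by the deterministic $\pi(\mvl,\cdot)$, whereas the process you integrate is $Y_n(\mvl,t)=\sqrt n\bigl(\pi_n(\mvl,t)-\E_{\mu_n}\pi_n(\mvl,t)\bigr)$. Bridging the two requires showing that the deterministic shift $\sqrt n\bigl(\E_{\mu_n}\pi_n(\mvl,t)-\pi(\mvl,t)\bigr)$ converges to the conditional mean $m(\mvl,t)$, and — because your continuous-mapping step is on $\bD([a,b]:\ell_{1,\delta})$ — this convergence must hold in the summable norm $\|\cdot\|_{1,\delta}$ uniformly in $t$, not merely pointwise in $\mvl$. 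This is nontrivial and is exactly what the paper establishes (for the $\fT_k$-truncated objects) by decomposing $X_n=Y_n+R_n+Z_n$ and invoking Lemma \ref{lem:TV-approximation} for $R_n\to 0$ and Lemma \ref{lem: MBP-approx} for $Z_n\to m$. For the infinite sum you would additionally need the weighted first-moment bounds of Theorem \ref{thm:mean-moment-bound-main} to control the $\ell_{1,\delta}$-tails of the shift. Related but less serious: you do not address the reduction to nonrandom type vectors (the paper's Lemma \ref{lem:weakcgce}), which is needed here precisely because the centering $\E[\cdot\mid\vU_n]$ is itself random; and the far-tail step over $[T,\infty)$ must be routed through the $A\log n$ connectivity cutoff, since the branching-process approximation error grows polynomially in $\tau$ and would otherwise make $\int_T^\infty\sqrt{\var(\tilde N_n(\tau))}\,d\tau$ diverge for fixed $n$.
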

Proof of Theorem \ref{thm:mst-wn0-convg} is provided in Section \ref{sec:8.2}.
We now complete the proof of Theorem \ref{thm:clt-mst-weight} using Theorem \ref{thm:mst-wn0-convg}. 
\subsection{Proof of Theorem \ref{thm:clt-mst-weight}:}
We begin with the following fundamental identity relating the weight of the MST on a connected weighted graph \ab{to the number of components in the graph obtained by only retaining all edges up to a certain weight threshold.}

\begin{lem}[{\cite{janson1995minimal}*{equation 3.1}}]
    \label{lem:mst-comp-janson}
  Consider a weighted connected graph $\cG = (\cV,\cE, \cW = (w_e:e\in \cE))$ where $w_e$ is the weight assigned to the edge $e \in \cE$, let $\cW(\cG)$ be the weight of the minimal spanning tree on $\cG$. For any fixed $t\geq 0$ let  $N(\cG,t)$ be the number of components \ab{in the graph obtained from $\cG$ by retaining only edges $e$ with $w_e \leq t$}. Then 
   \[ \cW(\cG) = \int_0^\infty [N(\cG,t) - 1]dt = \int_0^{\max_{e\in \cE} w_e} [N(\cG,t) - 1]dt. \]
\end{lem}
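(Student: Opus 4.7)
The plan is to prove the identity via Kruskal's algorithm and a Fubini-type rearrangement. Set $n = |\cV|$, and order the MST edges by their weights as $s_1 \le s_2 \le \cdots \le s_{n-1}$. I will first argue that
\[
N(\cG, t) - 1 \;=\; \#\{\, k \in \{1,\ldots, n-1\} : s_k > t\,\}, \qquad t \ge 0.
\]
This is the key combinatorial fact, and it follows from the classical analysis of Kruskal's algorithm. Let $\cG_t$ denote the subgraph on $\cV$ whose edge set is $\{e \in \cE : w_e \le t\}$. Kruskal's greedy procedure, applied to the edges of $\cG$ sorted by weight, adds exactly those edges that decrease the current number of components, and the set of added edges is precisely the MST edge set. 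Restricting attention to edges of weight $\le t$ shows that the number of MST edges with weight $\le t$ equals the number of ``merging'' edges among those of weight $\le t$, which is $|\cV| - N(\cG, t) = n - N(\cG, t)$. Rearranging gives the claimed identity.

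Given the identity, the main computation is immediate by Tonelli's theorem for nonnegative functions:
\[
\int_0^\infty [N(\cG, t) - 1]\,dt \;=\; \int_0^\infty \sum_{k=1}^{n-1} \ind\{s_k > t\}\, dt \;=\; \sum_{k=1}^{n-1} \int_0^{s_k} dt \;=\; \sum_{k=1}^{n-1} s_k \;=\; \cW(\cG).
\]
The second equality in the statement (that one may restrict the domain of integration to $[0, \max_e w_e]$) is then automatic, since $\cG_t = \cG$ is connected for every $t \ge \max_e w_e$, so $N(\cG, t) - 1 = 0$ on that range.

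There is no substantive obstacle here; the only thing that requires care is the step identifying $N(\cG,t) - 1$ with the count of MST edges of weight exceeding $t$, which is where the Kruskal characterization enters. One technicality worth noting is the convention about whether $\cG_t$ includes edges of weight exactly $t$; since the set of such $t$ is at most countable, it is Lebesgue-null and does not affect the integral, so either convention gives the same identity.
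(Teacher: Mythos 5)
The paper does not prove this lemma --- it is stated as a citation to Janson's equation (3.1) --- so there is no internal proof to compare your argument against. Your proof is correct and is the standard one. The central identity $N(\cG,t)-1 = \#\{k : s_k > t\}$ holds because the MST edges of weight at most $t$ form a spanning forest of the threshold graph $\cG_t$ (by Kruskal's algorithm, or equivalently by the minimax-path property of any MST), hence number exactly $n - N(\cG,t)$; subtracting from $n-1$ and applying Tonelli gives the formula. One minor remark: the identity holds pointwise for \emph{every} $t\ge 0$ once the statement fixes the convention $\{w_e \le t\}$ for $\cG_t$, so the appeal to a Lebesgue-null set of tie-values is unnecessary --- but it is harmless. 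Your argument also does not need uniqueness of the MST, since all minimum spanning trees of a weighted graph share the same multiset of edge weights; this could be stated explicitly to make the derivation watertight in the presence of ties.
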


The following proposition completes the proof of \ab{the first assertion in Theorem \ref{thm:clt-mst-weight}}. \sa{The second assertion in the theorem  is an immediate consequence of \cite{hladky2023random}*{Theorem 5} and we omit its proof.} 
\begin{prop}
    \label{prop:coupl-wn0-wn1}
    There exists a common probability space on which $\cW_n^{\fano}$ and $ \cW_n^{\fanz}$ can be constructed such that $\pr(|\cW_n^{\fano} - \cW_n^{\fanz}| > \ab{(\log n)^4/n}) \to 0$ as $n\to\infty$. 
\end{prop}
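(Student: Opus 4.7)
The plan is to construct an explicit coupling of the two models on a common probability space driven by a single family of uniforms, and then compare the MST weights via the integral identity in Lemma~\ref{lem:mst-comp-janson}. Conditionally on $\vU_n$, sample iid $V_e \sim U[0,1]$ for each pair $e = \{i,j\}$ with $1 \leq i < j \leq n$, and set $p_e := \thkappa_n(U_i, U_j)$. In the $\fano$ model I would declare $e$ present iff $V_e \le p_e$, assigning it weight $w_e := -\log(1 - V_e/p_e)$ in that case; in the $\fanz$ model I would assign $e$ the weight $\xi_e := -p_e^{-1}\log(1 - V_e)$. A direct check verifies the marginal laws: conditional on $V_e \le p_e$ the ratio $V_e/p_e$ is uniform on $[0,1]$, so $w_e \sim \mathrm{Exp}(1)$ and is independent of the presence indicator; while $\xi_e \sim \mathrm{Exp}(p_e)$ unconditionally. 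The weights are independent across edges, as required.

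The key feature of this coupling is a pointwise monotonicity. Let $\cG_n^\fano(t)$ and $\cG_n^\fanz(t)$ denote the subgraphs obtained by keeping only edges of weight $\le t$ in the two models, and let $N_\fano(t), N_\fanz(t)$ be the corresponding component counts. Edge $e$ belongs to $\cG_n^\fano(t)$ iff $V_e \le p_e(1 - e^{-t})$, and to $\cG_n^\fanz(t)$ iff $V_e \le 1 - e^{-tp_e}$. Since $p_e(1-e^{-t}) \le 1 - e^{-tp_e}$ for all $p_e \in (0,1]$ and $t \ge 0$ (immediate by differentiating in $t$ from equality at $t=0$), one obtains $\cG_n^\fano(t) \subseteq \cG_n^\fanz(t)$ and hence $N_\fano(t) \ge N_\fanz(t)$. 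On the (whp) event that $\cG_n^\dense(\thkappa_n,\mu)$ is connected, Lemma~\ref{lem:mst-comp-janson} then yields the nonnegative representation
\[
\cW_n^\fano - \cW_n^\fanz = \int_0^\infty [N_\fano(t) - N_\fanz(t)] \, dt.
\]

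I would next split this integral at $T^* := T (\log n)/n$, with $T$ a constant chosen large later. For $t \le T^*$, each edge in $\cD(t) := \cE(\cG_n^\fanz(t)) \setminus \cE(\cG_n^\fano(t))$ can decrease the component count by at most one, so $N_\fano(t) - N_\fanz(t) \le |\cD(t)|$; the elementary Taylor bound $(1-e^{-tp_e}) - p_e(1-e^{-t}) \le p_e t^2/2$ gives $\E|\cD(t)| \le n^2 t^2/4$, so integrating yields $\E\int_0^{T^*}[N_\fano(t) - N_\fanz(t)]\,dt = O((\log n)^3/n)$; by Markov's inequality this integral is $\le (\log n)^4/(2n)$ whp. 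For $t \ge T^*$, I would show that for $T$ sufficiently large $\cG_n^\fano(T^*)$ is connected whp (so $\cG_n^\fanz(T^*)$ is too, by the inclusion), forcing the integrand to vanish on $[T^*, \infty)$; since the edge probability in $\cG_n^\fano(T^*)$ is at least $a_*(1-e^{-T^*}) \ge (a_*T/2)(\log n/n)$ for large $n$, where $a_* := \inf_{i,j,n}\kappa_n(i,j) > 0$, stochastic domination by an \erdos graph above the connectivity threshold suffices once $T > 2/a_*$. Combining the two estimates yields the proposition. The main point requiring care is the distributional correctness of the coupling together with the pointwise containment; after that, the bulk estimate via Markov and the tail connectivity argument are routine.
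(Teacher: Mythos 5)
Your proof is correct, and it follows a genuinely different route from the paper's. The paper couples the two models via rate-one Poisson processes with independent marking (first marked point for $\fanz$, first point \emph{if} marked for $\fano$), then introduces an auxiliary pruned graph $\cG_n^{\star}$ containing only marked first points of weight at most $A\log n / n$, shows it is connected whp, and compares $\cW_n^{\fanz}$ and $\cW_n^{\fano}$ to $\cW_n^{\star}$ separately via the stability inequality \eqref{lem:optima-bound}. Your construction instead couples through a single family of conditionally iid uniforms with the inverse-CDF transforms, and the payoff is the pointwise monotone containment $\cG_n^{\fano}(t)\subseteq\cG_n^{\fanz}(t)$ (from $p_e(1-e^{-t})\le 1-e^{-t p_e}$), which lets you bypass the pruning step and the stability inequality entirely. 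Instead you subtract the two instances of Lemma~\ref{lem:mst-comp-janson} to obtain the nonnegative representation $\cW_n^{\fano}-\cW_n^{\fanz}=\int_0^{\infty}[N_{\fano}(t)-N_{\fanz}(t)]\,dt$, and then the estimate is a clean split: on $[0,T^*]$ bound the integrand by $|\cD(t)|$ and use the Taylor inequality $(1-e^{-tp_e})-p_e(1-e^{-t})\le p_e t^2/2$ with Markov, and on $[T^*,\infty)$ argue that $\cG_n^{\fano}(T^*)$ is already connected whp by domination of a supercritical Erd\H{o}s--R\'enyi graph, so the integrand vanishes. All the individual steps check out (the marginal verification, the monotonicity in $t$, the quadratic Taylor bound via $h''\le p_e$, the $O((\log n)^3/n)$ expectation leading to an $O(1/\log n)$ Markov bound, and the connectivity threshold calculation), so your argument establishes the proposition. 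One could argue your version is somewhat more transparent: the monotone containment makes the sign of $\cW_n^{\fano}-\cW_n^{\fanz}$ immediate, whereas the paper's route requires the additional lemma controlling how adding heavy edges changes MST weight. On the other hand, the paper's pruning argument generalizes more readily to couplings without a monotone structure. It is worth noting that the paper's Poisson coupling actually does satisfy the same containment $\cG_n^{\fano}(t)\subseteq\cG_n^{\fanz}(t)$; the authors simply did not exploit it, and your observation that this property alone suffices is a nice simplification.
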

\begin{proof}
\ab{We begin by constructing a coupling that will define the MST Model $-\fanz$
and the  MST Model $-\fano$ on a common probability space.
 
Let $\vU_n = \{U_1, \ldots, U_n\}$ denote a collection of iid Uniform $[0,1]$ random variables given on some probability space. Assume without loss of generality that on this space we are also given a collection of independent rate one Poisson processes 
 $\set{\cP_{e}: e\in \cK_n}$, independent of $\vU_n$.   Now we consider the following two models on this probability space:}
\begin{enumeratea}
    \item {\bf Model $\fanz$:} For each edge $e = \set{i,j}$, mark each point of the corresponding point of $\cP_{e}$ independently with probability $\thkappa_n(U_i, U_j)$. Let the edge weight $S_e$ be the value \ab{(i.e. the time instant)} of the first marked point. Then standard properties of Poisson processes imply that, \ab{conditionally on $\vU_n$,} $S_e \sim \exp(\thkappa_n(U_i, U_j))$. Write $\cG_n^{\fanz}$ for the resulting complete graph with edge weights $(S_e: e\in \cK_n)$. \ab{Note that this weighted graph has the same distribution as the MST Model $-\fanz$.}
    \item {\bf Model $\fano$:} For every edge $e$, let $T_e> 0$ denote the first point of $\cP_e$. If the first point is marked then let the edge weight be $T_{e}$ while if the first point is {\bf not} marked, then set the edge weight to be $\infty$ (i.e. the edge is not placed in the graph). Let $\cG_n^{\fano}$ denote the resulting random graph with edge weights.   \ab{This weighted graph has the same distribution as the MST Model $-\fano$.}
\end{enumeratea}
By construction the edge weights of the edges {\bf present} in $\cG_n^{\fano}$ coincide with that it $\cG_n^{\fanz}$; we write this relation as $\cG_n^{\fano}\subseteq \cG_n^{\fanz}$. Fix $n_0 \in \bN$, and $\alpha_{*} > 0$  such that,
\begin{equation}
    \label{eqn:def-alpha*}
    \min_{i,j \in [K]} \kappa_n(i,j) \geq  \alpha_* \mbox{ for all } n \ge n_0.
\end{equation}
   Let $A = 3/\alpha_{*} $.  \ab{We will assume without loss of generality that $n_0$ is large enough so that $\exp\{-A\log n_0/n_0\} \le 2/3$.}
   We now introduce a {\bf pruning step} that results in an intermediate weighted graph $\cG_n^{\star}$ as follows: the vertex set as before is $[n]$; the edge set of $\cG_n^{\star}$ are the collection of edges $e$ (along with their corresponding edge weights) such that:
\[S_e = T_e \qquad \text{\bf and } \qquad T_e \leq \frac{A\log{n}}{n}. \]
By construction, $\cG_n^{\star} \subseteq \cG_n^{\fano}$. Let $\cW_n^{\star}$ denote the weight of the MST of $\cG_n^{\star}$ with $\cW_n^{\star} = \infty$ if $\cG_n^{\star}$ is not connected. The following claim complets the proof of the Proposition and thus the Theorem. \\

\noindent {\bf Claim 1:}
With the choice of $A$ above, $\cG_n^{\star}$ is connected whp. In particular \begin{enumeratea}
    \item  $\pr(\cW_n^{\star} = \cW_n^{\fano}) \to 1$.
    \item  $\pr\left(|\cW_n^{\star} - \cW_n^{\fanz}| > \sa{\frac{(\log n)^4}{n}}\right) \to 0$     as $n\to\infty$.
\end{enumeratea}

The proof of the requires following observation about the change in the weight of a minimal spanning tree when a new edge is added. Consider a weighted \emph{connected} graph $\cG = (\cV,\cE, \cW = (w_e:e\in \cE))$ where $w_e$ is the weight assigned to the edge $e \in \cE$, let $\cW(\cG)$ be the weight of the minimal spanning tree in $\cG$. Let $e'$ be an edge not in $\cE$, and associated with it a weight $w_{e'}$ Define $\cW(\cG')$ to be the weight of the minimal spanning tree of the graph $\cG' = (\cV,\cE' = E \cup \set{e'},\cW' = \cW \cup \set{w_{e'}} )$.
\begin{equation}\label{lem:optima-bound}
     \cW(\cG) - \cW(\cG') \leq \left(\max_{e \in \cE} w_e - w_{e'}\right)_+
\end{equation} 
The above inequality says that when we add an edge whose weight is larger than
\ab{the maximal edge weight in the existing graph},  then the weight of the minimal spanning tree does not change, and it also gives an upper bound on the change in the weights when that is not the case. Before proving \eqref{lem:optima-bound} we prove Claim 1 using this inequality.

  Fix $n \ge n_0$.  Note that for any edge $e=(i,j)\in \cK_n$, the probability this edge is present in $\cG_n^*$ in the above construction equals $ \kappa_n(U_i, U_j) (1-\exp(-A\log{n}/n)) \geq 2\log{n}/n$  by choice of the $A$. Also, it follows from known results that an \erdos random graph with connection probability $2\log{n}/n$ is connected whp \cite{boll-book}. Therefore, $\cG_n^*$ is connected whp.
    
(a) One can obtain $\cG_n^{\fano}$ from $\cG_n^*$ by sequentially  adding edges  (in ascending order of weights) whose weights are in $[A \log n/n, \infty)$ . Then by Lemma \ref{lem:optima-bound}, we have $\cW_n^* = \cW_n^{\fano}$ when $\cG_n^*$ is connected. This proves part (a).

(b) Let $R_n$ denote the number of edges $e$ in $\cG_n^{\fanz}$ where the first point in $\cP_e$ is not marked, but $S_e \leq A \log n/n$. Again, one can obtain $\cG_n^{\fanz}$ from $\cG_n^*$ by adding these $R_n$ edges (sequentially). When $\cG_n^*$ is connected, by \eqref{lem:optima-bound}, we then have $\cW_n^* - \cW_n^{\fanz} \leq R_n A\log{n}/n$. We also have \[\E(R_n) \leq \sum_{e\in \cK_n} \pr(\cP_e[0,A\log{n}/n]\geq 2) = O([\log{n}]^2).\]
Thus, $\abs{\cW_n^{\star} - \cW_n^{\fanz}} = O_P((\log{n})^3/n)$. This completes the proof of {\bf Claim 1}.

\ab{Finally we prove \eqref{lem:optima-bound}.
This inequality is trivially true when $w_{e'} \ge \max_{e \in \cE} w_e$. Consider now the case when $w_{e'} < \max_{e \in \cE} w_e$ and so
$\max_{e \in \cE} w_e = \max_{e \in \cE'} w_e$.}

Using Lemma \ref{lem:mst-comp-janson} for $\cG$ and $\cG^\prime$ and using the fact that $\cG$ is connected gives, 

    \ab{\begin{align*}
    \cW(\cG) - \cW(\cG') &= \int_0^{\max_{e\in \cE} w_e}[N(\cG,t) - N(\cG',t)]dt.\\
    &= \int_{w_{e'}}^{\max_{e\in \cE} w_e}[N(\cG,t) - N(\cG',t)]dt \leq \max_{e\in \cE} w_e - w_{e'},
    \end{align*}
    where the second equality uses the fact that $N(\cG,t) = N(\cG',t)$ when
    $t <w_{e'}$ and
     the last inequality follows from observing that having the extra edge $e'$ can merge at most two different components, i.e. $0\le N(\cG,t) - N(\cG',t)\le 1$. }

\ab{This completes the proof of \eqref{lem:optima-bound} and therefore of the proposition and consequently also of the first assertion in Theorem \ref{thm:clt-mst-weight}.}
\end{proof}
\subsection{Proof of Theorem \ref{thm:mst-wn0-convg}:}
\label{sec:8.2}
\ab{For $t\ge 0$, denote the graph obtained from $\cK_n^{\fanz}$ by retaining only edges with weights $\xi_e \le t/n$ as $\cG^n(t)$. We denote the number of components in $\cG^n(t)$ by $\tilde N_n(t)$. \abb{Define for $i \in [n]$, $\tilde x_i = \sum_{x \in [K]} x \ind\{U_i \in \cI_x\}$. Then $\{\tilde x_i, i \in [n]\}$ are i.i.d. distributed as $\mu$.
It is easy to check that, with $\tilde \mvx^n = \{\tilde x_i, i \in [n]\}$,
$\{\{\cG^n(t), t\ge 0\}: \tilde \mvx^n\}$ has the same distribution as $\{\{\cG_n(t,\mvx^n,\kappa_n): t \ge 0\}: \mvx^n\}$ in Definition \ref{def:irg}, where $\mvx^n = \{x_i, i \in [n]\}$ and $\{x_i, i \in \bN\}$ are iid distributed as $\mu$ independent of the driving Poisson processes in Definition \ref{def:irg}. Consequently, 
\begin{equation}\label{eq:samed}(\{\tilde N_n(t), t\ge 0\},\tilde \mvx^n)  \stackrel{d}{=}
(\{ N_n(t), t\ge 0\}, \mvx^n), \mbox{ and } \E(\tilde N_n(t) \mid \vU_n) = 
\E(\tilde N_n(t) \mid \tilde \mu_n) = \E( N_n(t) \mid  \mu_n),
\end{equation}
where $N_n(\cdot)$ are as defined in Section \ref{sec:res-macro},
and $\mu_n$ (resp. $\tilde \mu_n$) are the empirical measures defined as in the statement of Theorem \ref{thm:irg-boll} using $\mvx^n$ (resp. $\tilde \mvx^n$).} Also note that, with $N(\cdot, \cdot)$ as defined in Lemma \ref{lem:mst-comp-janson},
$N(\cK_n^{\fanz}, t/n) = \tilde N_n(t)$. Thus, from Lemma \ref{lem:mst-comp-janson},
\begin{align*}
\cW_n^{\fanz} &= \int_0^\infty [N(\cK_n^{\fanz},t) - 1]dt\\
&= \frac{1}{n} \int_0^\infty [N(\cK_n^{\fanz},t/n) - 1]dt = 
\frac{1}{n} \int_0^\infty [\tilde N_n(t) - 1]dt
\end{align*}
From \eqref{eq:samed} we now have that
\abb{$$\left(\cW_n^{\fanz}, \tilde \mu_n\right) \stackrel{d}{=} \left(\frac{1}{n} \int_0^\infty [ N_n(t) - 1]dt, \mu_n\right).
$$} Thus in order to prove Theorem \ref{thm:mst-wn0-convg} it suffices to prove the result with $\cW_n^{\fanz}$ replaced with
$$\bar\cW_n^{\fanz} = \frac{1}{n} \int_0^\infty [ N_n(t) - 1]dt
$$
\abb{and $\cK_n(\thkappa) = \E(\cW_n^{\fanz}\mid \vU_n) = \E(\cW_n^{\fanz}\mid \tilde \mu_n)$
replaced with $\E(\bar\cW_n^{\fanz} \mid \mu_n)$.}
}
 
 Also fix $n_0$ and $\alpha_*$ as in \eqref{eqn:def-alpha*} for the rest of this section.  Throughout we only consider $n \ge n_0$. 
 \ab{The conditional expectation with respect to $\mu_n$ will be denoted as $\E_{\mu_n}$ throughout this section.}

\begin{lemma}\label{lem:tail-tail}
There exists $A \geq 0$ such that 
    \begin{enumeratei}
         \item With high probability,  the following holds $$\bar\cW_n^{\fanz} = \frac{1}{n}\int_0^{A\log n} (N_n(t) - 1)dt.$$
        \item $$\lim_{n\to \infty}\frac{1}{\sqrt{n}} \int_{A\log n}^{\infty} \ab{\E_{\mu_n}}(N_n(t) -1) dt = 0.$$
    \end{enumeratei} 
\end{lemma}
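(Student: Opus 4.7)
The plan is to exploit the uniform lower bound $\kappa_n(x,y) \ge \alpha_*$ from \eqref{eqn:def-alpha*} to show that $\cG_n(t)$ becomes connected by a time of order $\log n / \alpha_*$. Since $t \mapsto N_n(t)$ is non-increasing (edges are only added under the Poisson dynamics in Definition \ref{def:irg}), once connectivity occurs the integrand $N_n(t)-1$ vanishes identically thereafter. Thus for (i) it suffices to show that $\cG_n(A\log n)$ is connected with high probability for suitable $A$; and for (ii), using the crude bound $N_n(t) - 1 \le n-1$, it suffices to control $\pr(\cG_n(t) \text{ disconnected} \mid \mu_n)$ uniformly for $t \ge A \log n$.

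Both estimates come from a single cut bound. For any non-empty $S \subseteq [n]$ with $|S| \leq n/2$, let $D_S$ denote the event that no edge of $\cG_n(t)$ crosses $(S, S^c)$. By independence of the Poisson processes,
\begin{align*}
\pr(D_S \mid \mvx^n) = \exp\Bigl(-\tfrac{t}{n} \sum_{i\in S, j\in S^c} \kappa_n(x^n_i, x^n_j)\Bigr) \le \exp\Bigl(-\tfrac{t \alpha_* |S| |S^c|}{n}\Bigr) \le \exp\Bigl(-\tfrac{t \alpha_* |S|}{2}\Bigr),
\end{align*}
the last step using $|S^c|/n \ge 1/2$ for $|S| \le n/2$. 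If $\cG_n(t)$ is disconnected, then $D_C$ occurs for the smallest connected component $C$ (which has $|C| \le n/2$), so a union bound and $\binom{n}{k} \le n^k$ give
\begin{align*}
\pr(\cG_n(t) \text{ not connected} \mid \mu_n) \le \sum_{k=1}^{\lfloor n/2 \rfloor} \binom{n}{k}\, e^{-t\alpha_* k/2} \le \sum_{k=1}^{\infty} \bigl(n\, e^{-t\alpha_*/2}\bigr)^k.
\end{align*}
Choosing $A$ large enough that $n\, e^{-t\alpha_*/2} \le 1/2$ whenever $t \ge A \log n$ (e.g., $A > 4/\alpha_*$), the geometric series is bounded by $2 n\, e^{-t\alpha_*/2}$.

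For part (i), this yields $\pr(\cG_n(A\log n) \text{ not connected}) \le 2 n^{1 - A\alpha_*/2} \to 0$, so $\cG_n(t)$ is connected for all $t \ge A \log n$ with high probability, which gives the identity in (i). For part (ii), we have $\E_{\mu_n}(N_n(t) - 1) \le (n-1) \cdot 2 n\, e^{-t\alpha_*/2}$, hence
\begin{align*}
\frac{1}{\sqrt{n}}\int_{A\log n}^\infty \E_{\mu_n}(N_n(t) - 1)\, dt \le \frac{4 n^{3/2}}{\alpha_*} \, n^{-A\alpha_*/2},
\end{align*}
which tends to $0$ provided $A > 3/\alpha_*$. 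Taking $A$ large enough (say $A = 5/\alpha_*$) handles both parts simultaneously. There is no real ``hard part'' here: the argument is essentially a classical \erdos--R\'enyi-type connectivity estimate, and the only subtlety is arranging a single $A$ that works for both assertions, which is immediate by choosing the larger of the two thresholds.
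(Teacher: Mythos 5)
Your proof is correct, and it takes a genuinely different and more self-contained route than the paper.

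For part (i), the paper reduces to the connectivity claim established inside the proof of Proposition~\ref{prop:coupl-wn0-wn1} (Claim~1), which in turn is proved by stochastically dominating an Erd\H{o}s--R\'enyi graph with edge probability $2\log n/n$ and invoking the classical ER connectivity theorem. For part (ii), the paper stochastically dominates $N_n(t)$ by the component count of an ER graph and then cites three technical lemmas from Janson \cite{janson1995minimal} to control the integrated tail. You replace both of these imports with a single, elementary union bound over cuts $(S,S^c)$ with $|S|\le n/2$: the uniform lower bound $\kappa_n\ge\alpha_*$ gives $\pr(D_S\mid\mvx^n)\le e^{-t\alpha_*|S|/2}$ pointwise in $\mvx^n$, and summing $\binom{n}{k}e^{-t\alpha_* k/2}\le(n e^{-t\alpha_*/2})^k$ yields $\pr(\cG_n(t)\text{ disconnected}\mid\mu_n)\le 2n e^{-t\alpha_*/2}$ for $t\ge A\log n$ with $A$ large enough. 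Combined with the deterministic inequality $N_n(t)-1\le(n-1)\ind\{\cG_n(t)\text{ disconnected}\}$ and monotonicity of $N_n(\cdot)$, this gives both (i) and (ii) at once with any $A>3/\alpha_*$. Your argument buys self-containment (no need to re-derive or import the ER connectivity theorem or Janson's Lemmas 2.2/3.3/3.4) and is arguably cleaner, since the coupling in the paper's Claim~1 is only needed there for comparing $\cW_n^{\fano}$ to $\cW_n^{\fanz}$, not for bare connectivity; the paper's approach, on the other hand, minimizes new computation by reusing existing machinery. One minor point worth recording: your bound on the conditional probability holds pointwise over $\mvx^n$ (not just in $\mu_n$-conditional expectation) because $\kappa_n\ge\alpha_*$ is deterministic, which is what justifies the step $\E_{\mu_n}(N_n(t)-1)\le(n-1)\cdot 2ne^{-t\alpha_*/2}$; you use this implicitly but it should be stated.
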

\begin{proof}
    As argued in the proof of Proposition \ref{prop:coupl-wn0-wn1} (see proof of {\bf Claim 1} there), there exists $A \geq 0$ such that $\cG_n(t,\mvx^n,\kappa_n)$ is connected for all $t\geq A \log n$ whp. This proves (i). 

    To prove (ii), observe that,  $N_n(t)$ is bounded above by number of components in $\ER(n,\alpha_*t/n)$ which we denote  by ${N}^*_n(t)$. Using \cite{janson1995minimal}*{(iii) of Lemma 2.2, Lemma 3.3 and Lemma 3.4}, it follows that for suitably large $A$, we have $$
    \lim_{n\to \infty}\frac{1}{\sqrt{n}} \int_{A\log n}^{\infty} \E_{\mu_n}(N_n(t) -1) dt \leq \lim_{n\to \infty}\frac{1}{\sqrt{n}} \int_{A\log n}^{\infty} \E(N^*_n(t) -1) dt = 0.$$ 

 This concludes the proof the lemma.\end{proof}

By the above lemma, to prove the result, it is sufficient to show that \begin{align}\label{eqn:sufficient-mst}
    \gamma_n =  \int_0^{A\log n} \frac{N_n(t) - \E_{\mu_n}(N_n(t))}{\sqrt{n}} dt  = \int_0^{A\log n} \sum_{\mvl \neq 0} Y_n(\mvl,t) dt\convd \fN_\infty
\end{align} where $Y_n(\mvl,t) = \sqrt{n}\left(\pi_n(\mvl,t) - \E_{\mu_n}(\pi_n(\mvl,t))\right)$ for $\mvl \in \fT$. 

\ab{As noted in the discussion at the beginning of Section \ref{sec:fdd-fclt}, it is sufficient to prove the above result for the collection $\mvx$ that is nonrandom, as a consequence of Lemma \ref{lem:weakcgce}. For the rest of the section, we assume that the empirical distribution of type $\mu_n$ is deterministic and $\Psi_n := \sqrt{n}(\mu_n-\mu)$ converges to a nonrandom vector $\Psi$. We also assume without loss of generality that for all $n \ge n_0$, $\mu_n(j)$ and $\mu(j)$ are bounded below by $\alpha_*$ for all $j \in [K]$
and for such $n$ the inequality in \eqref{eq:bdonerr} is satisfied.
}

For fixed $k\geq 0$, define the ``$k$-th level truncation'': \begin{align*}
    \gamma_{n,k} = \int_0^k \sum_{\mvl \in \fT_k\setminus\set{\mv0}} Y_n(\mvl,t) dt.
\end{align*}

The proof of the following two lemmas are provided at the end of the section. 
\begin{lem}\label{lem:mst-wc-lemma}
\begin{enumeratei}
   \item For any $k\geq 0$, as $n\to \infty$, we have $$\gamma_{n,k} \convd \gamma_k \sim N(0,\sigma_k )$$ where $\sigma_k = \int_0^k\int_0^k \sum_{\mvl\in \fT_k\setminus\set{\mv0}}\sum_{\mvk\in \fT_k\setminus\set{\mv0}}\Sigma(\mvl,\mvk,s,t)ds dt$, and $\Sigma(\mvl,\mvk,s,t)$ is as defined in \eqref{eqn:two-point-covariance}.

   \item As $k\to \infty$, we have $\sigma_k  \to \sigma_\infty$ and $\sigma_\infty < \infty$. Hence, as $k\to \infty, \gamma_k \convd N(0,\sigma_\infty)$.
\end{enumeratei}
\end{lem}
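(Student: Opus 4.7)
The plan for part (i) is to combine the finite-dimensional FCLT in Theorem \ref{thm:fdd-fclt} with a continuous mapping argument on the integral functional. Having reduced (via Lemma \ref{lem:weakcgce} as in the beginning of Section \ref{sec:fdd-fclt}) to the case where the empirical measure is nonrandom, $\Psi_n := \sqrt{n}(\mu_n-\mu)$ converges to a deterministic vector $\Psi$, and Proposition \ref{prop:mean-variance}(iii) then asserts that the limit $\vX^{\fT_k}$ is a (non-conditionally) Gaussian process with mean $m(\mvl,t) = \nabla_{\mu}\pi(\mvl,t)\cdot \Psi + \nabla_{\kappa}\pi(\mvl,t)\cdot \Lambda$ and covariance $\Sigma$. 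Writing $Y_n(\mvl,t) = X_n(\mvl,t) - m_n(\mvl,t)$ where $m_n(\mvl,t) = \sqrt{n}(\E_{\mu_n}\pi_n(\mvl,t)-\pi(\mvl,t))$, I first verify that $m_n(\mvl,\cdot)\to m(\mvl,\cdot)$ uniformly on $[0,k]$ for each $\mvl \in \fT_k$. This is obtained by approximating $\E_{\mu_n}\pi_n(\mvl,t)$ by the branching-process probability using Lemma \ref{lem:TV-approximation} and then performing a first-order Taylor expansion of $\pi(\mvl,t;\kappa_n,\mu_n)$ in $(\kappa,\mu)$ around $(\kappa,\mu)$, in the spirit of Lemma \ref{lem: MBP-approx} but retaining the leading order term. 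It then follows that $Y_n^{\fT_k}(\cdot)\convd \vU^{\fT_k}(\cdot)$ in $\bD([0,k]:\bR^{|\fT_k|})$, where $\vU^{\fT_k}$ is the mean-zero Gaussian process solving \eqref{eqn:fdd-sdeU}.

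Since the integration functional $\xi \mapsto \int_0^k \sum_{\mvl \in \fT_k\setminus\{\mv0\}} \xi(\mvl,t)\,dt$ is continuous on $\bD([0,k]:\bR^{|\fT_k|})$, the continuous mapping theorem gives $\gamma_{n,k}\convd \gamma_k := \int_0^k \sum_{\mvl\in\fT_k\setminus\{\mv0\}} \vU^{\fT_k}(\mvl,t)\,dt$. Being a linear functional of a Gaussian process, $\gamma_k$ is centered Gaussian, and Fubini identifies its variance with $\sigma_k$. (A brief verification that the integrals may be interchanged with the expectation uses the second-moment bound in Theorem \ref{thm:variance-moment-bound-main} applied to $\vU^{\fT_k}$.) This proves part (i).

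For part (ii), the task is to show $\sigma_\infty<\infty$ and $\sigma_k\to\sigma_\infty$; the main obstacle is the absence of exponential decay of $\pi(\mvl,t)$ in $\|\mvl\|$ when $t$ is near $t_c$. My strategy is to apply Cauchy--Schwarz to the Gaussian process to obtain $|\Sigma(\mvl,\mvk,s,t)| \le \sqrt{\Sigma(\mvl,\mvl,s,s)\,\Sigma(\mvk,\mvk,t,t)}$, and then use the explicit formula in Proposition \ref{prop:mean-variance}(ii) which yields an estimate of the form $\Sigma(\mvl,\mvl,t,t) \le C(1+t)\|\mvl\|\pi(\mvl,t)$. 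Thus it suffices to show that $I := \int_0^\infty \sum_{\mvl \neq \mv0}\sqrt{(1+t)\|\mvl\|\pi(\mvl,t)}\,dt < \infty$. Splitting the $t$-domain into (a) a compact neighborhood of $t_c$, where the trivial bound $\sum_\mvl \|\mvl\|\pi(\mvl,t) \le 1$ together with Cauchy--Schwarz on $\mvl$ (using $|\{\mvl : \|\mvl\|=m\}|\le (m+1)^{K-1}$ and polynomial decay of $\pi(\mvl,t_c)$ expected from coalescent heuristics, see comments below \eqref{eqn:sde-gamma-defn}) suffices after integration over a finite-length interval; (b) the complement, where Remark \ref{rem:exp-decay} provides locally uniform exponential decay $\pi(\mvl,t)\le C(t)e^{-\delta(t)\|\mvl\|}$; and (c) the tail $t\ge A\log n$, handled by the Erd\H{o}s--R\'enyi comparison used in Lemma \ref{lem:tail-tail}(ii). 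With such a dominating integrable majorant in hand, dominated convergence gives $\sigma_k\to\sigma_\infty$, and since each $\gamma_k$ is centered Gaussian, convergence of variances implies $\gamma_k\convd N(0,\sigma_\infty)$. The hardest step will be carefully bounding the contribution from the critical window, where one cannot use the exponential bound; the argument will hinge on the finite Lebesgue length of this window and the uniform mass bound $\sum_\mvl \|\mvl\|\pi(\mvl,t)\le 1$.
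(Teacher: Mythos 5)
Your proof of part (i) is essentially the paper's argument: decompose $X_n(\mvl,t)$ into the centered piece $Y_n(\mvl,t)$ and a deterministic correction that is shown to converge to $m(\mvl,t)$ via the branching-process approximation (Lemma~\ref{lem:TV-approximation}) and a first-order expansion of $\pi(\mvl,t;\kappa_n,\mu_n)$, combine with Theorem~\ref{thm:fdd-fclt}, and apply the continuous mapping theorem to the integral functional. This is fine.

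Part (ii) contains a genuine gap. You apply Cauchy--Schwarz \emph{termwise}, $|\Sigma(\mvl,\mvk,s,t)| \le \sqrt{\Sigma(\mvl,\mvl,s,s)\,\Sigma(\mvk,\mvk,t,t)}$, and then aim to show
$\int_0^\infty \sum_{\mvl\neq\mv0}\sqrt{(1+t)\norm{\mvl}\,\pi(\mvl,t)}\,dt < \infty$.
This cannot work near $t_c$. Taking absolute values before summing throws away precisely the cancellation in the covariance formula of Proposition~\ref{prop:mean-variance}(ii) (the $-\sum_i l_ik_i/\mu(i)$ term). Concretely, even granting the (unproved for general IRG) polynomial decay $\pi(\mvl,t_c)\asymp \norm{\mvl}^{-5/2}$, you get $\sqrt{\norm{\mvl}\pi(\mvl,t_c)}\asymp\norm{\mvl}^{-3/4}$, and with $|\{\mvl:\norm{\mvl}=m\}|\asymp m^{K-1}$ the sum $\sum_{\mvl}\sqrt{\Sigma(\mvl,\mvl,t_c,t_c)}\asymp\sum_m m^{K-7/4}$ diverges for every $K\ge 1$. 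Worse, for $t$ near $t_c$ the decay rate $\delta(t)$ in $\pi(\mvl,t)\lesssim e^{-\delta(t)\norm{\mvl}}$ tends to $0$, and the resulting bound on $\sum_\mvl\sqrt{\norm{\mvl}\pi(\mvl,t)}$ blows up non-integrably as $t\to t_c$, so the ``finite Lebesgue length of the window'' observation does not rescue the argument. The mass bound $\sum_\mvl\norm{\mvl}\pi(\mvl,t)\le1$ also cannot control an $\ell^1$ sum of \emph{square roots} after any Cauchy--Schwarz reweighting, again because the counterweight $\sum_\mvl\norm{\mvl}^{-2a}$ diverges whenever the companion sum is finite.

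What the paper does instead is to apply Cauchy--Schwarz at the level of the full sums, writing $f_k(s,t)=\cov\bigl(\sum_{\mvl}Y(\mvl,s),\sum_{\mvk}Y(\mvk,t)\bigr)$ and bounding $|f_k(s,t)|\le\sqrt{\var(\sum_\mvl Y(\mvl,s))}\sqrt{\var(\sum_\mvk Y(\mvk,t))}$. The point is that $\var(\sum_\mvl Y(\mvl,t))$ enjoys the cancellation: from Proposition~\ref{prop:mean-variance}(ii) one gets
$\var\bigl(\sum_{\mvl\in\fT_k\setminus\set{\mv0}} Y(\mvl,t)\bigr)\le (1+\norm{\kappa}_\infty t)\sum_\mvl\norm{\mvl}\pi(\mvl,t)\le(1+\norm{\kappa}_\infty t)\,\pr(|\MBP_\mu(t\kappa,\mu)|<\infty)\le C(1+t)e^{-\alpha_* t}$,
which is bounded through the critical window (no blow-up at $t_c$) and integrable in $t$. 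This gives an integrable dominating function for $f_k$, and dominated convergence finishes the proof. So the missing idea in your argument is to exploit the negative term in the covariance matrix by keeping the double sum as a covariance of two scalar random variables rather than passing to absolute values termwise.
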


\begin{lem}\label{lem:mst-tail-lemma}
    For any $\epsilon > 0$, we have $$\limsup_{k\to \infty} \limsup_{n\to \infty} \pr\left(\abs{\gamma_n -\gamma_{n,k}} \geq \epsilon\right) = 0.$$
\end{lem}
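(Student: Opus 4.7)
By Chebyshev's inequality, it suffices to establish
\[
\limsup_{k \to \infty} \limsup_{n \to \infty} \E\!\left[ (\gamma_n - \gamma_{n,k})^2 \right] = 0.
\]
Using the identity $\sum_{\mvl \neq \mv0} Y_n(\mvl, t) = (N_n(t) - \E_{\mu_n} N_n(t))/\sqrt{n}$, the plan is to decompose $\gamma_n - \gamma_{n,k} = J_1(n,k) + J_2(n,k)$ with
\[
J_1(n,k) = \int_0^k \sum_{\mvl \notin \fT_k,\, \mvl \neq \mv0} Y_n(\mvl,t)\, dt, \qquad J_2(n,k) = \int_k^{A\log n} \frac{N_n(t) - \E_{\mu_n} N_n(t)}{\sqrt{n}}\, dt,
\]
and to show $\limsup_k \limsup_n \E[J_i(n,k)^2] = 0$ separately for $i = 1, 2$.

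For $J_2(n,k)$, I would fix $k > t_c$ and further split $J_2(n,k) = J_{2,1}(n,k,T) + J_{2,2}(n,k,T)$ at some $T > k$. On the compact interval $[k, T]$, Theorem \ref{thm:fclt-giant-surplus} gives weak convergence of the integrand to the Gaussian process $\sum_{\mvl \neq \mv0} X(\mvl, t)$; combined with the uniform second moment bounds of Theorem \ref{thm:variance-moment-bound-main} to secure uniform integrability, this should yield $\limsup_n \E[J_{2,1}^2] \leq \int_k^T \int_k^T \sum_{\mvl, \mvk \neq \mv0} \Sigma(\mvl, \mvk, s, t)\, ds\, dt$, which vanishes as $k \to \infty$ uniformly in $T$ by the finiteness of $\sigma_\infty$ (Lemma \ref{lem:mst-wc-lemma}(ii)). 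For the deep supercritical tail $J_{2,2}$, the methods of Lemma \ref{lem:tail-tail}(ii) combined with the variance bound of Lemma \ref{lem:variance-sum-bound} will yield $\limsup_n \E[J_{2,2}^2] \to 0$ as $T \to \infty$.

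For $J_1(n,k)$, I would fix a small $\eta > 0$ and split $[0,k] = [0, t_c - \eta] \cup [t_c - \eta, t_c + \eta] \cup [t_c + \eta, k]$. On the two non-critical subintervals, Theorem \ref{thm:fclt-irg} gives weak convergence of $\vX_n$ in $\bD([0, t_c - \eta]:\ell_{1,\delta})$ and $\bD([t_c + \eta, k]:\ell_{1,\delta})$ for every $\delta \geq 0$. Choosing $\delta > 1$ and using the elementary bound $\sum_{\mvl \notin \fT_k} |Y_n(\mvl, t)| \leq k^{-\delta} \|Y_n(\cdot, t)\|_{1,\delta}$ together with tightness of $\sup_t \|Y_n(\cdot, t)\|_{1,\delta}$, the contribution of these intervals to $J_1$ will be $O_{\pr}(k^{1-\delta}) \to 0$. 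On the critical window $[t_c - \eta, t_c + \eta]$, I would apply Cauchy--Schwarz in time together with Lemma \ref{lem:variance-sum-bound} (with $\cA = \{\mvl \in \fT\setminus\fT_k\}$) to bound $\E[(\sum_{\mvl \notin \fT_k} Y_n(\mvl, t))^2] \leq (1 + \|\kappa_n\|_\infty (t_c+1))\, \pr(|\cC_n(U_n, t)| > k) \leq C$ uniformly in $t$ and $k$, yielding $\E[(J_1^{\mathrm{crit}})^2] \leq C \eta^2$.

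The main obstacle is the critical window: Theorem \ref{thm:fclt-irg} is unavailable there, and $\pi(\mvl, t_c)$ exhibits only polynomial (rather than exponential) decay in $\norm{\mvl}$. The idea is that the crude variance bound suffices because the window has length $2\eta$, producing an $O(\eta^2)$ contribution after Cauchy--Schwarz. Given any target $\epsilon > 0$, one first selects $\eta$ small enough that the critical window contribution lies below $\epsilon^2/4$, then $T$ large to kill the deep supercritical tail, and finally sends $k \to \infty$ to eliminate the FCLT-based pieces. A final application of Chebyshev's inequality will complete the proof.
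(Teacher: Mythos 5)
Your decomposition $\gamma_n - \gamma_{n,k} = J_1 + J_2$ is an algebraic rearrangement of the paper's $\xi_n^{(1)} + \xi_n^{(2)}$, and splitting time at $t_c\pm\eta$ matches the paper, so the high-level architecture is similar. However, several of your specific estimates would fail as stated. For $J_{2,1}$, the claim that weak convergence of the integrand together with the second-moment bounds of Theorem \ref{thm:variance-moment-bound-main} yields convergence of $\E[J_{2,1}^2]$ to the limiting covariance integral is incorrect: to pass second moments through a weak limit you need uniform integrability of $J_{2,1}^2$, i.e.\ $L^{2+\eps}$ control, which neither Theorem \ref{thm:variance-moment-bound-main} nor anything else in the paper supplies. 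Relatedly, bounding $\E[J_{2,2}^2]$ by Cauchy--Schwarz in time introduces a factor of order $A\log n - T$ that diverges with $n$; the paper avoids this (for $\xi_n^{(1)}$) by working with first moments, where $\int_T^\infty\sqrt{(1+t)e^{-\alpha_* t}}\,dt<\infty$ and tends to $0$ as $T\to\infty$. One could repair $J_{2,2}$ by bounding the two-time covariance $\bigl|\cov(\sum_\mvl Y_n(\mvl,s),\sum_\mvk Y_n(\mvk,t))\bigr|$ directly, but that is not what you propose.

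The more fundamental gap is in the supercritical part of $J_1$. On $[t_c+\eta, k]$ you appeal to ``tightness of $\sup_t\|Y_n(\cdot,t)\|_{1,\delta}$'' with $\delta>1$. This is false for the \emph{full} vector indexed by $\fT$: for supercritical $t$ the giant has type $\mvl^*$ with $\norm{\mvl^*}\sim cn$ and $|Y_n(\mvl^*,t)|\sim n^{-1/2}$, so $\norm{\mvl^*}^\delta|Y_n(\mvl^*,t)|\sim n^{\delta-1/2}\to\infty$ whenever $\delta>1/2$. Theorem \ref{thm:fclt-irg}(b) gives tightness only of the truncation $Y_n^{\fT_{M\log n}}$; the tail $\{\norm{\mvl}>M\log n\}$, which in the supercritical regime is essentially the single giant, must be isolated and handled on a high-probability event. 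This is exactly the role of Lemma \ref{lem:mst-supercritical-bounds} and the subsequent estimates around display \eqref{eqn:10001} in the paper, which your sketch omits. That the paper switches there from moment bounds to an in-probability argument is not incidental: uniform moment control that sees the giant appears to be out of reach with the tools at hand, and this is precisely why a pure Chebyshev--$L^2$ strategy cannot be carried out in that regime.
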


The above lemmas, along with a triangle inequality, completes the proof \ab{of \eqref{eqn:sufficient-mst} and therefore complete the proof of Theorem \ref{thm:mst-wn0-convg}.} 
\qed

The remainder of the section focuses on the proofs of Lemma \ref{lem:mst-wc-lemma} and Lemma \ref{lem:mst-tail-lemma}.
\begin{proof}[Proof of Lemma \ref{lem:mst-wc-lemma} ]
(i) Fix $k\geq 1$. Let $\mvY_n^{\fT_k}(t) = (Y_n(\mvl,t):\mvl \in \fT_k)$ and recall the  process $\vX_n^{\fT_k}(t)$ as introduced below \eqref{eqn:xl-def}. For $\mvl \in \fT_k$, we have  $$X_n(\mvl,t) = Y_n(\mvl,t) + R_n(\mvl,t) + Z_n(\mvl,t)$$ where,
$$R_n(\mvl,t) = \sqrt{n}\left(\ab{\E_{\mu_n}}(\pi_n(\mvl,t)) - \pi(\mvl,t;\kappa_n,\mu_n) \right), \qquad Z_n(\mvl,t) = \sqrt{n}\left(\pi(\mvl,t;\kappa_n,\mu_n) - \pi(\mvl,t;\kappa,\mu) \right).$$ 
As observed in \eqref{eqn:908908}, we have \begin{align*}
    Z_n(\mvl,t) &= \int_0^1 \frac{d}{d u}\pi(\mvl,t;\kappa_n^{u},\mu_n^{u}) 
    d u
\end{align*} where $\kappa_n^u = u\kappa_n + (1-u)\kappa$ and $\mu_n^u =u\mu_n + (1-u)\kappa$. From the observations in the proof of Lemma \ref{lem: MBP-approx}, since the entries of $\kappa_n,\kappa,\mu_n$ and $\mu$ are all bounded below by $\alpha_* > 0$ for large $n$, by dominated convergence using \eqref{eqn:gradient-trees-kappa} and \eqref{eqn:gradient-trees-mu},
we have \begin{align}\label{eqn:mst-zeeeeeeeeeeee}
   \lim_{n\to \infty} Z_n(\mvl,t) =  \nabla_{\kappa}\pi(\mvl,t;\kappa,\mu)\cdot \Lambda + \nabla_{\mu}\pi(\mvl,t;\kappa,\mu)\cdot \Psi,\qquad \mvl \in \fT_k, \  t\leq k.
\end{align} 
\ab{Note that the right side of the above equation equals $m(\mvl, t)$ defined in Proposition \ref{prop:mean-variance}(i). Also, from the SDE for $X^{\fT_k}$ given in \eqref{eqn:fdd-sde}, and with $\mvm^{\fT_k}(t) := (m(\mvl, t), \mvl \in \fT_k)$, we have that $\mvm^{\fT_k}$ solves the ODE
$$\dot{\mvm}^{\fT_k}(t) = a^{\fT_k}(t) + \Gamma^{\fT_k}(t)\mvm^{\fT_k}(t), \; \mvm^{\fT_k}(0) = X^{\fT_k}(0),$$
where $\Gamma^{\fT_k}$, and $a^{\fT_k}$  are as defined in Section \ref{sec:res-micro}.
}
Also, by Lemma \ref{lem:TV-approximation} we have $\sup_{t\leq k}R_n(\mvl,t) \to 0$ as $n\to \infty$ for all $\mvl \in \fT_k$. Since, by Theorem \ref{thm:fdd-fclt}, $\vX_n^{\fT_k}(\cdot) \convd \vX^{\fT_k}(\cdot)$ in $\bD([0,k]:\bR^{\fT_k})$, the above discussion implies $\mvY_n^{\fT_k}(\cdot) \convd \mvY^{\fT_k}(\cdot)$ in $\bD([0,k]:\bR^{\fT_k})$ \ab{where $\mvY^{\fT_k}(\mvl,t) = \vX^{\fT_k}(\mvl,t) - m(\mvl, t)$ and 
$\mvY^{\fT_k}$ is the solution to the SDE \begin{align*}
    d\mvY^{\fT_k} = \Gamma^{\fT_k}(t)\mvY^{\fT_k} dt + G^{\fT_k}(t)d\mvB^{\fT_k}(t), \qquad \mvY^{\fT_k}(0) = 0,
\end{align*}
where $G^{\fT_k}$ and $\mvB^{\fT_k}$ are as defined in Section \ref{sec:res-micro}.
Furthermore, recall from \eqref{eqn:two-point-covariance} that  $ \cov(Y(\mvl,s),Y(\mvk,t)) = \Sigma(\mvl,\mvk,s,t)$}  for all $\mvl,\mvk \in \fT_k$ and $s,t \leq k$.  Therefore, we have $$\gamma_{n,k} \convd \gamma_k = \int_0^k \sum_{\mvl\in \fT_k\setminus\set{\mv0}} Y(\mvl,t) dt \sim N(0,\sigma_k).$$ 

(ii)
For $s,t \in \bR_+$ and $k\geq 1$, let   $$f_k(s,t) = \sum_{\mvl,\mvk\in \fT_k\setminus\set{\mv0}} \Sigma(\mvl,\mvk,s,t).$$
 Let $\lambda$ be the Lebesgue measure on $\bR_+^2$. Let $\cJ = \set{(s,t)\in \bR_+^2: s=t_c \text{ or } t=t_c}$. Note that $\lambda(\cJ) = 0$. Also, for any $(s,t) \not\in \cJ$, by Remark \ref{rem:exp-decay}, there exists $\delta_1 = \delta_1(s) >0 ,\delta_2 = \delta_2(t) >0$ such that $\pi(\mvl,s) \leq \exp(-\delta_1 \norm{\mvl})$ and $\pi(\mvl,t) \leq \exp(-\delta_2 \norm{\mvl})$ for all $\mvl\in \fT\setminus\set{\mv0}$. Also note that by (ii) of Proposition \ref{prop:mean-variance}, we have  $$\Sigma(\mvl,\mvk,s,t)^2 \leq \Sigma(\mvl,\mvl,s,s)\Sigma(\mvk,\mvk,t,t) \leq (1+\norm{\kappa_n}_\infty s)(1+\norm{\kappa_n}_\infty t)\norm{\mvl}\norm{\mvk}\pi(\mvl,s)\pi(\mvk,t), \hspace{0.15cm} \mvl,\mvk \in \fT\setminus\set{\mv0}.$$
\ab{Consequently, for all $(s,t) \notin \cJ$, $\sum_{\mvl,\mvk\in \fT\setminus\set{\mv0}} |\Sigma(\mvl,\mvk,s,t)| < \infty$ and so
$f_\infty(s,t) = \sum_{\mvl,\mvk\in \fT\setminus\set{\mv0}} \Sigma(\mvl,\mvk,s,t)$ is well defined 
and  $f_k(s,t) \to f_\infty(s,t)$, as $k \to \infty$, for all $(s,t) \notin \cJ$.}

Also, for all $s,t\in \bR_+^2$, we have \begin{align*}
   |f_k(s,t)| &= \bigg|\cov\left(\sum_{\mvl\in \fT_k\setminus\set{\mv0}} Y(\mvl,s), \sum_{\mvk\in \fT_k\setminus\set{\mv0}} Y(\mvk,t),\right)\bigg|\\
&\leq \sqrt{\var\left(\sum_{\mvl\in\fT_k\setminus\set{\mv0}}Y(\mvl,s)\right)\var\left(\sum_{\mvk\in\fT_k\setminus\set{\mv0}}Y(\mvk,t)\right)}
\end{align*}
Using Proposition \ref{prop:mean-variance}(ii), we have 
\begin{align}
\var\left(\sum_{\mvl\in\fT_k\setminus\set{\mv0}}Y(\mvl,t)\right) &\leq (1+\norm{\kappa}_\infty t) \sum_{\mvl \in \fT_k\setminus\set{\mv0}} \norm{\mvl}\pi(\mvl,t)\nonumber\\
&\leq (1+\norm{\kappa}_\infty t) \pr(|\MBP_{\mu}(t\kappa,\mu)| < \infty)\nonumber\\
&\leq (1+\norm{\kappa}_\infty t) \pr(|\BP(\alpha_* t)| < \infty).
 \end{align} Now, note that using \cite{remco-book-1}*{Theorem 3.16}, we have \begin{align}\label{eqn:bp-mbp-survival bound}
     \pr\left(|\BP(\lambda)| < \infty\right) = e^{-\lambda} \sum_{j=1}^\infty \frac{j^{j-1}}{j!} \left(\lambda\ e^{-\lambda}\right)^j \leq C e^{-\lambda} \sum_{j=1}^\infty \left(\frac{j^{j-1}}{j^{j+1/2}e^{-j}}\right) \left(e^{-1}\right)^j  \leq C'e^{-\lambda},\qquad \lambda\geq 0
  \end{align} for some constants $C$ and $C'$ independent of $\lambda$. The second inequality follows because of the Stirling's approximation and the inequality $xe^{-x} \leq e^{-1}$ for all $x\geq 0$. 
\ab{ Thus
 $$\ind_{\{(s,t) \in [0,k]^2\}} |f_k(s,t)| \le C'(1+\norm{\kappa}_\infty t)^{1/2}(1+\norm{\kappa}_\infty s)^{1/2}
 \exp(-\alpha_* (s+t)/2).
 $$
 Since the left side above converges to $\ind_{\{(s,t) \in [0,\infty)^2\}} |f_{\infty}(s,t)|$ a.e. $\lambda$ and, for the right side,
 $$\int_{[0,\infty)^2} C'(1+\norm{\kappa}_\infty t)^{1/2}(1+\norm{\kappa}_\infty s)^{1/2}
 \exp(-\alpha_* (s+t)/2) ds \, dt <\infty,$$
 we have  by dominated convergence, }
 $\sigma_k = \int_0^k\int_0^k f_k(s,t)dsdt \to \sigma_\infty = \int_0^\infty\int_0^\infty f_\infty(s,t)dsdt$ as $k\to \infty$ and $\sigma_\infty < \infty$. This completes the proof of the lemma.
\end{proof} 

\ab{Before proceeding with the proof of Lemma \ref{lem:mst-tail-lemma} we present the following lemma which will be a key ingredient in the proof. The proof of the latter lemma is provided after the proof of Lemma \ref{lem:mst-tail-lemma}.}

\ab{\begin{lem}\label{lem:mst-supercritical-bounds} 
Fix $\eta >0$. For any $\alpha,\beta > 0$, define the following events \begin{align*}
       F_n^{\alpha} = \set{\abs{\cC_{(1),n}(t_c+\eta)} \leq \alpha n}, \hspace{0.1cm} E_n^{\alpha,\beta} = \set{\abs{\cC_{(1),n}(t)} \geq \alpha n, \abs{\cC_{(2),n}(t)} \geq \beta \log n \text{ for \ab{some} } t\geq t_c+\eta}.
   \end{align*}
 There exists $B_1,B_2 > 0$ such that $\sqrt{n}\pr\left(F_n^{B_1} \cup E_n^{B_1,B_2} \right) \to 0$ as $n\to \infty$.
   \end{lem}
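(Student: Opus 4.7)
The plan is to bound $\pr(E_n^{B_1,B_2})$ and $\pr(F_n^{B_1})$ separately, each at rate $o(n^{-1/2})$, and then conclude by a union bound. The bound on $E_n^{B_1,B_2}$ is a quantitative refinement of the Poisson point process calculation in the proof of Lemma \ref{lem:auxillary-irg}(1); the bound on $F_n^{B_1}$ combines a Chebyshev estimate based on Theorem \ref{thm:variance-moment-bound-main} with a sub-critical branching process tail bound delivered by the duality of Lemma \ref{lem:conditional-dual} and the exponential moment bound of Lemma \ref{lem:mgf-mbp}.

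For $E_n^{B_1,B_2}$ I would revisit \eqref{eqn:second-largest-component-probability-bound}: if at some $t \ge t_c+\eta$ the largest and second-largest components of $\cG_n(t)$ have sizes $\ge B_1 n$ and $\ge B_2\log n$ respectively, then since component sizes are non-decreasing in $t$ and $\cG_n(t)$ eventually becomes connected, the two components must merge at some later time $\tau$, at which moment an edge is placed between two vertices lying in distinct components of these sizes. Campbell's formula then yields
\[ \pr(E_n^{B_1,B_2}) \;\le\; \frac{n\,\|\kappa_n\|_\infty}{B_1 B_2 a_* \log n}\exp\!\bigl(-(t_c+\eta) B_1 B_2 a_* \log n\bigr) \;=\; O\!\left(\frac{n^{\,1-(t_c+\eta)B_1 B_2 a_*}}{\log n}\right), \]
so choosing $B_2$ large enough that $(t_c+\eta) B_1 B_2 a_* > 3/2$ gives $\sqrt n\,\pr(E_n^{B_1,B_2}) \to 0$.

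For $F_n^{B_1}$, set $B_1 := l(t_c+\eta)/2$ where $l(\cdot)$ is the survival probability from Theorem \ref{thm:irg-boll}(a), and take $A := B_2$. Let $M_n$ denote the number of vertices of $\cG_n(t_c+\eta)$ lying in components of size in $(A\log n, n)$, and set
\[ S_n \;:=\; \sum_{\mvl \in \fT_{A\log n}\setminus\{\mv0\}}\|\mvl\|\,\pi_n(\mvl,t_c+\eta), \qquad s_n \;:=\; \sum_{\mvl \in \fT_{A\log n}\setminus\{\mv0\}}\|\mvl\|\,\pi(\mvl,t_c+\eta). \]
The identity $L_n(t_c+\eta) = n(1 - S_n - M_n/n)$, together with $s_n \to 1-l(t_c+\eta)$ guaranteed by the exponential decay in Remark \ref{rem:exp-decay}, shows that for $n$ large
\[ F_n^{B_1} \;\subseteq\; \{|S_n - s_n| \ge l(t_c+\eta)/6\} \cup \{M_n > 0\}. \]
Applying Lemma \ref{lem:norm-inequalities}(ii) with $\nu = 1$ and Theorem \ref{thm:variance-moment-bound-main} with $\delta = K+3$ yields $\sup_n n\,\E(S_n - s_n)^2 < \infty$, so Chebyshev's inequality gives $\pr(|S_n - s_n| \ge l(t_c+\eta)/6) = O(1/n)$.

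The main remaining obstacle is to show $\pr(M_n > 0) = o(n^{-1/2})$, since the edge-counting bound used for $E_n^{B_1,B_2}$ degenerates when neither of the two large components is macroscopic. The plan is to combine the local-weak-limit approximation of Lemma \ref{lem:TV-approximation} (which approximates the distribution of the component containing a typical vertex by $\MBP_{\mu_n}((t_c+\eta)\kappa_n, \mu_n)$) with the duality relation of Lemma \ref{lem:conditional-dual}: on the event that the root vertex is not in the giant, its component is, up to a correction of order $(\log n)^3/n$ from Lemma \ref{lem:TV-approximation}, distributed as the corresponding sub-critical dual branching process $\MBP_x((t_c+\eta)\kappa, \hat\mu_{t_c+\eta})$. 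Lemma \ref{lem:mgf-mbp} applied to this sub-critical branching process yields $\pr(|\text{dual MBP}| \ge k) \le C e^{-ck}$ for some $c > 0$. A first-moment bound then gives $\E M_n \le C n^{1-c A}$, so $\pr(M_n > 0) \le \E M_n = o(n^{-1/2})$ once $B_2 = A$ is taken large enough that both $(t_c+\eta)B_1 B_2 a_* > 3/2$ and $c B_2 > 3/2$ hold. A union bound over the three events $\{|S_n - s_n| \ge l(t_c+\eta)/6\}$, $\{M_n > 0\}$, and $E_n^{B_1,B_2}$ then completes the proof.
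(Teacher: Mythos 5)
Your bound on $E_n^{B_1,B_2}$ correctly reproduces the paper's Poisson-process counting argument from Lemma~\ref{lem:auxillary-irg} and \eqref{eqn:second-largest-component-probability-bound}, so that half of the proof is fine. The problem is the $F_n^{B_1}$ bound, where you depart from the paper (which invokes a moderate deviations principle for the type-vector of the largest component from \cite{mdp-irg-yu}) and try to get by with Chebyshev on $S_n - s_n$ together with a first-moment bound on $M_n$.

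The Chebyshev piece is fine and gives $O(1/n)$. The gap is the claim $\E M_n \le C n^{1-cA}$, which the local-weak-limit machinery you cite does not deliver. To see why: the event ``$v$ lies in a component of size in $(A\log n, |\cC_{(1),n}|)$'' requires knowing both that the exploration of $\cC_n(v)$ eventually terminates and that $v \notin \cC_{(1),n}$. The coupling in Lemma~\ref{lem:TV-approximation} carries an error of order $k^2/n$ after $k$ exploration steps; to certify that $|\cC_n(v)|$ is finite but larger than $A\log n$, one must run the exploration to completion, which may take up to order $n$ steps and hence accrues an error of order $n$ --- useless. Lemma~\ref{lem:conditional-dual} lets you describe the limiting $\MBP$ conditioned on extinction, but the coupling error is not conditioned on extinction; it is exactly on the event that the component is large but finite (i.e.\ the surviving/extinct classification goes wrong) that the coupling is most likely to fail. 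Put differently, ``$v$ is not in the giant'' is not a local event, and a per-vertex bound of the form $\pr(A\log n < |\cC_n(v)| < \varepsilon n) = o(n^{-3/2})$ is precisely the quantitative absence-of-medium-components result that the branching-process approximation alone cannot give. Note also that the paper's decomposition $F_n^{B_1} \cup E_n^{B_1,B_2}$ is set up specifically so that neither event requires ruling out a second large component when the largest is itself small: $E_n^{B_1,B_2}$ is conditioned on a macroscopic largest component (which makes the Poisson merging bound work), and $F_n^{B_1}$ is handled by the MDP, which gives an $\exp(-\theta\sqrt{n})$ bound on downward deviations of $|\cC_{(1),n}(t_c+\eta)|$ directly without any reference to the second-largest component. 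Your approach effectively shifts the hard part to $\pr(M_n > 0)$, which the available tools do not control at the required $o(n^{-1/2})$ rate.
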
 
We now provide the proof of Lemma \ref{lem:mst-tail-lemma}.}
\begin{proof}[Proof of Lemma \ref{lem:mst-tail-lemma}]
Note that $\gamma_n - \gamma_{n,k} = \xi_n^{(1)}(k) + \xi_n^{(2)}(k)$ where \begin{align*}
    \xi_n^{(1)}(k) = \int_k^{A\log n} \sum_{\mvl \in \fT_k\setminus\set{\mv0}} Y_n(\mvl,t) dt, \qquad \xi_n^{(2)}(k) = \int_0^{A\log n} \sum_{\mvl \in \fT\setminus\fT_k} Y_n(\mvl,t) dt.
\end{align*}
We first argue that \begin{align}\label{eqn:xi-1}
 \limsup_{k\to\infty} \limsup_{n\to \infty} \E\left(|\xi_n^{(1)}(k)|\right) = 0.
\end{align} By Cauchy-Schwartz inequality, we have $$\E\left(|\xi_n^{(1)}(k)|\right) \leq \int_k^{A\log n} \sqrt{n\var\left(\sum_{\mvl\in \fT_k\setminus\set{\mv0}}\pi_n(\mvl,t)\right)}dt.$$ Let $U_n$ is the uniformly picked vertex in $[n]$. 
\sa{Using Lemma \ref{lem:variance-sum-bound} (for the choice $\cA = \fT_{k}\setminus\set{\mv0}$) we have,} \ab{for a suitable $m \in \bN$, depending only on $K$, }
 \begin{align*}
n\var\left(\sum_{\mvl\in \fT_k\setminus\set{\mv0}}\pi_n(\mvl,t)\right) &\leq (1+\norm{\kappa_n}_\infty t) \pr\left(|\cC_n(U_n,t)|\leq k\right)\\
&\leq C\frac{(1+\norm{\kappa_n}_\infty t) t\ab{k^m}}{n} + (1+\norm{\kappa_n}_\infty t) \pr(|\MBP_{\mu_n}(t\kappa_n,\mu_n)| \leq  k)
\end{align*} where the last line follows from the branching process approximation in Lemma \ref{lem:TV-approximation} or \cite{bollobas2007phase}*{Lemma 9.6}. Also, as argued in \eqref{eqn:bp-mbp-survival bound}, we have $\pr\left(|\MBP_\mu(t\kappa,\mu)| < \infty \right) \leq \pr\left(|\BP(\alpha_*t)| <\infty \right) \leq C'e^{-\alpha_*t}$. Therefore $$
\E\left(|\xi_n^{(1)}(k)|\right) \leq \int_k^{A\log n} \left[C\frac{(1+\norm{\kappa_n}_\infty t) t\ab{k^m}}{n} + C'(1+\norm{\kappa_n}_\infty t) e^{-\alpha_*t}\right]^{\frac{1}{2}} dt.$$ From the above, it follows that \eqref{eqn:xi-1} holds.  We next focus on $\xi_n^{(2)}(k)$. There are three parts to this term, the integrals away from the critical regime, namely for some fixed $\eta$, the integral over $(0, t_c -\eta)$, the  integral from $(t_c+\eta, A\log{n} )$ and finally the contribution over the critical regime namely $(t_c-\eta, t_c+\eta)$. Let us now deal with each of these integrals. 

\ab{ \sa{Again using Lemma \ref{lem:variance-sum-bound} ( with $\cA = \fT\setminus\fT_k$)}, for any $\eta > 0$, and $t \in [0, t_c-\eta]$, we have
\begin{equation}\label{eq:insideterm}
\E\abs{\sum_{\mvl\in \fT\setminus\fT_k}Y_n(\mvl,t)} \le
\sqrt{n\var\left(\sum_{\mvl\in \fT\setminus\fT_k}\pi_n(\mvl,t)\right)} \le \sqrt{(1+\norm{\kappa_n}_\infty t) \pr\left(\abs{\cC_n(U_n,t)} \geq k\right)}
\end{equation}
where $U_n$ is the uniformly picked vertex from $[n]$.
Consequently,
\begin{align}\label{eqn: 2313123123123213}
     \int_0^{t_c -\eta} \E\abs{\sum_{\mvl\in \fT\setminus\fT_k}Y_n(\mvl,t)}dt 
     &\leq \int_0^{t_c -\eta} \sqrt{(1+\norm{\kappa_n}_\infty t) \pr\left(\abs{\cC_n(U_n,t)} \geq k\right)} dt. 
\end{align}}  Now, note that, \ab{for a suitable $m \in \bN$,}
\begin{align}\label{eqn:090909090}
\pr\left(\abs{\cC_n(U_n,t)} \geq k\right) &\leq \frac{C\ab{k^m}t}{n} + \pr\left(\abs{\MBP_{\mu_n}(t\kappa_n,\mu_n)}\geq k\right) \nonumber\\
&\leq \frac{C\ab{k^m}t}{n} + \frac{C'\ab{k^m}t}{\sqrt{n}} + \pr\left(\abs{\MBP_{\mu}(t\kappa,\mu)}\geq k\right)
\end{align} where the first line follows from the branching process approximation as earlier. The second line follows from Lemma \ref{lem:mbp-tv-approx}, since the event branching process has size greater than $k$ depends only the off-spring sizes of first $k$ individuals. Finally, by Remark \ref{rem:exp-decay}, we know that there exists $\delta = \delta(\eta) > 0$ such that $\pr(\abs{\MBP_{\mu}(t\kappa,\mu)} \geq k) \leq Ce^{-\delta k}$ for all $t\leq t_c-\eta$. Using all these observations in the \eqref{eqn: 2313123123123213}, we get for all $\eta > 0$,  \begin{align}\label{eqn:xi-2-subcritical}
   \limsup_{k\to \infty} \limsup_{n\to \infty} \int_0^{t_c -\eta} \E\left(\abs{\sum_{\mvl\in \fT\setminus\fT_k}Y_n(\mvl,t)}\right)dt = 0.
\end{align} 
Next, from the estimate in \eqref{eq:insideterm}, we  have \begin{align}\label{eqn:xi-2-critical}
   \limsup_{\eta \to 0}\limsup_{k\to \infty}\limsup_{n\to \infty} \int_{t_c-\eta}^{t_c +\eta} \E\left(\abs{\sum_{\mvl\in \fT\setminus\fT_k}Y_n(\mvl,t)}\right)dt &\leq\lim_{\eta \to 0}\lim_{n\to \infty} \int_{t_c-\eta}^{t_c + \eta} \sqrt{(1+\norm{\kappa_n}_\infty t} dt = 0.
\end{align}

Finally we consider the integral of the tail sum in the supercritical regime.
\ab{We will argue that, for any $\eta, \epsilon  > 0$, 
\begin{equation}
\label{lem:xi-2-supercritical}
\limsup_{k\to \infty}\limsup_{n\to \infty}\pr\left(\abs{\int_{t_c+\eta}^{A\log n} \sum_{\mvl\in \fT\setminus\fT_k} Y_n(\mvl,t) dt} \geq \epsilon\right) =0.
\end{equation}}
\ab{For the fixed choice of $\eta$, let $B_1$ and $B_2$ be as in Lemma \ref{lem:mst-supercritical-bounds} and let $M=B_2$. }

   By the above lemma, in order to prove \eqref{lem:xi-2-supercritical}, it is sufficient to prove
   \begin{align}\label{eqn:10001}
       \limsup_{k\to \infty}\limsup_{n\to \infty}\pr\left(\abs{\int_{t_c+\eta}^{A\log n} \sum_{\mvl\in \fT_{M\log n}\setminus\fT_k} Y_n(\mvl,t) dt} \geq \epsilon\right) =0.
   \end{align} 
\ab{Indeed, letting $(F_n^{B_1} \cup E_n^{B_1,B_2})^c = G_n$, on $G_n$, for $t \ge t_c+\eta$
\begin{align*}
\sum_{\mvl\in \fT\setminus\fT_k} Y_n(\mvl,t) &= 
\sum_{\mvl\in \fT_{M\log n}\setminus\fT_k} Y_n(\mvl,t)
+
\sqrt{n} \sum_{\mvl\in \fT\setminus\fT_{M\log n}} \left(\pi_n(\mvl,t) - \E_{\mu_n}(\pi_n(\mvl,t))\right)\\
&= \sum_{\mvl\in \fT_{M\log n}\setminus\fT_k} Y_n(\mvl,t) + \frac{1}{\sqrt{n}} O_{\bP}(1) - \sqrt{n} \E_{\mu_n}( \sa{\sum_{\mvl\in \fT\setminus\fT_{M\log n}}}\pi_n(\mvl,t)),
\end{align*}
where the last inequality follows on noting that on $G_n$, there is exactly one component with type in $\fT\setminus\fT_{M\log n}$.
Clearly, the integral of the second term on the right side above, over the interval $[t_c+\eta, A \log n]$ converges to $0$ as $n \to \infty$.
For the third term, note that
\begin{align}
\sqrt{n} \E_{\mu_n}\left(\int_{t_c+\eta}^{A\log n} \sum_{\mvl\in \fT\setminus\sa{\fT_{M\log n}}}\pi_n(\mvl,t) dt\right) &\le \sqrt{n}\ab{A}\log n \frac{\bP_{\mu_n}(F_n^{B_1} \cup E_n^{B_1,B_2})}{M\log n}\nonumber\\
&+ \sqrt{n} \E_{\mu_n}\left(\ind\{G_n\}\int_{t_c+\eta}^{A\log n} \sum_{\mvl\in \fT\setminus\sa{\fT_{M\log n}}}\pi_n(\mvl,t)dt\right)\nonumber\\
&\le \sqrt{n}\ab{\frac{A}{M}}\bP_{\mu_n}(F_n^{B_1} \cup E_n^{B_1,B_2})
+ \sqrt{n} O_{\bP}(1/n),\label{eq:133n}
\end{align}
where the first inequality follows on observing that
$$\sum_{\mvl \in \fT\setminus \fT_{M\log n}} \pi_n(\mvl,t) = \E\left(\frac{\ind\set{\abs{\cC_n(U_n,t)} \geq M\log n}}{\abs{\cC_n(U_n,t)}}| \cG_n(t)\right)\leq \frac{1}{M\log n}$$
and the last inequality once more uses the fact that, on $G_n$,
there is exactly one component with type in $\fT\setminus\fT_{M\log n}$, for every $t \ge t_c+\eta$. The right side in \eqref{eq:133n} converges to $0$ from Lemma \ref{lem:mst-supercritical-bounds}.
Combining the above observations we conclude that it to prove \eqref{lem:xi-2-supercritical} it suffices to show \eqref{eqn:10001}.}

   We now prove \eqref{eqn:10001}. Note that \sa{ by Lemma \ref{lem:variance-sum-bound} (with $\cA = \fT_{M\log n}\setminus\fT_k$)}\begin{align}\label{eqn:10000}
       \int_{t_c + \eta}^{A\log n} \E\left(\abs{\sum_{\mvl\in \fT_{M\log n}\setminus\fT_k}Y_n(\mvl,t)}\right)dt &\leq \int_{t_c + \eta}^{A\log n} \sqrt{n\var\left(\sum_{\mvl\in \fT_{M\log n}\setminus\fT_k}\pi_n(\mvl,t)\right)}dt \nonumber\\
     &\leq \int_{t_c + \eta}^{A\log n} \sqrt{(1+\norm{\kappa_n}_\infty t) \pr\left(k \leq \abs{\cC_n(U_n,t)} \leq M\log n\right)} dt.
   \end{align} where $U_n$ is the uniformly picked vertex from $[n]$. Now, note that using the arguments to obtain \eqref{eqn:090909090}, we have, \ab{for some $m \in \bN$}
   \begin{align}\label{eqn:109109}
\pr\left(k \leq \abs{\cC_n(U_n,t)} \leq M\log n\right) &\leq \frac{C\ab{(\log n)^m}t}{n} + \frac{C'\ab{(\log n)^m}t}{\sqrt{n}} +  \pr\left(k \leq \abs{\MBP_{\mu}(t\kappa,\mu)} \leq M\log n\right).
\end{align} 
\ab{Next, by Cauchy-Schwarz inequality,
\begin{equation}\label{eq:cascn}
\pr\left(k \leq \abs{\MBP_{\mu}(t\kappa,\mu)} \leq M\log n\right)
\le \left(\pr\left( \sa{k \leq \abs{\MBP_{\mu}(t\kappa,\mu)} < \infty}\right)\right)^{1/2}
\left(\pr\left(\abs{\MBP_{\mu}(t\kappa,\mu)} < \infty\right)\right)^{1/2}.
\end{equation}
Recall $\rho_t(x)$ the survival probability of $\MBP_x(t\kappa,\mu)$.  Since the entries of $\kappa$ are bounded below by $\alpha_*$, we have $1-\rho_t(x) \leq \pr\left(\abs{\BP(\alpha_* t)} < \infty \right) \leq C_1 e^{-\alpha_*t}$. Therefore, by the definition of dual operator $\hat{T}_{t\kappa,\mu} = T_{t\kappa,\hat{\mu}_t}$, it follows that for $\norm{\hat{T}_{t\kappa,\mu}} \leq C_2 t e^{-\alpha_*t}$. Therefore, there exists $t_0 > t_c$ such that  $\norm{\hat{T}_{t\kappa,\mu}} < 1$ \sa{for all $t \geq t_0$.} By Lemma \ref{lem:dual-norm} and Lemma \ref{lem:mgf-mbp}, it follows that there exists $\delta > 0 $ such that 
\begin{equation*}
\pr\left( \sa{k \leq \abs{\MBP_{\mu}(t\kappa,\mu)} < \infty}\right) \le \pr\left(k \leq \abs{\MBP_{\hat{\mu}_t}(t\kappa,\hat{\mu}_t)}\right) \leq C''e^{-\delta k}, \mbox{ for all } t\geq t_c+\eta.
\end{equation*}
Also, using \eqref{eqn:bp-mbp-survival bound},
$$
\pr\left(\abs{\MBP_{\mu}(t\kappa,\mu)} < \infty\right) \le C' e^{-\alpha_*t}.
$$
Combining the last two displays with \eqref{eq:cascn}, we have from \eqref{eqn:10000} that \eqref{eqn:10001} holds.}
 This finishes the proof of \eqref{eqn:10001}, and consequently of \eqref{lem:xi-2-supercritical}.

Finally,  \eqref{eqn:xi-1}, \eqref{eqn:xi-2-subcritical},\eqref{eqn:xi-2-critical} and  \eqref{lem:xi-2-supercritical} complete the proof of Lemma \ref{lem:mst-tail-lemma}. 
\end{proof}

We now prove Lemma \ref{lem:mst-supercritical-bounds}.
\begin{proof}[Proof of Lemma \ref{lem:mst-supercritical-bounds}]
Fix $\eta > 0$ as in the statement of the lemma. We first argue that for some  $B_1 \geq 0$ $\sqrt{n} \pr(F_n^{B_1}) \to 0$ as $n\to \infty$. 
\ab{Let $\delta>0$ be such that with  the kernel $\tilde \kappa$ defined as $\tilde \kappa(i,j) = \kappa(i,j) - \delta$, $i,j \in [K]$, $\tilde \kappa$ is irreducible and 
$\MBP_{\mu}\left((t_c+\eta)\tilde \kappa,\mu\right)$ is a supercritical branching process (existence of such a $\delta$ is guaranteed by continuity of norm of integral operators).} Since $\cS$ is finite, there exists $n_0 \in \bN$ such that for all $n\geq n_0$, 
\ab{\begin{equation}\label{eq:100n}
1-\exp\left(-(t_c+\eta)\kappa_n(i,j)/n\right) \geq (t_c+\eta) \frac{\tilde \kappa(i,j)}{n} \text{for all } i,j\in \cS.\end{equation}}  Let $\tilde{\cC}_{(1),n}$ be the largest component in the graph, \ab{in which starting with the type vector $\mvx^n$,  an edge between vertices of type $i,j \in \cS$ is placed with probability $(t_c+\eta)\tilde \kappa(i,j)/n$.} 

Let $\fF_n(\tilde{\cC}_{(1),n}) = \tilde{V}_n$ be the type of the largest component as defined in Definition \ref{defn:irg-comp-type}. Then by \cite{mdp-irg-yu}*{Theorem 1} \ab{(letting $a_n = n^{1/4}$ in that theorem) we have, for a certain vector $\vc \in \bR^K$, $\vc = (c_1, \ldots, c_K)$, $0 < c_i <\mu_i$, $i \in [K]$ (see \cite{mdp-irg-yu}),}
$$\frac{\tilde{V}_n - (\mu-\vc)n}{n^{3/4}}$$
satisfies a moderate deviations principle (MDP) with speed  $n^{1/2}$  and rate
 \begin{align*} \vI(x) = \la (I-D_{\vc}\tilde{\kappa})x, D_{\vv}(I-D_{\vc}\tilde{\kappa})x, \; x \in \bR^K, 
\end{align*} 
 where  $D_{\vv} = \text{Diag}\left(\frac{\mu_i}{c_i(\mu_i-c_i)}\right)$  and  $D_{\vc} = \text{Diag}\left(c_i\right)$, \ab{and for a vector $\vu \in \bR^K$, $\text{Diag}(u_i)$ denotes the $K\times K$ diagonal matrix with diagonal entries $\{u_i, i \in [K]\}$.}

Note that by \cite{mdp-irg-yu}*{Lemma 9} we have $(I-D_{\vc}\tilde{\kappa})D_\vv (I-D_{\vc}\tilde{\kappa})$ is strictly positive definite. Therefore, there exists $\theta > 0$ such that $\vI(x) \geq \theta\norm{x}^2$ for all $x \in \bR^K$. For any closed set $F \in \bR^K$, MDP implies  
$$\limsup_{n\to \infty} \frac{1}{\sqrt{n}} \pr\left(\frac{\tilde{V}_n - (\mu-\vc)n}{n^{3/4}} \in F\right) \ab{\le} -I(F).$$ Let $F = \set{x \in \bR^K : \norm{x} \geq 1}$. Let $a = (\mu-c)\cdot \ind$, \ab{where
$\ind$ is the $K$-dimensional vector of ones.}
Let  $B_1 = a/2$. Note that for $n_1 \in \bN$ such that $an^{\frac{1}{4}}/2K \geq 1$, we have \begin{align*}
    \pr\left(F_n^{B_1}\right) &\leq \pr\left(|\tilde{\cC}_{(1),n}| \leq B_1 n\right)\\
    &\leq \pr\left(\left(\tilde{V}_n - (\mu-\vc)n\right)_i \leq -\frac{an}{2K} \text{ for some } i \in \cS\right)\\
    &\leq \pr\left(\norm{\frac{\tilde{V}_n - (\mu -\vc)n}{n^{\frac{3}{4}}}} \geq \frac{an^{\frac{1}{4}}}{2K}\right) \leq \pr\left(\frac{\tilde{V}_n - (\mu -\cC)n}{n^{\frac{3}{4}}} \in F\right) \text{ for } n\geq n_1,
\end{align*} 
\ab{where the first inequality is a consequence of \eqref{eq:100n}.}
By MDP, there exists  $n_2 \geq n_1$ such that we have $$\pr\left(F_n^{B_1}\right) \leq \exp(-\vI(F)n^{\frac{1}{2}}/2) \leq  \exp(-\theta n^{\frac{1}{2}}/2)\text{ for all } n\geq n_2.$$ 
Therefore, $\sqrt{n}\pr(F_n^{B_1}) \to 0$ as $n\to \infty.$ 
\ab{Also, 
noting that the set $A_n^{\alpha, \beta}$ defined in the proof of Lemma \ref{lem:auxillary-irg} \sa{ contains } $E_n^{\alpha, \beta}$ with $T$ there replaced by $t_{c}+\eta$, we have from \eqref{eq:107n}, that with
$B_2:= \sa{2}\beta(B_1, t_c+\eta)$, 
$\sqrt{n}\pr(E_n^{B_1,B_2}) \to 0$
 as $n\to \infty.$} This completes the proof.
\end{proof}
\section*{Acknowledgements}
 Bhamidi and Sakanaveeti were partially supported by NSF DMS-2113662, DMS-2413928, and DMS-2434559.  Bhamidi and Budhiraja were partially funded by NSF RTG grant DMS-2134107.
 Budhiraja and Sakanaveeti were partially supported by NSF DMS-2152577.
Budhiraja would also like to thank the Isaac Newton Institute for Mathematical Sciences, Cambridge, for support and hospitality during the programme Stochastic systems for anomalous diffusion, where part of the work on this paper was undertaken. This  was supported by EPSRC grant EP/Z000580/1. Bhamidi and Sakanaveeti would like to thank the BIRS-Chennai Math Institute workshop on Network Models in December 2024 where part of this work was undertaken.

\bibliographystyle{plain}
\bibliography{fclt}

\appendix

\section{Conditions for weak convergence}\label{sec:appendix}

The following theorem gives a criterion for the tightness of a sequence of RCLL processes with values in a Polish space, see \cite{kurtz1981approximation}.

\begin{thm}
\label{thm:AppSemiMartTight}
Let $\cS$ be a Polish space and for $T>0$, $\{Y_n(t), 0 \le t \le T\}_{n\in\bb{N}}$ be a sequence of 
$\cS$-valued RCLL processes, with $Y_n$ being $\{\cF^n_t\}$-adapted, given on some filtered probability space
$(\Omega^n, \cF^n, \{\cF^n_t\}, \bP^n)$. Suppose that the following conditions are satisfied.

\begin{enumerate}
\item[$(T_1)$] $\{Y_n(t)\}_{n\in\bb{N}}$ is tight for every $t$ in a dense subset of $[0,T]$.
\item[$(A)$ ] For each  $\eps>0,\ \eta>0$  there exists a $\delta>0$ and $n_0$ with the property that for every collection $(\tau_n)_{n\in\bb{N}}$, where $\tau_n$ is an $\cF^n_t$-stopping time with $\tau_n\leq T-\delta$ for all $n \in \bN$,
\begin{equation*}
\sup_{n\geq n_0}\sup_{0\leq\theta\leq\delta}\pr\left(d(Y_n(\tau_n+\theta),Y_n(\tau_n))\geq\eta\right)\leq \eps,
\end{equation*}
where $d(\cdot,\cdot)$ is the distance on $\cS$.
\end{enumerate}
Then $\{Y_n\}_{n\in\bb{N}}$ is tight in $\bb{D}([0,T]:\cS)$.
\end{thm}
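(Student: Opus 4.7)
The plan is to deduce tightness of $\{Y_n\}$ in $\bb{D}([0,T]:\cS)$ from the standard Prokhorov-type characterization: a sequence of RCLL processes is tight if and only if (i) for each $t$ in a dense subset of $[0,T]$ together with $t=T$, the $\cS$-valued random variables $\{Y_n(t)\}$ are tight, and (ii) the Skorohod-type modulus of continuity $w'_\delta(Y_n)$ (the infimum, over $\delta$-sparse partitions $0 = t_0 < t_1 < \dots < t_k = T$, of the maximum oscillation of $Y_n$ on each subinterval) satisfies
\begin{equation*}
\lim_{\delta \downarrow 0}\limsup_{n\to\infty} \bP\bigl(w'_\delta(Y_n) \geq \eta\bigr) = 0 \quad \text{for every } \eta > 0.
\end{equation*}
Property (i) is immediate from $(T_1)$, so the task is to derive (ii) from the Aldous-type condition $(A)$, and to also extend pointwise tightness to all $t$.

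For the extension of pointwise tightness, I would argue as follows: given $t \in [0,T]$ and $\eps,\eta>0$, choose $\delta$ from $(A)$ corresponding to $(\eps/2, \eta)$ and pick $t_0$ in the dense set with $|t-t_0|\leq \delta$. Applying $(A)$ with the deterministic stopping time $\tau_n \equiv t_0$ (or $\tau_n \equiv t$ when $t \le T - \delta$, else a routine adjustment at the right endpoint) gives $\bP(d(Y_n(t),Y_n(t_0)) \geq \eta) \leq \eps/2$, and combining with tightness of $\{Y_n(t_0)\}$ yields a compact $K_\eta \subset \cS$ and a neighborhood $K_\eta^\eta$ (the $\eta$-enlargement, which is still relatively compact) in which $Y_n(t)$ lies with probability at least $1-\eps$.

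The main step, and the chief technical obstacle, is controlling the modulus. The idea is a chaining argument via successive stopping times. Fix $\eps, \eta > 0$, select $\delta$ and $n_0$ as provided by $(A)$ (with parameters $\eta/3$ and a sufficiently small target probability $\eps'$ to be chosen), and define recursively
\begin{equation*}
\sigma_0^n = 0, \qquad \sigma_{k+1}^n = \inf\bigl\{t > \sigma_k^n : d(Y_n(t), Y_n(\sigma_k^n)) > \eta/2 \bigr\} \wedge T.
\end{equation*}
These are $\{\cF^n_t\}$-stopping times by RCLL. On the event $\{w'_\delta(Y_n) > \eta\}$, at least one pair of successive $\sigma_k^n, \sigma_{k+1}^n$ must satisfy $\sigma_{k+1}^n - \sigma_k^n < \delta$ with $d(Y_n(\sigma_{k+1}^n), Y_n(\sigma_k^n)) \geq \eta/2$, which by $(A)$ has controllably small probability for each $k$. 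To make a union bound effective, one first uses the pointwise tightness (extended above) to show that the total number of such successive $\sigma_k^n$ before time $T$ is bounded in probability by some finite $N = N(\eps,\eta)$, using that inside a compact subset of $\cS$ only finitely many $\eta/2$-separated points fit; this produces a probabilistic cap on $k$ and hence closes the estimate.

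Combining the compact containment at every $t$ (which in particular holds at $t=T$ and a countable dense set) with the modulus bound yields tightness by the standard criterion; the argument is essentially that of Aldous and Kurtz, and I would cite \cite{kurtz1981approximation} for the technical details of the chaining step.
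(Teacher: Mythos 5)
The paper itself does not prove this theorem; it states it and cites Kurtz (1981), so there is no internal proof to compare against. Your overall plan---reduce to the modified modulus $w'_\delta$, extend pointwise tightness from the dense set to all $t\in[0,T]$, then chain via successive first-exit stopping times $\sigma_k^n$---is the standard outline, and the first two steps are fine. However, the chaining step as you have written it has two genuine gaps.

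First, you invoke condition $(A)$ with $\tau_n=\sigma_k^n$ and increment $\theta=\sigma_{k+1}^n-\sigma_k^n$. But condition $(A)$ quantifies over \emph{deterministic} $\theta\in[0,\delta]$; the increment $\sigma_{k+1}^n-\sigma_k^n$ is random and depends on the path of $Y_n$ after $\sigma_k^n$, so $(A)$ does not apply directly. Passing from deterministic increments to random (stopping-time) increments is a non-trivial step in the Aldous argument; it is handled by first deriving from $(A)$ (e.g.\ by averaging over an independent uniform $u\in[0,\delta]$ and a triangle inequality) a two-stopping-time version, namely that $\pr\bigl(d(Y_n(\sigma'),Y_n(\sigma))\ge\eta\bigr)$ is small uniformly over ordered stopping times $\sigma\le\sigma'\le\sigma+\delta$, $\sigma'\le T$; see Aldous (1978) or Jacod--Shiryaev, Thm.~VI.4.5.

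Second, the counting argument fails. You claim to bound the number of exit times $\sigma_k^n$ by observing that a compact subset of $\cS$ contains only finitely many $\eta/2$-separated points. But the values $Y_n(\sigma_0^n),Y_n(\sigma_1^n),\dots$ are only \emph{consecutively} $\eta/2$-separated, i.e.\ $d\bigl(Y_n(\sigma_{k+1}^n),Y_n(\sigma_k^n)\bigr)\ge\eta/2$, not pairwise separated. A path that oscillates rapidly between two fixed points $a,b$ with $d(a,b)>\eta/2$ generates arbitrarily many $\sigma_k^n$'s while staying in a two-point, hence compact, set. So compact containment gives no control on the count. The stochastic bound on the number of stopping times must come from condition $(A)$ itself (morally: $(A)$ makes each short gap $\sigma_{k+1}^n-\sigma_k^n\le\delta$ unlikely, which is exactly what rules out such oscillation), not from compactness of the state; this is the heart of the proof and is where the real work lies.
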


The following result is a minor adaptation of \cite{kurtz-ethier}[Theorem 11.2.1, 11.2.3].

\begin{thm}
\label{thm:fdd-fclt-kurtz}
Fix $N, d \geq 1$. Let $\beta_i: \bR^d \to [0,\infty)$ for $i=1,2,\dots,N$ be a collection of measurable functions, and $\Delta_1,\Delta_2,\dots,\Delta_N \in \bR^d$ be a collection of vectors. 

Consider the process $\set{\mvxi_n(t) \in \bR^d:t\geq 0}$ given as the solution to the following stochastic evolution equation \begin{align*}
    \mvxi_n(t) = \mvxi_n(0) + \frac{1}{n} \sum_{i=1}^N \Delta_i Y_i\left(n\int_0^t \beta_{n,i}(s)ds\right), \; t \ge 0,
\end{align*} where $\set{Y_i(\cdot): 1\leq i\leq N}$ is a collection of independent Poisson processes of rate $1$, and $\beta_{n,i}(s) = \beta_i(\mvxi_n(s)) + \epsilon_{n,i}(s)$ for $i\leq N$ where $\epsilon_{n,i}$ are bounded $\cF_t$-progressively measurable processes, with $\cF_t = \sigma \{\mvxi_n(s), s \le t\}$, $t \ge 0$.

Let $F:\bR^d \to \bR^d$ be defined as $F(\mvx) = \sum_{i=1}^N \Delta_i \beta_i(\mvx)$, $\mvx \in \bR^d$. Suppose the following hold: \begin{enumeratei}
    \item There exists a constant $\gamma \geq 0$ such that, a.s., $\abs{\epsilon_{n,i}(s)} \leq \gamma/n $ for all $s\geq 0$ and $i\leq N$. 

    \item The function $\beta_i$ are differentiable with continuous derivatives for all $1\leq i\leq N$. 

    \item There exists $\mvxi(0) \in \bR^d$ such that $\mvxi_n(0)$ converges in probability to $\mvxi(0)$.
    \item  There exists a function $\mvxi:[0,\infty) \to [0,\infty)$ which solves the  differential equation
    $$\mvx'(t)  = F(\mvx(t)), \qquad \mvx(0) = \mvxi(0). $$

\end{enumeratei} 
Then $\mvxi_n$ converges in probability, in $\bD([0,\infty],\bR^d)$, to $\mvxi$.

Suppose, in addition that, $X_n(0) = \sqrt{n}\left(\mvxi_n(0) - \mvxi(0)\right) \convd X(0)$
for some $\bR^d$ valued random variable. Let $\mvB$ be a $d$-dimensional Brownian motion, given on the same probability space as $X(0)$, and independent of $X(0)$. Then 
$\vX_n(\cdot) = \sqrt{n}\left(\mvxi_n(\cdot) - \mvxi(\cdot)\right) \convd \vX(\cdot)$ in $\bD([0,\infty),\bR^d)$, where $\vX(\cdot)$ is the solution to the following \emph{linear} stochastic differential equation \begin{align}\label{eqn:thm-kurtz-sde}
    d\vX(t) = \partial F(\mvxi(t)) \vX(t) dt + G(t)d\mvB(t)
\end{align} where $ \partial F(\mvx)$ is the $d\times d$ matrix giving the derivative of the function $F$ with respect to $\mvx$, and $G(t)$ is the symmetric square-root of the the non-negative definite $d\times d$ matrix $\Phi(t)$ defined as \begin{align*}
    \Phi(t) = \sum_{i=1}^N \Delta_i \Delta_i^T \beta_i(\mvxi(t)).
\end{align*} 

\end{thm}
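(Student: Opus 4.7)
My plan is to treat this theorem as a routine perturbation of the classical Kurtz density-dependent limit theorems (Theorems 11.2.1 and 11.2.3 of Ethier-Kurtz), the only novelty being the bounded error processes $\epsilon_{n,i}$. The strategy is to write $\mvxi_n$ as a semi-martingale, split it into a drift term, a compensated Poisson martingale, and an error term coming from $\epsilon_{n,i}$, and then handle these three pieces separately in both the law of large numbers and the central limit regime. Throughout I would fix an arbitrary $T<\infty$ and establish convergence in $\bD([0,T]:\bR^d)$, since convergence in $\bD([0,\infty):\bR^d)$ follows immediately.

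For the first assertion (the LLN), I would compensate each Poisson process by writing $Y_i(u) = N_i(u) + u$ with $N_i$ the associated martingale, yielding
\[
\mvxi_n(t) = \mvxi_n(0) + \int_0^t F(\mvxi_n(s))\,ds + \frac{1}{n}\sum_{i=1}^N \Delta_i N_i\!\left(n\int_0^t \beta_{n,i}(s)\,ds\right) + \sum_{i=1}^N \Delta_i \int_0^t \epsilon_{n,i}(s)\,ds.
\]
The error term is pointwise $O(1/n)$ by assumption (i), while Doob's inequality combined with the bounded rates on $[0,T]$ makes the martingale term $O_{\bP}(1/\sqrt{n})$. Subtracting the limit ODE and using that $F$ is Lipschitz on compact sets (assumption (ii) together with local boundedness of $\mvxi_n$), Gronwall's inequality gives $\sup_{t\le T}|\mvxi_n(t)-\mvxi(t)| \convp 0$.

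For the CLT, I would multiply the displayed identity by $\sqrt{n}$ after subtracting $\mvxi(t) = \mvxi(0) + \int_0^t F(\mvxi(s))\,ds$, obtaining
\[
\vX_n(t) = \vX_n(0) + \sqrt{n}\int_0^t\!\bigl[F(\mvxi_n(s))-F(\mvxi(s))\bigr]ds + \frac{1}{\sqrt{n}}\sum_{i=1}^N \Delta_i N_i\!\left(n\int_0^t \beta_{n,i}(s)\,ds\right) + \sqrt{n}\sum_{i=1}^N \Delta_i\int_0^t \epsilon_{n,i}(s)\,ds.
\]
The last term is $O(1/\sqrt{n})$ and disappears. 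Writing $F(\mvxi_n(s))-F(\mvxi(s)) = \partial F(\mvxi(s))(\mvxi_n(s)-\mvxi(s)) + o(|\mvxi_n-\mvxi|)$ using the $C^1$ assumption, the second term becomes $\int_0^t \partial F(\mvxi(s))\vX_n(s)\,ds$ up to terms vanishing in probability (using the $\cL^2$ control on $\vX_n$ arising from the bound on the martingale term). For the martingale term, the functional CLT for the rescaled Poisson processes gives $n^{-1/2}N_i(n\,\cdot) \convd B_i(\cdot)$ jointly in $i$, with the $\{B_i\}$ independent standard Brownian motions; coupled with the LLN $\int_0^t \beta_{n,i}(s)\,ds \to \int_0^t \beta_i(\mvxi(s))\,ds$ uniformly on $[0,T]$, a standard random time change (Theorem 1.4 of Ethier-Kurtz, Chapter 7) yields joint convergence of the martingale term to $\sum_i \Delta_i B_i\bigl(\int_0^t \beta_i(\mvxi(s))ds\bigr)$, whose covariation is precisely $\int_0^t \Phi(s)\,ds$ with $\Phi(s) = \sum_i \Delta_i\Delta_i^T\beta_i(\mvxi(s))$. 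Representing this Gaussian process as $\int_0^t G(s)\,d\mvB(s)$ on a suitable extension of the probability space, and passing to the limit in the semi-martingale identity, gives that any weak limit point of $\vX_n$ solves the linear SDE \eqref{eqn:thm-kurtz-sde}. Pathwise uniqueness of this linear SDE (immediate since $\partial F(\mvxi(\cdot))$ is continuous and bounded on $[0,T]$ and $G$ is deterministic and bounded) then identifies the limit uniquely.

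The main work is tightness of $\{\vX_n\}$ in $\bD([0,T]:\bR^d)$, which I would verify using the Aldous-Kurtz criterion (Theorem \ref{thm:AppSemiMartTight}): condition $(T_1)$ follows from the Burkholder-Davis-Gundy bound applied to the martingale term and control on the drift term via Gronwall, while condition $(A)$ follows from the fact that increments of the drift over intervals of length $\beta$ are $O(\beta)$ and the quadratic variation of the martingale term grows at most linearly. The only genuinely non-standard point relative to Ethier-Kurtz is keeping track of the $\epsilon_{n,i}$ terms, but assumption (i) makes their contribution negligible in both the LLN and CLT scales; all other ingredients are textbook, which is why the theorem is described as a ``minor adaptation.''
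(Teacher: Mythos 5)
Your proposal is correct and follows exactly the route the paper intends: the paper gives no proof of Theorem~\ref{thm:fdd-fclt-kurtz}, citing it as a ``minor adaptation'' of Theorems~11.2.1 and 11.2.3 in Ethier--Kurtz, and your sketch is the standard Ethier--Kurtz argument (compensated Poisson martingale decomposition, Gronwall for the LLN, random time change and Aldous--Kurtz tightness for the CLT) with the one genuinely new point --- negligibility of the $\epsilon_{n,i}$ terms at both scales via hypothesis (i) --- handled correctly. The only slight looseness is that localization on a stopping-time interval is needed to justify boundedness of $\beta_i(\mvxi_n(\cdot))$ before Gronwall, and the Taylor-remainder argument for the drift requires tightness of $\vX_n$ to already be in hand (which you do establish first), but these are standard details in Ethier--Kurtz and do not affect the correctness of the approach.
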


\subsection{Proof of Lemma \ref{lem:weakcgce}}

Let $F: S_4 \to \bR$ be a continuous and bounded function. Denote by
$\theta_n$ (resp. $\theta$) the probability distribution of $U_n$ (resp. $U$) on $S_1$.
We need to show that
\begin{equation}\label{eq:1100}
\int E(F(\Phi_n(u, Y_n(u)))) \theta_n(du) \to \int E(F(G(u,Z))) \theta(du) \mbox{ as } n \to \infty.
\end{equation}
Note that $\theta_n \to \theta$ weakly as $n \to \infty$. Thus, from the assumed continuity property of $G$, 
$$
\int E(F(G(u,Z))) \theta_n(du) \to \int E(F(G(u,Z))) \theta(du) \mbox{ as } n \to \infty.
$$
Thus it suffices to show that
$$\int \left|E(F(\Phi_n(u, Y_n(u)))) - E(F(G(u,Z)))\right| \theta_n(du) \to 0 \mbox{ as } n \to \infty.
$$
Since $\{\theta_n\}$ is tight it suffices to show that, for any compact $K^* \in S_1$,
$$\sup_{u \in K^* \cap S^n_1} \left|E(F(\Phi_n(u, Y_n(x)))) - E(F(G(u,Z)))\right| \to 0 \mbox{ as } n \to \infty.
$$
However this is immediate from the assumptions made in the statement of the lemma.

\end{document}